\documentclass[12pt]{article}

\usepackage{amsmath,amscd,amsthm,makeidx,graphicx,url,bm,bbm,mathrsfs,palatino,bm,mathtools,latexsym}
\usepackage[bbgreekl]{mathbbol}

\usepackage{amssymb}
\usepackage{a4wide,xy,graphicx}
\usepackage{tikz-cd}
\usepackage{enumerate,comment}
\usepackage{titlesec}
\setcounter{secnumdepth}{4}
\usepackage{tikz-cd}

\numberwithin{equation}{section}





\titleformat{\paragraph}[runin]
{\normalfont\normalsize\bfseries}{\theparagraph}{1em}{}
\titlespacing*{\paragraph}
{0pt}{3.25ex plus 1ex minus .2ex}{1.5ex plus .2ex}

\usepackage[
breaklinks=true,
pdftitle={Very stable Higgs bundles, the nilpotent cone and mirror symmetry},
pdfauthor={Tam\'as Hausel, Nigel Hitchin}]{hyperref}
\usepackage{color}
\definecolor{tocolor}{rgb}{.1,.1,.5}
\definecolor{urlcolor}{rgb}{.2,.2,.6}
\definecolor{linkcolor}{rgb}{.1,.4,.6}
\definecolor{citecolor}{rgb}{.6,.3,.1}
\hypersetup{colorlinks=true, urlcolor=urlcolor, linkcolor=linkcolor, citecolor=citecolor}

\setlength{\marginparwidth}{1.2in}
\let\oldmarginpar\marginpar
\renewcommand\marginpar[1]{\-\oldmarginpar[\raggedleft\footnotesize #1]
	{\raggedright\footnotesize #1}}

\makeindex

\xyoption{all}
\input{xypic}

\newcommand{\hookuparrow}{\mathrel{\rotatebox[origin=c]{90}{$\hookrightarrow$}}}

\renewcommand{\div}{{\rm div}}
\newcommand{\im}{{\rm im}}



\newtheorem{theorem}{Theorem}
\newtheorem{proposition}[theorem]{Proposition}
\newtheorem{corollary}[theorem]{Corollary}
\newtheorem{conjecture}[theorem]{Conjecture}
\newtheorem{lemma}[theorem]{Lemma}

\theoremstyle{remark}
\newtheorem{remark}[theorem]{Remark}
\newtheorem{example}[theorem]{Example}

\theoremstyle{definition}
\newtheorem{definition}[theorem]{Definition}

\numberwithin{theorem}{section}

\newcounter{margin}
	{\end{itshape}  \bigskip}

\newcommand{\lie}[1]{\mathfrak{#1}}

\def\beq{\begin{eqnarray}}
	\def\eeq{\end{eqnarray}}
\def\bes{\begin{eqnarray*}}
	\def\ees{\end{eqnarray*}}

\def\bP{{\mathbb P}}

\def\muhat{{\bbmu}}
\def\ddelta{{{\delta}}}

\def\calE{{\mathcal E}}

\def\Nm{{\rm Norm}}

\DeclareMathOperator{\Pic}{Pic}

\DeclareMathOperator{\Spec}{Spec} 
\DeclareMathOperator{\Hom}{Hom}
\DeclareMathOperator{\Tor}{Tor}
\DeclareMathOperator{\Ext}{Ext }

\DeclareMathOperator{\rank}{rank}

\DeclareMathOperator{\coker}{coker}

\def\bE{{\mathbb{E}}}
\def\bPhi{{\mathbb{\Phi}}}

\def\A{{\mathbb A}}

\def\E{\mathbb{E}}
\def\L{\mathbb{L}}
\def\C{\mathbb{C}}
\def\calN{{\mathcal{N}}}
\def\M{{\mathcal{M}}}

\def\calA{{\mathcal{A}}}

\def\calF{{\mathcal{F}}}

\def\calH{\mathcal{H}}

\def\P{\mathbb{P}}
\def\N{\mathbb{Z}_{\geq 0}}

\def\R{\mathbb{R}}

\newcommand{\T}{{\mathbb{T}}}

\def\calC{{\mathcal C}}

\def\calO{{\mathcal O}}

\def\Z{\mathbb{Z}}

\def\m{{\mathfrak m}}

\newcommand{\bino}[2]{\mbox{ $\binom{#1}{#2}$}}
\newcommand{\nc}{\newcommand}

\def\d{{{\delta}}}
\def\mm{{{\ell,m}}}


\usepackage{leftidx}

\newcommand{\calR}{\mathcal{R}}

\newcommand{\ad}{\textnormal{ad}}
\newcommand{\Gr}{\textnormal{Gr}}

\newcommand{\End}{\textnormal{End}}
\newcommand{\tr}{\textnormal{tr}}



\nc{\op}[1]{\mathop{\mathchoice{\mbox{\rm #1}}{\mbox{\rm #1}}
		{\mbox{\rm \scriptsize #1}}{\mbox{\rm \tiny #1}}}\nolimits}
\nc{\al}{\alpha}

\nc{\ep}{\varepsilon} 
\nc{\ga}{\gamma} 
\nc{\Ga}{\Gamma}
\nc{\la}{\lambda} 
\nc{\La}{\Lambda} 
\nc{\si}{\sigma}
\nc{\Sig}{{\Gamma}} 
\nc{\Om}{\Omega} 
\nc{\om}{\omega}

\nc{\SL}{\mathrm{SL}} 
\nc{\GL}{\mathrm{GL}} 
\nc{\SO}{\mathrm{SO}} 
\nc{\PGL}{\mathrm{PGL}}
\nc{\G}{\mathrm{G}}

\nc{\W}{\mathrm{W}}
\nc{\Lg}{\mathrm{L}}
\nc{\Pg}{\mathrm{P}}
\nc{\calL}{{\mathcal L}}
\nc{\Sym}{{\rm Sym}}


\renewcommand{\H}{\mathbb{H}}

\nc{\Frob}{\mathrm{Frob}}
\nc{\spec}{{\rm Spec}}
\def\PP {\mathbb{P}}

\nc{\rN}{\mathrm{N}}


\usepackage{longtable}





\nc{\cpt}{{\op{cpt}}} \nc{\Dol}{{\op{Dol}}} \nc{\DR}{{\op{DR}}}
\nc{\B}{{\op{B}}} \nc{\Triv}{\op{Triv}} \nc{\Hod}{{\op{Hod}}}
\nc{\Log}{{\op{Log}}} \nc{\Exp}{{\op{Exp}}} \nc{\Est}{E_{\op{st}}}
\nc{\Hst}{H_{\op{st}}} \nc{\Left}[1]{\hbox{$\left#1\vbox to
		10.5pt{}\right.\nulldelimiterspace=0pt \mathsurround=0pt$}}
\nc{\Right}[1]{\hbox{$\left.\vbox to
		10.5pt{}\right#1\nulldelimiterspace=0pt \mathsurround=0pt$}}
\nc{\LEFT}[1]{\hbox{$\left#1\vbox to
		15.5pt{}\right.\nulldelimiterspace=0pt \mathsurround=0pt$}}
\nc{\RIGHT}[1]{\hbox{$\left.\vbox to
		15.5pt{}\right#1\nulldelimiterspace=0pt \mathsurround=0pt$}}

\nc{\bee}{{\bf E}} 

\begin{document}


\title{Very stable Higgs bundles, equivariant multiplicity and mirror symmetry}

\author{ Tam\'as Hausel
\\ {\it IST Austria} 
\\{\tt tamas.hausel@ist.ac.at} \and Nigel Hitchin\\ {\it University of Oxford} 
\\{\tt hitchin@maths.ox.ac.uk }   }
\pagestyle{myheadings}

\maketitle

\begin{abstract} We define and study the existence of very stable Higgs bundles on Riemann surfaces, how it implies a precise formula for the  multiplicity of the very stable components of the global nilpotent cone and its relationship to mirror symmetry. The main ingredients are the Bialynicki-Birula theory of $\C^*$-actions on semiprojective varieties, $\C^*$ characters of indices of $\C^*$-equivariant coherent sheaves,   Hecke transformation for Higgs bundles, relative Fourier-Mukai transform along the Hitchin fibration, hyperholomorphic structures on universal bundles and cominuscule Higgs bundles. \end{abstract}

\tableofcontents

\section{Introduction}

\stepcounter{subsection}
Drinfeld \cite{drinfeld}  and Laumon \cite{laumon} call a vector bundle on a smooth complex projective curve $C$ of genus $g$ a {\em very stable bundle} if there is no non-zero nilpotent Higgs field $\Phi\in H^0(C;\End(E)\otimes K)$ on $E$. Laumon \cite[Proposition 3.5]{laumon} proved that a very stable bundle
is semi-stable for $g>0$ and stable for $g>1$ in the sense of Mumford and that very stable bundles form a non-empty open subset in the moduli space of stable bundles on $C$. 

We will assume $g>1$, $d\in \Z$ and $n\in \Z_{>1}$. Let $\M:=\M^d_n$ denote the moduli space of semi-stable rank $n$ degree $d$ Higgs bundles on $C$ i.e. pairs $(E,\Phi)$ with $\Phi\in H^0(C;\End(E)\otimes K)$. It was constructed in Hitchin \cite{hitchin1} by gauge theoretic and in \cite{ nitsure,simpson} by algebraic geometric techniques. Pauly and P\'eon-Nieto \cite{pauly-peon-nieto} proved that $E$ is very stable if and only if \beq \label{stableup}U_{E}=\{(E,\Phi)|\Phi\in H^0(C;\End(E)\otimes K)\} \subset \M\eeq is closed in $\M$. 

There is a generalization of $U_{E}\subset \M$ for more general Higgs bundles. Recall that there is a $\T$-action on $\M$ by $(E,\Phi)\mapsto (E,\lambda\Phi)$ for $\lambda\in \T$. For a stable Higgs bundle
$\calE=(E,\Phi)\in \M^{s\T}$ fixed by the $\T$-action we define its {\em upward flow} as \beq\label{upwardflow}W^+_\calE:=\{(E^\prime,\Phi^\prime)\in \M | \lim_{\lambda \to 0} (E^\prime,\lambda\Phi^\prime)=(E,\Phi)\}.\eeq
In particular, for a rank $n$ degree $d$ vector bundle $E$ we have $W^+_{(E,0)}=U_E$ from \eqref{stableup}. 

We define a $\T$-fixed stable Higgs bundle $\calE\in \M^{s\T}$ to be a {\em very stable Higgs bundle} if the only Higgs bundle $(E^\prime,\Phi^\prime)\in W^+_\calE$ with nilpotent Higgs field is $\calE$.  Our first result is a generalization of \cite[Theorem 1.1]{pauly-peon-nieto}. \begin{theorem}\label{mainclosed}  $\calE\in \M^{s\T}$ is very stable if and only if $W^+_{\calE}\subset \M$ is closed. 
\end{theorem}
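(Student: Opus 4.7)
The approach is to prove each implication separately, exploiting throughout the properness and $\T$-equivariance of the Hitchin map $h\colon\M\to\calA$ (with $\T$ acting on $\calA=\bigoplus_{i=1}^n H^0(C,K^i)$ with strictly positive weights $i$) and the resulting compactness of the global nilpotent cone $\calN:=h^{-1}(0)$.

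\textbf{Closed implies very stable.} Suppose $W^+_\calE\subset\M$ is closed and let $(E',\Phi')\in W^+_\calE$ with $\Phi'$ nilpotent. The $\T$-orbit $\{(E',\lambda\Phi'):\lambda\in\T\}$ is contained in $W^+_\calE$ (which is $\T$-invariant) and in $\calN$ (which is $\T$-invariant since nilpotency is preserved). Since $\calN$ is proper, the valuative criterion produces the limit $\calE'':=\lim_{\lambda\to\infty}(E',\lambda\Phi')\in\calN\cap\M^\T$. By closedness of $W^+_\calE$ we have $\calE''\in W^+_\calE\cap\M^\T=\{\calE\}$, the equality holding because any $\T$-fixed point in its own upward flow must coincide with the flow's base. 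Thus both limits $\lambda\to 0$ and $\lambda\to\infty$ of the orbit equal $\calE$; by separatedness of $\M$ the orbit is then forced to be constant, so $(E',\Phi')=\calE$.

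\textbf{Very stable implies closed.} Suppose $W^+_\calE\cap\calN=\{\calE\}$. The restriction $h|_{W^+_\calE}\colon W^+_\calE\to\calA$ is $\T$-equivariant between varieties on which $\T$ contracts every point to a unique fixed point ($\calE$ on the source, $0$ on the target). Its fiber over $0$ is precisely $W^+_\calE\cap\calN=\{\calE\}$. I would then invoke the standard equivariant properness criterion: a $\T$-equivariant morphism from a semiprojective variety to a positively graded affine variety is proper if and only if the fiber over the unique $\T$-fixed point in the target is proper. This yields properness of $h|_{W^+_\calE}$. In the factorization $W^+_\calE\hookrightarrow\M\xrightarrow{h}\calA$ the composition is proper and $h$ is separated, so by the two-out-of-three property for properness the inclusion $W^+_\calE\hookrightarrow\M$ is itself proper; being a proper locally closed immersion it is a closed immersion, and $W^+_\calE$ is closed in $\M$.

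\textbf{Main obstacle.} The crux is the equivariant properness criterion used in the second direction. Its hypotheses require that $W^+_\calE$ be a semiprojective variety on which $\T$ acts contracting everything to $\calE$, a fact that at smooth stable fixed points follows from the Bialynicki--Birula theory for semiprojective varieties invoked in the paper's introduction. The criterion itself can be proved directly via $\T$-homogeneous norm functions: if the fiber over $0$ is a single point, the image norm is bounded below by a positive constant on the compact unit level set of a source norm, giving the desired properness. A minor but necessary verification in the first direction is that a nonconstant $\T$-orbit in a separated variety has distinct $\lambda\to 0$ and $\lambda\to\infty$ limits, which then converts coincidence of those limits into constancy of the orbit.
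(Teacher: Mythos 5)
Your proposal is correct in overall shape, takes a genuinely different route from the paper in one direction, and contains one misjustified step in the other. The paper proves the statement as a general fact about normal semiprojective varieties (Proposition~\ref{upwardclose}): for ``very stable implies closed'' it passes to the compactification $\overline{\M}=(\M\times\C)/\!/\T$ and observes that $W^+_\calE\sqcup\P(W^+_\calE)$ is a weighted projective space, hence closed in $\overline{\M}$ with all boundary on the divisor at infinity. You instead use the Hitchin map: since $W^+_\calE\cong T^+_\calE$ is an affine space with positive weights and $h|_{W^+_\calE}^{-1}(0)=\{\calE\}$ set-theoretically, your ``equivariant properness criterion'' reduces to the standard graded fact that a homogeneous morphism of positively graded affine spaces whose zero fibre is a point is finite; cancellation against the separated $h$ then makes the locally closed immersion $W^+_\calE\hookrightarrow\M$ proper, hence closed. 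This is sound and in fact yields finiteness of $h|_{W^+_\calE}$, which the paper only establishes later (Lemma~\ref{locfree}). Your first direction is essentially the paper's argument in contrapositive form, with properness of the nilpotent cone replacing the paper's appeal to the definition of the core to produce the limit at $\infty$.

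The flaw is the claim that separatedness of $\M$ forces a $\T$-orbit with coinciding limits at $0$ and $\infty$ to be constant. This is false for separated varieties in general: gluing the two fixed points of $\P^1$ with its standard $\T$-action produces a projective, hence separated, nodal curve whose open orbit has equal limits at $0$ and $\infty$. What is actually needed --- and what the paper uses --- is that $\M$ is \emph{normal} and quasi-projective, so that Sumihiro's theorem and linearization of an ample line bundle give a $\T$-equivariant embedding into a projective space with a linear action, in which the closure of every nontrivial orbit is a line with two distinct fixed points. Since $\M$ is normal, your argument is repaired by replacing ``separatedness'' with this linearization step; with that substitution both directions go through.
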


From Laumon's result we know that a generic rank $n$ vector bundle is very stable, thus there always exists a very stable Higgs bundle of the form  $(E,0)$ with $E$ very stable, that is,  very stable Higgs  bundles in  the subvariety $\calN\in \pi_0(\M^\T)$ parametrizing semi-stable vector bundles on $C$. 

There are however many other fixed point components. Namely, when  $E=E_1\oplus\dots \oplus E_j$ is a direct sum of vector bundles and $\Phi|_{E_i}\to E_{i+1}K\subset EK$. We call $$(\rank(E_1),\rank(E_2),\dots,\rank(E_j))$$ -- an ordered partition of $n$ -- the {\em type} of the $\T$-invariant semistable Higgs bundle $(E,\Phi)$. For example, for a rank $n$ semistable bundle $E$ the semistable Higgs bundle $(E,0)\in \M^\T$ is of type $(n)$. 

In this paper we  classify all type $(1,\dots,1)$ very stable Higgs bundles:
\begin{theorem} \label{maintype111} Let $$E=L_0\oplus \dots \oplus L_{n-1}$$ be a direct sum of line bundles $L_i$ on $C$. Furthermore, let the Higgs field be given by $$\Phi|_{L_{i-1}}=b_i : L_{i-1}\to L_{i}K\subset EK$$ where $$b_i\in \Hom(L_{i-1},L_{i}K)=H^0(C;L_{i-1}^*L_{i}K).$$ Assume that $(E,\Phi)$ is stable. Then $(E,\Phi)$ is very stable if and only if $$b=b_{n-1}\circ \dots \circ b_1\in H^0(C;L_0^*L_{n-1}K^{n-1})$$ has no repeated zero. 
\end{theorem}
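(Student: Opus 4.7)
By Theorem \ref{mainclosed}, $\calE = (E,\Phi)$ is very stable if and only if $W^+_\calE$ is closed in $\M$, equivalently, the only nilpotent Higgs bundle in $W^+_\calE$ is $\calE$ itself. The plan is to parameterize $W^+_\calE$ via Bialynicki-Birula theory and then analyze which nilpotent deformations of $\Phi$ are gauge equivalent to $\Phi$. Assigning $\T$-weight $w_i = i$ to $L_i$ makes $\End(E)\otimes K$ into a graded bundle with $\Phi$ in weight $1$; the positive-weight part of the tangent space $T_\calE \M = \mathbb{H}^1(\End(E) \xrightarrow{\ad(\Phi)} \End(E)\otimes K)$ parameterizes Higgs bundles $(E',\Phi')$ admitting an increasing filtration $0\subset F_1 \subset \cdots \subset F_n = E'$ with $F_i/F_{i-1} \cong L_{i-1}$ and $\Phi'(F_i) \subseteq F_{i+1} \otimes K$ whose associated graded is $(E, \Phi)$. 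Choosing a $C^\infty$ splitting, $\Phi'$ is presented as $\Phi + c$ with $c_{j,i} \in \Hom(L_i, L_j K)$ allowed only for $j \leq i$, modulo the action of filtration-preserving unipotent gauges in $\Aut(E)$.

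Suppose $b$ has only simple zeros. I claim that any nilpotent $\Phi'$ in $W^+_\calE$ is gauge equivalent to $\Phi$, hence represents $\calE$ in $\M$. Away from the zeros of $b$, $\Phi$ is fibrewise regular nilpotent, so $\ad(\Phi)$ restricted to upper-triangular gauges is locally surjective onto the upper-plus-diagonal part of $\End(E)\otimes K$; hence the equation $\Phi + c = g\Phi g^{-1}$ can be solved for a unique unipotent upper-triangular gauge $g$, computed recursively from a triangular system. The resulting $g$ is a rational section of $\Aut(E)$ whose only possible poles lie at the zeros of $b$; a careful order-of-vanishing count, generalizing the $n=2$ model in which the unique conjugator is $g = \bigl(\begin{smallmatrix} 1 & -c_{11}/b_1 \\ 0 & 1 \end{smallmatrix}\bigr)$, shows that the pole order at each $p$ is at most $v_p(b) - 1$. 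Thus $v_p(b) = 1$ at every zero of $b$ forces $g$ to be regular, yielding $(E',\Phi') \cong \calE$ in $\M$.

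Conversely, if $b$ has a zero of order $m \geq 2$ at some $p \in C$, I would exhibit a nilpotent $(E',\Phi') \in W^+_\calE$ outside the gauge orbit of $\Phi$. The idea is to choose deformations $c_{j,i}$ vanishing on the reduced divisor $D_{\mathrm{red}}$ but not to the full order of $D = \mathrm{div}(b)$ at $p$: solving the nilpotency equations recursively (starting from a choice of ``free'' entries in an appropriate Kostant-type slice) produces a valid Higgs bundle in $W^+_\calE$ whose local gauge to $\Phi$ has a genuine pole at $p$ of order at least $1$, obstructing global gauge equivalence. The main obstacle, and the technical heart of the argument, is to rigorously control the orders of vanishing through the recursive solution of the gauge equation and the nilpotency constraints, particularly when $v_p(b) \geq 2$ arises from simultaneous vanishing of several of the $b_i$ rather than a single $b_i$ to high order, and to handle non-split $E' \neq E$ via the extension classes in $\Ext^1(L_i, L_j)$.
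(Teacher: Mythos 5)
Your proposal takes a genuinely different route from the paper — a direct gauge-theoretic analysis of the nilpotent locus in $W^+_\calE$, versus the paper's induction on $\deg(b)$ via Hecke transformations (reducing to the Hitchin-section base case $\deg(b)=0$, peeling off one simple zero at a time) and, for the converse, a Hecke curve of nilpotent Higgs bundles produced from the connectedness of the variety of $\Phi'_c$-invariant subspaces when $\Phi'_c$ fails to be regular nilpotent. In principle your strategy is the natural first attempt, but as written it has two genuine gaps. First, in the ``if'' direction the entire content is the assertion that the recursively constructed conjugating gauge $g$ has pole order at most $v_p(b)-1$ at each zero $p$ of $b$; this is stated as following from ``a careful order-of-vanishing count'' but is never carried out, and it is precisely the hard point: for $n\ge 3$ the recursion couples all the $c_{j,i}$, and a zero of $b$ of order $\ge 2$ can arise from several distinct $b_i$ vanishing simultaneously at $p$, where the naive entrywise division by the $b_i$ does not obviously produce the claimed bound. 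Moreover your normal form $\Phi'=\Phi+c$ on the fixed bundle $E$ suppresses the fact that a point of $W^+_\calE$ generally has underlying bundle $E'\not\cong E$ (a nonsplit iterated extension of the $L_i$); the gauge equation must simultaneously kill the $\bar\partial$-deformation (the classes in $H^1(\Hom(L_i,L_j))$) and conjugate the Higgs field, and deferring this to a closing remark leaves the main statement unproved even in the regular locus.

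Second, the ``only if'' direction is a plan rather than an argument: exhibiting a local gauge with a pole does not by itself produce a second point of $W^+_\calE\cap h^{-1}(0)$ in the moduli space. One must construct an actual stable nilpotent Higgs bundle, verify that its $\lambda\to 0$ limit is $\calE$ (and not some other fixed point), and verify that it is not isomorphic to $\calE$ globally — a pole of one particular conjugator does not rule out a different global isomorphism. The paper does this by showing that at a multiple zero the Hecke-transformed $\Phi'_c$ is a non-regular nilpotent, so the projective variety of invariant $(n-k)$-planes is connected with more than one $\T$-fixed point, yielding a $\bP^1$-family of stable nilpotent Higgs bundles in the upward flow with two non-isomorphic $\T$-fixed endpoints (distinguished by the degrees of the summands), together with explicit stability estimates. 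Until you supply the pole-order lemma, the treatment of the extension classes, and a concrete non-isomorphic nilpotent point in the converse, the proposal does not establish the theorem.
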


The proof of this proceeds by first verifying it in the case when $b$ has no zeroes. The corresponding Higgs bundle is called a uniformising Higgs bundle, the upward flow from which is nothing but a Hitchin section \cite{hitchin-lie}, which was already observed to be closed  in \cite{hitchin-lie}. In particular when $L_0\cong \calO_C$ then we get the {\em canonical uniformising Higgs bundle} \beq\label{canuni} \calE_0:=(\calO_C\oplus K^{-1}\oplus \dots \oplus K^{-n+1},\Phi|_{K^{-i+1}}=1:K^{-i+1}\to K^{-i}\otimes K).\eeq Then we apply Hecke transformations  at points of the curve to these Hitchin sections, and observe that we can reach all type $(1,\dots,1)$ upward flows with subsequent Hecke transformations, and can infer Theorem~\ref{maintype111} in turn. This argument is detailed in subsection~\ref{heckeverystable}.

From Theorem~\ref{maintype111} we  deduce that in every type $(1,\dots,1)$ component $F\in\pi_0(\M^\T)$ there exists a very stable Higgs bundle. Thus  when $n=2$ every component, being either of type $(2)$ or type $(1,1)$, contains very stable Higgs bundles. However, already for rank $n=3$ there are type $(1,2)$ components $F\in\pi_0(\M^\T)$ which contain no very stable Higgs bundles. This can be seen by computing the multiplicity of a very stable component of the nilpotent cone, which we now explain. 

We recall the Hitchin map \cite{hitchin-stable} $$\begin{array}{cccc} h:&\M&\to &\calA=\oplus_{i=1}^n  H^0(C;K^i)\\ &(E,\Phi)&\mapsto & \det(\Phi-x)\end{array} $$ given by the characteristic polynomial. It is a proper completely integrable system \cite{nitsure,hitchin-stable} with respect to a natural symplectic structure on $\M$. In particular, $\dim(\calA)=\dim(\M)/2$ and the fibres of $h$ are Lagrangian subvarieties. The generic fibre is isomorphic to an Abelian variety -- the Jacobian of a  spectral curve -- while the most singular fibre $N:=h^{-1}(0)$, the so-called {\em nilpotent cone}, is non-reduced as a subscheme. Its components can be understood from the {\em core} of $\M$. Namely, for  $F\in {\pi_0(\M^\T)}$  one defines the {\em downward flow} $$ W^-_F:=\{(E^\prime,\Phi^\prime)\in \M | \lim_{\lambda \to \infty} (E^\prime,\lambda\Phi^\prime)\in F\}.$$ Note that the Hitchin map is proper and $\T$-equivariant with an appropriate weighted $\T$-action on $\calA$. It follows that the  core  $$\calC:=\bigcup_{F\in \pi_0(\M^\T)}W^-_F\cong N_{red}$$ is a proper subvariety isomorphic to $N_{red}$ the reduction of the nilpotent cone. Thus each component $N_F$ of the nilpotent cone $N$ is labelled by $F\in \pi_0(\M^{\T})$ and its reduction is isomorphic to ${N_F}_{red}\cong\overline{W^-_F}$. We denote by $m_F:=\ell(\calO_{N,N_F})$ the multiplicity of the component $N_F\subset N$. We have the following

\begin{theorem} \label{mainmulti} Let $\calE\in \M^{s\T}$ be a very stable Higgs bundle. Denote by $T^+_\calE<T_\calE\M$ the positive part of the $\T$-module $T_\calE\M$ and by $F_\calE$ the $\T$-fixed point component containing $\calE$. We have  \begin{enumerate} \item the pushforward $h_*(\calO_{W^+_\calE})$ is a locally free sheaf on $\calA$ \item the multiplicity of the corresponding component $N_{F_\calE}$ of the nilpotent cone equals \beq \label{multi} m_{F_\calE}=\rank(h_*(\calO_{W^+_\calE}))=\left. { \chi_\T(\Sym(T^{+*}_\calE))\over \chi_\T(\Sym(\calA^*))}\right|_{t=1}\eeq  \item for generic $a\in \calA$  the intersection $W^+_\calE\cap h^{-1}(a)$ is transversal and has cardinality $m_{F_\calE}$. \end{enumerate}
\end{theorem}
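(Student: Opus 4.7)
The plan is to exploit the closedness of $W^+_\calE$ (Theorem~\ref{mainclosed}) and analyse the restriction $h|_{W^+_\calE}\colon W^+_\calE\to\calA$. Since $W^+_\calE$ is closed and $h$ is proper, this restriction is proper. Very stability gives that its fiber over $0$ equals $W^+_\calE\cap N=\{\calE\}$, which is zero-dimensional. The locus of $a\in\calA$ whose fiber has positive dimension is closed and $\T$-invariant, and because the $\T$-action on $\calA$ contracts to $0$ with strictly positive weights, any non-empty such set must contain $0$; this contradicts the zero-dimensionality at $0$. Hence all fibers are zero-dimensional and $h|_{W^+_\calE}$ is finite. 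By Bialynicki-Birula for semiprojective varieties at the smooth fixed point $\calE$, $W^+_\calE\cong T^+_\calE$ is a smooth affine space; the holomorphic symplectic form on $\M$ has $\T$-weight $1$, and the induced non-degenerate pairing $T^w_\calE\times T^{1-w}_\calE\to\C$ combined with $\dim\M=2\dim\calA$ yields $\dim T^+_\calE=\dim\calA$. A finite morphism between smooth varieties of equal dimension is automatically flat, so $h_*(\calO_{W^+_\calE})$ is locally free, proving~(1).

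For the character formula in~(2), $h^*$ realises $\calO_{W^+_\calE}=\Sym(T^{+*}_\calE)$ as a $\T$-graded free module of finite rank over $\calO_\calA=\Sym(\calA^*)$. Taking $\T$-characters, the fiber of $h_*(\calO_{W^+_\calE})$ at $0$ has character $\chi_\T(\Sym T^{+*}_\calE)/\chi_\T(\Sym\calA^*)$, whose value at $t=1$ is the total dimension of that fiber and hence, by flatness, the rank $r$. To identify $r$ with $m_{F_\calE}$ I would combine the flatness of the Hitchin map itself (so that $[h^{-1}(a)]=[h^{-1}(0)]=[N]$ as cycles) with the decomposition $[N]=\sum_F m_F[\overline{W^-_F}]$ to obtain
\[
r=[W^+_\calE]\cdot[N]=\sum_{F\in\pi_0(\M^\T)} m_F\,[W^+_\calE]\cdot[\overline{W^-_F}].
\]
For $F=F_\calE$, the tangent spaces $T^+_\calE$ and $T^-_\calE\oplus T^0_\calE$ are complementary in $T_\calE\M$, so $W^+_\calE$ meets $\overline{W^-_{F_\calE}}$ transversally at the single point $\calE$ with local contribution $1$. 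For $F\neq F_\calE$, very stability gives $W^+_\calE\cap\overline{W^-_F}\subseteq W^+_\calE\cap N=\{\calE\}$, and the closedness of $W^+_\calE$ together with the Bialynicki-Birula partial order on fixed components forces $\calE\notin\overline{W^-_F}$, so this contribution vanishes. Hence $r=m_{F_\calE}$.

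Part~(3) is then immediate: in characteristic zero, a finite flat morphism between smooth varieties is generically étale, so for $a\in\calA$ generic the map $h|_{W^+_\calE}$ is étale over $a$, meaning $W^+_\calE\cap h^{-1}(a)$ is a transversal intersection of exactly $r=m_{F_\calE}$ points.

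The hardest step is the last part of the multiplicity identification, namely ruling out $\calE\in\overline{W^-_F}$ for $F\neq F_\calE$. The inclusion $W^+_\calE\cap\overline{W^-_F}\subseteq\{\calE\}$ is an immediate consequence of very stability, but showing that the intersection is actually empty requires exploiting the maximality of $F_\calE$ in the Bialynicki-Birula order, which is where the closedness conclusion of Theorem~\ref{mainclosed} enters essentially rather than just formally.
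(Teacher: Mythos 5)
Your overall architecture coincides with the paper's: closedness of $W^+_\calE$ plus properness of $h$ gives a finite morphism $h\colon W^+_\calE\to\calA$, miracle flatness gives local freeness, the graded-free-module argument gives the character formula for the fibre at $0$ (the paper phrases this via the equivariant Serre conjecture, but your graded Nakayama version is equivalent), and generic \'etaleness in characteristic zero gives part (3). Your route to quasi-finiteness (upper semicontinuity of fibre dimension plus $\T$-contraction of $\calA$ to $0$) differs harmlessly from the paper's observation that a proper morphism of affine varieties is finite. The identification of the rank with an intersection number against $[\calO_N]$ localized at $\calE$ is also the paper's computation, phrased there via the projection formula in $K$-theory and Serre's Tor formula with a composition series of $\calO_{N_{F_\calE}}$.

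The genuine gap is in your treatment of the contributions from components $N_F$ with $F\neq F_\calE$. You correctly isolate the needed statement, namely $\calE\notin\overline{W^-_F}$ for $F\neq F_\calE$, but the mechanism you invoke --- ``maximality of $F_\calE$ in the Bialynicki-Birula partial order'' --- cannot work, because no order relation between the \emph{components} can see the difference between $\calE$ and a wobbly point of the same component $F_\calE$, and wobbly points of $F_\calE$ genuinely do lie on other components of the core. Concretely, if $E$ is a wobbly stable bundle with a nonzero nilpotent Higgs field $\Psi$, then $(E,0)=\lim_{t\to 0}(E,t\Psi)$ lies in $\overline{W^-_F}$ for the component $F$ to which $(E,\Psi)$ flows downward; so $\overline{W^-_F}\cap\calN\neq\emptyset$ and $\calN=F_{(E,0)}$ is in no sense maximal --- it is the minimum of the Morse function $\|\Phi\|^2$ and sits in the closure of essentially every downward flow. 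The statement you need is therefore a \emph{pointwise} consequence of very stability of $\calE$, not a property of $F_\calE$, and closedness of $W^+_\calE$ alone only gives you $\overline{W^-_F}\cap W^+_\calE\subseteq\{\calE\}$, which is consistent with $\calE\in\overline{W^-_F}$ and a positive (improper-looking but dimensionally proper) local intersection number there. Since Serre's positivity theorem would then force a strictly positive contribution $m_F\cdot i(\M,W^+_\calE\cdot\overline{W^-_F},\{\calE\})>0$, your formula $r=\sum_F m_F\,[W^+_\calE]\cdot[\overline{W^-_F}]$ would overshoot $m_{F_\calE}$ unless this is excluded.

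For comparison, the paper handles this point by asserting that $\calC=N_{\mathrm{red}}$ is \emph{smooth} at $\calE$ with tangent space $T^{\leq 0}_\calE$ (so that locally only $N_{F_\calE}$ passes through $\calE$, and the composition series of $\calO_{N_{F_\calE}}$ together with Serre's vanishing for the lower-dimensional factors accounts for all of $[\calO_N]$ at $\calE$); that assertion is exactly the statement you are missing, and it is not delivered by an order argument. A correct argument must use openness of the very stable locus in $F_\calE$ together with the local Bialynicki-Birula model (for instance, intersecting $\overline{W^-_F}$ with the local attractor of $F_\calE$ and using that the fibres of the limit map $p\mapsto\lim_{\lambda\to 0}\lambda p$ over very stable points of $F_\calE$ meet the core only in the fixed point), or some other input specific to $\calE$ being very stable. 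As written, your proof establishes $\rank(h_*(\calO_{W^+_\calE}))=\chi_\T(\Sym(T^{+*}_\calE))/\chi_\T(\Sym(\calA^*))|_{t=1}$ and part (3), but not the identification of this number with the multiplicity $m_{F_\calE}$.
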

Here for a positive $\T$-module $V$ with weight space decomposition $V=\oplus_{\lambda>0} V_\lambda$ we have denoted the  $\T$-character of its symmetric algebra by 
\bes\chi_\T({\Sym(V^*)})= \sum_{\lambda\leq 0} \dim(\Sym(V^*)_\lambda)t^{-\lambda}= \prod_{\lambda>0}\frac{1}{( 1- t^{\lambda})^{\dim(V_\lambda)}} \in \Z[[t]],\ees where $t$ denotes the $\T$-character of weight $-1$. 
The computation of \eqref{multi} is straightforward and when it is non-integer valued we  deduce the non-existence of very stable Higgs bundles in a given component. This is the case for certain types  of $(1,2)$ components in  rank $3$, see Remark~\ref{counter}. However the integrity of $m_{F_\calE}$ is not a complete obstruction for existence of very stable bundles as examples in Remark~\ref{obstruction} for type $(1,3)$ in rank $4$ show.

It is straightforward to compute \eqref{multi} for a very stable Higgs bundle. Consequently, we  deduce the multiplicity of type $(n)$ and type $(1,\dots,1)$ components of the nilpotent cone. 

\begin{corollary} \label{maincor} The multiplicity of $\calN$, the moduli space of semi-stable rank $n$ degree $d$ bundles, in the nilpotent cone is  $$m_\calN=2^{3g-3}3^{5g-5}\dots n^{(2n-1)(g-1)}.$$ Moreover let $N_{F_\calE}$ be the component of the nilpotent cone for a type $(1,\dots,1)$ Higgs bundle $\calE\in \M^{s\T}$ as in Theorem~\ref{maintype111}. Let $m_i=\deg(b_i)$. Then its multiplicity in the nilpotent cone is $$m_{F_{\calE}}=\prod_{i=1}^{n-1}{n\choose n-i}^{m_i}.$$
\end{corollary}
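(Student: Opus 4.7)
Both parts follow from the multiplicity formula~\eqref{multi} applied to a very stable representative.  As a general observation, for very stable $\calE$ one has $\dim T^+_\calE = \dim \calA = n^2(g-1)+1$ (this is implicit in Theorem~\ref{mainmulti}(3)), so substituting $(1-t^\lambda) = (1-t)(1+t+\cdots+t^{\lambda-1})$ in~\eqref{multi} and cancelling the common $(1-t)$-pole of order $n^2(g-1)+1$ yields the simplified form
\[
m_{F_\calE} \;=\; \prod_{w \geq 1} w^{\,\dim\calA_w \,-\, \dim T^{(w)}_\calE},
\]
where $\calA_w, T^{(w)}_\calE$ denote the weight-$w$ subspaces.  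Recall $\dim \calA_1 = g$ and $\dim \calA_i = (2i-1)(g-1)$ for $i \geq 2$.

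For the first claim, choose a very stable vector bundle $E$ (guaranteed by Laumon) and set $\calE = (E,0)$.  The tangent space $T_\calE \M = H^1(\End E) \oplus H^0(\End E \otimes K)$ carries $\T$-weights $0$ and $1$ respectively; by Riemann-Roch and stability, $\dim T^{(1)}_\calE = h^0(\End E \otimes K) = n^2(g-1)+1$ and $\dim T^{(w)}_\calE = 0$ for $w \geq 2$.  The displayed formula then yields $m_\calN = \prod_{i=2}^n i^{(2i-1)(g-1)}$, as asserted.

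For the second claim, let $\calE = (E,\Phi)$ be as in Theorem~\ref{maintype111} (very stable).  The one-parameter subgroup $g_\lambda = \mathrm{diag}(\lambda^{0},\lambda^{-1},\ldots,\lambda^{-(n-1)})$ satisfies $g_\lambda \Phi g_\lambda^{-1} = \lambda^{-1}\Phi$ and implements the $\T$-fixing of $\calE$; consequently $L_j L_i^* \subset \End E$ has tangent weight $i-j$, while $L_j L_i^* K \subset \End E \otimes K$ has tangent weight $i-j+1$ (the extra $+1$ from $\Phi \mapsto \lambda \Phi$).  Decomposing the tangent complex $\End E \xrightarrow{[\Phi,\cdot]} \End E \otimes K$ by $\T$-weight, using $\mathbb{H}^0_w = 0$ for $w \neq 0$ by stability and $\mathbb{H}^2_w \cong (\mathbb{H}^0_{1-w})^* = \delta_{w,1} \cdot \mathbb{C}$ by holomorphic symplectic duality (the symplectic form has weight $1$), a Riemann-Roch calculation summand by summand gives
\[
\dim T^{(w)}_\calE \;=\; \delta_{w,1} \,+\, (2w-1)(g-1) \,+\, \sum_{l=1}^{n-1} \bigl(N_w(l) - N_{w-1}(l)\bigr)\, m_l,
\]
where $N_w(l) = \min(l,w,n-w,n-l)$.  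Regrouping by $l$:
\[
m_{F_\calE} \;=\; \prod_{l=1}^{n-1} \Biggl( \prod_{w=2}^n w^{\,N_{w-1}(l) - N_w(l)} \Biggr)^{m_l}.
\]

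The identification of the inner product with $\binom{n}{n-l}$ is the combinatorial crux.  By the symmetry $N_w(l) = N_w(n-l)$ one may assume $l \leq n/2$; then the non-zero differences $N_{w-1}(l) - N_w(l)$ are $-1$ for $w \in \{2,\ldots,l\}$ and $+1$ for $w \in \{n-l+1,\ldots,n\}$, giving
\[
\prod_{w=2}^n w^{\,N_{w-1}(l) - N_w(l)} \;=\; \frac{n!/(n-l)!}{l!} \;=\; \binom{n}{n-l},
\]
completing the proof.  The main obstacle is the meticulous bookkeeping of the $\T$-weights on the tangent complex (in particular the tangent-weight shift by $+1$ on $\End E \otimes K$ coming from the scaling of $\Phi$) and the two-index Riemann-Roch accounting producing the pattern $N_w(l)$; once these are in place, the rest is a direct calculation followed by the combinatorial identity above.
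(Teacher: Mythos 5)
Your proof is correct and follows essentially the same route as the paper: both apply the multiplicity formula of Theorem~\ref{mainmulti} after computing the $\T$-weight decomposition of $T^+_\calE$ from the graded deformation complex $\End(E)\to\End(E)\otimes K$ via Riemann--Roch, the only cosmetic difference being that the paper first identifies $m_\calE(t)$ as a product of quantum integers (Remark~\ref{multN}) resp.\ quantum binomial coefficients (Remark~\ref{remark111}, equation \eqref{explicit}) and then sets $t=1$, whereas you cancel the $(1-t)$-pole and evaluate directly; your $N_w(l)=\min(l,w,n-w,n-l)$ bookkeeping is equivalent to the paper's $m_{a,b}$ notation. One small point in your favor: your $\delta_{w,1}$ term, coming from $\H^2_1\cong(\H^0_0)^*\cong\C$, is genuinely needed to make $\dim T^+_\calE=\dim\calA=n^2(g-1)+1$, and the paper's displayed intermediate formula $\dim T^k_\calE=(2k-1)(g-1)+m_{k,n-k+1}$ in Remark~\ref{remark111} silently omits it (its final formula \eqref{explicit} is nonetheless correct).
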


The main motivation of this paper was to try to understand the mirror of upward flows  \eqref{upwardflow} in the framework of Kapustin and Witten \cite{kapustin-witten}. Fixing $\M:=\M_{n}^{-n(n-1)(g-1)}$ we expect that the mirror of the structure sheaf $\calO_{W^+_\calE}$ of the  Lagrangian upward flow ${W^+_\calE}\subset \M$ will be a hyperholomorphic vector bundle 
on $\M$ if and only if $\calE$ is very stable, in which case we expect the mirror to be of rank   $m_{F_\calE}$.  

More precisely,  for any $\calE\in \M^{s\T}$ we introduce the {\em virtual multiplicity} 
\beq\label{virmul}m_\calE(t):={ \chi_\T(\Sym(T^{+*}_\calE))\over \chi_\T(\Sym(\calA^*))}\eeq which is a priori  a rational function in $t$. In the course of proving Theorem~\ref{mainmulti} we find that for a very stable $\calE$ the pushforward $h_*(\calO_{W^+_\calE})$ is a $\T$-equivariant vector bundle on $\calA$, with  $\T$-equivariant fibre at $0$ satisfying $\chi_\T(h_*(\calO_{W^+_\calE})_0)=m_\calE(t)$. In particular, for a very stable $\calE$ the quantity $m_\calE(t)$ is a polynomial in $t$, which we call the {\em  equivariant multiplicity}. 

Its significance for mirror symmetry is that we expect (see Remark~\ref{mirrorate0}) that  the mirror, denoted $\Lambda_\calE$, of a very stable upward flow $\calO_{W^+_\calE}$ should be a $\T$-equivariant vector bundle on $\M$ whose restriction to the canonical Hitchin section is isomorphic to $h_*(\calO_{W^+_\calE})$ and therefore its fibre at the canonical uniformising Higgs bundle $\calE_0$ should have $\T$-character \beq \label{mirrorexp}\chi_\T(\Lambda_\calE|_{\calE_0})=m_\calE(t).\eeq In particular, its rank should be $$\rank(\Lambda_\calE)=\dim(\Lambda_\calE|_{\calE_0})=m_\calE(1)=m_{F_\calE}$$ the multiplicity of $N_{F_\calE}$ in the nilpotent cone $N$. 

In the case of type $(1,\dots,1)$ fixed points, where we have a complete classification of very stable Higgs bundles in Theorem~\ref{maintype111}, we  find a candidate for the mirror, which is supported by the following results sketched below. For details see Section~\ref{mirror}. 

Let $\delta=(\delta_0,\delta_1,\dots,\delta_{n-1})$ with $\delta_0:=c_{01}+\dots+c_{0m_0}$ a divisor of degree $\ell$, where we assume that either $c_{0j}\in C$ or $-c_{0j}\in C$.  For $i>0$ we have the effective divisor $\delta_i={c_{i1}+\dots+c_{im_{i}}}\in C^{[m_i]}$  of degree $m_i\geq 0$, where $c_{ij}\in C$ are points. This data gives rise to a type $(1,\dots,1)$ Higgs bundle  as in Theorem~\ref{maintype111} by setting $$L_i:=(\delta_0+\delta_1+\dots + \delta_{i})K^{-i}$$ and $$b_i:=s_{\delta_i}\in H^0(C;L_{i-1}^*L_{i}K)\cong H^0(C;\calO_C(\delta_i))$$ the defining section of $\calO_C(\delta_i)$. We  denote this Higgs bundle by  $\calE_\delta$. Let us assume that  $\calE_\delta\in \M^{s\T}$ is very stable, i.e. it is stable and the divisor $\delta_1+\dots + \delta_{n-1}$ is reduced. Then $W^+_{\calE_\delta}\subset \M$ is a closed Lagrangian, and we denote its structure sheaf $$\calL_\delta:=\calO_{W^+_{\calE_\delta}}.$$

We construct  \beq\label{lambdaintro}\Lambda_\delta:= \bigotimes_{i=0}^{n-1}\bigotimes_{j=0}^{m_i} \Lambda^{n-i}(\bE_{c_{ij}}),\eeq  using a certain (twisted by a gerbe and appropriately normalised) universal Higgs bundle $(\bE,\bPhi)$ on $C\times \M^s$ and denote $\bE_{c_{ij}}=\bE|_{\M^s\times{\{c_{ij}\}}}$ when $c_{ij}\in C$ and $\bE_{c_{0j}}=\bE^*|_{\M^s\times{\{-c_{0j}\}}}$, when $-c_{0j}\in C$. When $n$ divides $\sum_{i=1}^{n-1} (n-i)m_i$ \eqref{lambdaintro} is a well-defined vector bundle of rank $m_{\calE_\delta}$.

For simplicity we assume that $$d=n\ell+\sum_{i=1}^{n-1}(n-i)m_i-n(n-1)(g-1)=-n(n-1)(g-1).$$ In this case for a generic $a\in \calA^\#\subset \calA$  we can identify $h^{-1}(a)=J(C_a)$ with the Jacobian of the smooth spectral curve $C_a\subset T^*C$. Define $$\begin{array}{cccc}\iota:&\M&\to& \M \\ &(E,\Phi)&\mapsto& (E^*K^{1-n},\Phi^T)\end{array},$$ which on $h^{-1}(a)$ will induce the inverse map. In particular, we will have $\iota(\calE_\delta)=\calE_{\delta^\iota}$ where $\delta^\iota=(\delta^\iota_0,\delta^\iota_1,\dots,\delta^\iota_{n-1})=(-\delta_{n-1}-\dots-\delta_0, \delta_{n-1},\dots,\delta_1).$

For $\M^\#:=h^{-1}(\calA^\#)$ we  construct a relative Poincar\'e bundle $\tilde{\bP}$ on the fibre product $\M^\#\times_{\calA^\#}\M^\#$, which has fibre $J(C_a)\times J(C_a)$ over $a\in \calA^\#$. 
We then have the following 
\begin{theorem} \label{mainmirror} Let $\calE_\delta\in \M^{s\T}$ be a very stable Higgs bundle of type $(1,\dots,1)$ and degree $-n(n-1)(g-1)$.  The relative Fourier-Mukai transform over $\calA^\#$ satisfies $$S(\calL_\delta|_{\M^\#})=(\pi_2)_*(\pi_1^*(\calL_\delta|_{\M^\#})\otimes \tilde{\bP})= \Lambda_\delta|_{\M^\#}$$
	
	and 	$$S(\Lambda_\delta|_{\M^\#})=\calL_{\delta^\iota}[-n^2(g-1)-1]|_{\M^\#}$$
	
	Moreover,   $\Lambda_\delta$ extends to  a  vector bundle with a hyperholomorphic connection on $\M^s$. 
\end{theorem}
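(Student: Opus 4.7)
The strategy is to verify the first Fourier-Mukai identity fibrewise over the smooth locus $\calA^\#$ of the Hitchin base, and then deduce the remaining claims by formal manipulations together with an appeal to the hyperholomorphic structure of the universal Higgs bundle. Each fibre $h^{-1}(a) \cong J(C_a)$ is a principally polarised abelian variety of dimension $n^2(g-1)+1$, and the relative transform $S$ restricts to the ordinary Fourier-Mukai transform on $J(C_a)$. By Theorem~\ref{mainmulti}(3), since $\calE_\delta$ is very stable, the Lagrangian $W^+_{\calE_\delta}$ cuts each generic fibre transversally in exactly $m_{F_{\calE_\delta}}$ reduced points $p_1(a),\dots,p_r(a)$, so $\calL_\delta|_{h^{-1}(a)} = \bigoplus_i \calO_{p_i(a)}$ and its fibrewise Fourier-Mukai transform is the vector bundle $\bigoplus_i \tilde{\bP}|_{\{p_i(a)\}\times J(C_a)}$, of rank $r = m_{F_{\calE_\delta}}$.

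The central step is then the isomorphism $\bigoplus_i \tilde{\bP}|_{p_i(a)} \cong \Lambda_\delta|_{h^{-1}(a)}$ as a family over $\calA^\#$. I would use the BNR spectral correspondence to describe a point $M\in J(C_a)$ as the Higgs bundle with underlying bundle $\pi_* M$, combined with the Hecke description of the upward flow extracted from the proof of Theorem~\ref{maintype111}: the canonical uniformising flow $W^+_{\calE_0}$ is a Hitchin section meeting $h^{-1}(a)$ in a single point, and each Hecke modification at $c_{ij}$ of corank $n-i$ translates that point by a divisor class supported over $\pi^{-1}(c_{ij})\subset C_a$. On the Fourier-Mukai side these translations exactly match tensoring by $\Lambda^{n-i}(\bE_{c_{ij}}|_{h^{-1}(a)})$, because spectral pushforward decomposes $\bE_c|_{h^{-1}(a)}$ into the $n$ Poincar\'e line bundles over $\pi^{-1}(c)$, whose $(n-i)$-th exterior power is the sum of Poincar\'e bundles indexed by the $\binom{n}{n-i}$ length-$(n-i)$ effective divisors on $\pi^{-1}(c)$. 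The degree choice $d=-n(n-1)(g-1)$ and the gerbe normalisation built into $\bE$ ensure no residual line-bundle factor survives, and assembling all the $c_{ij}$ produces exactly the tensor product defining $\Lambda_\delta$.

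For the second formula, apply $S$ once more and invoke the involutivity $S\circ S = \iota^*[-n^2(g-1)-1]$ of relative Fourier-Mukai along the family of principally polarised Jacobians $h:\M^\#\to\calA^\#$. Here the fibrewise inversion on $J(C_a)$ is realised by the global involution $\iota$, which by construction maps $\calE_\delta$ to $\calE_{\delta^\iota}$ and, commuting with the $\T$-action, sends $W^+_{\calE_\delta}$ to $W^+_{\calE_{\delta^\iota}}$, so $\iota^*\calL_\delta = \calL_{\delta^\iota}$. The hyperholomorphic extension in the final sentence is independent of these Fourier-Mukai computations: the appropriately normalised universal Higgs bundle $\bE$ on $C\times \M^s$ is hyperholomorphic, as is one of the main ingredients advertised in the abstract, hence so is every $\bE_c$; the hyperholomorphic category is closed under exterior powers and tensor products, so $\Lambda_\delta$ inherits a hyperholomorphic connection on all of $\M^s$.

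The principal obstacle lies in the fibrewise matching in the second paragraph: one has to trace, through the iterated Hecke construction of $W^+_{\calE_\delta}$ starting from the Hitchin section, how the family of $m_{F_{\calE_\delta}}$ intersection points on the Hitchin fibres deforms with $a$, and certify that it is cut out precisely, with no spurious global twist, by the tensor product defining $\Lambda_\delta$. This requires careful bookkeeping of the gerbe normalisation of $\bE$, of the specific degree $d=-n(n-1)(g-1)$, and of the $K^{-i}$ twists appearing in the spectral correspondence, and this is where the bulk of the technical work will live.
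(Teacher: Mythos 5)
Your handling of the two Fourier--Mukai identities is essentially the paper's own route: restrict to a fibre $h^{-1}(a)\cong J(C_a)$ over $a\in\calA^\#$, write $\calL_\delta$ there as a direct sum of skyscraper sheaves at the intersection points (whose explicit description is Proposition~\ref{vsintersect}), transform each skyscraper into a Poincar\'e line bundle, and observe that the direct sum over the $\binom{n}{n-i}$ choices at each zero of $b_i$ assembles into $\Lambda^{n-i}$ of the restriction of $\bE_{c_{ij}}$, because the spectral pushforward of the fibrewise Poincar\'e bundle is the universal bundle on $h^{-1}(a)$; the second identity is then Mukai's $S\circ S=(-1)^*[-\tilde{g}]$ combined with $\iota(W^+_{\calE_\delta})=W^+_{\calE_{\delta^\iota}}$, exactly as in Remark~\ref{remarkdual}. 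The bookkeeping you defer to the end -- that $\pi_*$ of the fibrewise Poincar\'e bundle is $\bE$ with no residual line-bundle twist, that the $\delta_0$-factor comes out as $\bigotimes_j\Lambda^{n}(\bE_{c_{0j}})$, and that the fibrewise statement globalises over $\calA^\#$ -- is precisely what the paper settles via the norm-map lemmas (Lemmas~\ref{pullnorm} and \ref{detpull}), the normalisation $\Nm(\bE)\cong\Nm^*(\L)$, and \'etale-local sections of the universal spectral curve; so for these two statements your outline is correct and not genuinely different.

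The genuine gap is the final clause. You obtain the hyperholomorphic structure on $\Lambda_\delta$ from the assertion that the (gerbe-twisted, normalised) universal bundle $\bE_c$ carries a hyperholomorphic connection on all of $\M^s$, citing the abstract. But that assertion is itself part of what a proof of this theorem must supply: it does not follow from the Fourier--Mukai computation, which lives only over $\calA^\#$ and would at best produce a connection adapted to the semi-flat approximation in the manner of Arinkin--Polishchuk, not to the genuine hyperk\"ahler metric, and it is not an off-the-shelf fact. The paper proves it in Section~\ref{hyper} by a twistor-theoretic argument: the framed Dolbeault/Hodge moduli spaces of $\lambda$-connections are glued to the framed Betti space to give a holomorphic $\PGL_n(\C)$-bundle on the Deligne--Simpson twistor space, and triviality along real twistor lines is checked by evaluating $\bar\partial_A+\zeta\Phi^*$-holomorphic local frames at the marked point, holomorphically in $\zeta$. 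Without some such construction (or another argument that a chosen unitary connection has curvature of type $(1,1)$ for every complex structure in the hyperk\"ahler family), the ``moreover'' part remains unproved; your remaining reductions -- closure of hyperholomorphic bundles under tensor and exterior powers, and cancellation of the gerbe when $n$ divides $D$ -- are fine and agree with the paper.
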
 

This theorem gives the mirror of $\calL_\delta$  as a hyperholomorphic vector bundle of rank $m_{F_{\calE_\delta}}$. The following will give evidence of the mirror relationship globally, in particular over the nilpotent cone.  To motivate it recall that mirror symmetry should be an equivalence of categories. Not just objects should match but morphisms between them. If two objects have a $\T$-equivariant structure so will the vector space of morphisms between them. Thus its $\T$-character should agree with the $\T$-character of the vector space of morphisms between the mirror objects.

To formalise this for two $\T$-equivariant coherent sheaves $\calF_1$ and $\calF_2$ on $\M$  we denote the {\em equivariant Euler pairing} as

$$\chi_\T(\M;\calF_1,\calF_2)=\sum_{k,l} \dim(H^k({ R} {H\!om}(\calF_1,\calF_2))_l) (-1)^k t^{-l},$$ which
on a semi-projective variety like $\M$ we expect to be a Laurent series in $\Z((t))$. 

\begin{theorem} \label{maineuler}  Let $\calE_\delta,\calE_{\delta^\prime}\in \M^{s\T}$ be two type $(1,\dots,1)$ very stable Higgs bundles of degree $-n(n-1)(g-1)$. Then there exist $\T$-equivariant structures on all coherent  sheaves of the form $\calL_\delta$ and $\Lambda_\delta$  such that $$\chi_\T\left(\M;\calL_{\delta^\prime},\Lambda_{\delta}\right)=\chi_\T(\det(\C_{1}\otimes\calA^*)) \chi_\T\left(\M;\Lambda_{\delta^\prime},\calL_{\delta^\iota}[-n^2(g-1)-1]
	\right). $$
	
\end{theorem}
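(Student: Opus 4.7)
The plan is to deduce this identity from the relative Fourier--Mukai equivalence of Theorem~\ref{mainmirror}, by transporting the derived $\Hom$ complexes through $S$ and then extending from $\M^\#$ to the full moduli space via properness of the Hitchin map.

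Over $\calA^\#$ the kernel $\tilde{\bP}$ realises $S$ as a derived equivalence on the relative abelian scheme $\M^\# \to \calA^\#$ (standard Mukai double-transform identity applied fibrewise to each Jacobian $J(C_a)$). Applying it to the pair $(\calL_{\delta'}|_{\M^\#}, \Lambda_\delta|_{\M^\#})$ and substituting $S(\calL_{\delta'})\simeq \Lambda_{\delta'}$ and $S(\Lambda_\delta)\simeq \calL_{\delta^\iota}[-n^2(g-1)-1]$ from Theorem~\ref{mainmirror} yields an isomorphism of derived $\Hom$-complexes between the two sides of the desired identity, up to a determinantal twist intrinsic to the Fourier--Mukai transform. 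One then chooses $\T$-equivariant structures on $\calL_\delta$, $\Lambda_\delta$ and $\tilde{\bP}$ compatibly -- the Hecke-plus-$\Lambda^{n-i}$ constructions of Section~\ref{mirror} make such choices possible -- to turn this into an isomorphism of $\T$-equivariant complexes on $\M^\#$.

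Next I would use properness and $\T$-equivariance of $h\colon \M\to \calA$ to compute $\chi_\T(\M; \calL_{\delta'}, \Lambda_\delta)$ as the equivariant Euler characteristic on the affine $\T$-space $\calA$ of the pushforward $Rh_* R\Hom(\calL_{\delta'}, \Lambda_\delta)$, which by the Koszul resolution of the origin equals the $\T$-character of the fibre at $0\in\calA$ divided by $\chi_\T(\Sym\calA^*)$. By Theorem~\ref{mainmulti}(1) applied to each $\calL_\delta$, together with the hyperholomorphicity of $\Lambda_\delta$ supplied by Theorem~\ref{mainmirror}, these pushforwards are locally free on $\calA$, so the $\T$-character at $0$ is determined by the generic data on $\calA^\#$ established in the previous step. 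Finally, the fibrewise Mukai twist $S\circ S \cong \iota^*[-g_{\text{fib}}]\otimes\det$ acquires, in the relative equivariant setting, a base-tangent factor equal to $\det(\calA^*)$ together with a weight-one character $\C_1$ reflecting the compatibility between $\iota$ and the $\T$-scaling on $\calA$, producing exactly the factor $\chi_\T(\det(\C_1\otimes\calA^*))$ on the right-hand side.

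The main obstacle will be the second step: establishing the requisite local freeness of $Rh_* R\Hom(\calL_{\delta'}, \Lambda_\delta)$ on all of $\calA$ (not merely generically on $\calA^\#$), and matching the explicit equivariant twist produced by the relative Fourier--Mukai kernel with the asserted character $\chi_\T(\det(\C_1\otimes\calA^*))$ -- ruling out any hidden contribution from the discriminant locus $\calA\setminus\calA^\#$ where the fibres of $h$ degenerate and the Fourier--Mukai equivalence no longer applies directly.
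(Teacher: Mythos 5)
Your route runs through the relative Fourier--Mukai equivalence of Theorem~\ref{mainmirror}, but that equivalence is only established over $\calA^\#$, whereas all of the $\T$-equivariant information in the pairing $\chi_\T(\M;\calL_{\delta^\prime},\Lambda_{\delta})$ is concentrated over the origin of $\calA$ (equivalently over the nilpotent cone), exactly where the fibres of $h$ degenerate and the kernel $\tilde{\bP}$ is not available. Your second step does not close this gap. It is true that $R\,{\mathcal Hom}(\calL_{\delta^\prime},\Lambda_{\delta})\cong \calL_{\delta^\prime}^\vee\otimes\Lambda_{\delta}$ is supported on the closed affine $W^+_{\calE_{\delta^\prime}}$, on which $h$ is finite and flat, so its pushforward is a shifted $\T$-equivariant vector bundle on $\calA$, equivariantly trivial by the equivariant Serre theorem; but then its equivariant Euler characteristic is the $\T$-character of its fibre at $0\in\calA$ times $\chi_\T(\Sym(\calA^*))$, and that fibre character cannot be recovered from the restriction to $\calA^\#$: points $a\neq 0$ are not $\T$-fixed, so their fibres carry no $\T$-action, and the generic identification supplied by Theorem~\ref{mainmirror} determines at most the rank. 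Promoting it to a $\T$-equivariant identification over all of $\calA$ is precisely the equivariant ``classical limit'' mirror statement that the paper flags as conjectural in the remark following this theorem, so your argument presupposes what is to be proved. Moreover the factor $\chi_\T(\det(\C_{1}\otimes\calA^*))$ is not produced by the Mukai double-transform twist in any way you have computed; attributing it to ``a base-tangent factor'' is exactly the step that needs a calculation.

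The paper's proof avoids the Fourier--Mukai transform entirely and is a direct fixed-point computation. One writes $\calL_{\delta^\prime}^\vee\cong\det(N_{W^+_{\calE_{\delta^\prime}}})[-n^2(g-1)-1]$ via the Koszul resolution, so the left-hand pairing becomes an equivariant index over the affine upward flow; this factors as the $\T$-character of the fibre at the fixed point times $\chi_\T(\calO_{W^+_{\calE_{\delta^\prime}}})=m_{\calE_{\delta^\prime}}(t)\chi_\T(\Sym(\calA^*))$, with $\chi_\T(\Lambda_{\delta}|_{\calE_{\delta^\prime}})=m_{\calE_{\delta}}(t)$ coming from the normalisation of the equivariant structure on the universal bundles. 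The same reduction on the other side, combined with the palindromic symmetry $m_{\calE}(t^{-1})=t^{-\deg}m_{\calE}(t)$, the $\T$-equivariance of $\iota$ (giving $m_{\calE_{\delta^\iota}}=m_{\calE_{\delta}}$), and the weight-one symplectic form relating $\det(T^+_{\calE_{\delta^\prime}})$ and $\det(T^{\leq 0}_{\calE_{\delta^\prime}})$, produces exactly the monomial $\chi_\T(\det(\C_{1}\otimes\calA^*))$. To salvage your approach you would have to prove the equivariant extension of the Fourier--Mukai identification across $\calA\setminus\calA^\#$, which is not currently known; otherwise the computation must be localised at the $\T$-fixed points as in the paper.
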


Thus up to the $q$-power $\chi_\T(\det(\C_{1}\otimes\calA^*))$, where $\C_1$ denotes the weight $1$ $\T$-character,  we find that the $\T$-character of morphisms between our Lagrangian and hyperholomorphic branes agree with that of the conjectured mirror branes. 

The content of  the paper is as follows. In Section~\ref{bb} we work out from scratch the Bialynicki-Birula theory for a semi-projective variety,  to conclude that the upward flows and the core of a semiprojective variety with a homogeneity $1$ symplectic form are Lagrangian. Here we  introduce very stable upward flows and prove some  properties including the general form of Theorem~\ref{mainclosed}.  In Section~\ref{bbhiggs} we determine which Higgs bundles belong to which upward and downward flows in the moduli space of Higgs bundles. In Section~\ref{verystable} we discuss very stable Higgs bundles and prove part $1$ of Theorem~\ref{mainmulti}. Then we examine in detail the Hecke transformation  of a  type $(1,\dots,1)$ $\T$-fixed Higgs bundle and apply this to classify all very stable Higgs bundles of this type proving Theorem~\ref{maintype111}. Moreover we show that the Hecke transform takes a Lagrangian subvariety to another in general. In Section~\ref{multiplicities} we investigate the multiplicity of a very stable component of the nilpotent cone, by deriving an explicit linear algebra formula for it, and study the implications. Here we prove the rest of Theorem~\ref{mainmulti} and Corollary~\ref{maincor}.  In Section~\ref{mirror} we study the relative Fourier-Mukai transform of our type $(1,\dots,1)$ very stable upward flows proving the first part of  Theorem~\ref{mainmirror}, and prove an agreement of $\T$-equivariant Euler forms for pairs of mirror branes settling Theorem~\ref{maineuler}. In Section~\ref{hyper} using the twistor space construction of the moduli space of Higgs bundles we endow the universal bundle at a point in $C$ with a hyperholomorphic connection and from the expression (\ref{lambdaintro}) prove  the second part of Theorem~\ref{mainmirror}. 

In the final Section~\ref{further} we discuss three issues relevant to further progress. First in Subsection~\ref{simple} we compute the virtual multiplicity \eqref{virmul} for the analogue of  the type $(1,\dots,1)$ fixed points for a simple group $G$ instead of $\GL_n$. This is the case where the $\T$-action at the fixed point set has the same form as the Hitchin section for the group $G$ as in \cite{hitchin-lie}, which is of course a closed upward flow. The $b_i$ which we studied for the general linear group are now replaced by sections $b_1,\dots, b_{\ell}$ of line bundles associated to the simple roots of $G$.
We find that unlike  the $\GL_n$ case considered in the body of this paper, the virtual multiplicity is rarely a polynomial. We prove in Proposition~\ref{cominint} that for {\em cominuscule} Higgs bundles where $b_i$ are nonvanishing sections except at  cominuscule roots it is in fact a polynomial.  In this case we offer Conjecture~\ref{cominmirror} that under precise conditions these are very stable and find a potential mirror in terms of the minuscule representations of the Langlands dual group, satisfying the expectation of \eqref{mirrorexp}. Secondly in Subsection~\ref{multiple} we study the first non-trivial wobbly type $(1,1)$ $\SL_2$ Higgs bundle where $b$ has a single double $0$. We find the closure of its upward flow and formulate a conjecture about the mirror of the universal  $\SO_3$ bundle  in the adjoint representation. Finally in Subsection~\ref{cotfib} we discuss the problem of the mirror of a very stable cotangent fibre to the moduli of semi-stable vector bundles. We relate this to considerations of Donagi--Pantev and to Drinfeld--Laumon in the geometric Langlands correspondence.

\vskip.5cm

{\noindent \bf Acknowledgements.} We would like to thank Brian Collier, Davide Gaiotto, Peter Gothen, Jochen Heinloth, Daniel Huybrechts, Quoc Ho, Joel Kamnitzer, G\'erard Laumon, Luca Migliorini, Alexander Minets, Brent Pym,  Peng Shan, Carlos Simpson, Andr\'as Szenes, Fernando R. Villegas, Richard Wentworth, Edward Witten and K\={o}ta Yoshioka for interesting comments and discussions. Most of all we are grateful for a long list of very helpful comments by the referee. We would also like to thank the organizers of the Summer School on Higgs bundles in Hamburg in September 2018, where the authors and Richard Wentworth were  giving lectures and where the work in this paper started by considering the mirror of the Lagrangian upward flows $W^+_\calE$ investigated in \cite{collier-wentworth}. The second author wishes to thank EPSRC and ICMAT for support.

	\section{Bialynicki-Birula partition and decomposition}
\label{bb}

\subsection{Bialynicki-Birula theory for semi-projective varieties}

Let $M$ be a normal, complex, semi-projective algebraic variety.  Semi-projective means that $M$ is quasi-projective and we have a $\T:=\C^\times$ action on $M$  such that  the fixed point locus $M^{\T}$ is projective and   for every $x\in M$ there is a $p\in M^\T$ such that
$\lim_{\lambda\to 0}  \lambda x=p$. By the latter we mean that there exists a $\T$-equivariant morphism $g:\A^1\to M$ with $g(1)=x$ and $g(0)=p$, where $\T$ acts on $\A^1$ in the standard way. 

Examples of semi-projective varieties include  cotangent bundles of smooth projective varieties, moduli spaces of Higgs bundles and Nakajima quiver varieties.

For every $\alpha\in M^{\T}$ denote by  $$W^+_\alpha:= \{ x\in M | \lim_{\lambda\to 0}  \lambda x = \alpha\}$$ the {\em upward flow} from $\alpha$ and $$W^-_\alpha:= \{ x\in M | \lim_{\lambda\to \infty}  \lambda x = \alpha\}$$ the {\em downward flow} from $\alpha$. We also define for connected components $F\in \pi_0(M^{\T})$ of the fixed point locus $$W^+_F:=\cup_{\alpha\in F} W^+_\alpha$$ the {\em attractor} of $F$  
and $$W^-_F:=\cup_{\alpha\in F} W^-_\alpha.$$ the {\em repeller} of $F$. 

Let $M^s\subset M$ denote the smooth locus of $M$. Let $\alpha\in M^{s\T}$ be a smooth fixed point.  Then $\T$ acts linearly on the tangent space $T_\alpha M$. We can decompose this representation into weight spaces
$T_\alpha M\cong \oplus_{k\in \Z} (T_\alpha M)_k$, where $(T_\alpha M)_k<T_\alpha M$ denotes the isotypical component, where $\lambda\in \T$ acts as multiplication by $\lambda^k$. We will denote by $T^+_\alpha M = \oplus_{k>0}(T_\alpha M)_k$ the positive and by $T^-_\alpha M = \oplus_{k<0}(T_\alpha M)_k$ the negative part of this decomposition. Let  $T^0_\alpha M=(T_\alpha M)_0$ denote the zero weight component then we have the decomposition $$T_\alpha M \cong  T^+_\alpha M \oplus T^0_\alpha M \oplus T^-_\alpha M. $$

\begin{proposition}\label{bbprop} For a smooth fixed point $\alpha\in M^{s\T}$ the upward and downward flows  $W^+_\alpha$ and $W^-_\alpha$ are $\T$-invariant locally closed subvarieties in $M$. Furthermore we have $W^+_\alpha \cong T^+_\alpha M$ and $W^-_\alpha\cong T^-_\alpha M$ as varieties with $\T$-action.  \end{proposition}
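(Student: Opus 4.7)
\medskip

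\noindent\textbf{Proof proposal.} First observe that $\T$-invariance of $W^\pm_\alpha$ is immediate from the definitions, since for $x\in W^+_\alpha$ and $\mu\in\T$ one has $\lim_{\lambda\to 0}\lambda(\mu x)=\lim_{\lambda\to 0}(\lambda\mu)x=\alpha$, and similarly for $W^-_\alpha$. The downward flow statement reduces to the upward flow statement by replacing the $\T$-action with its inverse, so I will concentrate on $W^+_\alpha$.

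The plan is a standard linearization-and-spread argument. First I would produce a local model of $M$ near $\alpha$: since $\alpha$ is a smooth $\T$-fixed point, there is a $\T$-equivariant \'etale neighborhood $\phi:(V,0)\to(M,\alpha)$, where $V$ is an open $\T$-invariant neighborhood of $0\in T_\alpha M$ with the linear $\T$-action. This follows from the equivariant \'etale slice theorem (or from Bialynicki-Birula's original proof using $\T$-equivariant formal trivialization and Artin approximation; a choice of $\T$-stable formal trivialization of $\widehat{\calO}_{M,\alpha}$ with the help of the weight space decomposition of the cotangent space does the job). On $T_\alpha M$ the upward flow of $0$ is visibly $T^+_\alpha M$ since a vector $v=v_++v_0+v_-$ satisfies $\lim_{\lambda\to 0}\lambda\cdot v=0$ precisely when $v_0=0=v_-$. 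Hence $\phi$ restricts to a $\T$-equivariant \'etale morphism $\phi^+:V\cap T^+_\alpha M\to W^+_\alpha\cap \phi(V)$ from an open neighborhood of $0$ in $T^+_\alpha M$.

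Next I would globalise $\phi^+$ using the contracting $\T$-action. For every $v\in T^+_\alpha M$, because all $\T$-weights on $T^+_\alpha M$ are strictly positive, there exists $\lambda_0\in\T$ with $\lambda_0 v\in V\cap T^+_\alpha M$. I then \emph{define}
\[
\Phi(v):=\lambda_0^{-1}\cdot\phi^+(\lambda_0 v)\in M.
\]
Independence of the choice of $\lambda_0$ follows from the $\T$-equivariance of $\phi^+$ together with the fact that $\phi^+$ is $\T$-equivariant on its whole domain. The resulting $\Phi:T^+_\alpha M\to M$ is $\T$-equivariant by construction, and $\Phi$ is \'etale at every point since $\phi$ is \'etale and the $\T$-action is a local isomorphism on the smooth locus. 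Semi-projectivity of $M$ enters here: the assumption that $\lim_{\lambda\to 0}\lambda x$ exists for every $x$ guarantees that the image $\Phi(T^+_\alpha M)$ lies in $W^+_\alpha$, and conversely any $x\in W^+_\alpha$ eventually enters $\phi(V)$ under $\lambda\to 0$, so $x\in\Phi(T^+_\alpha M)$; thus $\Phi$ surjects onto $W^+_\alpha$.

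The main obstacle is to upgrade $\Phi$ from an \'etale surjection to an isomorphism onto a locally closed subvariety, i.e.\ to rule out self-intersections of $W^+_\alpha$ and to identify it as a closed subscheme of an open subset of $M$. For injectivity I would argue as follows: if $\Phi(v)=\Phi(w)$ with $v\ne w$, spread the equality by the $\T$-action and take the limit $\lambda\to 0$ to force $v=w$ in $V\cap T^+_\alpha M$ where $\phi^+$ is already an isomorphism of a sufficiently small neighborhood of $0$ (shrink $V$ to achieve this separation using that $\phi$ is \'etale at $0$ and hence an isomorphism of suitable formal neighborhoods). To see that $W^+_\alpha$ is locally closed, I would exhibit it as the intersection of the open set $\{x\in M:\lim_{\lambda\to 0}\lambda x\in F_\alpha\}$, where $F_\alpha$ is the connected component of $M^\T$ containing $\alpha$, with the closed preimage of the single point $\{\alpha\}$ under the algebraic retraction $W^+_{F_\alpha}\to F_\alpha$, $x\mapsto\lim_{\lambda\to 0}\lambda x$; the existence of this retraction as a morphism of schemes is the technical heart and again uses the equivariant linearization together with the fact that $M^\T$ is projective, so the limit map is well-defined and continuous in the Zariski topology.
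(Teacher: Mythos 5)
Your overall strategy (linearize equivariantly at $\alpha$ and transport along the flow) is the same philosophy as the paper's proof, but as written there are two genuine gaps. First, the entire argument rests on the existence of a $\T$-equivariant \'etale morphism $\phi:(V,0)\to(M,\alpha)$ with $V$ a $\T$-invariant open subset of $T_\alpha M$. This is not what the equivariant slice theorem provides: Luna's theorem (for the reductive group $\T$, applied after choosing a $\T$-invariant affine open via Sumihiro) gives a $\T$-equivariant \'etale map in the \emph{opposite} direction, from an invariant saturated neighbourhood of $\alpha$ in $M$ to $T_\alpha M$, and an \'etale map cannot simply be inverted; likewise, ordinary Artin approximation of the formal linearization does not produce \emph{equivariant} \'etale neighbourhoods. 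So the crucial local model is precisely the unproved technical heart. The paper supplies this content by hand and in the Luna direction: it chooses homogeneous functions $x_1,\dots,x_n$ generating $\m_\alpha/\m_\alpha^2$ on a $\T$-invariant affine open, shows $W^\pm_\alpha$ lie in the locus where $dx_1,\dots,dx_n$ is a frame, identifies $W^+_F$ as $V(A_-)$ in the graded coordinate ring, computes the tangent spaces, and then defines the map $f_\alpha:W^+_\alpha\to T^+_\alpha M$, $p\mapsto\sum_{\deg x_i>0}x_i(p)\partial_i$, proving it is \'etale, equivariant, surjective (its image is an invariant open subset of a positive $\T$-module containing the origin) and finally an isomorphism by a covering-space argument. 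Relatedly, your injectivity step ``shrink $V$ so that $\phi^+$ is an isomorphism near $0$'' is not available Zariski-locally: an \'etale morphism need not be injective on any Zariski neighbourhood, so this separation requires the formal/analytic isomorphism or an argument of the covering-space type as in the paper.

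Second, your proof of local closedness is incorrect as stated: the set $\{x\in M:\lim_{\lambda\to 0}\lambda x\in F_\alpha\}=W^+_{F_\alpha}$ is in general \emph{not} open in $M$. For instance, for $\T$ acting on $\PP^2$ with weights $0,1,2$ the attractor of the middle fixed point is $\{x_0=0,\ x_1\neq 0\}$, which is only locally closed; and in $\M$ itself $\dim W^+_F=\dim F+\dim T^+_\alpha<\dim\M$ for every component with $T^-_\alpha\neq 0$. Moreover, the existence of the limit retraction $W^+_{F_\alpha}\to F_\alpha$ as a morphism of schemes, which you defer, is essentially the Bialynicki-Birula statement one is trying to prove, so this part of the argument is close to circular (and projectivity of $M^\T$ is not the relevant input). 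The paper avoids all of this by showing directly that $W^+_\alpha$ is a connected component of the common zero locus of the degree-$0$ coordinates inside $W^+_F\cap U=V(A_-)$, hence closed in the invariant affine open $U$, hence locally closed in $M$. To repair your write-up you would either have to prove the equivariant \'etale linearization you invoke, or replace it by the graded-coordinate argument of the paper (or by an algebraic Rees-type construction), and redo the local-closedness step without the openness claim.
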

\begin{proof} This is proved in \cite{bialynicki-birula} for a smooth complete $M$. The statement above could be reduced to that case by equivariantly compactifying $M$ (see below or \cite[Theorem 3]{sumihiro}) and equivariantly resolving the singularities of the compactification (\cite[Corollary 7.6.3]{villamayor}). 
	
	For completeness, and to prepare the ground for the proof of Proposition~\ref{lagrangian}, we present a proof  inspired by the approach of \cite[Lemma 2.2]{bellamy-etal} see also \cite[\S 1]{drinfeld3}.  
	Let $\alpha\in M^{s\T}$.  As the singular locus $\operatorname{Sing}(M)\subset M$ is $\T$-invariant and closed we have   $W^+_\alpha\subset M^s$. Take $\alpha\in U_0\subset M^s$  a $\T$-invariant open affine neighbourhood (using \cite[Corollary 2]{sumihiro}). In particular, $U_0$ is smooth.  The $\T$-action on $U_0$ induces a $\Z$-grading on $\C[U_0]\cong H^0(U_0,\calO_{U_0})$, defined by setting $f\in \C[U_0]_i$ if and only if \beq\label{transrule}f(\lambda p)=\lambda^if(p)\eeq for all $\lambda \in \T$ and $p\in U_0$. In this case we will say that $f$ is homogeneous of degree $\deg(f)=i$. 
	
	Let $\m_\alpha\triangleleft \C[U_0]$ be the maximal ideal of functions vanishing at $\alpha$. As $\alpha$ is $\T$-fixed $\m_\alpha$ is a homogeneous ideal. We can find homogeneous elements $x_1,\dots,x_{n}\in \m_\alpha\subset \C[U_0]$, where $n=\dim(M)$ such that their image  in $\m_\alpha/\m_\alpha^2\cong T^*_\alpha M$ form a basis. The one-forms $dx_1,\dots, dx_n\subset H^0(U_0,\Omega_{U_0})$ then form a basis at $\alpha$ and we let $U$ be the complement of the divisor of zeroes of the section $dx_1\wedge\dots\wedge dx_n\in H^0(U_0,\Omega^n(U_0))$. It is a $\T$-invariant affine open subset $U\subset U_0$, where $dx_1,\dots, dx_n$ remains a basis at every point. In particular, $U$ is non-singular. 
	As $U$ is invariant under the $\T$-action and the closure of any $\T$-orbit in $\M\setminus U$ is still contained in $\M\setminus U$, as it is closed,  we have   $$W^{\pm}_\alpha \subset U.$$ Therefore we can prove Proposition~\ref{bbprop} only working in $U$.
	
	Let $A:=\C[U]$ and $A=\oplus_{n\in \Z} A_n$ be the grading induced by the $\T$-action. Let $A_+=\oplus_{n\in \Z_{>0}} A_n$ and $A_-=\oplus_{n\in \Z_{<0}} A_n$. Thus in particular $A=A_-\oplus A_0\oplus A_+$.  First we observe that 
	
	\begin{lemma} $U^\T=V(f|f\in A_++A_-)\subset U$ i.e. the subvariety of zeroes of homogenous $f$ of non-zero degree. Thus $\C[U^\T]=A/(A_+,A_-)$. \end{lemma}
	\begin{proof} Clearly if $p\in U^\T$, $f \in A_i$ with $i\in \Z\setminus \{0\}$    and $\lambda\in \T$ such that $\lambda^i\neq 1$ then $$f(p)=f(\lambda p)=\lambda^if(p)=0.$$ On the other hand if $p\in U$ is such that  $f(p) =0$  for all $f\in A_i$ with $i\in \Z\setminus \{0\}$ then the maximal ideal at $p$ satisfies $(A_++A_-)\subset \m_p$  and so $\m_p$ is a homogenous ideal, thus $p\in U^\T$. \end{proof}
	
	Let $F:=U^\T$ and, as above, $W^+_{F}=\coprod_{\beta\in F} W^+_\beta=\{p\in U|\lim_{\lambda\to 0}\lambda p\in F\}$ the attractor of $F$ in $U$. Then we have 
	\begin{lemma} \label{ringofw} $W^+_{F}=V(f|f\in A_-)$ i.e. the subvariety of zeroes of homogenous $f$ of negative degree. Thus $\C[W^+_{F}]=A/(A_-)$. 
	\end{lemma}
	\begin{proof} Again it is clear that if $p\in W^+_{F}$ then $f(p)=0$ for $f\in A_-$. Namely if $f\in A_i$ for $i<0$ then $$f(\lim_{\lambda\to 0} \lambda p)=\lim_{\lambda\to 0} f(\lambda p)=\lim_{\lambda\to 0} \lambda^{i} f(p)$$ implies that $f(p)=0$.
		
		On the other hand assume that for some $p\in U$ we have  $f(p)=0$ for all $f\in A_-$. Then let $\pi_p:A\to A/\m_p\cong \C$ and define \bes \begin{array}{cccc}g:&A&\to& \C[x]\\ &\sum_i a_i&\mapsto& \sum_i \pi_p(a_i)x^i
		\end{array},\ees where we write every element in $A$ in the form of a finite sum of homogeneous elements $\sum_i a_i$ where $a_i\in A_i$. Because $f(a_i)=0$ for $i<0$, the definition makes sense. We see that $g$ is a graded ring homomorphism.  Observe also that for $z\in \A^1=\Spec(\C[x])$ the composition $$(\pi_{z}\circ g)\left(\sum_i a_i\right)=\sum_i \pi_p(a_i)z^i.$$ When $z= 1$ we see that $\ker(\pi_{1}\circ g)=\m_{p}$ and that $\ker(\pi_0\circ g)=(\ker(\pi_p|_{A_0}),A_-,A_+)$ is a maximal ideal containing $A_-$ and $A_+$ thus corresponds to a fixed point of the $\T$-action. Indeed the ring homomorphism $g$ gives a $\T$-equivariant map $\A^1\to U$, such that $g(1)=p$ and $g(0)=\lim_{\lambda\to 0} \lambda p\in F$ by  definition. Thus $p\in W^+_{F}$. The result follows.
	\end{proof}
	
	\begin{remark} We can build a map $\pi:W^+_{F}\to F$ by sending $p\in W^+_{F}$ to $\lim_{\lambda \to 0} \lambda p\in F$. By the construction above we see that the maximal ideal of $\pi(p)$ is $(\ker(\pi_p|_{A_0}),A_-,A_+)\triangleleft A$. Thus we see that $\pi$ is induced by the map \beq\label{pistar}\begin{array}{cccc}\pi^*:&A/(A_+,A_-)&\to& A/(A_-)\\ &a \mbox{ \rm mod } (A_+,A_-)&\mapsto & a_0 \mbox{ mod } (A_-)\end{array},\eeq where $a=\sum_{i\in \Z} a_i$ is the sum of homogeneous elements $a_i\in A_i$. It is straightforward to check that $\pi^*$ is a well-defined graded ring homomorphism and that $(\pi^*)^{-1}(\ker(\pi_p))=(\ker(\pi_p|_{A_0}),A_-,A_+)$.   
	\end{remark}
	
	Next we determine the tangent spaces of $F$, $W^+_{F}$, $W^-{F}$, $W^+_\alpha$ and $W^-_\alpha$. To do this we will denote by $\partial_i\in H^0(U,TU)$ the dual basis to $dx_i$. Recall that  $\lambda\in \T$ acts on the function $x_i\in\C[U]$ via the formula $$\lambda\cdot x_i(p)=x_i( \lambda^{-1}p)=\lambda^{-\deg(x_i)}x_i(p),$$ i.e. $\lambda\cdot x_i=\lambda^{-deg(x_i)}x_i$ which should not be confused with the transformation rule in \eqref{transrule}.  In particular, the induced action on one-forms gives \beq\label{transdxi} \lambda\cdot d x_i =(m_\lambda)_*(d x_i)=\lambda^{-deg(x_i)}dx_i, \eeq where $m_\lambda$ denotes the action isomorphisms:  $$\begin{array}{cccc}m_\lambda: &U&\to& U\\ &p&\mapsto& \lambda p\end{array}$$ for $\lambda\in \T$. One can compare\footnote{There seems to be a related confusion in \cite[Definition 1.1]{bellamy-etal}. Their  definition of elliptic $\T$-action  should either require the existence of limit points in the $t\to 0$ limit as in this paper, or replace the definition of positive weight using $(m_t)_*$ instead of $m_t^*$. In fact, when they use the definition of positive weight in the displayed line after (2.1) on page 2613 of \cite{bellamy-etal}, they use the correct $(m_t)_*$ instead of $m_t^*$ as in   \cite[Definition 1.1]{bellamy-etal}. } \eqref{transdxi} to $$m_\lambda^*(dx_i)=d(m_\lambda^*(x_i))=\lambda^{\deg(x_i)}dx_i .$$  This way we see that $\partial_i$ will have weight $\deg(x_i)$ in the sense that \beq\label{homdi} (m_\lambda)_*(\partial_i)=T m_\lambda(\partial_i)=\lambda^{\deg(x_i)}(\partial_i).\eeq  As a reality check for $\lambda\in \T$  we compute, using \eqref{transrule} $$dx_j(T m_\lambda (\partial_i))=m_\lambda^*(dx_j)(\partial_i)=d ( x_j\circ m_\lambda)(\partial_i)=  \lambda^{\deg x_j} dx_j(\partial_i)=\lambda^{\deg x_j} \delta_{i,j},$$ showing \eqref{homdi}.
	\begin{lemma} For $\beta\in F$ we have $$T_\beta F = (T_\beta U)^\T\cong \operatorname{span}(\partial_i|_\beta|\deg(x_i)=0)\subset T_\beta U.$$ In particular, $F\subset U$ is a non-singular subvariety. 
	\end{lemma}
	\begin{proof} Dually, we will prove that the kernel of the surjective map $g:T^*_\beta U\to T^*_\beta F$ is $$\operatorname{span}(dx_i|_\beta| \deg(x_i)\neq 0)<T^*_\beta U.$$ It is clear that $dx_i|_\beta\in \ker(g)$ when $\deg(x_i)\neq 0$, because $g$ is a map of $\T$-modules when we endow $T^*_\beta F$ with the trivial $\T$-module structure. Let $\eta\in (T^*_\beta U)^\T\cap \ker(g)$. We can represent $\eta=d {y}_0$ with the homogeneous degree $0$ element ${y}_0\in \mathfrak{m}_{\beta,U}\triangleleft A$. We will  denote by $$\bar{y}_0\in \mathfrak{m}_{\beta,F}\triangleleft A/(A+,A_-)\cong \C[F]$$ the image of ${y}_0$ in the quotient $A/(A+,A_-)$.  By assumption $0=g(\eta)=d{\bar{y}}_0$ thus $\bar{y}_0\in \mathfrak{m}_{\beta,F}^2.$ Thus there exists some $\bar{c}_i,\bar{d}_i\in  \mathfrak{m}_{\beta,F}$ such that $\bar{y}_0=\sum_i \bar{c}_i\bar{d}_i$. Denoting by ${c}_i$ and ${d}_i$ some lift of $\bar{c}_i$ and $\bar{d}_i$ in $\mathfrak{m}_{\beta,U}$, we get that ${y_0}-\sum_i {c}_i{d}_i\in (A_+,A_-)$. We can assume that  $c_i$ and $d_i$ are homogeneous and in turn that $\deg(c_i)=\deg(d_i)=0$.   We have  some $a_j\in A_+$, $b_j\in A_-$ such that ${y_0}-\sum_i {c}_i{d}_i=\sum_j a_jb_j$. We can assume that $a_j$ and $b_j$ are homogeneous of non-zero degree and in turn that $\deg(a_j)+\deg(b_j)=0$. 
		Denoting $\pi_\beta(a_j)=\lambda_j \in A/\m_{\beta,U}\cong \C$ and $\pi_\beta(b_j)=\mu_j \in A/\m_{\beta,U}\cong \C$ we get that $a_j-\lambda_j$ and $b_j-\mu_j$ are in $\m_{\beta,U}$. Thus we  conclude that $$y_0-(\sum_j a_j \mu_j+\lambda_j b_j)\in \m_{\beta,U}^2.$$ As $\m_{\beta,U}^2$ is a homogeneous ideal we deduce that $y_0\in \m_{\beta,U}^2$ proving the first claim. 
		
		Finally, we deduce that $\dim(\m_{\beta,U}/\m^2_{\beta,U})=\#\{dx_i|\deg(x_i)=0\}$ is independent of $\beta\in F$, thus indeed $F$ is non-singular.
	\end{proof}
	\begin{lemma} \label{tangentattractor} For $p\in W^+_{F}$ we have $T_p W^+_{F}=\operatorname{span}(\partial_i|_p|\deg(x_i)\geq 0)<T_pU.$ Similarly for $p\in W^-_{F}$ we have $T_p W^-_{F}=\operatorname{span}(\partial_i|_p|\deg(x_i)\leq 0)<T_pU.$
	\end{lemma}
	\begin{proof} Again, we will prove dually that the kernel of the surjective map $h:T^*_p U\to T^*_p W^+_{F}$ is $$\operatorname{span}(dx_i|_p| \deg(x_i)< 0)<T^*_p U.$$ By Lemma~\ref{ringofw} for $\deg(x_i)<0$ the function $x_i$ vanishes on $W^+_{F}$ and thus $dx_i|_p$ is in the kernel of $h$. 
		Assume now that $\sum_{\deg x_i\geq 0} \alpha_i dx_i|_p\in \ker(h)$ for some $\alpha_i\in \C$.  This means that $$\sum_{\deg x_i\geq 0} \alpha_i (x_i-\pi_p(x_i))\in \m_{p,W^+_{F}}^2.$$ It follows that there are $c_j,d_j\in \m_{p,U}$ such that $$\sum_{\deg x_i\geq 0} \alpha_i (x_i-\pi_p(x_i))-\sum_j c_j d_j\in (A_-).$$ So we have homogeneous $a_k\in A_-$ and $r_k\in A$ such that $$\sum_{\deg x_i\geq 0} \alpha_i (x_i-\pi_p(x_i))-\sum_k r_k a_k\in \m_{p,W^+_{F}}^2 .$$ By writing $r_k=r_k-\pi_p(r_k)+\pi_p(r_k)$ and noting that $\pi_p(a_k)=0$ as $p\in W^+_{F}$ and $a_k\in A_-$ we get
		$$\sum_{\deg x_i\geq 0} \alpha_i (x_i-\pi_p(x_i))-\sum_k \beta_k a_k\in \m_{p,W^+_{F}}^2 ,$$ where $\beta_k=\pi_p(r_k)\in \C$. In other words \beq \label{form0} 0=\sum_{\deg x_i\geq 0} \alpha_i dx_i|_p-\sum_k \beta_k da_k|_p \in T^*_p(U).\eeq Our final observation is that for $\deg(x_i)\geq 0$ the function $\partial_i(a_k)$ has degree $-\deg(x_i)+\deg(a_k)<0$. This is because $da_k=  \sum_j \partial_j(a_k)dx_j$ therefore $$ da_k\wedge dx_1\wedge\dots \wedge dx_{i-1}\wedge d x_{i+1}\wedge \dots \wedge dx_n=(-1)^{i-1} \partial_i(a_k)dx_1\wedge \dots \wedge dx_n. $$ Thus $\partial_i(a_k)$ vanishes on $W^+_{F}$. This means that $da_k|_p$ is a linear combination of the $dx_i$ with $\deg(x_i)<0$ and so $\sum_{\deg x_i\geq 0} \alpha_i dx_i|_p$ and $\sum_k \beta_k da_k|_p$ are linearly independent. Thus \eqref{form0} implies that $\sum_{\deg x_i\geq 0} \alpha_i dx_i|_p=0$. The first statement follows. 
		
		The proof of the second statement on the downward flows is  the same as above for the inverse action of $\T$ on $U$.   \end{proof}
	
	\begin{lemma} \label{tangentup} For $\alpha\in F$, and $p\in W^+_\alpha$  we have  $T_p W^+_{\alpha}=\operatorname{span}(\partial_i|_p|\deg(x_i)> 0)<T_pU.$ Similarly,  for $p\in W^-_\alpha$  we have  $T_p W^-_{\alpha}=\operatorname{span}(\partial_i|_p|\deg(x_i)< 0)<T_pU.$
	\end{lemma}
	
	\begin{proof} First we note that the locus of zeroes of $(x_1,\dots,x_n)$ in $U$ is zero dimensional, because $dx_1,\dots,dx_n$ is a basis at every point. Also this locus is invariant by $\T$, they are thus fixed points in $F$. All non-negative degree elements vanish on $F$ therefore the zeroes of the $0$ degree elements $${\bf x}_0:=\{x_i|\deg(x_i)=0\}$$ on $F$ are precisely the zeroes of $(x_1,\dots,x_n)$ on $U$. 
		
		By construction one such common zero is $\alpha$ and so $W^+_\alpha$ is a connected component of the variety of zeroes of $\bf{x}_0$ on $W^+_{F}$. In particular, $W^+_\alpha\subset U$ is closed and thus it is locally closed in $M$. 
		
		Thus we have that  $W^+_\alpha$ is a connected component of $\Spec(A/(A_-,\bf{x}_0))$. For $p\in W^+_\alpha$ we determine the kernel of the surjective map $f:T^*_p U\to T^*_p W^+_\alpha$. All functions in $A_-$ and  $\bf{x}_0$ restrict to $W^+_\alpha$ as zero thus $dx_i|_p$ is in the kernel when $\deg(x_i)\leq 0$. Let now $\sum_{\deg x_i>0} \alpha_i dx_i|_p \in \ker(f)$. This means that $$\sum_{\deg x_i> 0} \alpha_i (x_i-\pi_p(x_i))\in \m_{p,W^+_\alpha }^2,$$ hence there are $c_j,d_j\in \m_{p,U}$ such that $$\sum_{\deg x_i> 0} \alpha_i (x_i-\pi_p(x_i))-\sum_j c_j d_j\in (A_-,\bf{x}_0).$$ In turn, this means that there are homogeneous elements $a_k\in A_-$  and scalars $\beta_j\in \C$ such that \beq \label{final0} 0=\sum_{\deg x_i>0} \alpha_i dx_i|_p-\sum_k  da_k|_p-\sum_{\deg x_j=0} \beta_j dx_j|_p\in  T^*_p(U).\eeq Just as in the proof of Lemma~\ref{tangentattractor} above we can argue that as $a(p)=0$ for all $a\in A_-$, $da_k|_p\in \operatorname{span}(dx_i|_p|\deg x_i<0)$, thus the linear independence of $\{dx_1|_p,\dots,dx_n|_p\}$ and \eqref{final0} implies that $\sum_{\deg x_i>0} \alpha_i dx_i|_p =0$ proving the first claim.
		
		The second claim again follows from the first by inverting the $\T$-action on $U$.
	\end{proof}
	
	To finish the proof of Proposition~\ref{bbprop} we define the map \beq\label{model}f_\alpha:W^+_\alpha\to T^+_\alpha\eeq sending $$p\mapsto \sum_{\deg(x_i)>0} x_i(p)\partial_i$$ where $\partial_i\in T_\alpha M$ is the dual basis of $dx_1,\dots,dx_n$ at $\alpha$. By Lemma~\ref{tangentup} the derivative of $f_\alpha$  is an isomorphism, thus it is \'etale and so open. Furthermore $f_\alpha$ is $\T$-equivariant and  its image is an open $\T$-invariant subvariety in the positive $\T$-module $T^+_\alpha M$, containing the origin, thus $f_\alpha$ is surjective. As $T^+_\alpha M$ is simply connected, it follows that $f_\alpha$ is a trivial covering, and $W^+_\alpha$ being connected implies that $f_\alpha$ is an isomorphism. 
\end{proof}

\begin{remark} \label{afffibr}
	If we put the maps $f_\alpha$ in a family over $F$ we get a $\T$-equivariant isomorphism $f:W^+_{F}\cong  T^+F$ showing that $U^s_{F_\alpha}\subset M$ is a locally trivial affine fibration, another main result of \cite{bialynicki-birula}.  
\end{remark}

\begin{definition}	We call $M=\coprod _{\alpha}W^+_\alpha$ the {\em Bialynicki-Birula partition} and $M=\coprod _{F}W^+_F$ the {\em Bialynicki-Birula decomposition}. Define also \beq\label{core}\calC:=\coprod_{\alpha\in M^\T} W^-_\alpha=\coprod_{F\in \pi_0(M^\T)} W^-_F\eeq to be the {\em core} of $M$. 
	
\end{definition}

Additionally, in all our examples  $(M^s,\omega)$ with $\omega\in \Omega^2(M)$ will be  symplectic  s.t. \beq \label{weightone}\lambda^*(\omega)=\lambda\omega.\eeq Then we have
\begin{proposition} \label{lagrangian} Let $M$ be a normal, semi-projective complex variety with $(M^s,\omega)$ symplectic with $\omega\in \Omega^2(M^s)$. Assume \eqref{weightone} that $\omega$ is homogeneous of weight $1$.  For a smooth point $\alpha\in M^{s\T}$ the subspaces $T^+_\alpha M, T^{\leq 0}_\alpha M:=T^0_\alpha M\oplus T^+_\alpha M <T_\alpha M$ and subvarieties $W^+_\alpha,W^-_{F_\alpha}\subset M^s$ are Lagrangian.
\end{proposition}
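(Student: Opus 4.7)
The plan is to derive both statements from a single observation: the identity $m_\lambda^*\omega=\lambda\omega$ is rigid in the weight-graded setting, and combined with the $\T$-equivariant local models from Proposition~\ref{bbprop} and Remark~\ref{afffibr} it forces $\omega$ to vanish on $W^+_\alpha$ and on $W^-_{F_\alpha}$, while the tangent-space bookkeeping automatically supplies the half-dimension count.

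I would first settle the linear assertions at $\alpha$. For $v\in(T_\alpha M)_k$ and $w\in(T_\alpha M)_l$, applying $m_\lambda^*\omega=\lambda\omega$ at $\alpha$ yields $\lambda^{k+l}\omega_\alpha(v,w)=\lambda\,\omega_\alpha(v,w)$ for all $\lambda\in\T$, so $\omega_\alpha$ pairs $(T_\alpha M)_k$ nontrivially only with $(T_\alpha M)_{1-k}$. Since no two positive and no two non-positive integers sum to $1$, both $T^+_\alpha M$ and $T^0_\alpha M\oplus T^-_\alpha M$ are isotropic—I read the displayed formula for $T^{\leq 0}_\alpha M$ as a typo for $T^0\oplus T^-$, which is in any case forced by the dimension count. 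Non-degeneracy gives $\dim(T_\alpha M)_k=\dim(T_\alpha M)_{1-k}$, and summing produces $\dim T^+_\alpha M=\dim(T^0_\alpha M\oplus T^-_\alpha M)=\tfrac12\dim M$, promoting both isotropic subspaces to Lagrangian.

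For the variety $W^+_\alpha$ I would use the $\T$-equivariant isomorphism $W^+_\alpha\cong T^+_\alpha M=\Spec\C[y_1,\dots,y_k]$ from Proposition~\ref{bbprop}, in which each $y_i$ carries a strictly positive weight $w_i$. Every regular 2-form expands as $\sum_{i<j}g_{ij}(y)\,dy_i\wedge dy_j$ with polynomial $g_{ij}$; since $m_\lambda^*dy_i=\lambda^{w_i}dy_i$, the weight-$1$ homogeneity of the pullback of $\omega$ forces every homogeneous piece of $g_{ij}$ to have weight $1-w_i-w_j\le -1$. Such terms do not exist in a polynomial ring whose generators all have strictly positive weight, so $g_{ij}\equiv 0$ and the pullback of $\omega$ to $W^+_\alpha$ vanishes identically. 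Together with $\dim W^+_\alpha=\dim T^+_\alpha M=\tfrac12\dim M$ from the previous step, this makes $W^+_\alpha$ Lagrangian.

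The argument for $W^-_{F_\alpha}$ is parallel after invoking Remark~\ref{afffibr} for the reversed $\T$-action: $W^-_{F_\alpha}\cong T^-F_\alpha$ is an affine bundle over the fixed component, with base coordinates $z_a$ of weight $0$ and fiber coordinates $y_j$ of strictly negative weights. Splitting a local 2-form into $dz_a\wedge dz_b$, $dz_a\wedge dy_j$ and $dy_i\wedge dy_j$ pieces and imposing weight $1$ would demand coefficients of strictly positive weight, but every function in the coordinate ring has weight at most $0$; hence the pullback of $\omega$ to $W^-_{F_\alpha}$ is zero. The count $\dim W^-_{F_\alpha}=\dim F_\alpha+\dim T^-_\alpha M=\dim T^0_\alpha M+\dim T^-_\alpha M=\tfrac12\dim M$ completes the Lagrangian claim. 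The only genuine obstacle throughout is pinning down the $m_\lambda^*$ versus $(m_\lambda)_*$ conventions—flagged in the paper's own footnote—after which the weight arithmetic is essentially rigid.
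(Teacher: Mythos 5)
Your proof is correct and follows essentially the same route as the paper: the identical weight computation at $\alpha$ shows $T^+_\alpha M$ and $T^0_\alpha M\oplus T^-_\alpha M$ are complementary isotropic, hence Lagrangian, subspaces (and yes, the displayed definition of $T^{\leq 0}_\alpha M$ is a typo), and the weight-$1$ homogeneity of $\omega$ then forces the coefficient functions of its restriction to the flows to have impossible degrees. The only cosmetic difference is that you pull $\omega$ back to the equivariant models $W^+_\alpha\cong T^+_\alpha M$ and $W^-_{F_\alpha}\cong T^-F_\alpha$ of Proposition~\ref{bbprop} and Remark~\ref{afffibr} before expanding, whereas the paper expands $\omega=\sum_{i<j} f_{i,j}\,dx_i\wedge dx_j$ in the ambient chart $U$ and uses Lemmas~\ref{ringofw} and~\ref{tangentup} to conclude that the relevant $f_{k,l}$, being homogeneous of degree $1-\deg x_k-\deg x_l$, vanish on the flow.
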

\begin{proof} As $\alpha\in M^{s\T}$ the tangent space $T_\alpha M$ is a $\T$-module. Let $X,Y\in T_\alpha X$ be homogeneous  with weight $\nu_1$ and $\nu_2$ respectively. That means that $\lambda\cdot X=\lambda^{\nu_1} X$ and $\lambda\cdot X=\lambda^{\nu_2} Y$. Then from \eqref{weightone} $$\lambda\omega(X,Y)=\lambda^*(\omega)(X,Y)=\omega(\lambda\cdot X,\lambda\cdot Y)=\lambda^{\nu_1+\nu_2}\omega(X,Y).$$ Consequently $\omega(X,Y)=0$ unless $\nu_1+\nu_2=1$. Thus $\omega$ is trivial on $T^+_\alpha$ and $T^{\leq 0}_\alpha$. As $T_\alpha M=T^+_\alpha\oplus T^{\leq 0}_\alpha$ both subspaces are Lagrangian.
	
	Recall the construction of $U\subset M^s$, the $\T$-invariant affine neighbourhood of $\alpha\in U$ from the proof of Proposition~\ref{bbprop} above.  Let $\omega=\sum_{i<j} f_{i,j} dx_i\wedge dx_j$ for some unique $f_{i,j}\in \C[U]$. As $\omega$, $x_i$ and $x_j$ are homogeneous so is $f_{i,j}$ and $\deg{f_{i,j}}=1-\deg(x_i)-\deg(x_j)$. Let now $p\in W^+_\alpha $. By Lemma~\ref{tangentup} the tangent vectors $\partial_k|_p\in T_p U$ with positive weights will be a basis for $T_p W^+_\alpha$. Take two such $\partial_k|_p, \partial_l|_p\in T_pW^+_\alpha$ and compute $$\omega(\partial_k|_p,\partial_l|_p)=\sum_{i<j} f_{i,j} dx_i\wedge dx_j(\partial_k|_p,\partial_l|_p)=f_{k,l}(p).$$
	The degree of $x_l$ and $x_k$ are positive, so the degree of $f_{k,l}$ is negative. In particular, $f_{k,l}$ vanishes on $W^+_\alpha$. Thus $f_{k,l}(p)=0$, and $T_pW^+_\alpha\subset T_p U$ is isotropic, and half-dimensional as $T_\alpha W^+_\alpha=T_\alpha^+<T_\alpha M$ is Lagrangian. Thus $W^+_\alpha$ is indeed Lagrangian in $U\subset M^s$. Similarly we can prove that $W^-_F$ is Lagrangian in $M^s$. The result follows.\end{proof}
\begin{remark} Our approach in the  proof of Proposition~\ref{lagrangian} above was inspired by \cite[\S 2.2]{bellamy-etal}. Similar results were mentioned in \cite[Proposition 7.1]{nakajima}.
\end{remark}
\subsection{Very stable upward flows}
\begin{definition} Let $\alpha\in M^{s\T}$. We say that $\alpha$ is {\em very stable} if the only point in its upward flow which is also in the core $\calC$ of \eqref{core} is $\alpha$: $$W^+_\alpha\cap \calC=\{\alpha\}.$$ 
\end{definition} 

\begin{remark} The origin of the term {\em very stable} is in the work of Drinfeld \cite{drinfeld} and  Laumon \cite{laumon}, which was applied to vector bundles on curves (see Section~\ref{verystable}). However there is an accidental coincidence in  terminology. Namely, Bialynicki-Birula \cite{bialynicki-birula2} using terminology of Smale \cite[II.2]{smale} calls our upward flow $W^+_\alpha$ the {\em stable} subscheme of $\alpha$ while our downward flow $W^-_\alpha$ is called the {\em unstable} subscheme of $\alpha$. Our definition above can be formulated to say that the stable subscheme $W^+_\alpha$ is {\em very stable} if and only if it disjoint from the unstable subschemes $W^-_\beta$ for $\beta\neq \alpha$. 
\end{remark}
\begin{proposition} \label{upwardclose} $\alpha\in M^{s\T}$ is very stable if and only if $W^+_\alpha\subset M$ is closed. 
\end{proposition}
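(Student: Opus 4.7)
The plan is to prove the two implications separately. For the direction that closedness implies very stability, pick any $p\in W^+_\alpha\cap \calC$. Since $p\in \calC$ the limit $\beta:=\lim_{\lambda\to \infty}\lambda p$ exists in $M^\T$, and the orbit $\T\cdot p$ lies in the closed $\T$-invariant set $W^+_\alpha$, so the closure $\overline{\T\cdot p}$ lies in $W^+_\alpha$, forcing $\beta\in W^+_\alpha\cap M^\T$. By Proposition~\ref{bbprop} the isomorphism $W^+_\alpha\cong T^+_\alpha M$ shows that $W^+_\alpha$ contains only the single fixed point $\alpha$, hence $\beta=\alpha$. Under the same isomorphism $p$ corresponds to some $v\in T^+_\alpha M$, and all $\T$-weights on $T^+_\alpha M$ being strictly positive forces $v=0$ from $\lim_{\lambda\to\infty}\lambda\cdot v=0$, so $p=\alpha$ and $W^+_\alpha\cap \calC=\{\alpha\}$.

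For the converse I would argue by contrapositive: assume $W^+_\alpha$ is not closed, pick $q\in \overline{W^+_\alpha}\setminus W^+_\alpha$, and aim to produce a point of $W^+_\alpha\cap \calC$ distinct from $\alpha$. By semi-projectivity $\lambda\mapsto \lambda q$ extends to a morphism $\A^1\to M$ whose value at $0$ is $\gamma:=\lim_{\lambda\to 0}\lambda q\in M^\T$; the image lies in the closed $\T$-invariant set $\overline{W^+_\alpha}$, so $\gamma\in \overline{W^+_\alpha}^\T$, and $\gamma=\alpha$ would place $q$ in $W^+_\alpha$ by the definition of the upward flow, contradicting the choice of $q$. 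Thus $\overline{W^+_\alpha}$ carries a second $\T$-fixed point $\gamma\neq\alpha$. I would then pick a sequence $p_n\in W^+_\alpha$ with $p_n\to \gamma$ and, as in the proof of Proposition~\ref{bbprop}, pass to $\T$-weight coordinates $(v^+,v^0,v^-)$ in a $\T$-invariant affine neighborhood of $\gamma$ where $M$ is locally modelled on $T_\gamma M=T^+_\gamma\oplus T^0_\gamma\oplus T^-_\gamma$.

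Writing $p_n=(v^+_n,v^0_n,v^-_n)\to 0$, the hypothesis $p_n\in W^+_\alpha$ with $\alpha\neq \gamma$ forces $v^-_n\neq 0$ for large $n$, since otherwise the $\lambda\to 0$ orbit of $p_n$ would limit to a fixed point inside the local $T^0_\gamma$-slice near $\gamma$ rather than at the faraway point $\alpha$. I would then rescale by choosing $\lambda_n\in \T$ with $\lambda_n\to 0$ such that the norm of the $v^-$-component of $\lambda_n p_n$ is a fixed small constant; the rescaled points $q_n:=\lambda_n p_n\in W^+_\alpha$ have $v^+(q_n)$ and $v^0(q_n)$ converging to $0$ while $v^-(q_n)$ stays bounded, so after extracting a subsequence $q_n\to q_\infty$ with local coordinates $(0,0,v^-_\infty)$, $v^-_\infty\neq 0$. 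This places $q_\infty\in W^-_\gamma\setminus\{\gamma\}\subset \calC$, and $q_\infty\neq \alpha$ because $q_\infty$ sits near $\gamma$.

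The main obstacle is to upgrade $q_\infty\in \overline{W^+_\alpha}$ to $q_\infty\in W^+_\alpha$, for then $q_\infty\in (W^+_\alpha\cap \calC)\setminus\{\alpha\}$ directly contradicts very stability and closes the proof. I would attack this via the valuative criterion: the family $(q_n)$ supplies a morphism $\Spec R\to M$ from a complete DVR with generic point in $W^+_\alpha$ and special point $q_\infty$. Applying the $\T$-action yields a morphism $\T\times \Spec R\to M$ whose $\mu\to 0$ limit along the generic fibre is the constant $\alpha$; using that the core $\calC$ is proper to control how the $\T$-orbit of $q_\infty$ specialises as $\mu\to 0$, one argues that $\lim_{\mu\to 0}\mu q_\infty$ must agree with the generic-fibre limit $\alpha$, so $q_\infty\in W^+_\alpha$ and the contradiction is delivered.
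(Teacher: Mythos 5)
Your first implication (closed $\Rightarrow$ very stable) is correct, and in fact more elementary than the paper's: the paper argues the contrapositive via a linearized projective embedding to separate the two limits of a non-trivial orbit, whereas you use closedness to force the $\lambda\to\infty$ limit of a core point into $W^+_\alpha\cong T^+_\alpha M$, where positivity of the weights finishes the job. That part stands.

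The second implication (very stable $\Rightarrow$ closed) has a genuine gap, and it is exactly at the step you flag as ``the main obstacle''. Your rescaled limit $q_\infty$ is only known to lie in $\overline{W^+_\alpha}$, and there is no mechanism to upgrade this to $q_\infty\in W^+_\alpha$. The valuative/properness argument you sketch would need the Bialynicki--Birula limit map $p\mapsto\lim_{\lambda\to 0}\lambda p$ to behave continuously (or at least semicontinuously towards $\alpha$) on the closure of the attractor, but that is precisely what fails when $W^+_\alpha$ is not closed: boundary points of $W^+_\alpha$ flow down to \emph{other} fixed points, and properness of the core only guarantees that $\lim_{\mu\to 0}\mu q_\infty$ exists and is some fixed point, not that it equals $\alpha$. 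So the contradiction is never delivered; the argument is circular in that it assumes the very conclusion (membership of a boundary point in $W^+_\alpha$) that distinguishes closed from non-closed flows. A secondary problem: your local weight-coordinate model $(v^+,v^0,v^-)$ at $\gamma$ requires $\gamma$ to be a smooth point, but $M$ is only normal and the second fixed point $\gamma\in\overline{W^+_\alpha}$ need not lie in $M^s$; Proposition~\ref{bbprop} gives the linear model only at smooth fixed points. The paper's proof of this direction avoids both issues by a global construction: very stability ensures $(W^+_\alpha\times\C)\setminus\{(\alpha,0)\}$ avoids $\calC\times\{0\}$, so its $\T$-quotient is a weighted projective space sitting inside the semiprojective compactification $\overline{M}=(M\times\C\setminus\calC\times\{0\})/\T=M\sqcup Z$; being projective it is closed there, its boundary lies in the divisor $Z$ at infinity, and hence $W^+_\alpha$ is closed in $M$. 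If you want to salvage your approach you would essentially have to reproduce an argument of this global type rather than a local limit-extraction near $\gamma$.
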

\begin{proof}If $\alpha$ is not very stable then there exists  $\beta\neq\alpha \in W^+_\alpha\cap \calC$  in the upward flow of $
	\alpha$. Then $\lim_{\lambda\to \infty}(\lambda\cdot \beta)\in M^\T$ is fixed by the $\T$-action. Moreover a linearized $\T$-action on a very ample line bundle on $M$ (which exists, because $M$ is normal and quasi-projective and \cite[Corollary 7.2]{dolgachev}) embeds it equivariantly in some projective space $M\subset {\mathbb P}^N$ with linear $\T$-action, where the closure of every non-trivial $\T$-orbit is linear, thus has two distinct $\T$-fixed points. Consequently, $$\lim_{\lambda\to \infty}(\lambda\cdot \beta)\neq\lim_{\lambda\to 0}(\lambda\cdot \beta)=\alpha$$and thus $W^+_\alpha$ is not closed.
	
	Assume now that $\alpha\in M^{s\T}$ is very stable i.e. $$W^+_{\alpha}\cap \calC=\{\alpha\}$$ as sets.  Then we will determine the image of  $W^+_\alpha\setminus \calC$ in the geometric quotient $$Z=(M\setminus \calC)/\T.$$ By construction it is $$\P(W^+_\alpha)=W^+_\alpha\setminus \{\alpha\}/\T\subset Z$$  isomorphic to the  weighted projective space $\P(T_\alpha^+)=(T_\alpha^+\setminus \{0\})/\T$. By the construction of
	\cite[(1.2.3)]{hausel-large} (c.f. also \cite[\S 11]{simpson3} and \cite{hausel-compact} in the Higgs moduli space case) we have the compactification $$\overline{M}=(M\times\C)/\! /\T=\left(M\times\C\setminus \calC\times\{0\}\right)/\T\cong M\sqcup Z.$$ We 
	see that the geometric quotient $$\P(W^+_\alpha\times \C)= \left((W^+_\alpha\times \C)\setminus (\alpha,0)\right)/\T= W^+_\alpha\sqcup \P(W^+_\alpha) \subset M\sqcup Z = \overline{M}$$ is also a weighted projective space, thus projective, thus closed in $\overline{M}$. It follows that all boundary points of the closure $\overline{U}_\alpha$ in $\overline{M}$ lie on the divisor at infinity $Z\subset \overline{M}\setminus M$, thus $W^+_\alpha$ is closed in $M$. 
\end{proof}

 \section{Bialynicki-Birula theory on the moduli space of Higgs bundles}
 \label{bbhiggs}

Fix $C$ a smooth complex projective curve of genus $g>1$.
Recall that a vector bundle $E$ on $C$ is called 
{\em stable} (resp. {\em semi-stable}) 
if for all proper subbundles $F\subset E$ \beq \label{stable}{\deg(F)\over {\rm rank}(F)}
< {\deg(E)\over {\rm rank}(E)}\eeq (resp. ${\deg(F)/ {\rm rank}(F)}
\leq {\deg(E)/ {\rm rank}(E)}$).

Similarly, a Higgs bundle $(E,\Phi)$ where $\Phi\in H^0(\End(E)\otimes K)$, is stable (resp. semi-stable) if \eqref{stable} holds for $\Phi$-invariant proper subbundles $F\subset E$. 

We fix integers $n\in \Z>0$ and $d\in \Z$.	The moduli space of semistable Higgs bundles $\M:=\M_{n}^{d}$ of rank $n$ and degree $d$ was constructed by \cite{hitchin} by gauge-theoretic and by \cite{nitsure} and \cite{simpson1} by algebraic geometric methods.  At its smooth points it carries a hyperk\"ahler metric, in particular an algebraic symplectic structure. It is  a normal \cite[Corollary 11.7]{simpson1b} quasi-projective variety. The open subset $\M^s\subset \M$ of stable points is precisely the smooth locus.  It is a coarse moduli space, but carries a universal Higgs bundle  \'etale locally on $\M^s$ \cite[Theorem 4.5.(4)]{simpson1}. 

For a Higgs bundle $(E,\Phi)$ we can compute the characteristic polynomial of the Higgs field as $\det(t-\Phi)=t^n+a_{1}t^{n-1}+\dots+a_n$, where $a_i\in H^0(C;K^i)$. This leads to the  map $$h:\M\to \oplus_{i=1}^n  H^0(C;K^i),$$ a proper, completely integrable Hamiltonian system \cite{hitchin-stable,simpson,nitsure} in particular, the fibres are Lagrangian at their smooth points. 

The moduli space $\M$ carries a $\C^\times$-action defined by $(E,\Phi)\mapsto (E,\lambda\Phi)$ for $\lambda\in \C^\times$. If we let $\C^\times$ act with weight $i$ on  $H^0(C;K^i)$ then $h$ becomes $\C^\times$-equivariant. As $h$ is proper and all the weights on $\oplus_{i=1}^n  H^0(C;K^i)$ are positive, we  deduce that $\M$ is semi-projective with respect to this $\T$-action. 

Finally, we recall that there is a canonical symplectic structure on $\M^s$ originally arising as part of its hyperk\"ahler structure. The tangent space at $(E,\Phi)\in \M^s$ can be identified (see e.g. \cite{biswas-ramanan})
\beq\label{tangent} T_{(E,\Phi)}\M^s\cong \H^1\left(C;\End(E)\stackrel{\ad(\Phi)}{\to} \End(E)\otimes K\right).\eeq Serre duality will give an isomorphism between this hypercohomology and its dual. This defines a symplectic form on $T_{(E,\Phi)}$. To see that it gives rise to a closed form $\omega$, we recall \cite[Theorem 4.3]{biswas-ramanan} that $\omega=d\theta$ where $$\theta:\H^1\left(C;\End(E)\stackrel{\ad(\Phi)}{\to}\End(E)\otimes K\right)\to \C$$ is given by first mapping to $H^1(C;\End(E))$ and then using  Serre duality to pair this element with $\Phi\in H^0(C;\End(E)\otimes K)$.  With respect to the $\T$-action we clearly have that $\lambda^*(\theta)=\lambda\theta$ and thus $\lambda^*(\omega)=\lambda\omega$ is also of weight $1$.

In this paper we will study the upward and downward flows of the Bialynicki-Birula partition on $\M$.  

\subsection{Fixed points of the $\T$-action $\M^{s\T}$}
First we recall a parametrisation of $\M^\T$. If $(E,\Phi)\in \M^{s\T}$ is a stable Higgs bundle fixed by the $\T$-action then we have for every $\lambda\in\T$ $$(E,\lambda\Phi)\cong (E,\Phi).$$ The isomorphism above gives a vector bundle automorphism $f_\lambda:E\to E$  making the diagram \beq\label{fixed}\begin{array}{ccc} E&\stackrel{\lambda \Phi}{\longrightarrow} & EK\\ \!\!\!\!\! f_\lambda \downarrow &&\,\,\,\,\,\downarrow f_\lambda \\ E &\stackrel{\Phi}{\longrightarrow}& EK \end{array}\eeq
commutative. These define a fibrewise $\T$-action on $E$. Let $E=L_0\oplus \dots\oplus  L_k$ be the weight space decomposition of this $\T$-action, for subbundles $L_i<E$ and $0\leq k \leq n-1$. Let $\T$ act on $L_i$ by weight $w_i\in \Z$, in other words $f_\lambda$ acts on $L_i$ as multiplication by $\lambda^{w_i}$. 
From the diagram \eqref{fixed} we see that $\Phi$ maps the weight $w_i$ space to the $w_i-1$ weight space. Using an overall scaling we can assume that $w_i=-i$. Then $\Phi(L_i)\subset L_{i+1}K$. 
We call the ordered partition $(\rank(L_0),...,\rank(L_k))$  of $n$ the {\em type} of the fixed point $(E,\Phi)\in \M^{s\T}$.  Similarly we can talk about the type of a component of the fixed point set  $F\in \pi_0(\M^{s\T})$ as the type is locally constant on $\M^{s\T}$. The latter can be seen by noting that \'etale locally we have a $\T$-equivariant universal bundle on $\M^s$ and on the connected component $F\in \pi_0(\M^{s\T})$  the weight space for a given weight of the $\T$-action forms a vector bundle. 

\begin{example} For example when $E=L_0$ the type is $(n)$ and $\Phi=0$, thus $E$ is a semistable bundle. We will denote by $\calN$ the moduli space of rank $n$ degree $d$ semistable bundles which is embedded $\calN\subset \M^\T$, the component of $\M^\T$ of type $(n)$.
\end{example}

\begin{example}	\label{ex111} At the other extreme are the fixed points of type $(1,\dots,1)$. Let $(E,\Phi)\in \M^{s\T}$ be a type $(1,...,1)$ fixed point of the $\T$-action. In other words $E=L_0\oplus\dots \oplus L_{n-1}$ is a direct sum of line bundles and the Higgs field $\Phi$ satisfies $\Phi(L_{i-1})\subset L_{i}K$. Such a Higgs bundle is determined by $L_0$ and the choice of non-zero  $$b_i:=\Phi|_{L_{i-1}}\in H^0(C;L_{i-1}^*L_{i}K).$$ Note that the isomorphism class of such a Higgs bundle only depends on the effective divisors $\delta_i:=\div(b_i)\in C^{[m_i]}$, where $m_i=\deg(L_{i-1}^*L_{i}K)$, and $L_0$. To simplify notation we will choose a divisor $\delta_0$ so that $L_0=\calO(\delta_0)$. We will write $\delta=(\delta_0,\delta_1,\dots,\delta_{n-1})$. We also denote by $\calE_{\d}$ the type $(1,\dots,1)$ Higgs bundle for which $$L_i:=(\delta_0+\delta_1+\dots+\delta_{i-1})K^{-i+1}$$ and  $$b_i:=s_{\delta_i}\in H^0(C;\calO(\delta_i))\cong H^0(C;L_{i-1}^*L_{i}K)$$ the defining section of $\calO(\delta_i)$. 
	
	
	Let $\ell:=\deg(L_0)=\deg(\delta_0)$ and  $m:=(m_1,\dots,m_{n-1})$ where $m_i=\deg(b_i)=\deg(\delta_i)$. Then the ambient component $\calE_{\d}\in F_{\mm}\in \pi_0(\M^\T)$ of the fixed point set $\M^\T$ is isomorphic to  \beq\label{type111fix}F_{\mm}\cong J_\ell(C)\times C^{[m_1]}\times \dots \times C^{[m_{n-1}]}.\eeq Here the Jacobian $J_\ell(C)$ means the moduli space of degree $\ell$ divisors on $C$ modulo principal ones, when we represent its elements by degree $\ell$ divisors $\delta_0$, or equivalently $J_\ell(C)$ can mean the moduli space of degree $\ell$ line bundles when we represent its elements by the line bundle $L_0$. 
\end{example}
\begin{remark} \label{remarkn2} For rank $n=2$ we have just two different types. The type $(2)$ fixed point component is $\calN\subset \M$ the moduli space of semistable bundles with zero Higgs field. The type $(1,1)$ components $F_{\ell,m_1}\cong J_\ell(C)\times C^{[m_1]}$ parametrise Higgs bundles of the form $E=L_0\oplus L_1$ and $$\Phi=\left(\begin{array}{cc}0&0\\b&0\end{array} \right),$$ where $L_0$ and $L_1$ are line bundles of degrees $\ell_0:=\ell$ and $\ell_1$ respectively, $b\in H^0(C;L_0^*L_1K)$ and $m_1=\deg(b)=\ell_1-\ell_0+2g-2$. For stability we need $m_1<2g-2$. When $d=\ell_0+\ell_1$ is fixed then $2\ell_0=d-m_1+(2g-2)$ and we will abbreviate  $F_{m_1}:=F_{\ell,m_1}$. We then have $g-1$ type $(1,1)$ fixed point components $F_{1}$, $F_3$, \dots, $F_{2g-3}$ when $d$ is odd and $g-1$ of them $F_0,\dots, F_{2g-4}$ when $d$ is even.  
\end{remark}
\subsection{Upward flows on $\M$}

We  now describe the upward flows in the Bialynicki-Birula partition of $\M$. We know from Proposition~\ref{bbprop} that the upward flow $W^+_\calE$ for $\calE\in \M^\T$ is isomorphic to a vector space modelled on $T_\calE^+$. The following proposition describes which Higgs bundles belong to $W^+_\calE$. 

\begin{proposition}\label{filtrationexists} Let $\calE=(E,\Phi)\in \M$ then \beq\label{limitdown}\lim_{\lambda\to 0} (E,\lambda\Phi)=(E^\prime,\Phi^\prime)\eeq for some   $(E^\prime,\Phi^\prime)\in \M^{s\T}$ if and only if the following hold \begin{enumerate} \item there exists a  filtration \beq\label{filt}0=E_0\subset E_1\subset \cdots E_{k-1}\subset E_k = E\eeq by subbundles such that \item for all $i$ \beq\label{compup}\Phi(E_i)\subset E_{i+1}K \eeq \item  and the induced maps $$gr_0(\Phi):E_i/E_{i+1}\to E_{i+1}/E_{i+2}K$$ satisfy \beq \label{limithiggs}(E_1/E_0\oplus E_2/E_1\oplus \cdots\oplus E_k/E_{k-1},gr_0(\Phi))\cong (gr(E),gr_0(\Phi))\cong(E^\prime,\Phi^\prime).\eeq \end{enumerate}
	Furthermore the filtration \eqref{filt} with these properties is unique. 
\end{proposition}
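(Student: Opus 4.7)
The plan is to establish a correspondence between $\T$-equivariant families of rank-$n$ Higgs bundles on $C \times \A^1$ (with $\T$ scaling the $\A^1$-factor) and filtered Higgs bundles $(E,\Phi,E_\bullet)$ on $C$ of the kind in the statement, and then deduce both directions and uniqueness from this correspondence. Under pushforward along $p: C \times \A^1 \to C$, a $\T$-equivariant rank-$n$ vector bundle $\tilde{\mathcal{E}}$ on $C \times \A^1$ is the same data as a locally-free graded $\calO_C[\lambda]$-module, equivalently an ascending chain of subbundles $\cdots \subset \mathcal{F}_{i-1} \subset \mathcal{F}_i \subset \cdots$ of $E := \varinjlim \mathcal{F}_i$ (with the $\lambda$-action given by the inclusions), stabilising at $\mathcal{F}_N = E$ and $\mathcal{F}_M = 0$. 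Its fibre at $\lambda = 1$ is $E$, and at $\lambda = 0$ it is the associated graded $\bigoplus_i \mathcal{F}_i / \mathcal{F}_{i-1}$. Reindexing to $0 = \mathcal{F}_{-1} \subset \mathcal{F}_0 \subset \cdots \subset \mathcal{F}_{k-1} = E$ and setting $E_i := \mathcal{F}_{i-1}$ recovers precisely the type of filtration in the statement. Since the $\T$-action on $\M$ scales $\Phi$ with weight $+1$, a relative Higgs field $\tilde{\Phi}: \tilde{\mathcal{E}} \to \tilde{\mathcal{E}} \otimes p^{*}K$ making $g: \A^1 \to \M$ $\T$-equivariant must itself have $\T$-weight $+1$; in the graded picture this is exactly the condition $\Phi(E_i) \subset E_{i+1} K$, with $\tilde{\Phi}|_{\lambda=1} = \Phi$ and $\tilde{\Phi}|_{\lambda=0} = gr_0(\Phi)$.

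Granted this dictionary, the $(\Leftarrow)$ direction is immediate: the given filtration $(E_\bullet, \Phi)$ produces by the standard Rees construction a $\T$-equivariant family $(\tilde{\mathcal{E}}, \tilde{\Phi})$ on $C \times \A^1$, hence a $\T$-equivariant map $g: \A^1 \to \M$ realising the limit $(E^\prime, \Phi^\prime) = (gr(E), gr_0(\Phi))$. For the $(\Rightarrow)$ direction, the existence of the limit provides by definition a $\T$-equivariant $g: \A^1 \to \M$ with $g(1) = (E,\Phi)$ and $g(0) = (E^\prime,\Phi^\prime)$; pulling back by the étale-local universal Higgs bundle on $\M^s$ (possibly after an étale base change on $\A^1$ preserving $0$ and $1$, which is harmless) produces a $\T$-equivariant family $(\tilde{\mathcal{E}}, \tilde{\Phi})$, from which the correspondence extracts the desired filtration together with conditions \eqref{compup} and \eqref{limithiggs}. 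Uniqueness follows from Proposition~\ref{bbprop}: $W^{+}_{\calE^\prime} \cong T^{+}_{\calE^\prime}$ is a vector space, so $(E,\Phi)$ determines a unique $\T$-equivariant map $g: \A^1 \to \M$, and hence via the correspondence a unique filtration.

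The main expected obstacle is the étale-local nature of the universal bundle in the $(\Rightarrow)$ direction: the family $\tilde{\mathcal{E}}$ is only defined up to chart-dependent choices, so one must check the filtration extracted is chart-independent. This is automatic because the filtration is read off from the intrinsic weight filtration of the $\T$-action on the central fibre $E^\prime$, which is canonical and invariant under any admissible change of universal family. A secondary but routine technical point is local freeness of the Rees bundle in the $(\Leftarrow)$ direction, which reduces to a Nakayama argument on local splittings of $E_\bullet$.
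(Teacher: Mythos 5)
Your overall route is the same as the paper's: the Rees dictionary between $\T$-equivariant families on $C\times\A^1$ and filtered Higgs bundles gives the ``if'' direction, and for ``only if'' you pull back (\'etale-locally defined) universal families along the equivariant arc and extract the filtration from limits of sections, which is exactly the paper's \eqref{filtdef}. Two steps, however, are not right as written. First, your chart-independence argument invokes the wrong mechanism: the filtration of $E=\tilde{\mathcal{E}}|_{C\times\{1\}}$ is \emph{not} read off from the weight decomposition of the central fibre --- the central fibre only carries the grading of the associated graded $\mathrm{gr}(E)\cong E'$, which does not remember how $E$ is filtered. The filtration is extracted from the limiting behaviour of sections of the whole family under its equivariant structure, so what has to be checked is that two $\T$-equivariant families of stable Higgs bundles over $C\times\A^1$ inducing the same map to the coarse moduli space $\M$ give the same chain of subbundles; this holds because, by simplicity of stable Higgs bundles, they differ by tensoring with a $\T$-equivariant line bundle pulled back from $\A^1$, which at most reindexes the filtration. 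Moreover your ``\'etale base change on $\A^1$ preserving $0$ and $1$, which is harmless'' does not do the job: a connected $\T$-equivariant \'etale map onto $\A^1$ hitting $0$ is an isomorphism, and without equivariance over a neighbourhood of the whole orbit closure the limit procedure defining $E_i$ makes no sense. One genuinely needs the family (with its equivariant structure) over $\A^1\times C$ itself, i.e.\ the vanishing of the gerbe obstruction on the arc, which is how the paper phrases it.

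Second, the uniqueness argument has a real gap. Uniqueness of the equivariant map $g:\A^1\to\M$ with $g(1)=(E,\Phi)$ is immediate from equivariance on the dense orbit plus separatedness of $\M$ (Proposition~\ref{bbprop} is not the relevant input), but the step ``unique $g$, hence via the correspondence a unique filtration'' presupposes that your dictionary is a bijection between filtrations of $(E,\Phi)$ satisfying \eqref{compup} and such arcs $g$. That requires precisely two things you do not prove: the coarse-versus-fine comparison above, and the statement that the round trip filtration $\mapsto$ Rees family \eqref{zgrading} $\mapsto$ limit filtration \eqref{filtdef} returns the original filtration. The latter is the entire content of the paper's uniqueness step (the computation identifying, inside the localization $\bigoplus_i x^{-i}\bigl(\bigoplus_{j\geq 0}H^0(U;E_{-i+j})\bigr)$, which sections extend over $0$, and concluding $E'_{-i}=E_{-i}$). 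Appealing to the Rees correspondence as ``standard'' assumes rather than proves exactly the point at issue; spelled out, your argument and the paper's uniqueness computation become the same thing.
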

\begin{proof} The proof is by a Higgs bundle analogue of the Rees construction. The vector bundle version is discussed in \cite{asok,heinloth,halpern}. 
	
	First let us assume that we have a filtration \eqref{filt} which is compatible with the Higgs field i.e. \eqref{compup} holds for all $i$. We denote by $E^\prime:=gr(E)$ and $\Phi^\prime=gr_0(\Phi)$ and assume that $(E^\prime,\Phi^\prime)\in \M^{s\T}$ is a stable Higgs bundle. 
	
	We can define a vector bundle $\tilde{E}$ over $\C\times C=\spec(\C[x])\times C$ given by the $\Z$-graded $\calO_C[x]$ -module \beq \label{zgrading}\tilde{E}:=\bigoplus_{i\in \Z} x^{-i} E_{-i}\eeq on $C$, where $x$ acts via the embedding $x^{-i} E_{-i}\subset x^{-i+1} E_{-i+1}$. Here $E_k=E$ for $k\geq n$ and $E_k=0$ for $k\leq 0$. Note, that we only have non-positive weights in the $\Z$-graded module \eqref{zgrading}.
	
	We also get the vector bundle homomorphism $\tilde{\Phi}:\tilde{E}\to \tilde{E}\otimes K$ by $\tilde{\Phi}:x^i E_i\to x^{i+1} E_{i+1}\otimes K$ given by $\Phi|_{E_i}$. Furthermore the $\Z$-grading \eqref{zgrading} defines a $\T$-action on $\tilde{E}$ covering the standard action of $\T$ on $\C\cong \spec(\bigoplus_{i\in \Z_{\leq 0}} x^{-i})$. Under this $\T$-action $\lambda\in \T$  sends $\tilde{\Phi}$ to $\lambda^{-1}\tilde{\Phi}$. 
	This gives us a family of Higgs bundles over $C$ parametrized by $\C$. Over $\{1\}\times C$ we get the vector bundle $$\tilde{E}/(x-1)\tilde{E}\cong E$$ back on $C$, while $\tilde{\Phi}$  will induce precisely $\Phi$, yielding our original Higgs bundle $$(\tilde{E},\tilde{\Phi})_{\{1\}\times C}\cong (E,\Phi)$$ 
	The $\T$-equivariant structure on $(\tilde{E},\tilde{\Phi})$ shows that over $\{\lambda\}\times C$ for $0\neq\lambda\in\C$ we get \beq\label{etildelambda}(\tilde{E},\tilde{\Phi})_{\{\lambda\}\times C}\cong (E,\lambda\Phi)\eeq Over   $\{0\}\times C\in \C$ the vector bundle $\tilde{E}$ restricts as  $$\tilde{E}/(x)\tilde{E}\cong gr(E)=E^\prime$$ while $\tilde{\Phi}$ restricts as $gr_0(\Phi)=\Phi^\prime$, giving the Higgs bundle \beq\label{etildezero}(\tilde{E},\tilde{\Phi})_{\{0\}\times C}\cong (E^\prime,\Phi^\prime).\eeq
	
	As $\M$ is a coarse moduli space we have a morphism $f:\C\to \M$ such that \beq \label{fdef}f(\lambda)=\left\{\begin{array}{ll} (E,\lambda\Phi)& \mbox{ when } \lambda\neq 0\\ (E^\prime,\Phi^\prime)&\mbox{ when } \lambda=0 \end{array}\right.\eeq which exactly means \eqref{limitdown}.
	
	For the other direction we assume \eqref{limitdown} and note that as $(E^\prime,\Phi^\prime)$ is a stable Higgs bundle $(E,\lambda\Phi)$ is stable too, because stability is an open condition (c.f. \cite[Lemma 3.7]{simpson1} or \cite[Proposition 3.1]{nitsure}). 
	Let us assume the existence of a map like $f$. Then, as the obstruction in $H^2(\M^s,\T)$ vanishes on $f(\C)$, we get a family of stable Higgs bundles $(\tilde{E},\tilde{\Phi})$ over $\C\times C$ with the properties \eqref{etildelambda} and \eqref{etildezero}. As in \S\ref{equiuni} and \cite[\S 4]{hausel-thaddeus} we can construct a $\T$-equivariant structure on $\tilde{E}$ where $\lambda\in \T$ acts on $\tilde{\Phi}$ as $\lambda^{-1}\tilde{\Phi}$. In other words if $v\in \tilde{E}$ then $$\tilde{\Phi}(\lambda(v))=\lambda(\tilde{\Phi}(v)),$$ where the $\T$-action on $\tilde{E}K$ is induced from the $\T$-action on $\tilde{E}$ and the weight  one action on $K$, i.e. for $\omega\in K$ $\lambda(\omega):=\lambda \omega$; compare it with \eqref{univdiag}. Denote by \beq \label{filtdef}E_i=\{v\in E\cong \tilde{E}|_{\{1\}}| \lim_{\lambda\to 0} \lambda^i \lambda(v)  \mbox{ exists} \}.\eeq Note that \beq\label{limphi}\lambda^i\lambda(\Phi(v))=\lambda^i\lambda(\tilde{\Phi}|_{\{1\}}(v))=\lambda^i\tilde{\Phi}(\lambda(v))=\tilde{\Phi}(\lambda^i\lambda(v)).\eeq So if $v\in E_i$ then $\lim_{\lambda\to 0}\lambda^i\lambda(v)$ exists and so does $\lim_{\lambda\to 0}\lambda^i\lambda(\Phi(v))$. Recalling that $\lambda$ acts on $K$ with weight one, we see that $\Phi(v)\in E_{i+1}K$.  As the construction is locally trivial over $C$, we get an increasing filtration $E_i\subset E_{i+1}\subset E$ by subbundles of $E$. Furthermore $\Phi(E_i)\subset E_{i+1}K$ thus $\Phi$ is compatible with the filtration. For $\mu\in \T$ we compute $$\mu(\lim_{\lambda\to 0}\lambda^i\lambda(v)) =\lim_{\lambda\to 0}\lambda^i\mu(\lambda(v))=\mu^{-i}\lim_{\lambda\to 0}(\mu\lambda)^i(\mu\lambda)(v)=\mu^{-i}\lim_{\lambda\to 0}\lambda^i\lambda(v).$$ This way we get a map
	
	 $$f_i:\begin{array}{ccc}E_i&\to &(\tilde{E}|_{\{0\}})_{-i}\\ v &\mapsto& \lim_{\lambda\to 0} \lambda^{i}\lambda(v) \end{array}$$ from $E_i$ to the weight $-i$ $\T$-isotypical component of the vector bundle with $\T$-action $\tilde{E}|_{\{0\}}$ on $C$. The kernel of this map consists of those $v\in E_i$ for which $$ \lim_{\lambda\to 0}\lambda^i\lambda(v)=0,$$ i.e  there exists a section  $s\in \Gamma(\C,\tilde{E}|_{\C\times \{\pi(v)\}}),$ where $\pi:E\to C$ is the projection of the vector bundle,  $s(0)=0$ and $s(\lambda)=\lambda^i\lambda(v)$. Then we can write $s=\lambda s^\prime$ for another $s^\prime\in \Gamma(\C,\tilde{E}|_{\C\times \{\pi(v)\}})$ and so $ \lim_{\lambda\to 0}\lambda^{i-1}\lambda(v)=s^\prime(0)$ exists showing $\ker(f_i)=E_{i-1}$. Thus $f_i$ induces $$f_i:E_i/E_{i-1}\xhookrightarrow{} (\tilde{E}|_{\{0\}})_i\subset \tilde{E}|_{\{0\}}, $$ and if we put them together we get $f:gr(E)\xhookrightarrow{} \tilde{E}|_{\{0\}}$ an injective bundle map between  vector bundles of the same rank, which thus must be an isomorphism. 
	
	Finally, we note that \eqref{limphi} implies that for $v\in E_i$ we have $$f_{i+1}(\Phi(v))=\tilde{\Phi}|_0(f_i(v))$$ thus $$(gr(E),gr_0(\Phi))\cong(\tilde{E}|_{\{0\}},\tilde{\Phi}|_{\{0\}})$$ completing the proof of existence of the required compatible filtration.
	
	In order to prove the uniqueness of the filtration we start with a compatible filtration \eqref{fullfiltration} as in the first part of the proof. Then we construct the $\T$-equivariant family of stable Higgs bundles $\tilde{E}$ over $\C\times C$ as in \eqref{zgrading}. Then we will show that the original filtration agrees with the one constructed in the second part of the proof with the definition \eqref{filtdef}. First we reformulate \eqref{filtdef} by saying that $E^\prime_{-i}\subset E$ is the subsheaf whose sections $s\in H^0(U,E^\prime_{-i})\subset H^0(U,E)$ on open subsets $U\subset C$ satisfy the condition that the section $\lambda^{-i}\lambda(s)\in H^0(\C^*\times U,\tilde{E})$ extends to a section in $H^0(\C\times U,\tilde{E})$. We note that the $\C[x,x^{-1}]$-module $H^0(\C^*\times U,\tilde{E})$ is the localization of the $\C[x]$-module $\bigoplus_{i\in \Z} x^{-i}E_{-i}$  at $x$, thus $$H^0(\C^*\times U,\tilde{E})\cong \bigoplus_{i\in \Z} x^{-i}\left(\bigoplus_{j\in \N} H^0(U;E_{-i+j})\right). $$ We see that the $\T$-invariant section $\lambda(s)\in H^0(\C^*\times U;\tilde{E})^\T$ agrees with $s=x^0s$ in this description  and so the section $\lambda^{-i}\lambda (s)\in H^0(\C^*\times U;\tilde{E}) $ agrees with $x^{-i}s$ in this description also. Finally, we see that $x^{-i}s$ extends to a section over $\C\times U$ if and only it is in the image of the restriction map $H^0(\C\times U, \tilde{E})\to H^0(\C^*\times U, \tilde{E})$. This happens exactly when $s\in H^0(U;E_{-i})$. Thus $E^\prime_{-i}=E_{-i}$. Therefore the filtration in \eqref{fullfiltration}
	satisfying \eqref{limithiggs} is unique.
	
	The proof of the Proposition is complete. 
\end{proof}

\begin{remark} When $n=2$ and $(E,\Phi)\in \M$ then we can find the filtration in Proposition~\ref{filtrationexists}, giving the limiting Higgs bundle $\lim_{\lambda\to 0}(E,\lambda\Phi)\in \M^\T$ as follows. If $E$ is semistable then the limiting Higgs bundle is simply $(E,0)$. When $E$ is not stable then it has a unique (maximal) destabilizing subbundle $L\subset E$. The compatibility condition \eqref{compup} in this case is vacuous, and thus the limiting Higgs bundle is $\calE_{\delta_0,\delta_1}$ where $L=\calO(\delta_0)$, $\delta_1=\div(b)$ with $b:=\Phi|_L:L\to (E/L)\otimes K$. 
\end{remark}

\begin{remark}\label{bbonpe} If we projectivise the total space of the $\T$-equivariant vector bundle $\tilde{E}$ as  ${\mathbb P}(\tilde{E}),$ then the map ${\mathbb P}(\tilde{E})\to \C$ will be proper, and $\T$-equivariant with respect to the induced $\T$-action on ${\mathbb P}(\tilde{E})$. If follows that this  $\T$-action on ${\mathbb P}(\tilde{E})$ is semi-projective. The fixed point set of the action is $$({\mathbb P}(\tilde{E}))^\T=\cup_i {\mathbb P}((\tilde{E}|_{\{0\}})_i).$$ The Bialynicki-Birula decomposition ${\mathbb P}(\tilde{E})=\cup_i W^+_i$ gives   $${\mathbb P}(E_i)\setminus {\mathbb P}(E_{i-1}) = W^+_{-i}\cap {\mathbb P}(\tilde{E})|_{\{1\}},$$ which follows from  the  proof of Proposition~\ref{filtrationexists}. Thus, amusingly, the filtration $E_i\subset E_{i+1}\subset E$ in Proposition~\ref{filtrationexists} describing the Bialynicki-Birula upward flows on $\M$, can be recovered from the Bialynicki-Birula decomposition on ${\mathbb P}(\tilde{E})$.
	
	In fact, one can construct the Bialynicki-Birula partition on the total space of the projectivised equivariant universal bundle (which always exists over the stable locus, and extends over the whole of $\M$ using Simpson's framed moduli Higgs moduli space \cite[Theorem 4.10]{simpson1}, see \eqref{grassmann} for $k=1$) on $\M\times C$ which will contain the information on the filtration in Proposition~\ref{filtrationexists} which determines the limiting  Higgs bundle. 
\end{remark}

\begin{remark}   In a differential geometric language Proposition~\ref{filtrationexists} appeared  in \cite[Proposition 4.2]{collier-wentworth}. Also a closely related partition of the de Rham moduli space of holomorphic connections $\M_{\rm DR}$ appeared in \cite{simpson}. In fact the algorithm in \cite{simpson} can be used verbatim to find the filtration in Proposition~\ref{filtrationexists} producing the limiting Higgs bundle. 
\end{remark}

\begin{remark} \label{hitchinsection} Let us recall the construction of a section of the Hitchin map from \cite{hitchin-lie}.  Let ${{a}}=(a_1,\dots,a_n)\in H^0(C;K)\oplus \dots \oplus H^0(C;K^n)=\calA$ be a point in the Hitchin base. For a line bundle $L$ on $C$ let 
	$$E_L:=L\oplus LK^{-1}\oplus\dots\oplus LK^{1-n}.$$ Take the Higgs field $\Phi_{{a}}:E_L\to E_LK$ given by the companion matrix: 	\beq\label{companion}\Phi_{{a}}:=\left(\begin{array}{ccccc}  0 &0& \dots &0& -a_n\\ 1 & 0 & \dots &0&-a_{n-1}\\ 0 & 1 & \dots &0&-a_{n-2}\\ \vdots & \vdots & \ddots &\vdots &\vdots \\ 0 & 0& \dots &1 & -a_1
	\end{array}\right).\eeq  Denote the Higgs bundle $\calE_{L,{a}}:=(E_L,\Phi_{{a}})$. Define $E_i:=L\oplus LK^{-1}\oplus\dots\oplus LK^{-i+1}$. The filtration $0\subset E_1\dots \subset E_n=E_L$ satisfies $\Phi_{{a}}(E_i)\subset E_{i+1}K$ and the associated graded is $(gr(E_L),gr_0(\Phi_{ a}))=(E_L,\Phi_0)=\calE_{L,0}$. It is straightforward to check this is stable. We call $\calE_{L,0}$ a {\em uniformising Higgs bundle}\footnote{The terminology is motivated by the $n=2$ case when the $\PGL_2$ Higgs bundle associated to our uniformising Higgs bundle corresponds by  non-abelian Hodge theory to the representation $\pi_1(C)\to \PGL_2(\R)\subset \PGL_2(\C)$ giving the uniformising hyperbolic metric on $C$.}.   
	Now Proposition~\ref{filtrationexists} applies and shows that $\calE_{L,a}$ is stable too and $\lim_{\lambda\to 0} (E_L,\lambda \Phi_{ a})=(E_L,\Phi_0)=\calE_{L,0}$. Thus we have a map $s:\calA\to \M$ given by $a\mapsto \calE_a$. By construction  $h(\calE_{L,a})=a$ thus $s$ is a section of the Hitchin map $h:\M\to \calA$. Thus we see that $\calA\cong s(\calA)\subset W^+_{\calE_{L,0}}$ is a subvariety of an affine variety which is isomorphic to a vector space of the same 
	dimension, therefore $s(\calA)=W^+_{\calE_{L,0}}$, and so the upward flow $W^+_{\calE_{L,0}}$ is just a Hitchin section. 
	
	In particular, this implies that if $\calE\in W^+_{\calE_{L,0}}$, i.e. it has a full filtration as in Proposition~\ref{filtrationexists} such that the associated graded is isomorphic to $\calE_{L,0}$, then it has to be isomorphic to a Higgs bundle  of the form $(E_L,\Phi_a)$. For example, the underlying bundle must  split as a direct  sum of line bundles. 
	
	If we choose  $L=\calO_C$ then we have \beq\label{uniform}\calE_0:=\calE_{{\calO_C},0}=E_{\calO_C}\stackrel{\Phi_0}{\to}E_{\calO_C}\otimes K \eeq  the {\em canonical uniformising Higgs bundle}. This will give a {\em canonical section} $W^+_{\calE_0}\subset \M$ of the Hitchin map, corresponding to the structure sheaves of the spectral curves under  the BNR correspondence, see \eqref{canonicalBNR}. 
	

\end{remark}

\subsection{Downward flows on $\M$}

Recall that the core of $\M$ is defined as $\calC:=\coprod_\alpha W^-_\alpha = \coprod_F W^-_F$. From Proposition~\ref{lagrangian} we know that $W^-_F\subset \M^s$ is a Lagrangian subvariety. Thus the core $\calC$ is Lagrangian at its smooth points. 

By recalling the $\T$-equivariant map $h:\M\to\calA$ where $\T$ acts on $\calA$ with positive weights we see that the core of $\calA$ is the origin $0\in \calA$, and so $\calC\subset h^{-1}(0)$. On the other hand $h$ is proper, therefore $h^{-1}(0)$ is projective, thus $h^{-1}(0)\subset \calC$. The zero fibre of the Hitchin map $h^{-1}(0)$ is called the {\em nilpotent cone}, because $h(E,\Phi)=0$ if and only if $\Phi$ is nilpotent. We will think of the nilpotent cone $N:=h^{-1}(0)$ as a subscheme  of $\M$. In particular, we will see later that most of its components are non-reduced. The argument above then implies 

\begin{proposition}\label{nilpcore} The reduced scheme of the nilpotent cone coincides with the core: $N_{red}=\calC$. 
\end{proposition}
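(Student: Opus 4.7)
The plan is to establish $N_{\mathrm{red}} = \calC$ as closed reduced subschemes of $\M$ by proving the set-theoretic equality of the underlying subsets. Since $\calC$ is by construction a union of the locally closed reduced subvarieties $W^-_F$, once we know it coincides with the closed subset $N = h^{-1}(0)$, its induced reduced scheme structure will automatically agree with $N_{\mathrm{red}}$.

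For the inclusion $\calC \subseteq N$, I would take $x \in W^-_\alpha$ and apply the $\T$-equivariant map $h$ to the relation $\lim_{\lambda \to \infty} \lambda x = \alpha$. Since $\alpha \in \M^\T$ and the $\T$-weights on $\calA = \oplus_{i=1}^n H^0(C; K^i)$ are all strictly positive, the only $\T$-fixed point of $\calA$ is the origin, so $h(\alpha) = 0$. Then $h(\lambda x) = \lambda \cdot h(x)$ converges to $0$; but positivity of the weights forces $\lambda \cdot v$ to diverge in $\calA$ as $\lambda \to \infty$ whenever $v \neq 0$. Hence $h(x) = 0$, i.e.\ $x \in N$.

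For the reverse inclusion $N \subseteq \calC$, the crucial input is that $N = h^{-1}(0)$ is proper, because $h$ is proper. This matters because semi-projectivity of $\M$ only guarantees existence of $\lim_{\lambda \to 0} \lambda x$, not of $\lim_{\lambda \to \infty} \lambda x$, for an arbitrary $x$. However, for $x \in N$ the $\T$-orbit closure $\overline{\T \cdot x} \subset N$ is a closed subvariety of a projective variety, hence projective; the orbit morphism $\T \to \overline{\T \cdot x}$ therefore extends uniquely to $\P^1 \to \overline{\T \cdot x}$, supplying $\alpha := \lim_{\lambda \to \infty} \lambda x \in \M^\T$. This places $x$ in $W^-_\alpha \subset \calC$.

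I do not foresee a serious obstacle; the only step requiring care is the existence of the $\lambda \to \infty$ limit for points in $N$, which is precisely what properness of the Hitchin map supplies. Once the set-theoretic equality is in hand, the scheme-theoretic conclusion is immediate because $\calC$ carries its reduced induced structure as a closed subset of $\M$.
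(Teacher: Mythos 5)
Your argument is correct and is essentially the paper's own: the inclusion $\calC\subseteq N$ follows from $\T$-equivariance of $h$ together with the strictly positive weights on $\calA$, and the reverse inclusion uses properness of $h$, hence projectivity of $h^{-1}(0)$, to produce the $\lambda\to\infty$ limits. Your extra remarks (the orbit-closure/$\P^1$-extension argument and the passage from set-theoretic equality to the reduced-scheme statement) just spell out steps the paper leaves implicit.
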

\begin{remark}
	The core is a Lagrangian subvariety. Thus Proposition~\ref{nilpcore} shows that the nilpotent cone is Lagrangian, which is a result of Laumon \cite[Theorem 3]{laumon}.
\end{remark}

We finish with the analogue of Proposition~\ref{filtrationexists} for the downward flow.

\begin{proposition}\label{downwardfiltration} Let $\calE=(E,\Phi)\in \M$ then \bes\label{limitup}\lim_{\lambda \to \infty} (E,\lambda\Phi)=(E^\prime,\Phi^\prime)\ees for some   $(E^\prime,\Phi^\prime)\in \M^{s\T}$ if and only if the following hold 
	\begin{enumerate} \item there exists a filtration \beq\label{filtdown}0=E_0\subset E_1\subset \cdots E_{k-1}\subset E_k = E\eeq by subbundles \item such that for all $i$ \bes\label{compdown}\Phi(E_{i+1})\subset E_{i}K\ees \item and the induced maps $gr_\infty(\Phi):E_i/E_{i-1}\to E_{i-1}/E_{i-2}K$ satisfy $$(E_1/E_0\oplus \cdots\oplus E_{k-1}/E_{k-2}\oplus  E_k/E_{k-1},gr_\infty(\Phi))\cong (gr(E),gr_\infty(\Phi))\cong(E^\prime,\Phi^\prime).$$
		\end{enumerate}
	
	Additionally, the fibration \eqref{filtdown} with these properties is unique. 
\end{proposition}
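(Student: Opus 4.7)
My plan is to run the Rees-theoretic argument of Proposition~\ref{filtrationexists} in reverse: the statement here is the direct analogue obtained by swapping the roles of $\lambda\to 0$ and $\lambda\to\infty$. A convenient conceptual framing is that downward flows of the $\T$-action $\lambda\cdot(E,\Phi) = (E,\lambda\Phi)$ are upward flows of the inverse action $\lambda\cdot^{\op{op}}(E,\Phi) = (E,\lambda^{-1}\Phi)$; although the inverse action is not semi-projective (the induced action on $\calA$ has negative weights), the Rees-theoretic core of the proof of Proposition~\ref{filtrationexists} uses only the $\T$-action and the coarse moduli structure, so it transfers.

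For the ``if'' direction, given a filtration $0=E_0\subset\cdots\subset E_k=E$ with $\Phi(E_{i+1})\subset E_iK$, I would construct a Rees-type family
$$\tilde{E} \;:=\; \bigoplus_{i\in\Z} y^i E_i \;\subset\; E[y,y^{-1}]$$
over $\A^1\times C$ with coordinate $y$ (extended with $E_k=E$ for $k\geq n$, $E_k=0$ for $k\leq 0$), together with a Higgs field $\tilde{\Phi}$ built from $\Phi|_{E_i}$ via the downward compatibility, arranged so that $\tilde{\Phi}$ has $y$-degree opposite to the one in the proof of Proposition~\ref{filtrationexists}. With the appropriate $\T$-equivariant structure (inverse convention on $\A^1_y$), the fibre over $y=1$ is $(E,\Phi)$ and the fibre over $y=0$ is the associated graded $(E',\Phi')=(\mathrm{gr}(E),\mathrm{gr}_\infty(\Phi))$. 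Stability being an open condition, nearby fibres are stable and the coarse moduli property yields a morphism $g\colon \A^1\to\M$. Reparametrising $\lambda = y^{-1}$ identifies $g|_\T$ with $\lambda\mapsto(E,\lambda\Phi)$, so $\lim_{\lambda\to\infty}(E,\lambda\Phi)=g(0)=(E',\Phi')$.

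For the ``only if'' direction, the assumption that the limit exists and equals a stable $(E',\Phi')$ gives, in the coordinate $y=\lambda^{-1}$, a morphism $\A^1\to\M$ sending $y=0$ to $(E',\Phi')$; by the same gerbe-vanishing argument used in the proof of Proposition~\ref{filtrationexists}, this lifts to a $\T$-equivariant family $\tilde{E}$ on $\A^1\times C$. Mimicking \eqref{filtdef}, I would define $E_i\subset E\cong \tilde{E}|_{y=1}$ to consist of those $v$ for which $y^{-i}\lambda(v)$ extends regularly to $y=0$ (with the $\T$-action taken in the inverse convention), and verify by the same local calculations already given that $E_\bullet$ is a subbundle filtration satisfying $\Phi(E_{i+1})\subset E_iK$ and with $\mathrm{gr}(E)\cong E'$. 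Uniqueness of the filtration follows exactly as in Proposition~\ref{filtrationexists}: any other such filtration reconstructs the same Rees family, hence is canonically determined by the limit. The main technical obstacle is careful bookkeeping of signs and $\T$-weight conventions, namely ensuring that the inverse $\T$-action on $\A^1_y$ is set up consistently with the reversed Higgs compatibility, so that the limit at $y=0$ is indeed $\lim_{\lambda\to\infty}$ rather than $\lim_{\lambda\to 0}$.
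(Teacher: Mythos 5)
Your proposal is correct and is essentially the paper's intended argument: the paper's proof of Proposition~\ref{downwardfiltration} simply says it is an appropriate modification of the proof of Proposition~\ref{filtrationexists}, and the modification you spell out (running the Rees construction with the inverse $\T$-convention, $y=\lambda^{-1}$, reversed Higgs compatibility, and the same gerbe-vanishing, limiting-filtration and uniqueness steps) is exactly that. The only ingredient of the paper you do not mention is the optional reinterpretation, as in Remark~\ref{bbonpe}, of the filtration via downward flows on the projectivised universal bundle, which is not needed for the proof.
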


\begin{proof} The proof is an appropriate modification of the proof of Proposition~\ref{filtrationexists}.  
	Just as in Remark~\ref{bbonpe} we can think of the filtration  \eqref{filtdown} on the Higgs bundle induced from the downward flows on the projectivised universal bundle. 
\end{proof}
 
	\section{Very stable Higgs bundles}
\label{verystable}

\subsection{Definition and basic properties} 
Drinfeld and Laumon in \cite[Definition 3.4]{laumon} call a vector bundle $E$ on $C$ {\em very stable} if the only nilpotent Higgs field $\Phi
\in H^0(\End(E)\otimes K)$ is the trivial one.  \cite[Proposition 3.5]{laumon} proves that a rank $n$ very stable bundle is stable, and that very stable bundles form an open dense subset of the moduli space of stable  bundles. 

\begin{definition} Let $\calE\in \M^{s \T}$ be a stable $\T$-invariant Higgs bundle. We call $\calE$ a {\em very stable Higgs bundle} if and only if  the only nilpotent Higgs bundle in $W_\calE^+$ is $\calE$.  On the other hand we call $\calE$ {\em wobbly}, when it is not very stable. 
\end{definition}

\begin{remark} A vector  bundle which is very stable is stable, as shown in \cite[Proposition 3.5]{laumon}. For Higgs bundles we have to impose it as the definition uses upward flows in the semistable moduli space $\M$. We could also define very semistable bundles for strictly semistable bundles in the same way as above. These could also be interesting to study. 
	\end{remark}

In this language \cite[Proposition 3.5]{laumon} implies \begin{theorem} \label{laumon} There exist very stable type $(n)$ Higgs bundles $\calE\in \M^{s\T}$. In fact, they form an open dense subset of $\calN\in \pi_0(\M^{s\T})$. 
\end{theorem}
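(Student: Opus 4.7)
The plan is to reduce the statement directly to Laumon's original very-stable-bundle result \cite[Proposition 3.5]{laumon} by identifying the two notions of very stability in the type $(n)$ case.

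My first step would be to describe the upward flow $W^+_\calE$ from a type $(n)$ stable fixed point $\calE = (E,0)$. Since $\Phi=0$ makes every subbundle $\Phi$-invariant, stability of $\calE$ as a Higgs bundle is equivalent to stability of $E$ as a vector bundle. By Proposition~\ref{filtrationexists}, a point $(E',\Phi')\in W^+_\calE$ corresponds to a (necessarily unique) filtration of $E'$ whose associated graded Higgs bundle is $(E,0)$, regarded as type $(n)$. A type with a single part forces the filtration to have length one, i.e.\ to be trivial, so $E'\cong E$ and any $\Phi'\in H^0(C;\End(E)\otimes K)$ is admissible. Hence $W^+_\calE = U_E$ in the notation of \eqref{stableup}.

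With this identification in hand, the very-stability of $\calE\in\M^{s\T}$ in our Higgs-theoretic sense says exactly that the only nilpotent element of $U_E$ is $\Phi=0$, which is the definition of a very stable vector bundle used by Drinfeld \cite{drinfeld} and Laumon \cite{laumon}. The component $\calN\in\pi_0(\M^\T)$ meets $\M^{s\T}$ precisely in the moduli of stable vector bundles on $C$, so very stable type $(n)$ stable Higgs bundles correspond bijectively to very stable stable vector bundles.

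The theorem then follows by invoking \cite[Proposition 3.5]{laumon}, which shows that very stable bundles form a nonempty Zariski open subset of the moduli of stable bundles; being a nonempty open subset of this irreducible variety, it is automatically dense, which transports to an open dense subset of $\calN\cap\M^{s\T}$ and hence of $\calN$. The only delicate point in this argument is the filtration step: one must use the uniqueness clause of Proposition~\ref{filtrationexists} to rule out longer compatible filtrations whose graded happens to be abstractly isomorphic to $(E,0)$ of type $(n)$. Beyond that, the proof is essentially a translation of Laumon's result into the Bialynicki--Birula language developed in Section~\ref{bb}.
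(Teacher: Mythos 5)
Your proposal is correct and is essentially the paper's own argument: the paper treats this theorem as an immediate translation of Laumon's \cite[Proposition 3.5]{laumon}, using the identification $W^+_{(E,0)}=U_E$ so that ``very stable Higgs bundle of type $(n)$'' coincides with Drinfeld--Laumon's ``very stable bundle.'' One small quibble: the exclusion of longer compatible filtrations in Proposition~\ref{filtrationexists} follows not from its uniqueness clause but from the fact that a stable bundle $E$ is indecomposable, so its associated graded cannot split into more than one nonzero summand.
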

The following was proved for very stable bundles $E$ (in our context Higgs bundles with zero Higgs field) in  \cite[Theorem 1.1]{pauly-peon-nieto}.

\begin{lemma}\label{transversal} The stable Higgs bundle $\calE\in \M^{s\T}$ is very stable if and only if the upward flow $W^+_\calE\subset\M$ is closed.
\end{lemma}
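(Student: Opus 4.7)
The plan is to deduce this from the two general structural results established earlier, namely Proposition~\ref{upwardclose} applied in the Bialynicki--Birula setup and Proposition~\ref{nilpcore} identifying the core of $\M$ with the reduced nilpotent cone. No new geometric input beyond these is needed; the lemma is essentially a dictionary check.

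First, I would verify that the hypotheses of Proposition~\ref{upwardclose} are satisfied. Recall that $\M$ is normal and quasi-projective, the $\T$-action $(E,\Phi)\mapsto (E,\lambda\Phi)$ makes it semi-projective (because the Hitchin map $h:\M\to\calA$ is proper and $\T$-equivariant with positive weights on $\calA$), and $\calE\in\M^{s\T}$ is by assumption a smooth point (stable Higgs bundles form the smooth locus $\M^s$). Hence Proposition~\ref{upwardclose} applies and yields
\[
W^+_\calE \subset \M \text{ is closed} \iff W^+_\calE \cap \calC = \{\calE\},
\]
where $\calC$ is the core of $\M$ in the sense of \eqref{core}.

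Next I would translate the right-hand condition into the Higgs bundle language. By Proposition~\ref{nilpcore}, the core satisfies $\calC = N_{\mathrm{red}}$, where $N=h^{-1}(0)$ is the nilpotent cone. Since a set-theoretic intersection is unchanged by passing to the reduction, we have $W^+_\calE\cap \calC = W^+_\calE\cap N$ as sets. But $N$ is exactly the locus of Higgs bundles $(E',\Phi')\in \M$ with $\Phi'$ nilpotent, as the vanishing of all coefficients of the characteristic polynomial is equivalent to nilpotence of the Higgs field. Therefore the condition $W^+_\calE\cap \calC = \{\calE\}$ is equivalent to: the only Higgs bundle in $W^+_\calE$ with nilpotent Higgs field is $\calE$ itself, which is precisely the definition of $\calE$ being very stable.

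Combining the two equivalences yields the lemma. There is no genuine obstacle; the only subtlety worth explicitly mentioning in the write-up is that one must pass between the scheme $N$ and its reduction $N_{\mathrm{red}}$, which is harmless at the level of the set-theoretic intersection with $W^+_\calE$ that appears in the BB-very-stable criterion.
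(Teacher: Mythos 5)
Your proposal is correct and follows essentially the same route as the paper: the paper's proof is exactly "Proposition~\ref{upwardclose} applied to the semi-projective variety $\M$", with the identification of the core with the reduced nilpotent cone (Proposition~\ref{nilpcore}) and the smoothness of stable points left implicit. You have merely spelled out these hypothesis checks and the set-theoretic dictionary between $\calC=N_{\mathrm{red}}$ and the nilpotent-Higgs-field definition of very stable, which is fine but not a different argument.
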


\begin{proof}This is Proposition~\ref{upwardclose} applied to the semi-projective variety $\M$.  
\end{proof}

\begin{remark} \label{hsvs}
	Recall from Remark~\ref{hitchinsection} that the upward flow    $W^+_{\calE_{L,0}}\subset\M$ of the uniformising Higgs bundle $\calE_{L,0}$  is  a Hitchin section $h:W^+_{\calE_{L,0}} \cong s(\calA)$. Thus $W^+_{\calE_{L,0}}\cap h^{-1}(0)=\{\calE_{L,0}\}$. Therefore  the  Higgs bundle $\calE_{L,0}$ is very stable. In particular, by the lemma above,  $W^+_{\calE_{{L,0}}}\subset\M$ is closed. This also follows from the fact that it is a section of the Hitchin map, which  was already pointed out in \cite[p. 454]{hitchin-lie}. 
\end{remark}

\begin{lemma} \label{locfree} When $\calE\in \M^{\T}$ is very stable then $h:W^+_\calE\to \calA$ is finite, flat, surjective and generically \'etale. In particular, when $\calE\in \M^{\T}$ is very stable then $h_*(\calO_{W^+_\calE})$ is locally free.
\end{lemma}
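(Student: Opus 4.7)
The plan is to deduce everything from the following three ingredients already available in the paper: (i) the closedness of $W^+_\calE$ in $\M$ given by very stability (Lemma~\ref{transversal}); (ii) the properness and $\T$-equivariance of $h:\M\to\calA$ with positive weights on $\calA$; and (iii) the Lagrangian property $\dim W^+_\calE=\dim T^+_\calE=\dim\M/2=\dim\calA$ (Proposition~\ref{lagrangian} combined with the completely integrable system structure).

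\smallskip

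First, since $W^+_\calE\subset\M$ is closed by Lemma~\ref{transversal} and $h$ is proper, the restriction $h|_{W^+_\calE}:W^+_\calE\to\calA$ is proper. Next, I would identify the fiber over $0\in\calA$: by the very stability of $\calE$, the only Higgs bundle in $W^+_\calE$ with nilpotent Higgs field is $\calE$ itself, so set-theoretically $h^{-1}(0)\cap W^+_\calE=\{\calE\}$, which is $0$-dimensional.

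The next step is to upgrade this to the statement that every fiber is $0$-dimensional, giving quasi-finiteness. For this I would use the $\T$-equivariance together with semicontinuity. The set $Z_k:=\{a\in h(W^+_\calE):\dim h^{-1}(a)\cap W^+_\calE\geq k\}$ is closed (upper semicontinuity) and $\T$-invariant; if $Z_1$ were nonempty it would contain some $a\neq 0$, but then $\lim_{\lambda\to 0}\lambda\cdot a=0\in Z_1$, contradicting $\dim(h^{-1}(0)\cap W^+_\calE)=0$. Hence $h|_{W^+_\calE}$ is quasi-finite and proper, therefore finite. The image $h(W^+_\calE)$ is then a closed subset of the irreducible $\calA$ of dimension $\dim W^+_\calE=\dim\calA$, hence equal to $\calA$, giving surjectivity.

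\smallskip

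For flatness, both source and target are smooth (indeed $W^+_\calE\cong T^+_\calE$ is an affine space by Proposition~\ref{bbprop}, and $\calA$ is a vector space), so they are Cohen--Macaulay and regular of the same dimension; miracle flatness then yields that the finite surjective morphism $h|_{W^+_\calE}$ is flat. Generic étaleness is automatic for a finite dominant morphism between irreducible varieties in characteristic zero (the generic fiber is a finite separable extension of function fields). Finally, for a finite flat morphism the pushforward $h_*\calO_{W^+_\calE}$ is locally free, with rank equal to the degree of $h|_{W^+_\calE}$. The main obstacle is really conceptual rather than technical: one must be careful that the semicontinuity argument uses $\T$-equivariance to propagate the $0$-dimensionality of the fiber at the origin (granted by very stability) to every fiber; once this is set up, all remaining assertions follow from standard commutative algebra.
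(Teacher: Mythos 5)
Your proof is correct and follows the same overall skeleton as the paper's (proper from closedness, then quasi-finite, hence finite, then miracle flatness on smooth equidimensional varieties, surjectivity by dimension, generic \'etaleness in characteristic zero, and local freeness from finite plus flat), but your quasi-finiteness step is genuinely different. The paper gets quasi-finiteness in one line: by Proposition~\ref{bbprop} the upward flow $W^+_\calE\cong T^+_\calE$ is affine, so the fibres of the proper map $h|_{W^+_\calE}$ are simultaneously proper and affine, hence finite -- very stability is used only once, through closedness. You instead invoke very stability a second time to identify $h^{-1}(0)\cap W^+_\calE=\{\calE\}$ and then propagate zero-dimensionality to all fibres via $\T$-equivariance and upper semicontinuity of fibre dimension; note that closedness of your locus $Z_1$ in $\calA$ is not pure semicontinuity (Chevalley's statement lives on the source) but needs the properness of $h|_{W^+_\calE}$, which you do have, and the phrase ``it would contain some $a\neq 0$'' should be ``it would contain some $a$, and if $a=0$ we are done, otherwise flow $a$ to $0$'' -- a harmless slip since either case contradicts the zero-dimensional central fibre. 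What your route buys is a more geometric picture (the central fibre controls all fibres, which is also the mechanism behind the later multiplicity computation in Theorem~\ref{multiplicity}); what the paper's route buys is brevity and independence from fibre-dimension semicontinuity, using only the affineness of the Bialynicki-Birula cell.
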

\begin{proof} As $W^+_\calE\subset \M$ is closed by Lemma~\ref{transversal} the restriction of the Hitchin map $h:W^+_\calE\to \calA$ is proper. Furthermore since $h:W^+_\calE\to \calA$ is a map between  affine varieties (cf. Proposition~\ref{bbprop}), it is quasi-finite. As a proper and quasi-finite map it is  finite (c.f \cite[Theorem 8.11.1]{ega}). Finally, it is a map between smooth equidimensional varieties and so by miracle flatness (c.f. \cite[Corollary 23.1]{matsumura}) $h$ is flat. Thus $h$ is  finite and flat  thus $h_*(\calO_{W^+_\calE})$ is locally free.
	
	Finally, as $h:W^+_\calE\to \calA$ is finite and between equidimensional varieties, it is surjective, and thus generically smooth, and so generically \'etale. 
\end{proof}

\subsection{ Hecke transformations}
\label{heckeverystable}

For a point $c\in C$ recall the exact sequence of sheaves \beq \label{ses}0\to \calO_C(-c)\stackrel{s_c}{\to} \calO_C\to \calO_c\to 0.\eeq  Let $E\to C$ be a vector bundle. By abuse of notation we will also denote by $E$ its locally-free sheaf of sections.   Tensoring \eqref{ses} with $E$ gives \beq\label{tensored} 0\to E(-c){\to} E\stackrel{f}{\to} (E_c)_c \to 0.\eeq Here $E_c$ denotes the fibre of the vector bundle $E$ at $c$ and for any vector space $U$ we denote $U_c:=U\otimes \calO_c.$ In particular, $(E_c)_c=E\otimes \calO_c$. If now $V<E_c$ then $V_c\subset (E_c)_c$ is a subsheaf and if $W:=E_c/V$ then $W_c$ is a quotient sheaf of $(E_c)_c$. Denoting by  $\pi_V:(E_c)_c\to W_c$ the quotient map and  $f_V=f \circ \pi_V$  we make the following 
\begin{definition} Let $E$ be a vector bundle on the curve $C$.  Let $V<E_c$ be a subspace at a point $c\in C$. The {\em Hecke transform} of $E$ at $V$ is defined to be $E_V:=\ker f_V$. \end{definition} Equivalently, the following   short exact sequence of sheaves defines the Hecke transformed vector bundle: $$0\to E_V{\to} E\stackrel{f_V}{\to} W_c \to 0.$$ As a subsheaf of a torsion free sheaf, $E_V$ is torsion free and so a locally free sheaf, i.e. the sheaf of sections of a vector bundle which we also denote by $E_V$ or sometimes $E^\prime=E_V$. 

\begin{remark} \label{remarkV}By construction $E(-c)\subset E_V$ is a subsheaf and fits into a short exact sequence $$0\to E(-c){\to} E^\prime{\to} V_c \to 0.$$ This way we get a surjective map $E^\prime_c\to V$. Denote by $V^\prime < E^\prime_c$ its kernel. 
	Then we see that $E^\prime_{V^\prime}=E(-c)$. 
\end{remark}

\begin{example}\label{heckebundle} When $V=E_c$ then the exact sequence \eqref{tensored} shows that $E_V=E(-c)$. More generally, if a vector bundle on a curve $E\oplus F$ is a direct sum of two subbundles  and we  choose the subspace $E_c<(E\oplus F)_c$ then the Hecke transfrom of $E\oplus F$ at $E_c$ satisfies $$(E\oplus F)_{E_c}\cong E(-c)
\oplus F.$$ This can be seen by adding the trivial sequence $0\rightarrow F\stackrel{\cong}{\rightarrow}F\to 0 \to 0$ to \eqref{tensored}.	\end{example}

Let now $(E,\Phi)$ be a Higgs bundle and let $V<E_c$ be a $\Phi_c$-invariant subspace. Consider the quotient $W:=E_c/V$. Denote by $\overline{\Phi}_c$ the morphism $W_c\to W_c\otimes K$ induced by $\Phi$.   This data induces a unique Higgs bundle $(E^\prime,\Phi^\prime)$ making the following diagram commutative.
\beq \label{HeckeHiggs} \begin{array}{ccccccc} 0 \to & E^\prime& {\to} &E& \stackrel{f_V}{\to} & W_c & \to 0 
	\\ & \Phi^\prime \downarrow &&\Phi \downarrow&& \overline{\Phi}_c\downarrow & \\ 0 \to & E^\prime K&\to &EK&\to& W_c K&\to 0 \end{array}.\eeq 

\begin{definition} Let $(E,\Phi)$ be a Higgs bundle on the curve $C$ and $V< E_c$ a $\Phi_c$-invariant subspace at $c\in C$. The {\em Hecke transform} of $(E,\Phi)$ at $V< E_c$ is the unique Higgs bundle $(E^\prime,\Phi^\prime)$ making the diagram \eqref{HeckeHiggs} commutative. Sometimes we will denote it $\calH_V(E,\Phi)=(E^\prime,V^\prime)$ or by $(E_V,\Phi_V)=(E^\prime,\Phi^\prime)$. 
\end{definition}

\begin{remark} \label{V'Higgs}As  in Remark~\ref{remarkV}, the Hecke transformation will produce a subspace $V^\prime<E^\prime_c$ such that the Hecke transform of $E^\prime$ at $V^\prime$ is $E(-c)$. We note that the Hecke transformed Higgs field $\Phi^{\prime\prime}:E(-c)\to E(-c)K$ as  a section of $\End(E(-c))K=\End(E)K$ agrees with $\Phi\in \End(E)K$ away from $c$, and so agrees everywhere. Thus the Hecke transform of $(E^\prime,\Phi^\prime)$ at $V^\prime$ is the Higgs bundle $(E(-c),\Phi)$. 
\end{remark}

\begin{remark}\label{alonghitchin} We also note that as $\Phi^\prime$ and $\Phi$ agree away from $c$ so  their characteristic polynomial will also agree away from $c$, therefore $h(\Phi^\prime)=h(\Phi)\in \oplus_{i=1}^n H^0(C;K^i)$. Therefore the Hecke transformation is along the fibers of the Hitchin fibration. In the case of a point $a\in {\mathcal A}$ for which  the spectral curve $C_a$ is smooth, the fibre is isomorphic to the Jacobian of $C_a$. At a generic point $c\in C$ the subspace $V<E_c$ is a sum of  eigenspaces of $\Phi_c$. Since $C_a$ is the curve of eigenvalues this is an effective  divisor $D$. In the notation of Section~\ref{intersection} the Hecke transform takes the line bundle $U$ on $C_a$ corresponding to $(E,\Phi)$  to the line bundle $U(-D)$ corresponding to $(E^\prime,\Phi^\prime)$. 
\end{remark}

\begin{example}\label{invarianthecke} Assume that the Higgs bunde $(E,\Phi)$ is such that $E=E_1\oplus \dots \oplus E_k$, a direct sum of subbundles, and $\Phi(E_i)\subset E_{i+1}K$. Then for any $1\leq j \leq k-1$ we have  $V:=(E_{j+1}\oplus \dots \oplus E_k)_c<E_c$ is $\Phi_c$ invariant. Then we can determine the Hecke transform $(E^\prime,\Phi^\prime)$ at $V$ by definition as $E^\prime=E_1^\prime \oplus \dots \oplus E^\prime_k$ such that $E_i^\prime=E_i(-c)$ unless $i>j$ when $E_i^\prime=E_i$ from Example~\ref{heckebundle}.  The modified Higgs field then satisfies $$\Phi^\prime|_{E^\prime_i}=\Phi|_{E_i}\in H^0(C;E_i^{\prime*}E^\prime_{i+1}K)\cong H^0(C;E_i^*E_{i+1}K)$$ for all $i$ except $i=j$ when $$\Phi^\prime|_{E_{j}(-c)}=\Phi|_{E_j}s_c\in H^0(C;E_i^*(c)E_{i+1}K).$$ In particular, $(\Phi^\prime|_{E_{j}(-c)})$ vanishes at $c$. 
Furthermore $V^\prime$ of Remark~\ref{V'Higgs} will be $V^\prime=E^\prime_1\oplus \dots \oplus E^\prime_{j}|_c<E^\prime_c$, so that the $\calH_{V^\prime}(E^\prime,\Phi^\prime)=(E(-c),\Phi)$ as expected.

Doing the Hecke transformation backwards we get that if the Higgs bundle $(E,\Phi)$ has a compatible decomposition $E=E_1\oplus\dots\oplus E_k$ such that $\Phi(E_i)\subset E_{i+1}K$ and $\Phi_c$ is zero on $(E_j)_c$, then $V=(E_1\oplus \dots \oplus E_{j})|_c<E_c$ is $\Phi_c$-invariant.  Then the Hecke transform
$\calH_{V}(E,\Phi)$ has the chain form\footnote{Where arrows represent morphisms twisted by $K$.}
$$E_1\stackrel{\Phi_1}{\longrightarrow}{E_2}\stackrel{\Phi_2}{\longrightarrow}\cdots \stackrel{\Phi_{j-2}}{\longrightarrow}E_{j-1} \stackrel{\Phi_{j-1}/s_c}{\longrightarrow} E_j(-c)\stackrel{\Phi_{j}}{\longrightarrow}\cdots \stackrel{\Phi_{k}}{\longrightarrow}{E_k(-c)},$$ where $\Phi_i:=\Phi|_{E_i}:E_i\to E_{i+1}K$. 

Note that in both cases above the Higgs bundles are $\T$-fixed and so are the Hecke transformed Higgs bundles (when all of them are stable). However if the $\Phi_c$-invariant subspace is not a direct sum of the $E_i|_c$'s then  the Hecke transform will not be $\T$-fixed. Such examples will play a crucial rule in the only if part of the proof of Theorem~\ref{mainverystable}. 

	\end{example}



\subsubsection{The Hecke transform of a full compatible filtration}

Here we investigate what happens to a compatible full filtration on a Higgs bundle,  when we perform a Hecke transformation at a point $c\in C$ where the Higgs field on the fibre of the graded Higgs bundle at $c$ is regular nilpotent. This is equivalent with $b(c)\neq 0$ using the notation \eqref{be} below. 

Let a rank $n$ Higgs bundle $(E,\Phi)$ carry a full filtration \beq\label{fullfiltration}0=E_0\subset E_1\subset\dots\subset E_n=E\eeq by $\rank(E_i)=i$ subbundles, such that \beq\label{compatible}\Phi(E_i)\subset E_{i+1}\otimes K.\eeq Then we have the induced maps \beq\label{bi}b_i=:E_i/E_{i-1}\to E_{i+1}/E_i\otimes K\eeq between line bundles, and the composition \beq \label{be}b=b_{n-1}\circ \dots \circ b_1:E_1\to E/{E_{n-1}}\otimes K^{n-1}.\eeq Suppose that $b$ does not vanish at $c\in C$. This means that for all $i$ the map  $\Phi_c: E_{i}|_c\to E_{i+1}|_c\otimes K|_c$ does not preserve $E_i|_c$.  To perform the Hecke transformation we choose a $\Phi_c$-invariant subspace $V<E_c$ of $\dim(V)=k< n$.  We need the following linear algebra
\begin{lemma}\label{linear} Let $0=U_0<U_1<\dots U_n<U$ be a full flag of subspaces of an $n$-dimensional complex vector space $U$. Furthermore let $A$ be a linear transformation $A:U\to U$ such that $A(U_i)<U_{i+1}$ but $U_i$ is not $A$-invariant. If $V< U$ of $\dim(V)=k< n$ is $A$-invariant, then 
	$\dim (V\cap U_i)= \max(0,k+i-n)$.
\end{lemma}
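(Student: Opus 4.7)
The plan is to combine a dimension-count ``jumps are small'' observation with an ``orbit of $A$'' argument to pin down exactly where the jumps happen.

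First I would unpack the hypothesis. Since $A(U_i)\subset U_{i+1}$ but $A(U_i)\not\subset U_i$, the composite $U_i\to U_{i+1}\to U_{i+1}/U_i$ is nonzero, and it kills $U_{i-1}$ because $A(U_{i-1})\subset U_i$. Therefore $A$ induces a nonzero, hence invertible, linear map $\bar A_i: U_i/U_{i-1}\xrightarrow{\sim} U_{i+1}/U_i$ for each $i=1,\dots,n-1$. The upshot is that if $v\in U_j\setminus U_{j-1}$, then $Av\in U_{j+1}\setminus U_j$, and inductively $A^m v\in U_{j+m}\setminus U_{j+m-1}$ as long as $j+m\le n$.

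Next I would make the elementary observation that the filtration $V\cap U_0\subset V\cap U_1\subset\cdots\subset V\cap U_n=V$ has jumps $\dim(V\cap U_{i+1})-\dim(V\cap U_i)\in\{0,1\}$, because $V\cap U_{i+1}/V\cap U_i$ injects into the $1$-dimensional space $U_{i+1}/U_i$. The total of these jumps is $k$, so the claimed formula $\dim(V\cap U_i)=\max(0,k+i-n)$ is equivalent to the assertion that all $k$ jumps occur at the last $k$ steps, i.e.\ that $V\cap U_{n-k}=0$.

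The heart of the proof is thus to establish $V\cap U_{n-k}=0$, and I expect this to be the main (and really the only) obstacle. I would argue by contradiction: let $j$ be the smallest index with $V\cap U_j\ne 0$, and suppose $j\le n-k$. Pick $0\ne v\in V\cap U_j$. By minimality of $j$ we have $v\in U_j\setminus U_{j-1}$. Since $V$ is $A$-invariant, $A^m v\in V$ for all $m\ge 0$, and by the graded-isomorphism step above, $A^m v\in U_{j+m}\setminus U_{j+m-1}$ for all $0\le m\le n-j$. These $n-j+1$ vectors live in successive pieces of the flag, so they are linearly independent, whence $n-j+1\le\dim V=k$, i.e.\ $j\ge n-k+1$, contradicting $j\le n-k$.

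Finally, combined with the dimension-jump observation, $V\cap U_{n-k}=0$ forces a jump of $1$ at each step from $i=n-k$ to $i=n$, giving $\dim(V\cap U_i)=k+i-n$ for $i\ge n-k$ and $\dim(V\cap U_i)=0$ for $i\le n-k$, which is exactly $\max(0,k+i-n)$.
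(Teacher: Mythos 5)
Your proof is correct, and it reaches the conclusion by a genuinely different route than the paper. The paper works with the complementary chain of sums $V+U_0\leq V+U_1\leq\dots\leq V+U_n$: from the hypothesis one has $U_i+A(U_i)=U_{i+1}$, so $V+U_i=V+U_{i+1}$ holds exactly when $V+U_i$ is $A$-invariant, and in that case $V+U_i$ must already contain all later $U_j$, i.e.\ equal $U$. Hence the chain is strictly increasing until it saturates, giving $\dim(V+U_i)=\min(k+i,n)$ for every $i$ at once, and the stated formula follows from $\dim(V\cap U_i)=k+i-\dim(V+U_i)$. You instead work with the intersections directly: after reducing the claim to $V\cap U_{n-k}=0$ via the ``jumps are at most one'' observation, you take a nonzero $v\in V\cap U_j$ with $j$ minimal and use the induced isomorphisms $U_i/U_{i-1}\to U_{i+1}/U_i$ to produce the $n-j+1$ linearly independent vectors $v,Av,\dots,A^{n-j}v$ inside $V$, forcing $j\geq n-k+1$. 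Both arguments are short and elementary; the paper's is slightly slicker in that it never chooses vectors and yields the dimension count for all $i$ in one stroke, while yours is more constructive, exhibiting an explicit $A$-orbit inside $V$ that witnesses the bound — essentially a cyclic-vector argument reflecting that $A$ is regular with respect to the flag. No gaps: the induced maps on graded pieces, the independence of vectors lying in strictly increasing flag steps, and the equivalence with $V\cap U_{n-k}=0$ are all justified or routine.
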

\begin{proof} By assumption $U_i+A(U_i)= U_{i+1}$. Thus $V+U_i=V+U_{i+1}$ if and only if $V+U_i$ is invariant under $A$. On the other hand $V+U_i=V+U_{i+1}$ in turn implies $V+U_i=V+U_n=U$. 
	Thus we see that the chain of subspaces
	$$V=V+U_0\leq V+U_1\leq \dots \leq V+U_{n-1}\leq V+U_n=U$$ must be strictly increasing until it reaches the whole ambient space $U$. Therefore $\dim(V+U_i)=\min(k+i,n)$, which implies the result.  
\end{proof}

The $\Phi_c$-invariant subspace $V<E_c$ induces a map of sheaves $f_V:E\to W_c$, where $W:=E_c/V$.   This induces the Hecke transformed Higgs bundle $(E^\prime,\Phi^\prime)$:
\bes \begin{array}{ccccccc} 0 \to & E^\prime & \to &E& \stackrel{f_V}{\to} & W_c & \to 0 
	\\ & \Phi^\prime \downarrow &&\Phi \downarrow&& \overline{\Phi}_c\downarrow & \\ 0 \to & E^\prime K&\to &EK&\to& W_c K&\to 0 \end{array}.\ees 
Each subbundle $E_i$ will also induce a Hecke transformed line bundle with respect to the subspace $V_i:=E_i|_c\cap V< E_i|_c$. Denoting $W^i:=E_i|_c/V_i$ we get a new vector bundle $E_i^\prime$ from the kernel of the map $f_{V_i}:E_i\to W^i_c$. In other words we have the top row of the following commutative diagram \beq \label{commutative} \begin{array}{ccccccc} 0\to & E^\prime_i&\to &E_i& \stackrel{f_{V_i}}{\to} &W^i_c&\to 0 \\ &\rotatebox[origin=c]{-90}{$\hookrightarrow$} &&\rotatebox[origin=c]{-90}{$\hookrightarrow$} &&\rotatebox[origin=c]{-90}{$\hookrightarrow$}& \\   0\to & E_{i+1}^\prime&\to &E_{i+1}& \stackrel{f_{V_{i+1}}}{\to} &W^{i+1}_c&\to 0 \\ &\rotatebox[origin=c]{-90}{$\twoheadrightarrow$} &&\rotatebox[origin=c]{-90}{$\twoheadrightarrow$} &&\rotatebox[origin=c]{-90}{$\twoheadrightarrow$}& \\   0\to & E_{i+1}^\prime/E^\prime_i&\to &E_{i+1}/E_i& \stackrel{\overline{f}_{V_{i+1}}}{\to} &(W^{i+1}/W^i)_c&\to 0
\end{array}\eeq Here the embedding  $E_i^\prime\hookrightarrow E_{i+1}^\prime$ is uniquely induced from the rest of the first two top rows of the diagram. The quotient maps connecting the second and third rows of the diagram are the ones making the columns  into short exact sequences. The last row, and in particular the map  $\overline{f}_{V_i}$,  is then  uniquely induced from the rest of the diagram to make it commutative. 

From Lemma~\ref{linear} we have  \beq\label{dimwi}\dim(W^{i+1}/W^i) =\left\{
\begin{array}{cc}1 & \mbox{ for } i< n-k \\ 0 & \mbox{ for } i\geq n-k \end{array}
\right.\eeq
Thus from the bottom row of \eqref{commutative} we obtain \beq \label{'quotients}E_{i+1}^\prime/E^\prime_i\cong\left\{
\begin{array}{cc}(E_{i+1}/E_i)(-c) & \mbox{ for } i< n-k \\ (E_{i+1}/E_i) & \mbox{ for } i\geq n-k \end{array}
\right.\eeq In particular, $E_{i+1}^\prime/E^\prime_i$ is a line bundle, and so $E^\prime_i\subset E^\prime_{i+1}$ is a subbundle. Thus we get a full filtration of $E^\prime$ by  subbundles: \beq \label{'filtration} E^\prime_0=0\subset E^\prime_1\subset\dots\subset E^\prime_n=E
\eeq

By construction it is compatible with $\Phi^\prime$ i.e.  \beq \label{'compatible}\Phi^\prime(E^\prime_i)\subset E^\prime_{i+1}K.\eeq Finally we will determine the maps $b^\prime_i:E^\prime_i/E^\prime_{i-1}\to E^\prime_{i+1}/E^\prime_i\otimes K$ induced from $\Phi^\prime$. We have the commutative diagram

\bes \begin{array}{ccccccc}   0\to & E_{i}^\prime/E^\prime_{i-1}&\to &E_{i}/E_{i-1}& \stackrel{\overline{f}_{V_{i}}}{\to} &(W^{i}/W^{i-1})_c&\to 0 \\ &b_i^\prime \downarrow &&b_i \downarrow &&\overline{b}_i\downarrow & \\   0\to & (E_{i+1}^\prime/E^\prime_i)\otimes K&\to &(E_{i+1}/E_i)\otimes K& \stackrel{\overline{f}_{V_{i+1}}}{\to} &(W^{i+1}/W^i)_c\otimes K&\to 0
\end{array}.\ees Here $\overline{b}_i$ is induced from $\Phi_c$. When $i>n-k$ then from \eqref{dimwi} we see that $\overline{b}_i$ maps between zero-dimensional spaces, which shows that \beq \label{bibip}b_i^\prime=b_i\in H^0(C;(E_{i}^\prime/E^\prime_{i-1})^*(E_{i+1}^\prime/E^\prime_i)K)=H^0(C;(E_{i}/E_{i-1})^*(E_{i+1}/E_i)K).
\eeq
When $i<n-k$ then $\overline{b}_i$ is induced from an isomorphism of $1$-dimensional vector spaces, and we get again the agreement \eqref{bibip}. 

Furthermore, when $i=n-k$ then $W^{i}/W^{i-1}$ is one-dimensional and $W^{i+1}/W^i=0$. This implies that $b_i^\prime=b_i s_c$ where $s_c\in H^0(C;\calO(c))\cong H^0(C;\Hom(\calO(-c), \calO))$ is the defining \eqref{ses} section vanishing at $c$. 

Finally by tracing the kernel of the induced maps on the fibres at $c$ in the diagram \eqref{commutative} and using \eqref{dimwi} we see that $V^\prime=(E^\prime_{n-k})_c<E^\prime_c$. Thus $\calH_{V^\prime}(E^\prime,\Phi^\prime)=(E(-c),\Phi )$.

We proved the following

\begin{proposition} \label{modifiedfilt}Let a Higgs bundle $(E,\Phi)$ carry a full \eqref{fullfiltration} and compatible \eqref{compatible} filtration, such that $b$ \eqref{be} does not vanish at $c$. Let $V<E_c$ be a  $k$-dimensional and $\Phi_c$-invariant subspace. Then the Hecke transformed Higgs bundle $\calH_V(E,\Phi)=(E^\prime,\Phi^\prime)$ carries an induced full filtration \eqref{'filtration} which is compatible with $\Phi^\prime$ \eqref{'compatible}. Furthermore we have \eqref{'quotients} and $b^\prime_i=b_i$ unless $i=n-k$ when $b^\prime_{n-k}=b_{n-k}s_c$ acquires a simple zero at $c$. Finally, the subspace of Remark~\ref{V'Higgs} is  $V^\prime=(E^\prime_{n-k})_c<E^\prime_c$. Thus $\calH_{V^\prime}(E^\prime,\Phi^\prime)=(E(-c),\Phi )$. 
\end{proposition}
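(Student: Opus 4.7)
The plan is to build everything fiber-by-fiber at $c$ using commutative diagrams of short exact sequences that relate the Hecke transform of $E$ at $V$ to the Hecke transforms of each $E_i$ at the induced subspace $V_i := V \cap E_i|_c$. The hypothesis $b(c)\neq 0$ will feed into the linear algebra step to pin down the dimensions of all intersections, after which the rest is essentially a diagram chase.

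First I would record the regularity statement: the non-vanishing of $b$ at $c$ says exactly that the flag $E_1|_c\subset\dots\subset E_n|_c$ together with $\Phi_c$ satisfies the hypothesis of Lemma~\ref{linear} (that is, $\Phi_c(E_i|_c)\subset E_{i+1}|_c$ but no $E_i|_c$ is $\Phi_c$-invariant). Applying the lemma to the $k$-dimensional $\Phi_c$-invariant subspace $V$ gives $\dim(V\cap E_i|_c)=\max(0,k+i-n)$, equivalently $\dim(W^{i+1}/W^i)$ is $1$ for $i<n-k$ and $0$ for $i\geq n-k$, which is the crucial numerical input.

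Next I would assemble the vertical column of Hecke-transform sequences in \eqref{commutative}. The inclusions $E_i\hookrightarrow E_{i+1}$ and $W^i\hookrightarrow W^{i+1}$ give a morphism of short exact sequences whose kernel row is
\[
0\to E'_i\to E'_{i+1}\to E'_{i+1}/E'_i\to 0,
\]
and the induced bottom row identifies $E'_{i+1}/E'_i$ with an extension of a skyscraper by $(E_{i+1}/E_i)(-c)$ of total length one. The numerical input above then forces the two cases of \eqref{'quotients}; in particular each $E'_{i+1}/E'_i$ is a line bundle, so each $E'_i\subset E'_{i+1}$ is a subbundle and we get the full filtration \eqref{'filtration}. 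Compatibility $\Phi'(E'_i)\subset E'_{i+1}K$ is automatic: $\Phi_c$ preserves $V_i$ because it preserves both $V$ and $E_i|_c$, so $\Phi$ restricts through the kernel $E'_i$.

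To compute the $b'_i$, I would apply the same diagram idea to the $\Phi$-induced map between the short exact sequences for $E_i/E_{i-1}$ and $(E_{i+1}/E_i)\otimes K$, which produces the commutative diagram of three rows relating $b'_i$, $b_i$, and the map $\overline{b}_i$ on $(W^i/W^{i-1})_c$. For $i\neq n-k$ the source and target quotient twists agree (either both $(-c)$ or neither), so $b'_i=b_i$ under the identifications. At $i=n-k$, by \eqref{dimwi} the source quotient $E'_{n-k}/E'_{n-k-1}$ is untwisted while the target $(E'_{n-k+1}/E'_{n-k})\otimes K$ acquires a factor of $\calO(-c)$; the only map making the diagram commute is $b'_{n-k}=b_{n-k}s_c$.

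The last claim is a direct fiber computation at $c$: from the rightmost column of \eqref{commutative} one reads that $E'_{n-k}|_c$ maps isomorphically onto $V$ under the inclusion $E'\hookrightarrow E$, so the subspace $V'$ of Remark~\ref{V'Higgs} is precisely $(E'_{n-k})_c$, and then $\calH_{V'}(E',\Phi')=(E(-c),\Phi)$ is just Remark~\ref{V'Higgs} applied to our $V$. The main bookkeeping hurdle is keeping track of the twists by $\calO(-c)$ consistently across the three rows and three columns, which is exactly what isolates the single anomalous index $i=n-k$ in the formula for $b'_i$.
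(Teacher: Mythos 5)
Your overall route is the paper's own: Lemma~\ref{linear} applied to the flag $E_\bullet|_c$ and $\Phi_c$ gives the dimension count \eqref{dimwi}, the Hecke transforms of the $E_i$ at $V_i=V\cap E_i|_c$ assemble into the diagram \eqref{commutative} whose bottom rows yield \eqref{'quotients} and hence the full filtration \eqref{'filtration}, and a further diagram computes the $b'_i$. However, three of your justifications are false as written, even though the conclusions you draw are the correct ones. First, for compatibility you assert that $\Phi_c$ preserves $E_i|_c$; this is exactly what the hypothesis $b(c)\neq 0$ rules out (you yourself invoked it to apply Lemma~\ref{linear}). What is true and what you need is $\Phi_c(V_i)\subset V_{i+1}\otimes K_c$, which follows since $V$ is $\Phi_c$-invariant and $\Phi_c(E_i|_c)\subset E_{i+1}|_c\otimes K_c$, so $\Phi_c(V\cap E_i|_c)\subset (V\cap E_{i+1}|_c)\otimes K_c$; this is what lets $\Phi$ carry $E'_i$ into $E'_{i+1}K$. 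Second, at the anomalous index you have the twists reversed: by \eqref{'quotients} the quotients $E'_j/E'_{j-1}$ with $j\leq n-k$ are the twisted ones, so the source $E'_{n-k}/E'_{n-k-1}\cong (E_{n-k}/E_{n-k-1})(-c)$ is twisted while the target $E'_{n-k+1}/E'_{n-k}\cong E_{n-k+1}/E_{n-k}$ is not; $b'_{n-k}$ is $b_{n-k}$ precomposed with $s_c:(E_{n-k}/E_{n-k-1})(-c)\hookrightarrow E_{n-k}/E_{n-k-1}$, whence $b'_{n-k}=b_{n-k}s_c$. With your assignment, $b'_{n-k}$ would be a section of $\Hom\bigl(E_{n-k}/E_{n-k-1},(E_{n-k+1}/E_{n-k})(-c)K\bigr)$, which would force $b_{n-k}(c)=0$ and contradict $b(c)\neq 0$.

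Third, the identification of $V'$: the claim that $(E'_{n-k})_c$ maps isomorphically onto $V$ under $E'\hookrightarrow E$ cannot hold, since $\dim (E'_{n-k})_c=n-k$ while $\dim V=k$, and in fact it maps to zero in $E_c$; moreover, even a subspace mapping isomorphically onto $V$ would be a complement of $V'=\ker(E'_c\to V)$, not $V'$ itself. The correct argument is the opposite one: for $i\leq n-k$ Lemma~\ref{linear} gives $V_i=0$, so $E'_{n-k}=E_{n-k}(-c)$, whose inclusion into $E$ factors through multiplication by $s_c$ and therefore vanishes on the fibre at $c$; hence $(E'_{n-k})_c\subset \ker(E'_c\to E_c)=V'$, and equality follows because both spaces have dimension $n-k$. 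With that, $\calH_{V'}(E',\Phi')=(E(-c),\Phi)$ is Remark~\ref{V'Higgs}. After these three repairs your argument coincides with the paper's proof.
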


\subsubsection{Very stable Higgs bundles via Hecke transformations}

For $\delta=(\delta_0,\delta_1,\dots,\delta_{n-1})\in J_\ell(C)\times C^{[m_1]}\times \dots \times C^{[m_{n-1}]}$ let $\calE_{\d}\in \M^{s\T}$ be a type $(1,...,1)$ fixed point of the $\T$-action constructed in Example~\ref{ex111}.

\begin{theorem} \label{mainverystable} Let $\calE_{\d} \in \M^{s\T}$ be a type $(1,...,1)$ fixed point of the $\T$-action. Then $\calE_{\d}$ is very stable if and only if $b=b_1\circ\dots \circ b_{n-1}\in H^0(C;L_0^*L_{n-1}K)$ has no repeated zero, i.e. if and only if the effective divisor $\div(b)=\delta_1+\dots+\delta_{n-1}$ is reduced. 
\end{theorem}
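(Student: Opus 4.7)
The plan is to reduce both directions to the Hitchin section via the Hecke transformation machinery of Subsection~\ref{heckeverystable}: the ``if'' direction propagates the no-zero base case via Hecke transformations, while the ``only if'' direction relies on a direct construction of non-trivial nilpotent elements in the upward flow.

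For the ``if'' direction, first dispose of the case $\div(b)=0$: every $m_i$ vanishes, $L_i\cong L_0\otimes K^{-i}$, and, up to twisting by $L_0$, $\calE_\delta$ is the canonical uniformising Higgs bundle $\calE_0$ of \eqref{uniform}. By Remark~\ref{hsvs}, $W^+_{\calE_\delta}$ is a Hitchin section, hence closed, so Lemma~\ref{transversal} yields very stability. To propagate, I would show that Hecke transformation preserves very stability. For $\calE\in\M^{s\T}$ of type $(1,\dots,1)$, a point $c\in C$ with $b(c)\ne 0$, and a $\Phi_c$-invariant subspace $V$ of dimension $k$, Proposition~\ref{modifiedfilt} identifies $\calH_V(\calE)$ as another type $(1,\dots,1)$ fixed point. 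Given any $(E'',\Phi'')\in W^+_\calE$, the $\T$-equivariant Rees family $\tilde E\to \C\times C$ from the proof of Proposition~\ref{filtrationexists} carries a $\T$-invariant family of subspaces extending $V$; Hecke-transforming the whole family yields a new $\T$-equivariant family whose fibre at $\lambda=1$ is the pointwise Hecke transform of $(E'',\Phi'')$ and whose fibre at $\lambda=0$ is $\calH_V(\calE)$. Hence Hecke maps $W^+_\calE\to W^+_{\calH_V(\calE)}$, invertibly by Remark~\ref{V'Higgs}, so closedness is preserved. Iterating Proposition~\ref{modifiedfilt} in reverse at each zero of $\div(b)$ reduces the reduced-divisor case to the no-zero case.

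For the ``only if'' direction, argue by contrapositive: assume $\div(b)$ has a repeated point. I would construct a nilpotent $(E',\Phi')\in W^+_{\calE_\delta}\setminus\{\calE_\delta\}$ by parameterising elements of $W^+_{\calE_\delta}$ via Proposition~\ref{filtrationexists}---a filtration on $E'$ with associated graded $\calE_\delta$ and a compatible Higgs field---and then writing the nilpotency condition $(\Phi')^{n}=0$ as a system of polynomial equations on the free parameters (the diagonal and above-diagonal entries in a local splitting, with subdiagonal fixed to the $b_i$). The trivial solution recovers $\calE_\delta$; the repeated-zero assumption on $\div(b)$ should allow non-trivial solutions producing Higgs bundles not isomorphic to $\calE_\delta$. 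Verifying that such solutions lie in $W^+_{\calE_\delta}$ amounts to checking the graded and filtration conditions of Proposition~\ref{filtrationexists}.

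The main obstacle is the ``only if'' direction. The ``if'' argument is essentially bookkeeping once the extension of Hecke to Rees families is established, which is a routine if tedious check. For ``only if'', the delicate algebraic task is translating the repeated-zero condition on $\div(b)$ into the existence of non-trivial solutions of the nilpotency equations, and verifying that the resulting Higgs bundles genuinely lie in $W^+_{\calE_\delta}$ rather than in an adjacent upward flow---the latter requires tracking the $\T$-flow limit $\lambda\to 0$ to confirm it reproduces $\calE_\delta$ exactly.
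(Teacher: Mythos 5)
Both directions of your plan have gaps, and the more serious one is the key lemma of your ``if'' direction. You propose to Hecke-transform \emph{every} point of $W^+_\calE$ at a family of subspaces extending the $\Phi_c$-invariant $V$ at a point $c$ with $b(c)\neq 0$, obtaining an invertible map $W^+_\calE\to W^+_{\calH_V(\calE)}$ and thereby transferring closedness from the Hitchin-section case. No such map exists. At the fixed point $\Phi_c$ is regular nilpotent and $V$ is its unique $k$-dimensional invariant subspace, but at a general point $(E'',\Phi'')$ of the flow the induced filtration has graded maps equal to the $b_i$, which do not vanish at $c$; so by Lemma~\ref{linear} every $\Phi''_c$-invariant $k$-plane is transverse to the filtration (in particular $(E_k)_c$ is \emph{not} invariant), and there is no canonical single-valued family of invariant subspaces over the flow --- generically $\Phi''_c$ is regular semisimple with ${n\choose k}$ invariant $k$-planes. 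A bijection is numerically impossible anyway: Hecke transformation preserves the characteristic polynomial (Remark~\ref{alonghitchin}), the Hitchin section $W^+_{\calE_0}$ meets a generic fibre in one point, while the upward flow of its Hecke transform meets it in ${n\choose k}$ points (Proposition~\ref{vsintersect}, Corollary~\ref{mult111}). The paper's argument is structured precisely to avoid this: one Hecke-transforms only the \emph{nilpotent} points of $W^+_{\calE_\delta}$, and only at a zero $c$ of $b_k$, where $(E_k)_c$ \emph{is} automatically $\Phi_c$-invariant; this sends them into $W^+_{\calE_{\delta'}}$ with $\deg b'=\deg b-1$ (stability checked as in Lemma~\ref{stablemma}), induction forces the image to be $\calE_{\delta'}$ itself, and then, because $b'(c)\neq 0$ makes $\Phi'_c$ regular nilpotent, the reverse Hecke subspace is \emph{unique}, recovering $(E(-c),\Phi)\cong\calE_{\delta-c}$ and hence $(E,\Phi)\cong\calE_\delta$. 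That uniqueness at the regular-nilpotent stage is the rigidity your sketch does not supply.

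Your ``only if'' direction is likewise missing its main idea: the assertion that a repeated zero of $b$ ``should allow'' nontrivial solutions of the nilpotency equations is exactly the statement to be proved, and no mechanism is given; one must also show the resulting Higgs bundle is not isomorphic to $\calE_\delta$, not merely that it limits to it. The paper's construction is concrete: Hecke-transform $\calE_\delta$ once at the multiple zero $c$ so that the new $b'$ still vanishes at $c$; then $\Phi'_c$ is a non-regular nilpotent, the variety $S_{n-k}(\Phi'_c)$ of invariant $(n-k)$-planes is projective, connected and contains more than one point, and a $\T$-orbit closure in it yields a Hecke curve --- a $\P^1$ of stable nilpotent Higgs bundles in $W^+_{\calE_{\delta-c}}$ joining $\calE_{\delta-c}$ to a different $\T$-fixed point --- so $\calE_{\delta-c}$, hence $\calE_\delta$, is wobbly. (Alternatively, when $b$ has a repeated zero the count of Proposition~\ref{vsintersect} falls below the multiplicity of Theorem~\ref{multiplicity}, contradicting the transversal-cardinality statement for very stable points.) Either route needs an input --- connectedness of the invariant-subspace variety, or the fibrewise intersection count --- that your outline does not provide.
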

\begin{proof} First we prove the if part. The proof is by induction on $\deg(b)$. When $b$ has no zeroes, then $\calE_{\d}\cong \calE_{L_0,0}$ are the uniformising Higgs bundles from Remark~\ref{hitchinsection}. Thus $W^+_{\calE_{\d}}$ is a Hitchin section. In particular $\calE_{\d}$ is its intersection with the nilpotent cone $h^{-1}(0)$ (c.f. Remark~\ref{hsvs}). Thus $\calE_{\d}$ is very stable when $\deg(b)=0$. 
	
	Let $\calE_{\d} \in \M^{s\T}$ be such that $b$ has no repeated zero, but at least one zero say $c\in C$ with $b(c)=0$. Assume that it is $b_k(c)=0$. Then $V:=(L_0\oplus \dots \oplus L_{k-1})_c<E_c$ is $\Phi_c$ invariant. We see from Example~\ref{invarianthecke} that $\calH_{V}(\calE_{\d})=\calE_{\d^\prime}$ where $\d^\prime_i=\d_i$ i.e. $b^\prime_i=b_i$ except if  $i=k$ when  $b^\prime_k=b_k/s_c$ i.e. $\d^\prime_k=\d_k-c$. In particular, $b^\prime$ does not vanish at $c$. We will make frequent use of the following
	\begin{lemma}\label{stablemma} Let $\calE_{\d} \in \M^{s\T}$ such that $b_k(c)=0$. Let $V:=(L_0\oplus \dots \oplus L_{k-1})_c<E_c$. Then  $\calE_{\d^\prime}:=\calH_{V}(\calE_{\d})$ has the chain form
$$L_0\stackrel{b_1}{\rightarrow}{L_1}\stackrel{b_2}{\rightarrow}\cdots \stackrel{b_{k-1}}{\rightarrow}L_{k-1} \stackrel{b_k/s_c}{\rightarrow} L_k(-c)\stackrel{b_{k+1}}{\rightarrow}\cdots \stackrel{b_{n-1}}{\rightarrow}{L_{n-1}(-c)}$$ and it is a stable Higgs bundle.
	\end{lemma}
	\begin{proof} As by assumption $\calE_{\d}$ is stable all proper $\Phi$-invariant subbundles have slope less than the slope of $E$. As the  only $\Phi$-invariant subbundles are $L_i\oplus \dots \oplus L_{n-1}\subset E$, we have for every $0\leq i \leq n-1$ the following inequality  $$\frac{\ell_i+\dots+\ell_{n-1}}{n-i}<\frac{\ell_0+\dots+\ell_{n-1}}{n}.$$ To see that $\calE_{\d^\prime}$ is stable we check that \bes \frac{\ell^\prime_i+\dots+\ell^\prime_{n-1}}{n-i}=\frac{\ell_i+\dots+\ell_{n-1}-\min(n-i,n-k)}{n-i}&<&\frac{\ell_0+\dots+\ell_{n-1}}{n}-\frac{\min(n-i,n-k)}{n-i}\\
		< \frac{\ell^\prime_0+\dots+\ell^\prime_{n-1}+n-k}{n}-\frac{\min(n-i,n-k)}{n-i}	
		&<& \frac{\ell^\prime_0+\dots+\ell^\prime_{n-1}}{n} \ees 
		
		because $$\frac{n-k}{n}\leq \frac{\min(n-i,n-k)}{n-i}.$$
	\end{proof}
	
	Assume now that we have a nilpotent Higgs bundle $\calE=(E,\Phi)\in W^+_{\calE_{\d}}$. From Proposition~\ref{filtrationexists} let $E_1\subset\dots\subset E_n=E$ be the $\Phi$-invariant filtration  inducing $\lim_{\lambda\to 0}\lambda\cdot \calE=\calE_{\d}$. Let $V:=(E_{k})_c<E_c$, which by the assumption of $b_k(c)=0$ is $\Phi_c$-invariant. As we saw in the previous subsection $\calH_V(E,\Phi)=(E^\prime,\Phi^\prime)$ is nilpotent in the upward flow of $\calH_{V_{1k}}(\calE_{\d})=\calE_{\d^\prime}$, because $\calE_{\d^\prime}$ is stable by Lemma~\ref{stablemma}. But $\deg(b^\prime)=\deg(b)-1$ and so by induction we can assume that $\calE_{\d^\prime}$ is very stable and hence the nilpotent $(E^\prime,\Phi^\prime)$ in its upward flow should be isomorphic to it: $(E^\prime,\Phi^\prime)\cong \calE_{\d^\prime}$. Now $b^\prime(c)\neq 0$ therefore $\Phi^\prime_c$ is regular nilpotent. Hence there is a unique $\Phi^\prime_c$-invariant $(n-k)$-dimensional subspace of $E^\prime$, namely $V^\prime_{k,n-1}=(L_{k})_c\oplus \dots\oplus (L_{n-1})_c$. The Hecke transform of $(E^\prime,\Phi^\prime)\cong \calE_{\d^\prime}$ at this subspace gives $(E(-c),\Phi)\cong\calE_{\delta-c}$, where $$\delta-c:=(\delta_0-c,\delta_1,\dots,\delta_{n-1}).$$ This shows that $\calE_{\d}$ is also very stable. This proves the ``if" part of the theorem.
	
	For the only if part, when $b$ has repeated zeroes, the strategy will be to produce a {\em Hecke curve} \cite{NR} - a one-parameter family of Higgs bundles produced as Hecke transforms of a fixed Higgs bundle -   in the intersection of the nilpotent cone and the upward flow $W^+_{\calE_{\d}}$ originating at $\calE_{\d}$. 
	
	Take a multiple zero $c$ of $b$. Assume that $b_k(c)=0$. Then $V:=(L_0\oplus \dots \oplus L_{k-1})_c<E_c$ is $\Phi_c$-invariant. The Hecke transformed Higgs bundle from Example~\ref{invarianthecke} $\calH_{V}(\calE_{\d})=\calE_{\d^\prime}=(E^\prime,\Phi^\prime)$ has  $b^\prime(c)=0$ otherwise $\calH_{V^\prime}(\calE_{\d^\prime})$ would have $b(c)=0$ a simple zero from Proposition~\ref{modifiedfilt}. 
	
	Thus $b^\prime(c)=0$ and as before in Lemma~\ref{stablemma} $\calE_{\d^\prime}$ is still stable. Now however the nilpotent $\Phi^\prime_c$ is no longer regular due to $b^\prime(c)=0$. This implies that, besides $V^\prime=(L^\prime_{k}\oplus \dots \oplus L^\prime_{n-1})_c$,  
	there is more than one $\Phi^\prime_c$-invariant $n-k$-dimensional subspace in $E^\prime_c$.  By \cite[Theorem 6]{shayman} the subvariety $S_{n-k}(\Phi^\prime_c)\subset Gr_{n-k}(E^\prime_c)$  of $\Phi^\prime_c$-invariant $(n-k)$-dimensional subspaces in the Grassmannian  of $(n-k)$-planes in $E^\prime_c$ is connected. The connectedness of $S_{n-k}(\Phi^\prime_c)$ also follows from \cite[Theorem 6.2]{horrocks} as one can think of $S_{n-k}(\Phi^\prime_c)$ as the support of the fixed point scheme of the unipotent group action of $\G_a\cong \{\exp(t \Phi^\prime_c)\}$ on the Grassmannian $Gr_{n-k}(E^\prime_c)$.
	
	We will find a curve in $ S_{n-k}(\Phi^\prime_c)$, leading to a Hecke curve in $\M$ by studying a natural $\T$-action on the projective variety $S_{n-k}(\Phi^\prime_c)$. We note that $\calE_{\d^\prime}=(E^\prime, \Phi^\prime)$ is $\T$-invariant, meaning that \beq \label{isom}(E^\prime, \Phi^\prime)\cong (E^\prime, \lambda \Phi^\prime)\eeq for all $\lambda\in \C$. The isomorphism in \eqref{isom} is induced by an automorphism of $(E^\prime,\Phi^\prime)$. Let $\T$ act on $E^\prime=L^\prime_0\oplus \dots \oplus L^\prime_{n-1}$ with weight $-i$ on $L^\prime_i$. This $\T$-action induces the weight $-1$ action of $\T$ on $\Phi^\prime$, showing \eqref{isom}. Restricting this $\T$-action to $E^\prime_c$ we get an action of $\T$ on $Gr_{n-k}(E^\prime_c)$. We note that $\Phi^\prime_c(\lambda\cdot v)= \lambda\cdot \Phi^\prime_c(v)$ and thus if $V\subset E^\prime_c$ is a $\Phi_c$-invariant subspace then $\lambda\cdot V$ is also $\Phi_c$-invariant. Therefore this $\T$-action leaves $S_{n-k}(\Phi^\prime_c)$ invariant. We have $S_{n-k}(\Phi^\prime_c)^\T\subset Gr_{n-k}(E^\prime_c)^\T$.  Fixed points of the $\T$-action on $Gr_{n-k}(E^\prime_c)$ are of the form $(L_{i_1}\oplus \dots \oplus L_{i_{n-k}})_c$ for any $(n-k)$-element subset $\{i_1,\dots,i_{n-k} \}\subset \{0,\dots,n-1\}$. We  have $$V^\prime=(L^\prime_{k},\dots, L^\prime_{n-1})_c\in S_{n-k}(\Phi^\prime_c)^\T\subset Gr_{n-k}(E_c^\prime).$$ By Remark~\ref{V'Higgs} we have that $\calH_{V^\prime}(E^\prime,\Phi^\prime)=(E(-c),\Phi)=\calE_{\delta-c}$. 

 As $S_{n-k}(\Phi^\prime_c)$ is projective, connected and contains more than one point, there is $V^\prime\neq V\in S_{n-k}(\Phi^\prime_c)$ such that $\lim_{\lambda\to 0} \lambda V=V^\prime$. Let $$V_\infty:=\lim_{\lambda\to \infty} \lambda\cdot V\in S_{n-k}(\Phi^\prime_c)^\T.$$ As the $\T$-action on $S_{n-k}(\Phi^\prime_c)$ is linear we see that  $V^\prime\neq V_\infty$. Let $V_\infty=(L_{i_1}\oplus \dots \oplus L_{i_{n-k}})_c$ for some \beq\label{subset}\{k,\dots, n-1\}\neq \{i_1,\dots,i_{n-k} \}\subset \{0,\dots,n-1\}.\eeq We thus have a map  $\P^1\to S_{n-k}(\Phi^\prime_c)$ sending $0$ to $V^\prime$, $\infty$ to $V_\infty$ and $\lambda \in \T$ to $\lambda\cdot V$. We will denote the corresponding subspaces $V_t$ for $t\in \C \cup {\infty}$. 
	
	First we need the following 
	
	\begin{lemma} $\calH_{V_\infty}(\calE_{\d^\prime})$ is a $\T$-fixed stable Higgs bundle. 
	\end{lemma}
	\begin{proof} Let $\calH_{V_\infty}(\calE_{\d^\prime})=(E^{\prime\prime},\Phi^{\prime\prime})$. Then $E^{\prime\prime}\cong L_0^{\prime\prime}\oplus \dots \oplus L_{{n-1}}^{\prime\prime}$ where $L_i^{\prime\prime}\cong L_i^\prime$ when $i\in\{i_1,\dots,i_{n-k}\}$ and $L_i^{\prime\prime}\cong L_i^\prime(-c)$ when $i\notin\{i_1,\dots,i_{n-k}\}$. Thus 
		\beq\label{lpp}  \ell_i^{\prime\prime}:=\deg(L_i^{\prime\prime} ) =\left\{ \begin{array}{cc} \ell_i^\prime & \mbox{ when } i\in\{i_1,\dots,i_{n-k}\} \\ \ell_i^\prime-1 & \mbox{ when } i\notin\{i_1,\dots,i_{n-k}\}  
		\end{array} 
		\right. \eeq
		The only $\Phi^{\prime\prime}$-invariant subbundles of $E^{\prime\prime}$ are of the form $L_i^{\prime\prime}\oplus \dots \oplus L_{n-1}^{\prime\prime}$. Thus to prove stability we check \bes  \frac{\ell^{\prime\prime}_i+\dots+\ell^{\prime\prime}_{n-1}}{n-i}\leq\frac{\ell^\prime_i+\dots+\ell^\prime_{n-1}-\max(0,k-i)}{n-i}&=&\frac{\ell_i+\dots+\ell_{n-1}-(n-i)}{n-i}  \\
		&<&  \frac{\ell_0+\dots+\ell_{n-1}-n}{n}=
		\frac{\ell^{\prime\prime}_0+\dots+\ell^{\prime\prime}_{n-1}}{n}
		\ees
		
	\end{proof}
	
	Let $V\in S_{n-k}(E_c)$ and denote $V_\lambda:=\lambda\cdot V$. Denote by $f_\lambda:E^\prime\to E^\prime$ the automorphism induced by our $\T$-action on $E^\prime$ as in \eqref{fixed}. Finally identify  $(K)_c\cong \C$.   We now have the commutative diagram:

	$$\begin{tikzcd}[column sep={between origins,5em}]
	& E^{\prime}_VK \ar{rr}
	& & E^\prime K \ar{rr} & & (E_c/V)_c \\
	E^{\prime}_V \ar{ur}{\lambda \Phi^{\prime}_V} \ar[crossing over]{rr} \ar{dd}[swap,near end]{f^{\prime\prime}_\lambda} 
	& & E^{\prime} \ar{ur}{\lambda \Phi^\prime}\ar{rr} & & (E_c/V)_c \ar{ur}{\lambda \overline{\Phi}^\prime_c}& \\
	& E^{\prime}_{V_\lambda}K \ar[leftarrow]{uu}[near start]{f^{\prime\prime}_\lambda}  \ar{rr} \ar[leftarrow, swap]{dl}{  \Phi^{\prime}_{V_\lambda} }
	& & E^\prime K \ar[leftarrow, swap]{dl}{ \Phi^\prime} \ar{rr} \ar[leftarrow, near start]{uu}{f^\prime_\lambda}& & (E_c/V_\lambda)_c \ar[leftarrow,near start]{uu}{\overline{f}^\prime_{\lambda c}}\\
	E^\prime_{V_\lambda} \ar{rr} & & E^\prime \ar[leftarrow, near start]{uu}{f^\prime_\lambda} \ar{rr} & & (E_c/V_\lambda)_c \ar[leftarrow, near start]{uu}{\overline{f}^\prime_{\lambda c}} \ar{ur}{ {\overline{\Phi}^\prime_c}} &
	\end{tikzcd}.$$
	
	The map $f_\lambda^{\prime\prime}:E_V^\prime\to E^\prime_{V_\lambda}$ is defined uniquely so as to make the diagram commutative.  The diagram shows that $f_\lambda^{\prime\prime}$ induces $(E^\prime_{V_\lambda},\Phi^\prime_{V_\lambda})\cong (E^\prime_V,\lambda\cdot \Phi^\prime_V)$. 
	
	Therefore   $\calH_{ V_t}(\calE_{\d})$ is stable for all $t$ since stability is an open condition for Higgs bundles (c.f. \cite[Lemma 3.7]{simpson1} or \cite[Proposition 3.1]{nitsure}). Finally, we note that $  \calH_{ V_0}(\calE_{\d})\cong \calE_{\d-c} \not\cong \calH_{ V_\infty}(\calE_{\d})$ because there exists an $i$ such that $\deg(L_i^{\prime\prime})\neq \deg(L_i(-c))$ because of \eqref{subset} and \eqref{lpp}. Thus we found a $\T$-invariant family of nilpotent stable Higgs bundles in $\M^s$ parametrized by $\bP^1$ connecting  $\calE_{\d-c}$ and $\calH_{ V_\infty}(\calE_{\d})$. Thus $W^+_{\calE_{\d-c}}\cap h^{-1}(0)$ is not trivial. Therefore $\calE_{\d-c}$ and so $\calE_{\d}$ is not very stable. The  theorem follows. 
	
\end{proof}

\begin{corollary} \label{type111} There is a very stable Higgs bundle in every type $(1,\dots,1)$ component $F_{\mm}\in \pi_0(\M^{s\T})$ from \eqref{type111fix}. In fact, they form a dense open subset. 
\end{corollary}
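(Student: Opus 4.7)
The plan is to deduce Corollary~\ref{type111} directly from the classification in Theorem~\ref{mainverystable}. By \eqref{type111fix}, each type $(1,\dots,1)$ component $F_{\mm} \in \pi_0(\M^{s\T})$ is identified with the product $J_\ell(C) \times C^{[m_1]} \times \dots \times C^{[m_{n-1}]}$, and every point of this product corresponds to a stable Higgs bundle $\calE_\delta$ of the shape described in Example~\ref{ex111}. According to Theorem~\ref{mainverystable}, such a $\calE_\delta$ is very stable if and only if the effective divisor $\delta_1 + \dots + \delta_{n-1}$ on $C$ is reduced.

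I would then verify that this condition defines a Zariski-open subset of $F_{\mm}$. Setting $m = m_1 + \dots + m_{n-1}$, the locus of reduced divisors inside $C^{[m]}$ is the complement of the image of the big diagonal and hence Zariski-open. Composing the natural addition morphism $C^{[m_1]} \times \dots \times C^{[m_{n-1}]} \to C^{[m]}$ with the projection $F_{\mm} \to C^{[m_1]} \times \dots \times C^{[m_{n-1}]}$, the preimage of the reduced locus is open in $F_{\mm}$, and by Theorem~\ref{mainverystable} it coincides with the set of very stable Higgs bundles in $F_{\mm}$.

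For non-emptiness I would pick any $m$ distinct points on $C$ and distribute them arbitrarily into effective divisors $\delta_1, \dots, \delta_{n-1}$ of the prescribed degrees, together with any $\delta_0 \in J_\ell(C)$. The supports of the $\delta_i$ are then pairwise disjoint and each $\delta_i$ is reduced, so the sum is reduced, and Theorem~\ref{mainverystable} produces a very stable $\calE_\delta \in F_{\mm}$. Since $F_{\mm}$ is a product of irreducible varieties (the Jacobian together with the symmetric powers of $C$), it is irreducible, so any nonempty Zariski-open subset is automatically dense. As the decisive input (Theorem~\ref{mainverystable}) is already established, no substantive obstacle remains: the corollary is a transparent translation of the divisor-theoretic criterion into a genericity statement inside each component $F_{\mm}$.
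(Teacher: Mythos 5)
Your proposal is correct and follows essentially the same route as the paper: both deduce the corollary from Theorem~\ref{mainverystable} by observing that the very stable locus in $F_{\mm}$ is the complement of the locus where $\div(b)=\delta_1+\dots+\delta_{n-1}$ has a repeated point, hence open, and nonempty (so dense by irreducibility). You merely spell out the openness, non-emptiness and density steps that the paper leaves implicit.
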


\begin{proof} From Theorem~\ref{mainverystable} the very stable locus in $F_{\mm}$ are Higgs bundles $\calE_{\d}$ where $b$ has no repeated zero. Thus they form the complement of some divisor. The result follows. 
\end{proof}

\subsubsection{Example in rank $2$}

Let $L_0$ and $L_1$ be line bundles on $C$ and $E=L_0\oplus L_1$ together with a lower triangular Higgs field: $\Phi=\left(\begin{array}{cc}0&0\\b&0\end{array} \right)$ with $b=b_1\in H^0(C;L_0^*L_1K)$. Assume that $\deg(b)<2g-2$ so that $(E,\Phi)$ is stable. Suppose further that $b=s_c^2 b^{\prime\prime}$ has a double zero at $c\in C$ where $b^{\prime\prime}\in H^0(C;L^*_0L_1K(-2c)).$ Let $V:=(L_0)_c< E_c$. It is $\Phi_c$-invariant as $\Phi_c=0$. We have $$E^\prime=L_0^\prime\oplus L_1^\prime=L_0\oplus L_1(-c)$$ and $V^\prime=V_0= (L_1^\prime)_c$. As $b^\prime=s_c b^{\prime\prime}$ still vanishes at $c$, $\Phi^\prime_c=0$. Thus $S_{1}(\Phi^\prime_c)=\P(E_c)$ the whole $\P^1$. Let $V_\infty=(L_0)_c<E^\prime_c$, which is $\Phi^\prime_c$-invariant. Fix basis vectors $\langle v_1\rangle=(L^\prime_1)_c$ and $\langle v_2 \rangle=(L^\prime_2)_c$ then let $V_t:=\langle tv_1+v_2 \rangle.$ 
We obtain $\lim_{t\to 0} V_t=V^\prime$ and $\lim_{t\to \infty} V_t=V_\infty$. Then we see that $\calH_{V^\prime}(E^\prime,\Phi^\prime)=(E(-c),\Phi)$ and  $\calH_{V_\infty}(E^\prime,\Phi^\prime)=(E^{\prime\prime},\Phi^{\prime\prime})$. Here $$E^{\prime\prime}=L_0^{\prime\prime}\oplus L_1^{\prime\prime}=L_0\oplus L_1(-2c)$$ and $\Phi^{\prime\prime}=\left(\begin{array}{cc}0&0\\b^{\prime\prime}&0\end{array}\right).$ For $t\neq 0,\infty$ the Hecke transforms $E^\prime_{V_t}$ are no longer direct sums but extensions \beq \label{firstext}0\to L_0(-c)\to E^\prime_{V_t}\to L_1(-c)\to 0,\eeq or \beq \label{secondext}0\to L_1(-2c) \to E^\prime_{V_t}\to L_0\to 0.\eeq The first one corresponds to  the modification of $E^\prime_1=L_0^\prime=L_0$ in $E^\prime_{V_t}$. In other words \eqref{firstext} is the modified filtration of Proposition~\ref{modifiedfilt}. This shows that $$\lim_{\lambda\to 0}\lambda\cdot \calH_{V_t}(E^\prime,\Phi^\prime) = (E(-c),\Phi).$$ In particular, as the latter is stable so is $\calH_{V_t}(E^\prime,\Phi^\prime)$.  

The second extension \eqref{secondext} is induced from $L_1(-2c)\subset E^\prime_{V_t}$ the Hecke modification of the subbundle $L_1^\prime\cong L_1(-c)\subset E^\prime$. As $\Phi^\prime$ was trivial on $L_1^\prime$ the modification $\Phi^\prime_{V_t}$ will be trivial on $L_1(-2c)$. Thus $\Phi^\prime_{V_t}$ will be given by projection to $L_0$ in \eqref{secondext} followed by $b^{\prime\prime}:L_0\to L_1(-2c)K$. As in Proposition~\ref{downwardfiltration} this shows that $$\lim_{\lambda\to \infty} \lambda\cdot \calH_{V_t}(E^\prime,\Phi^\prime) =(E^{\prime\prime},\Phi^{\prime\prime}). $$

Thus $\calH_{V_t}(E^\prime,\Phi^\prime)$ is a $
\T$-equivariant family of stable nilpotent Higgs bundles parametrized by $t\in \P^1=\C\cup \infty$, connecting $\calH_{V_0}(E^\prime,\Phi^\prime)=(E(-c),\Phi)$ and $\calH_{V_\infty}(E^\prime,\Phi^\prime)=(E^{\prime\prime},\Phi^{\prime\prime})$.

This shows that $(E(-c),\Phi)$ and thus $(E,\Phi)$ are not very stable. 

\subsubsection{Hecke transforms of Lagrangians}
We have seen above how the different  upward flows of  type $(1,\dots,1)$ very stable Higgs bundles are related by Hecke transforms. We also showed they were Lagrangian in Proposition~\ref{lagrangian}. It is a general fact that the Hecke transform takes Lagrangians to Lagrangians, as we show next.

Simpson constructed \cite[Theorem 4.10]{simpson1} the fine moduli space $\calR^s$ of  stable rank $n$ degree $d$ Higgs bundles, framed at the point $c\in C$. The group $\GL_n$ acts on the framing with quotient  $\M^s$ so that $\calR^s\to \M^s$ is a $\PGL_n$-principal bundle, associated to the projective universal bundle $\P(\E_c)$ restricted to $c$. Let $H_k\subset \GL_n$ be the stabilizer of $\C^k<\C^n$ - a maximal parabolic subgroup of $\GL_n$. The quotient  $\GL_n/H_k$ is isomorphic to the Grassmannian of $k$-planes in $\C^n$ and we can construct the Grassmannian bundle \beq \label{grassmann} Gr_k(\E_c):=\calR^s/H_k\eeq over $\M^s$. 

By identifying $K_c\cong \C$, \'etale locally the universal Higgs field $\bPhi_c$ restricted to $c$ gives an endomorphism $\bPhi_c\in H^0(\M^s;\End(\E_c))$. We will denote the set of $\Phi_c$-invariant $k$-dimensional subspaces of $E_c$ by $$\calH^s:=\{V< \E_{((E,\Phi),c)} | \mbox{ s.t. } 
(E,\Phi)\in \M^s, \dim(V)=k \mbox{ and } \Phi_c(V)\subset V \}\subset Gr_k(\E_c).$$ The subset $\calH^s$ has the structure of a variety, as \'etale-locally in $\M^s$ we can assume that  some $\lambda\in\C$ is never an eigenvalue of $\Phi_c$ in other words $\det(\Phi_c-\lambda)\neq 0$. That means that $\bPhi_c-\lambda$ is invertible and thus acts on $\Gr_k(\E_c)$ and  $\calH^s$ in this \'etale neighbourhood is the fixed point variety of a morphism. By construction \beq\label{proper}\pi:\calH^s\to \M^s\eeq is a proper map.

A point of $\calH^s$ is represented by a pair $((E,\Phi),V) \in \calH^s$  of a Higgs bundle $(E,\Phi)$ and $k$-subspace $V\in Gr_k(E_{c})$ such that $\Phi_c(V)\subset V$.   We also consider ${}^{s}\calH^s\subset \calH^s$ the subset of points $((E,\Phi),V)\in \calH^s$ where $V\in Gr_k(E_{c})$ such that $\calH_{V}(E,\Phi)$ is stable. As stability is an open condition for Higgs bundles ${}^{s}\calH^s\subset \calH^s$ is an open subset. Further we define $\M^\#\subset\M$ the open subset where the spectral curve is smooth and the Higgs field $\Phi_c$ at the point $c\in C$ has distinct eigenvalues. Then we set $\calH^\#:=\pi^{-1}(\M^\#)\subset \calH^{s}$ an open subset.  The Hecke transform does not change the spectral curve, and when it is smooth the Higgs bundle is automatically stable. Thus $\calH^\#\subset {}^{s}\calH^s$. 

By abuse of notation we will still denote $\pi:\calH^\# \to \M^\#$ the restriction of \eqref{proper}. If  $\M=\M_{n,d}$ then we denote $\M^\prime:=\M_{n}^{d-k}$ so that $\calH_V(E,\Phi)\in \M^{\prime s}$ when $((E,\Phi),V)\in {}^{s}\calH^s$.  This defines $\pi^\prime:\calH^\#\to \M^{\prime\#}$.  We have the following
\begin{proposition}\label{heckeinjective} The map
	$(\pi^\prime,\pi):\calH^\#\to \M^{\prime \#}\times  \M^\#$ is injective. 
\end{proposition}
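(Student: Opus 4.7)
The plan is to reduce the injectivity statement, via the BNR correspondence, to a uniqueness question for effective representatives of a divisor class on the spectral curve. Suppose two points $((E_1,\Phi_1),V_1)$ and $((E_2,\Phi_2),V_2)$ of $\calH^\#$ map to the same image. From the $\pi$-factor we get $(E_1,\Phi_1) \cong (E_2,\Phi_2)$ in $\M^\#$, and an isomorphism transports $V_2$ to a subspace of the fibre of $E_1$ at $c$. Since the spectral curve $C_a$ is smooth and hence integral, the Higgs bundle $(E,\Phi)$ is simple with $\Aut(E,\Phi) = \Gm$, which acts trivially on subspaces of $E_c$. So the problem reduces to showing that two $\Phi_c$-invariant $k$-dimensional subspaces $V_1, V_2 \subset E_c$ with $\calH_{V_1}(E,\Phi) \cong \calH_{V_2}(E,\Phi)$ must be equal.

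Since $\Phi_c$ has distinct eigenvalues on $\M^\#$, every $\Phi_c$-invariant $k$-dimensional subspace is a direct sum of eigenspaces and is therefore indexed by a $k$-subset $I \subseteq \{1,\dots,n\}$. Letting $U$ be the line bundle on $C_a$ with $\pi_{a,*}U = E$, the fibre $E_c$ decomposes as $\bigoplus_i U_{p_i}$ over the distinct points $p_1,\dots,p_n \in \pi_a^{-1}(c)$. By Remark~\ref{alonghitchin}, the Hecke transform $\calH_{V_I}(E,\Phi)$ corresponds to $U(-D_I)$ for an effective divisor $D_I$ canonically built from $I$ and supported on $\{p_1,\dots,p_n\}$. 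The hypothesis therefore forces $D_{I_1} \sim D_{I_2}$ in $\Pic(C_a)$, and the task reduces to showing that distinct $I$'s give distinct divisor classes.

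The key input is the cohomological estimate $h^0(C_a,\calO(D_I)) = 1$, which forces $D_I$ to be the unique effective representative of its linear equivalence class. Since $D_I \leq p_1 + \cdots + p_n = \pi_a^* c$, one has $\calO_{C_a}(D_I) \hookrightarrow \pi_a^*\calO_C(c)$; applying the projection formula together with the standard spectral-curve identity $\pi_{a,*}\calO_{C_a} \cong \bigoplus_{i=0}^{n-1} K^{-i}$ gives
\[
h^0(C_a,\pi_a^*\calO_C(c)) = h^0(C,\calO_C(c)) + \sum_{i=1}^{n-1} h^0(C, K^{-i}(c)).
\]
For $g > 1$ we have $h^0(\calO_C(c)) = 1$ (Abel--Jacobi is injective), while $\deg K^{-i}(c) = 1 - i(2g-2) < 0$ for every $i \geq 1$ kills all other summands. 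Hence $h^0(\pi_a^*\calO_C(c)) = 1$, and since $\calO_{C_a} \subset \calO(D_I) \subset \pi_a^*\calO_C(c)$ one concludes $h^0(\calO(D_I)) = 1$, from which $D_{I_1} \sim D_{I_2}$ gives $D_{I_1} = D_{I_2}$ and thus $V_1 = V_2$.

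The main point requiring care is pinning down the BNR/Hecke dictionary at the chosen fibre so that the assignment $I \mapsto D_I$ is a clean injection from $k$-subsets to effective divisors on $\pi_a^{-1}(c)$; once that translation is in place, the cohomological step is short and the conclusion is formal transport through the correspondence.
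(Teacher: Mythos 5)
Your proof is correct, and its overall skeleton matches the paper's: reduce via the BNR correspondence to the statement that the finitely many $\Phi_c$-invariant $k$-dimensional subspaces (sums of eigenspaces, since $\Phi_c$ is regular semisimple on $\M^\#$) give pairwise distinct divisor classes $D_I$ on $C_a$, and observe that this follows once $h^0(C_a,\calO(D_I))=1$, because then $D_I$ is the unique effective divisor in its class. Where you genuinely diverge is in the proof of that key cohomological input. The paper argues dually: by Riemann--Roch and Serre duality, $h^0(\calO(D))=1$ is equivalent to surjectivity of the restriction $H^0(C_a,K_{C_a})\to H^0(D,K_{C_a})$, which is then checked by hand using the adjunction isomorphism $K_{C_a}\cong\pi_a^*K^n$, the explicit description of sections as $a_0+a_1x+\dots+a_{n-1}x^{n-1}$ with $a_i\in H^0(C,K^{n-i})$, Lagrange interpolation at the distinct eigenvalues, and base-point-freeness of the $K^i$. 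You instead use the inclusion $\calO_{C_a}\subset\calO(D_I)\subset\pi_a^*\calO_C(c)$ (valid since $D_I\le \pi_a^*c$, the preimage of $c$ being reduced on $\M^\#$) and compute $h^0(C_a,\pi_a^*\calO_C(c))=1$ directly from the projection formula and $\pi_{a*}\calO_{C_a}\cong\bigoplus_{i=0}^{n-1}K^{-i}$, killing all twists $K^{-i}(c)$, $i\ge 1$, by negativity of degree for $g>1$. Your route is shorter and avoids both Serre duality and the explicit basis of $H^0(K_{C_a})$, at the cost of using the structure of the pushforward of $\calO_{C_a}$ (which the paper establishes anyway in the preceding proposition); the paper's interpolation argument has the mild virtue of exhibiting concretely which sections of $K_{C_a}$ separate the points of $D$. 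One cosmetic remark: you should fix the degree convention for $D_I$ (the transform at a $k$-dimensional $V$ removes the $n-k$ points of $\pi_a^{-1}(c)$ not corresponding to $V$), but since any such $D_I$ satisfies $0\le D_I\le\pi_a^*c$, your bound applies verbatim and nothing in the argument changes.
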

\begin{proof}
	When the Higgs field $\Phi_c$ has $n$ distinct eigenvalues $\lambda_i$, a Hecke transform depends on the choice of  a $k$-element subset  or equivalently a subspace $\C^k< \C^n$ spanned by the corresponding eigenspaces.  If the equation of the spectral curve is given by $\det (x-\Phi)=0$ and $(E,\Phi)$ is the direct image of the line bundle $U$ on $C_a$ then the Hecke transform is the direct image of $U(-D)$  where $D$ is the effective divisor $(\lambda_{i_1},c)+(\lambda_{i_2},c)+\dots +(\lambda_{i_k},c)$.
	
	The map will be injective if for two choices  of subspace the divisor classes $D_1,D_2$ are distinct. This will follow if $\dim H^0(C_a, {\mathcal O}(D))=1$.
	By Riemann-Roch and Serre duality this condition is equivalent to $\dim H^0( C_a, K_{C_a}(-D))=\tilde g-k$ where $\tilde g$ is the genus of $ C_a$, or alternatively if  the restriction $H^0(C_a, K_{ C_a})\rightarrow H^0(D, K_{ C_a})\cong \C^k$ is surjective. 
	
	By adjunction on the total space of $K$, which has trivial canonical bundle, we have $K_{ C_a} \cong \pi^*K^n$ and hence sections of $\pi^*K^n$ of the form $a_0+a_1x+\dots +a_{n-1}x^{n-1}$ where $a_i$ is the pullback of a section of $H^0(C,K^{n-i})$. This space has dimension $n^2(g-1)+1=\tilde g$ and so represents uniquely  every section of $K_{ C_a}$. Restricting to $D$ gives $(v_1,\dots, v_k)$ where 
	$$v_m=a_0(c)+a_1(c)\lambda_{i_m}+\dots +a_{n-1}(c)\lambda_{i_m}^{n-1}.$$
	Since the $\lambda_i$ are distinct, Lagrange interpolation provides a polynomial of degree $(n-1)$ which agrees with any $(v_1,\dots, v_k)$ and because $K^i$ on $C$ has no base points, there exist sections $a_i$ agreeing at $c$ with the coefficients of the polynomial. Hence restriction is surjective.
\end{proof} 

We can define a correspondence on ${\mathcal M}'^s\times {\mathcal M}^s$ by saying that two points $(m,m')$ are equivalent if they are represented by Higgs bundles $(E,\Phi), (E',\Phi')$ where $(E',\Phi')$ is related to $(E,\Phi)$ by a $k$-plane Hecke transformation at $c$. Then, from Proposition~\ref{heckeinjective},  
at a generic point this is locally given by the graph of a map and hence has a dense open set which is a manifold of half the dimension.

Recall that a {\em Lagrangian correspondence} between symplectic manifolds $M_1,M_2$ is a Lagrangian submanifold  $ M_1\times M_2$ with respect to the symplectic form $p_1^*\omega_1-p_2^*\omega_2$, for example the graph of a symplectic transformation. Then points in $M_2$ corresponding to those in a Lagrangian in $M_1$ will, under transversality conditions, form a Lagrangian submanifold. The following result was already discussed in \cite[pp.185-186]{kapustin-witten}. 

\begin{theorem} The Hecke correspondence on ${\mathcal M}'^s\times {\mathcal M}^s$ is Lagrangian at a generic point.
\end{theorem}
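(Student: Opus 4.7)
The plan is a dimension count combined with a fibrewise reduction to translations on spectral Jacobians along the Hitchin fibration.

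First, I would establish the dimension. Over $\M^\#$ the endomorphism $\Phi_c$ has $n$ distinct eigenvalues, so the $\Phi_c$-invariant $k$-planes in $E_c$ are precisely the $\binom{n}{k}$ sums of $k$ eigenspaces; hence $\pi:\calH^\#\to\M^\#$ is a finite \'etale cover and
\[
\dim\calH^\#=\dim\M^\#=\tfrac{1}{2}\dim(\M^{\prime\#}\times\M^\#).
\]
Combined with the injectivity $(\pi',\pi):\calH^\#\hookrightarrow\M^{\prime\#}\times\M^\#$ from Proposition~\ref{heckeinjective}, the Hecke correspondence is an irreducible subvariety of $\M^{\prime\#}\times\M^\#$ of the correct dimension to be Lagrangian.

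Next I would verify the isotropy condition. By Remark~\ref{alonghitchin} the correspondence sits inside $\M^{\prime\#}\times_{\calA^\#}\M^\#$, so the question is fibrewise over $\calA^\#$. Fix $a\in\calA^\#$ and write $p_a:C_a\to C$ for the smooth spectral cover. Via BNR, $(E,\Phi)=(p_a)_*U$ for some line bundle $U$ on $C_a$, and a $\Phi_c$-invariant $k$-plane $V\subset E_c$ corresponds to a $k$-element subset of the fibre $p_a^{-1}(c)\subset C_a$, giving an effective degree-$k$ divisor $D_V$ on $C_a$ with $(E',\Phi')=(p_a)_*(U(-D_V))$. Thus on each sheet of the finite cover $\pi$ the Hecke correspondence is the graph of the fibrewise translation $U\mapsto U(-D_V)$ on the relative Jacobian of the spectral family. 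The symplectic form on $\M^\#$ has Hitchin fibres as Lagrangian leaves (Proposition~\ref{lagrangian}) and transversally realises the Serre-duality pairing between $T_a\calA\cong H^0(C_a,K_{C_a})$ and $T_U J(C_a)\cong H^1(C_a,\calO)$; this pairing depends only on $a$, while translation by $-D_V$ on $J(C_a)$ induces the identity on tangent spaces under their canonical identification with $H^1(C_a,\calO)$. Hence for generic $X_1,X_2\in T\calH^\#$ we get $\omega(\pi_*X_1,\pi_*X_2)=\omega'(\pi'_*X_1,\pi'_*X_2)$, so $(\pi',\pi)^*(p_1^*\omega'-p_2^*\omega)=0$ generically.

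The main obstacle is the rigorous identification of Hitchin's symplectic form --- defined on $\H^1(\End E\xrightarrow{\mathrm{ad}(\Phi)}\End E\otimes K)$ via Serre duality on $C$, as recalled in the paper --- with the Serre-duality pairing on the spectral curve under the decomposition of the tangent space into the vertical and horizontal parts of the integrable system $h$. This comparison is standard, using the push-forward $(p_a)_*$ and the adjunction $K_{C_a}\cong p_a^*K^n$ invoked in the proof of Proposition~\ref{heckeinjective}, but is delicate at $p_a^{-1}(c)$; the commutativity of diagram \eqref{HeckeHiggs} is what ensures that any potential residue contribution at $p_a^{-1}(c)$ to the difference $\pi^*\theta-\pi'^*\theta'$ of Biswas--Ramanan Liouville forms (with $\omega=d\theta$) is exact, yielding $\pi^*\omega-\pi'^*\omega'=0$ and hence the generic Lagrangian property.
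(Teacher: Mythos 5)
Your dimension count is fine (over $\M^\#$ the correspondence is a finite \'etale multisection of $\M^{\prime\#}\times_{\calA^\#}\M^\#$, injective by Proposition~\ref{heckeinjective}, of half the dimension of the product), and the fibrewise picture via BNR is the right one. The gap is in the isotropy step. It is simply not true that a fibrewise translation of an integrable system preserves the symplectic form just because the fibres are Lagrangian and translation acts as the identity on the fibre tangent spaces $H^1(C_a,\calO)$: in local action--angle coordinates a translation $\theta\mapsto\theta+c(I)$, $I\mapsto I$ pulls $\omega=\sum_i dI_i\wedge d\theta_i$ back to $\omega+\sum_{i,j}\partial c_i/\partial I_j\, dI_j\wedge dI_i$, which vanishes only when the translating section defines a closed $1$-form on the base. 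Here the translating divisor $D_V=(\lambda_{i_1},c)+\dots+(\lambda_{i_k},c)$ genuinely varies with $a$ (the eigenvalues of $\Phi_c$ move as $a$ moves), there is no canonical horizontal splitting, and the possibly nonzero terms are exactly the mixed ``base--base'' contributions coming from the $a$-dependence of $D_V$. Your phrase ``this pairing depends only on $a$ \dots hence $\omega(\pi_*X_1,\pi_*X_2)=\omega'(\pi'_*X_1,\pi'_*X_2)$'' asserts precisely the point that has to be proved, and your closing paragraph concedes this by saying the residue contribution at $\pi_a^{-1}(c)$ is exact ``because of the commutativity of \eqref{HeckeHiggs}'' --- but that diagram only defines the Hecke transform; it does not by itself kill the residue.

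To close the gap you either need the paper's local computation --- representing the combined deformation of $(E',\Phi')$ by $\dot A+\bar\partial a/z$ with $\dot\Phi+[\Phi,a]$ preserving $V$ at $c$, and checking via Stokes' theorem and the Jacobi identity that $\int_C\tr(\dot A'_1\dot\Phi'_2)=\int_C\tr(\dot A_1\dot\Phi_2)$ --- or an argument showing the translating section is Lagrangian in the relevant sense. For the latter, one clean route compatible with your setup: since $f^*\omega-\omega$ is basic for a fibrewise translation $f$, it suffices to exhibit one Lagrangian section on which it vanishes; the Hitchin section is Lagrangian, and by Proposition~\ref{modifiedfilt} its Hecke transform at $V$ lies in an upward flow $W^+_{\calE_{\delta'}}$, which is Lagrangian by Proposition~\ref{lagrangian}, so $f^*\omega-\omega$ restricts to zero on a section and hence vanishes. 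Either way, an actual argument for the cross terms is required; as written, the proposal establishes half-dimensionality but not isotropy.
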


\begin{proof}
	We use a differential-geometric approach and assume that $\Phi_c$ has distinct eigenvalues. 
	As defined above, when we have  a rank $n$ vector bundle $E$ on the curve $C$ and a point $c\in C$ the Hecke transform $E'$ is the vector bundle defined by the kernel subsheaf of a homomorphism $h: {\mathcal O}(E)\rightarrow {\mathcal O}_c^{n-k}$ with the kernel at $c$ the vector space $V$.

	Let $v_1,\dots, v_n$ be a local basis of sections of $E$  with $v_1,\dots, v_k$ spanning $V$ at $c$. Using a local coordinate where $c$ is $z=0$,  a local basis for $E'$ is given by $ u_1,\dots,  u_n$ where $v_i=u_i$ for $i>k$  and $v_i=zu_i$ for $1\le i\le k$. 
	We shall use these two local bases to make calculations.
	
	Varying $V\subset E_c$ gives a variation of the holomorphic structure on $E'$ as in \cite{NR}. 
	A first order deformation of $V$  in a parameter $t$  is described by local holomorphic sections of the form 
	$$ v_i+t\dot v_i=v_i+t\sum_{j=k+1}^n a_{ij}v_j$$
	for $1\le i\le k$. The variation of a local basis of the  subsheaf is then 
	$$\dot u_i=\frac{1}{z} \sum_{j=k+1}^n a_{ij}u_j.$$
	The matrix $a=a_{ij}$ in the basis $u_1,\dots, u_n$ for $E'$ gives the    section
	$$\begin{pmatrix}  0 & 0 \\
	a/z& 0\end{pmatrix}$$ over a punctured disc. 
	
	For a fixed holomorphic structure on $E$, this is   a \v Cech
	cocycle for a class in $H^1(C;\End (E'))$ defining the variation of the holomorphic structure on the Hecke transform corresponding to $a(c)\in  \Hom(V,E_c/V)$, a tangent vector to the Grassmannian.    Here however we need to vary $E$ also and it is convenient to use 
	a Dolbeault representative $\dot A\in \Omega^{0,1}(C;\End (E))$. We may make the assumption that the deformation of holomorphic structure on $E$ is trivial in a neighbourhood of $c$ so that we can take $\dot A=0$ in this neighbourhood,  extend $a$ to a $C^{\infty}$ section  supported in a neighbourhood of $c$ and holomorphic in a smaller one and take $\dot A+\bar\partial \dot a/z\in \Omega^{0,1}(C; \End (E'))$ giving the combined deformation of holomorphic structure on $E'$.

	Let $(\dot A,\dot\Phi)$ be a first order deformation of a Higgs bundle preserving the deformation of the subsheaf.   
	A  local section of $\End (E)$ can be written in block matrix form with respect to the subspaces  spanned by $v_1,\dots, v_k$ and $v_{k+1},\dots, v_n$ respectively as
	$$\begin{pmatrix}  P & Q\\
	R & S\end{pmatrix} $$ and  in the basis $u_1,\dots, u_n$ it becomes 
	\begin{equation}
	\begin{pmatrix}  P & zQ\\
	z^{-1}R & S .\end{pmatrix}
	\label{matrix1}
	\end{equation} 
	From this we see explicitly that  a Higgs field $\Phi\in H^0(C;\End (E)\otimes K)$  extends to $\End (E')\otimes K$ if  and only if  the entry $R$ is divisible by $z$, i.e. $\Phi$ preserves $V$ at $c$, which is the definition of $\calH^s$.

	To preserve the subsheaf to first order we need 
	$(\Phi+t\dot\Phi)(v+ta(v))=w+ta(w)$ modulo $t^2$ 
	for $v$ and $w$ linear combinations of  $v_1,\dots,v_k$. It follows that  $\dot\Phi+[\Phi,a]$ preserves the subspace $V$ at $c$.

	Write $$\Phi=\begin{pmatrix}  P & Q\\
	R & S\end{pmatrix}\quad \dot\Phi=\begin{pmatrix}  \dot P & \dot Q\\
	\dot R & \dot S\end{pmatrix}$$
	Since $\dot A=0$ in a neighbourhood the entries are holomorphic.  
	Then
	$$\dot\Phi+[\Phi,a]=\begin{pmatrix}  \dot P- Qa& \dot Q\\
	\dot R +Sa-aP& \dot S-aQ\end{pmatrix}.$$
	This  preserves $V$, so $\dot R +Sa-aP$ must be divisible by $z$.  Since $\Phi_c$ has distinct eigenvalues $\lambda_i$ we can take the basis vectors $v_i$ to be eigenvectors of $\Phi$ in a neighbourhood and then the condition is that 
	$\dot R_{ij}+(\lambda_i-\lambda_j)a_{ij}$ be divisible by $z$ and accordingly  $a_{ij}(0)$ is uniquely determined by $\dot \Phi$.
	
	In the basis for $E'$ we have  
	$$\dot\Phi+[\Phi,a]=\begin{pmatrix}  \dot P- Qa& \dot Q z\\
	(\dot R +Sa-aP)z^{-1}& \dot S-aQ\end{pmatrix}.$$
	which is now holomorphic in a neighbourhood of $c$.

	Let $(\dot A, \dot \Phi)\in \Omega^{0,1}(C;\End (E))\oplus \Omega^{1,0}(C,\End (E))$ be a Dolbeault representative of a tangent vector to ${\mathcal M}^s$. The holomorphic symplectic form $\omega$ is defined by
	$$\int_C\tr(\dot A_1\dot\Phi_2)-\tr(\dot A_2\dot\Phi_1).$$

	With $\dot A'=\dot A+\bar\partial  a/z$ as above we obtain
	$$ \int_C\tr(\dot A'_1\dot\Phi'_2)= \int_C\tr(\dot A_1\dot\Phi_2)+\tr (\bar\partial  a_1Q_2/z)-\tr \,\bar\partial  a_1[\Phi,a_2]/z$$
	But $Q_2$ is holomorphic on the support of $ a_1$ and divisible by $z$ so by Stokes' theorem the second term vanishes. By the Jacobi identity 
	$\tr (a_1[\Phi,a_2])-\tr (a_2[\Phi,a_1])=\tr (\Phi[a_1,a_2])$ but the off-diagonal matrices $a_1,a_2$ commute so  $\tr\, \bar\partial  a_1[\Phi,a_2]/z=\tr \,\bar\partial  a_2[\Phi,a_1]/z$.
	Then
	$$ \int_C\tr(\dot A'_1\dot\Phi'_2)= \int_C\tr(\dot A_1\dot\Phi_2)$$
	and the two symplectic structures coincide. 
\end{proof}

\section{Multiplicities in the nilpotent cone}
\label{multiplicities}

\subsection{Multiplicities of very stable components}
Let $\calE\in \M^{s \T}\subset h^{-1}(0)$ be a very stable Higgs bundle. Denote by $F_\calE\in \pi_{0}(\M^{ \T})$ the component of the $\T$-fixed point set containing $\calE$.   Let $N:=h^{-1}(0)\subset \M$ denote the subscheme of nilpotent Higgs bundles, the so called {\em (global) nilpotent cone}. Then the corresponding subvariety $N_{red}\cong \calC\subset \M$ is isomorphic to the core of $\M$. Denote by $\calC_{F_{\calE}}:= \overline{W^-_{F_\epsilon}}$ the closure of the downward flow from $F_\calE$, which is an irreducible component of $\calC$. Denote by $N_{F_{\calE}}\subset N$ the corresponding irreducible component of the subscheme $N$.  

\begin{definition} \label{defmult} The {\em multiplicity} of the component $N_{F_\calE}$   in the subscheme $N$ is defined to be the length of the local ring $\calO_{N,N_{{F_\calE}}}$ at a generic point  and is  denoted by $m_{F_\calE}:=\ell(\calO_{N,N_{{F_\calE}}})$.   
\end{definition}

 Let $k\in \Z$ be an integer. For a $\T$-module $U=\bigoplus_{\lambda\leq k} U_\lambda$  with finite dimensional weight spaces $\dim(U_\lambda)<\infty$ corresponding to $\lambda\in \Hom(\T,\T)\cong \Z$   we denote its character by \beq\label{trunchar}\chi_\T(U):=\sum_{\lambda\leq k} \dim(U_\lambda)t^{-\lambda} \in \Z((t)).\eeq For a finite dimensional positive $\T$-module $V=\oplus_{\lambda>0} V_\lambda$  the  character of the  symmetric algebra $\Sym(V^*)=\bigoplus_{\lambda\leq 0} \Sym(V^*)_\lambda$ of its dual satisfies
\beq\label{sym} \chi_\T({\Sym(V^*)})&=& \sum_{\lambda\leq 0} \dim(\Sym(V^*)_\lambda)t^{-\lambda}= 
\prod_{\lambda>0}\frac{1}{( 1- t^{\lambda})^{\dim(V_\lambda)}} \in \Z[[t]].\eeq In particular,  the equivariant Euler characteristic (see \S\ref{equivarianteuler} for more details) of the structure sheaf $\calO_V$ on $V$ can be computed as
\beq \label{eqeu} \chi_\T(V;\calO_V)=\sum_{i,k} (-1)^i t^{-k} \dim H^i(\calO_V)_{k} =\sum_k t^{-k} \dim H^0(\calO_V)_{k} = \chi_\T({\Sym(V^*)}),\eeq where we recall that $\T$ acts on $s
\in H^0(\calO_V)$ with non-positive weights by the formula $(\lambda\cdot s)(v)=s(\lambda^{-1}\cdot v)$. 

Finally, we let $T_\calE^+\M<T_\calE\M$ denote the part of the $\T$-module $T_\calE\M$ with positive weights. Then $T_\calE W^+_\calE\cong T_\calE^+\M$ which  we will abbreviate as $T_\calE^+$.
\begin{theorem} \label{multiplicity} When $\calE\in \M^{s \T}\subset N$ is a very stable Higgs bundle then the multiplicity of the component $N_{F_\calE}$   in the nilpotent cone $N$ satisfies $$m_{F_\calE}=\rank(h_*(\calO_{W^+_\calE}))=\left. { \chi_\T(\Sym(T^{+*}_\calE))\over \chi_\T(\Sym(\calA^*))}\right|_{t=1}.$$  Moreover, for a generic $a\in \calA$  the intersection $W^+_\calE\cap h^{-1}(a)$ is transversal and has cardinality $m_{F_\calE}$. 
\end{theorem}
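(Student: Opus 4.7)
My plan is to derive all three assertions from the finite flat structure of $h|_{W^+_\calE}:W^+_\calE\to\calA$, combined with $\T$-equivariance and a transversal-slice argument at $\calE$.

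First I would invoke Lemma~\ref{locfree} to get that $h|_{W^+_\calE}$ is finite, flat, surjective and generically \'etale, so $h_*(\calO_{W^+_\calE})$ is locally free of some rank $r$. Generic \'etaleness then immediately gives the third claim: for generic $a\in\calA$ the fiber $W^+_\calE\cap h^{-1}(a)$ consists of $r$ reduced transverse points. Everything then reduces to computing $r$ and identifying it with $m_{F_\calE}$.

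To compute $r$ I would evaluate the dimension of the fiber of $h_*(\calO_{W^+_\calE})$ at $0\in\calA$ via a $\T$-equivariant Koszul resolution. Since $W^+_\calE$ is Lagrangian by Proposition~\ref{lagrangian}, $\dim W^+_\calE=\dim\calA=m$; combined with flatness, any homogeneous basis $y_1,\dots,y_m\in\calA^*$ of $\T$-weights $-d_1,\dots,-d_m$ pulls back to a regular sequence in $\calO_{W^+_\calE}\cong\Sym(T^{+*}_\calE)$. The Koszul complex of $\{h^*y_i\}$ then resolves $\calO_{W^+_\calE\cap h^{-1}(0)}$ $\T$-equivariantly, and taking equivariant Euler characteristic yields
$$\chi_\T(\calO_{W^+_\calE\cap h^{-1}(0)})=\chi_\T(\Sym(T^{+*}_\calE))\prod_{i=1}^{m}(1-t^{d_i})=\frac{\chi_\T(\Sym(T^{+*}_\calE))}{\chi_\T(\Sym(\calA^*))}.$$
Setting $t=1$ recovers the total $\C$-dimension of the (finite) fiber at $0$, so $r$ equals the value of this ratio at $t=1$.

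The hard part will be the final identification $r=m_{F_\calE}$. By very stability, $W^+_\calE\cap\calC=\{\calE\}$, and since $\calE\in\overline{W^-_F}$ only for $F=F_\calE$, only the component $N_{F_\calE}$ of $N$ meets $W^+_\calE$, at the single point $\calE$. All fibers of $h|_{\M^s}$ have Lagrangian dimension $\dim\M/2$, so by miracle flatness $h|_{\M^s}$ is flat and $N\cap\M^s$ is Cohen--Macaulay of pure dimension $\dim\M/2$. By Proposition~\ref{lagrangian}, $T_\calE W^+_\calE=T^+_\calE$ and $T_\calE\overline{W^-_{F_\calE}}=T^{\leq 0}_\calE$ are complementary Lagrangian subspaces of $T_\calE\M^s$, so $W^+_\calE$ is a smooth slice at $\calE$ transverse to $(N_{F_\calE})_{\rm red}=\overline{W^-_{F_\calE}}$. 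A standard local computation---cutting a pure-dimensional Cohen--Macaulay scheme by a regular sequence whose zero locus meets the unique local reduced component transversally---then shows that the length of the intersection equals the generic multiplicity, giving $\ell_\calE(\calO_{W^+_\calE\cap N})=m_{F_\calE}$. Since the left-hand side is precisely the $\C$-dimension of the fiber of $h_*(\calO_{W^+_\calE})$ at $0$ computed above, this matches with $r$ and completes the proof.
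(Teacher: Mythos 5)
Your proposal is correct in outline and runs parallel to the paper's argument, but it substitutes different machinery at the two technical steps, so a comparison is worthwhile. For the rank/character formula you resolve the scheme-theoretic fibre $W^+_\calE\cap h^{-1}(0)$ by the Koszul complex of the pulled-back homogeneous coordinates of $\calA$ (a regular sequence by finiteness of $h|_{W^+_\calE}$ and Cohen--Macaulayness of the affine space $W^+_\calE$) and evaluate at $t=1$; the paper instead invokes the equivariant Serre problem to trivialize the $\T$-equivariant locally free sheaf $h_*(\calO_{W^+_\calE})$ on $\calA$ and divides characters. Both yield the same identity; your route is more self-contained for the rank, while the paper's gives directly the $\T$-module structure of $h_*(\calO_{W^+_\calE})_0$, which it needs again later. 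For the identification of this rank with $m_{F_\calE}$, the paper stays inside K-theory with supports: Serre's Tor formula, a composition series of $\calO_{N_{F_\calE}}$, Serre's vanishing theorem for the lower-dimensional factors, and the projection formula along $h$; notably this never requires $N$ to be Cohen--Macaulay, since the higher Tor's are absorbed in the alternating sum. Your route instead makes $N\cap\M^s$ Cohen--Macaulay via miracle flatness of $h|_{\M^s}$ and then applies the associativity formula for the multiplicity of a regular sequence. This works and is arguably more elementary, but note that miracle flatness silently uses that every fibre of $h$ is equidimensional of dimension $\dim\M/2$, which itself rests on $\dim N=\dim\M/2$ (Proposition~\ref{nilpcore} and Laumon's theorem), so this hypothesis should be spelled out.

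The one step you should not introduce with ``since'' is the assertion that $\calE\in\overline{W^-_F}$ only for $F=F_\calE$, i.e.\ that $N_{F_\calE}$ is the only irreducible component of $N$ through $\calE$. Very stability says $W^+_\calE\cap\calC=\{\calE\}$, which controls which points of the core flow up to $\calE$, not which components of the core contain $\calE$; and your associativity formula is exactly as sensitive to this as the paper's computation, since an extra top-dimensional component through $\calE$ would contribute a strictly positive extra term to $\ell_\calE(\calO_{W^+_\calE\cap N})$. It also underlies the smoothness of $\overline{W^-_{F_\calE}}$ at $\calE$ that you use to get intersection number $1$. The paper relies on the same geometric input, phrased as the statement that $\calC=N_{red}$ is smooth at $\calE$ with tangent space $T^{\leq 0}_\calE$ complementary to $T^+_\calE$, so that $i(\M,W^+_\calE\cdot\calC,\{\calE\})=1$; in your write-up this point needs to be stated and justified (or cited in that form), rather than asserted in passing as a consequence of very stability.
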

\begin{proof} We shall use basic properties of the 
	K-theory of coherent sheaves with supports as developed in e.g. \cite{baum-etal,gillet-soule, piepmeyer-walker}. For $Z\subset X$ a closed algebraic  subset of a complex algebraic variety we will denote by $K^\circ_Z(X)$ (resp. by $K^Z_\circ(X)$)  the Grothendieck group of bounded complexes of locally free sheaves (resp. coherent sheaves) with homology supported in $Z$.
	
	We shall compute the intersection product \beq \label{rankmult} [\calO_{N}]\cap [\calO_{W^+_\calE}]\in  K_\circ(\{\calE\})\cong \Z\eeq 
	in two different ways. Here $[\calO_{N}]=h^*([\calO_0])\in K^\circ_\calC(\M)$, where  $[\calO_0]\in K^\circ_{\{0\}}(\calA)$
	and $[\calO_{W^+_\calE}]\in K^{W^+_\calE}_\circ(\M)$, where $W^+_\calE\subset \M$ is closed by Lemma~\ref{transversal}.  Thus the intersection product $$[\calO_{N}]\cap[\calO_{W^+_\calE}]=[\calO_N\otimes^L\calO_{W^+_\calE}] \in K_\circ^{\calC\cap W^+_{\calE}}(\M)\cong K^{\{\calE\}}_\circ (\M)\cong K_\circ(\{\calE\})\cong \Z$$ is defined. The last isomorphism is given by the Euler characteristic $\chi:K_\circ(\{ \calE \})\to \Z$.  

	First we note that $W^+_\calE$ intersects $\calC=N_{red}$ transversally at the single point $\calE$, since both $W^+_\calE$ and $\calC$ are smooth at $\calE$ and the tangent spaces  $T^+_\calE$ and $T^{\leq  0}_\calE$ are complementary. This also follows from Bialynicki-Birula's local description of the upward flow (see Remark~\ref{afffibr}). Then \cite[\href{https://stacks.math.columbia.edu/tag/0B1I}{Lemma 42.14.3}]{stackproject} implies that the intersection multiplicity \beq\label{tranint}i(\M,W^+_\calE\cdot \calC,\{\calE\})=i(\M,W^+_\calE\cdot \calC_{F_\calE},\{\calE\})=1.\eeq
	
	We can also find an affine open neighbourhood $\spec(R)$  of $\calE\in \M$ so that $\calO_{W^+_\calE}|_{\spec(R)}$ is represented by the $R$-algebra $A$ and $\calO_{N_{F_\calE}}$ by the $R$-algebra $B$. As ${N_{F_\calE}}$ is irreducible $B$ has a unique minimal prime ideal $I_{min}\lhd B$ which corresponds to the generic point of ${N_{F_\calE}}$ and coincides with the nilradical of $B$. In particular, the reduced ring $B_{red}=B/I_{min}$ and \beq\label{reduced}\spec (B_{red}) \cong \calC_{F_{\calE}}\cap{\spec(R)}.\eeq  By \cite[Theorem 2 of IV.1.4]{bourbaki} 
	we know that $B$ as a $B$-module has a composition series $$0=B_k\subset B_{k-1} \subset \dots\subset B_0=B$$ with factors $B_j/{B_{j+1}}\cong B/P_j$, where $P_j\lhd B$ are prime ideals. Moreover we know from  {\em loc. cit.} that the factor $B/I_{min}$ appears $\ell(B_{I_{min}})$ times in this composition series, which further agrees with $$\ell(B_{I_{min}})=\ell(\calO_{N,N_{{F_\calE}}})=m_{F_\calE}.$$ 
	
	Now $\calO_{W^+_\calE}\otimes^L \calO_{N_{F_\calE}}$ has support $\{\calE\}$. The Euler characteristic of its stalk at $\{\calE\}$ is given by Serre's Tor formula
	$$\chi(A_{I_\calE},B_{I_\calE})=\sum (-1)^i \ell(\Tor_i(A_{I_\calE},B_{I_\calE})),$$
	where $I_\calE\lhd R$ is the maximal ideal corresponding to the point $\calE\in \spec(R)\subset \M$. Computing this further
	$$\sum_i (-1)^i \ell(\Tor_i(A_{I_\calE},B_{I_\calE}))=\sum_i \sum_j (-1)^i \ell(\Tor_i(A_{I_\calE},(B/P_j)_{I_\calE}))=\sum_j \chi(A_{I_\calE},(B/P_j)_{I_\calE}).$$
	We know that $P_j\supset I_{min}$. When $P_j\neq I_{min}$ then $\dim(B/P_j)<\dim(B/I_{min})$ and so Serre's \cite[Theorem C.1.1.a]{serre} shows that $\chi(A_{I_\calE},(B/P_j)_{I_\calE})=0$ in this case. Thus  the Euler characteristic of the stalk of $\calO_{W^+_\calE}\otimes^L \calO_{N_{F_\calE}}$ at $\{\calE\}$ equals $$\chi(A_{I_\calE},B_{I_\calE})=m_{F_\calE} \chi(A_{I_\calE},(B/I_{min})_{I_\calE})=m_{F_\calE},$$
	as $$\chi(A_{I_\calE},(B/I_{min})_{I_\calE})=\chi(A_{I_\calE},(B_{red})_{I_\calE})=i(\M,W^+_\calE\cdot \calC_{F_\calE},\{\calE\})=1$$ from \eqref{tranint},\eqref{reduced} and \cite[Theorem C.1.1.b]{serre}. This  implies  that \beq \label{intermult}\chi([\calO_{W^+_\calE}]\cap [\calO_{N}])=m_{F_\calE}.\eeq

	On the other hand, by the projection formula in $K$-theory \cite[(1)]{piepmeyer-walker} \beq\label{projection}h_*([\calO_{N}] \cap [\calO_{W^+_\calE}])=[\calO_0] \cap h_*([\calO_{W^+_\calE}]) \in K^{\{0\}}_\circ(\calA).\eeq
	By Lemma~\ref{locfree} $h_*([\calO_{W^+_\calE}])$ is locally free and we  compute $$\chi([\calO_{N}]\cap [\calO_{W^+_\calE}])=\chi(h_*([\calO_{N}]\cap [\calO_{W^+_\calE}]))=\chi([\calO_0]\cap h_*([\calO_{W^+_\calE}]))=\chi([h_*([\calO_{W^+_\calE}])_0])=\rank(h_*(\calO_{W^+_\calE})).$$
	Thus the multiplicity $m_{F_\calE}$ in \eqref{intermult} agrees with $\rank(h_*(\calO_{W^+_\calE}))$ the rank of the locally free sheaf $h_*(\calO_{W^+_\calE})$. 
	
	To determine this rank, we notice that  $W^+_\calE$ is $\T$-invariant. 
	We  compute the equivariant Euler characteristic of $\calO_{W^+_\calE}$  as \beq \label{one}\chi_\T(\calO_{W^+_\calE})=\chi_\T(\calO_{T_\calE^+})= \chi_\T(\Sym(T_\calE^{+*}))\in\Z((t))\eeq since the $\T$-action on $W^+_\calE$ is modelled on the $\T$-module $T_\calE^+$ i.e. $W^+_\calE\cong T_\calE^+$ are $\T$-equivariantly isomorphic (c.f. \eqref{model}). 
	
	On the other hand because $h:W^+_{\calE}\to \calA$ is finite the $\T$-equivariant sheaf $h_*(\calO_{W^+_\calE})$ is locally free. By the equivariant  Serre conjecture proved for $\T$ in \cite{equserre} the $\T$-equivariant locally free sheaf $h_*(\calO_{W^+_\calE})$ on the affine space $\calA$ is trivial. In other words \beq\label{hitchinpush}h_*(\calO_{W^+_\calE})\cong h_*(\calO_{W^+_\calE})_0\times \calA,\eeq where the $\T$-action on the $0$ fibre  $h_*(\calO_{W^+_\calE})_0$ is inherited from the $\T$-equivariant structure on $h_*(\calO_{W^+_\calE})$ and the $\T$ action on $\calA$ is the standard one.  Thus we  compute \beq \label{other}\chi_\T(\calO_{W^+_\calE})=\chi_\T(Rh_*(\calO_{W^+_\calE}))=\chi_\T(h_*(\calO_{W^+_\calE}))=\chi_\T(h_*(\calO_{W^+_\calE})_0) \chi_\T(\Sym(\calA^*)) \in \Z((t)).\eeq Here  $\chi_\T(h_*(\calO_{W^+_\calE})_0)$ of \eqref{trunchar} is thought of as the $\T$-equivariant Euler characteristic over the point $0\in \calA$.
	Comparing \eqref{one} and \eqref{other} we see that the equivariant Euler characteristic of the $\T$-module $h_*(\calO_{W^+_\calE})_0$ can be expressed as \beq \label{t-character}\chi_\T(h_*(\calO_{W^+_\calE})_0)={ \chi_\T(\Sym(T^{+*}_\calE))\over \chi_\T(\Sym(\calA^*))}\in \Z((t)).\eeq In particular, the dimension of $h_*(\calO_{W^+_\calE})_0$ is $$\rank(h_*(\calO_{W^+_\calE}))=\dim(h_*(\calO_{W^+_\calE})_0)=\left. { \chi_\T(\Sym(T^{+*}_\calE))\over \chi_\T(\Sym(\calA^*))}\right|_{t=1}.$$
	This proves the first statement. For the second statement note that 
	when $\calE\in \M^{s\T}$ is very stable then $W^+_\calE\subset \M$ is closed and for   $a\in\calA$ \beq \label{generic}h_*([\calO_{W^+_\calE}]\cap [h^{-1}(a)])=[h_*(\calO_{W^+_\calE})\cap \calO_a]=[h_*(\calO_{W^+_\calE})_a]=m_{F_\calE}\in K(\{a\})\cong \Z.\eeq When $a\in \calA$ generic then by Lemma~\ref{locfree} the intersection of $W^+_\calE$ and $h^{-1}(a)$ is transversal and thus has cardinality $m_{F_\calE}$ from \eqref{generic}.

\end{proof}

\begin{definition}\label{equmult} For $\calE\in \M^{s
		\T}$ define the rational function $$m_{\calE}(t):={{ \chi_\T(\Sym(T^{+*}_\calE))\over \chi_\T(\Sym(\calA^*))}}\in \Z((t)).$$ 
	We call it the {\em virtual equivariant multiplicity} or {\em virtual multiplicity} for short. 
\end{definition}

\begin{corollary}\label{eqmult}
	When $\calE\in \M^{s
		\T}$ is very stable $m_{\calE}(t)$ is a palindromic and monic polynomial with non-negative integer coefficients, such that $m_{\calE}(1)=m_{F_\calE}$ is the multiplicity of the component $N_{F_\calE}\subset N$ in the nilpotent cone.
\end{corollary}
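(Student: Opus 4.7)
The plan is to build directly on the proof of Theorem~\ref{multiplicity}. There we used the equivariant Serre conjecture to show that $h_*(\calO_{W^+_\calE})$ is $\T$-equivariantly trivial on $\calA$, whence $m_\calE(t) = \chi_\T(V)$ where $V := h_*(\calO_{W^+_\calE})_0$ is a finite-dimensional graded $\T$-module of total dimension $\rank h_*(\calO_{W^+_\calE}) = m_{F_\calE}$. This identity already yields three of the four desired properties: the character of a finite-dimensional graded vector space is automatically a polynomial in $t$ with non-negative integer coefficients, and its value at $t=1$ equals $\dim V = m_{F_\calE}$.

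For monic and palindromic I would identify $V$ with the coordinate ring of the scheme-theoretic fiber $h^{-1}(0) \cap W^+_\calE$. Pick $\T$-homogeneous coordinates $x_1,\dots,x_k$ on $W^+_\calE \cong T^+_\calE$ with $x_i$ of weight $-\lambda_i$ (where $\lambda_1,\dots,\lambda_k>0$ are the weights of $T^+_\calE$), and a $\T$-homogeneous basis $e_1,\dots,e_d$ of $\calA^*$ with $e_j$ of weight $-w_j$; then
\[
V \;\cong\; \C[x_1,\dots,x_k]/(h^* e_1,\dots,h^* e_d).
\]
By Proposition~\ref{lagrangian} the upward flow is Lagrangian, so $k = \dim W^+_\calE = \dim \calA = d$, and by Lemma~\ref{locfree} the map $h$ is finite and flat between smooth affine varieties of equal dimension. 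Hence $h^*e_1,\dots,h^*e_d$ form a regular sequence in $\Sym(T^{+*}_\calE)$, so $V$ is a graded complete intersection and the Koszul resolution computes its character as
\[
m_\calE(t) \;=\; \chi_\T(V) \;=\; \frac{\prod_{j=1}^{d}(1 - t^{w_j})}{\prod_{i=1}^{d}(1 - t^{\lambda_i})}.
\]

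This closed form yields the remaining two properties. Under $t\mapsto t^{-1}$ each factor $(1-t^a)$ becomes $-t^{-a}(1-t^a)$; since the numerator and denominator contain the same number $d$ of such factors, the signs $(-1)^d$ cancel and one obtains $m_\calE(t^{-1}) = t^{-N} m_\calE(t)$ with $N = \sum_j w_j - \sum_i \lambda_i$, which is precisely the palindromic identity. Comparing the top-degree monomials $(-1)^d t^{\sum w_j}$ and $(-1)^d t^{\sum\lambda_i}$ in numerator and denominator, the quotient has leading term $t^N$ with coefficient $1$, so $m_\calE(t)$ is monic of degree $N$.

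The only non-formal step is the regular sequence claim, which I expect to be the single technical point; it rests on $h\colon W^+_\calE \to \calA$ being a finite flat morphism of smooth affine varieties of equal dimension, hence automatically a local complete intersection. Both inputs are already secured by Lemma~\ref{locfree} and Proposition~\ref{lagrangian}, so the argument is essentially a direct combination of the established geometric facts with a standard Koszul computation.
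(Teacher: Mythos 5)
Your proposal is correct and follows essentially the paper's own argument: the first three properties come from identifying $m_\calE(t)$ with the $\T$-character of the finite-dimensional module $h_*(\calO_{W^+_\calE})_0$ via Theorem~\ref{multiplicity}, and palindromicity/monicity from manipulating the product formula under $t\mapsto t^{-1}$ using $\dim T^+_\calE=\dim\calA$ (the paper obtains monicity from $m_\calE(0)=1$ together with palindromicity rather than by comparing leading coefficients, a cosmetic difference). The only remark is that your Koszul/regular-sequence step is redundant: the closed form $\prod_j(1-t^{w_j})/\prod_i(1-t^{\lambda_i})$ is already the definition of $m_\calE(t)$ by \eqref{sym} and Definition~\ref{equmult}, so no complete-intersection argument is needed there.
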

\begin{proof}
	When $\calE$ is very stable  by \eqref{t-character} $m_{\calE}(t)$ is the Laurent polynomial $\chi_\T(h_*(\calO_{W^+_\calE})_0)\in \Z[t,t^{-1}]$, the character of the finite dimensional $\T$-module $h_*(\calO_{W^+_\calE})_0$. In particular, $m_{\calE}(t)$ has non-negative coefficients.  
	
	When all the weights of $\T$ on a finite dimensional $\T$-module $V$ are positive, then ${1/ \chi_{\T}(\Sym(V^*))}$ is a monic Laurent series i.e. $$\left({1\over \chi_{\T}(\Sym(V^*))}\right)_{t=0}=1$$ from \eqref{sym}. It follows that $m_{\calE}(t)\in \Z[t]$ is a polynomial and $m_{\calE}(0)=1$. 
	
	As  $$\prod_{\lambda>0}{( 1- t^{\lambda})^{\dim(V_\lambda)}}=(-1)^{\dim V}t^{\sum \lambda\dim(V_\lambda)} \prod_{\lambda>0}{( 1- t^{-\lambda})^{\dim(V_\lambda)}}$$ we get  \beq\label{palindromic}m_{\calE}(t)=m_{\calE}(t^{-1}) t^{\sum \lambda\left(\dim(\calA_\lambda)-\dim((T^+_\calE)_\lambda)\right)}=m_{\calE}(t^{-1}) t^{\deg(m_{\calE}(t))}
	\eeq where $$\deg(m_{\calE}(t))=\sum \lambda\left(\dim(\calA_\lambda)-\dim((T^+_\calE)_\lambda)\right)$$ is the degree of the polynomial $m_{\calE}(t)$. Thus $m_{\calE}(t)$ is palindromic.  As $m_{\calE}(0)=1$  it is monic too. The result follows.   
\end{proof}

\begin{remark} In Section~\ref{further} we shall see that, in the cases considered there, $m_{\calE}(t^2)$ is a product of Poincar\'e polynomials of compact homogeneous spaces and hence monic and palindromic by Poincar\'e duality. 
\end{remark} 

\begin{remark} For a very stable $\calE\in \M^{s\T}$ we have $m_{\calE}(t)=\chi_\T(h_*(\calO_{W^+_{\calE}})_0)$ and so $\T$ acts on the vector space $h_*(\calO_{W^+_{\calE}})_0$ with non-positive weights and the multiplicity of the trivial character of $\T$ is $1$ in $h_*(\calO_{W^+_{\calE}})_0$.
	
\end{remark}

\begin{definition} When $\calE\in M^{s\T}$ is very stable we call $m_{\calE}(t)$ the {\em equivariant multiplicity} of $N_{F_\calE}\subset N$. 
\end{definition}
\begin{remark} Note that by the $\T$-equivariant analogue of the projection formula \eqref{projection} we see that the equivariant multiplicity at a very stable Higgs bundle satisfies $$m_{\calE}(t)=[\calO_N]^\T\cap [\calO_{W^+_\calE}]^\T\in K_\circ^\T(\{\calE\})\cong \Z[t,t^{-1}]. $$ Hence the name. 
\end{remark}
\begin{remark} \label{multN}We know from \cite[Proposition 3.5]{laumon} that   very stable rank $n$ vector bundles $E\in \calN_n$ exist.  Then $\calE=(E,0)$ is a very stable Higgs bundle, and $T^+_\calE\cong T^*_E\calN$ and $\T$ acts with weight one. Thus  \beq \label{cotchar}\chi_\T(\Sym(T^{+*}_\calE))={1\over (1-t)^{n^2(g-1)+1}}\eeq  while $$\chi_\T(\Sym(\calA^*))={1\over (1-t)^g(1-t^2)^{3g-3}(1-t^3)^{5g-5}\cdots (1-t^n)^{(2n-1)(g-1)}}.$$ Thus the equivariant multiplicity in this case
	is \beq \label{eqmultN}m_{\calE}(t)={{ \chi_\T(\Sym(T^{+*}_\calE))\over \chi_\T(\Sym(\calA^*))}}=[2]_t^{3g-3}[3]_t^{5g-5}\cdots [{n}]_t^{(2n-1)(g-1)}, \eeq  where $$[n]_t:=\frac{1-t^n}{1-t}=1+t+\dots+t^{n-1}$$ are the quantum integers. So by Theorem~\ref{multiplicity} the multiplicity of the component of the nilpotent cone $N$ whose reduced subvariety is isomorphic with $\calN$, the moduli space of rank $n$ stable bundles, is    $$m_{\calN}=2^{3g-3}3^{5g-5}\cdots n^{(2n-1)(g-1)}.$$ This number was also computed in \cite{beauville-etal} as the degree of the dominant map $h^{-1}(a)\to \calN$.   We will see in Subsection~\ref{cotfib} below that this is not a coincidence.  
	
	Finally, we note that for any other $\calF\in \M^{s\T}$ $${\chi_\T(\Sym(T^{+*}_\calE)) \over \chi_\T(\Sym(T^{+*}_\calF)) }\in \Z[t]$$ is a polynomial from the definition \eqref{sym}, thus we can deduce the following. 
\end{remark}

\begin{corollary}  For $\calF\in \M^{s\T}$  very stable, the equivariant multiplicity divides \eqref{eqmultN}:
	$$m_\calF(t)|[2]_t^{3g-3}[3]_t^{5g-5}\cdots [{n}]_t^{(2n-1)(g-1)}.$$  In particular, it is a product of cyclotomic polynomials. 
\end{corollary}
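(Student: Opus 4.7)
The plan is to compare $m_{\calF}(t)$ to the equivariant multiplicity of the distinguished very stable Higgs bundle $\calE=(E,0)$ with $E$ a very stable bundle, whose multiplicity polynomial is given explicitly in Remark~\ref{multN} as $m_{\calE}(t)=[2]_t^{3g-3}[3]_t^{5g-5}\cdots[n]_t^{(2n-1)(g-1)}$. The strategy has two steps: first show $m_{\calF}(t)\mid m_{\calE}(t)$ in $\Z[t]$, then use the irreducibility of cyclotomic polynomials.

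For the divisibility, since both virtual multiplicities from Definition~\ref{equmult} share the common denominator $\chi_\T(\Sym(\calA^*))$, I would write
$$\frac{m_{\calE}(t)}{m_{\calF}(t)}=\frac{\chi_\T(\Sym(T^{+*}_\calE))}{\chi_\T(\Sym(T^{+*}_\calF))}=\frac{\prod_{\lambda>0}(1-t^\lambda)^{\dim(T^+_\calF)_\lambda}}{(1-t)^{n^2(g-1)+1}},$$
using \eqref{sym} and the explicit form \eqref{cotchar} of $\chi_\T(\Sym(T^{+*}_\calE))$ at the type $(n)$ fixed point. Next I would observe that $(1-t)$ divides $(1-t^\lambda)$ for every $\lambda\geq 1$, so the numerator is divisible by $(1-t)^N$ where $N=\sum_{\lambda>0}\dim(T^+_\calF)_\lambda=\dim T^+_\calF$. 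By Proposition~\ref{lagrangian}, $W^+_\calF$ is Lagrangian in $\M^s$, hence $\dim T^+_\calF=\dim\M/2=n^2(g-1)+1$, matching the exponent in the denominator. Therefore the ratio lies in $\Z[t]$, which gives $m_{\calF}(t)\mid m_{\calE}(t)$.

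To conclude, I would factor each quantum integer as $[k]_t=(1-t^k)/(1-t)=\prod_{d\mid k,\, d>1}\Phi_d(t)$, the product of the cyclotomic polynomials indexed by proper divisors $d>1$ of $k$. Thus $m_{\calE}(t)$ is a product of cyclotomic polynomials. Since by Corollary~\ref{eqmult} $m_{\calF}(t)$ is a monic polynomial in $\Z[t]$, and since cyclotomic polynomials are irreducible in $\Z[t]$, every monic divisor of $m_{\calE}(t)$ in $\Z[t]$ is itself a product of cyclotomic polynomials; this applies to $m_{\calF}(t)$.

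The only subtle point is ensuring the dimension equality $\dim T^+_\calF=n^2(g-1)+1$, which is really the key input (it is what makes the simple $(1-t)\mid(1-t^\lambda)$ divisibility suffice to produce the needed polynomial in $\Z[t]$); everything else is formal manipulation with characters and cyclotomic factorisations.
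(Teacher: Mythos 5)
Your proof is correct and follows essentially the same route as the paper, which deduces the divisibility from the ratio $\chi_\T(\Sym(T^{+*}_\calE))/\chi_\T(\Sym(T^{+*}_\calF))$ at the end of Remark~\ref{multN}; you merely make explicit the key point the paper leaves implicit, namely that $\dim T^+_\calF=\dim\calA=n^2(g-1)+1$ because upward flows are Lagrangian, so the $(1-t)$-powers match and the quotient is a product of quantum integers in $\Z[t]$. The cyclotomic conclusion via monicity (Corollary~\ref{eqmult}) and irreducibility of $\Phi_d$ is exactly the intended argument (only the phrase ``proper divisors $d>1$'' should just read ``divisors $d>1$'', including $d=k$, as your displayed formula correctly indicates).
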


\begin{remark}\label{rank2} Let $n=2$. By Corollary~\ref{type111} and Theorem~\ref{laumon} we know that there exists a very stable Higgs bundle $\calE$ in every component $\calN,F_i\in \pi_0(\M_2)$ of the fixed point set of the $\T$-action on $\M$ (see Remark~\ref{remarkn2}).  We see that $T^+_\calE=V_2\oplus V_1$ has only weights $1$ and $2$ and $V_1\cong T^*F_i$, thus $$\chi_\T(\Sym(T^{+*}_\calE))={1\over (1-t)^{\dim F_i}(1-t^2)^{\dim \M/2-\dim F_i}}={1\over(1-t)^{i+g}(1-t^2)^{3g-i-3}}.$$ On the other hand $$\chi_\T(\Sym(\calA^*))={1\over(1-t)^g(1-t^2)^{3g-3}}.$$ Consequently $$m_{\calE}(t)={{ \chi_\T(\Sym(T^{+*}_\calE))\over \chi_\T(\Sym(\calA^*))}}=(1+t)^{i}=[2]_t^i.$$  Thus from Theorem~\ref{multiplicity} we get that $$m_{F_i}=2^i.$$ This result was proved in \cite[Proposition 6]{hitchin1} for $\SL_2$ Higgs bundles ($i$ even) and for twisted $\SL_2$ Higgs bundles ($i$ odd) in \cite{hausel-thaddeus}.	\end{remark}

\begin{remark} \label{counter}
	One can compute  the rational function ${{ \chi_\T(\Sym(T^{+*}_\calE))/ \chi_\T(\Sym(\calA^*))}}$ for every $\calE\in \M^{s\T}$, which by 
	Corollary~\ref{eqmult} equals the polynomial $m_{\calE}(t)$, when $\calE$ is very stable. This quantity depends only on the ambient $F_\calE\in \pi_0(\M^\T)$ and when it is not a polynomial, we  deduce that there exists no very stable Higgs bundle in $F_\calE$. 
	
	As an example recall \cite{gothen} that a type $(1,2)$ fixed point $\calE$ of the $\T$-action has underlying rank $3$ vector bundle of the form $E=L\oplus V$, where $L$ is a rank $1$ and $V$ is a rank $2$ vector bundle on $C$ and
	the Higgs field only non-trivial in $H^0(C;L^{*}VK)$. If $\ell=\deg(L)$ and $v=\deg(V)$ then $d=\deg(E)=\ell+v$. It is shown in \cite[Proposition 2.5]{gothen} that there is a stable Higgs bundle of this form if $ d/3<\ell<d/3+g-1$ or equivalently $0<2\ell-v<3g-3$. 
	
	We can easily compute the weight spaces of the $\T$-module 
	$T^+_\calE$. It has two weights $1$ and $2$, and we can compute the $2$-weight space isomorphic to $H^0(C;V^*LK)$ whose dimension is $2\ell-v+2g-2$ from Hirzebruch-Riemann-Roch and the fact that $H^1(C;V^*LK)\cong H^0(C;VL^*)^*\cong 0$ from \cite[proof of Proposition 4.2]{gothen}. Because of the homogeneity $1$ symplectic form,  the weight $2$ space is isomorphic with the dual of the weight $-1$ space, thus $2\ell-v+2g-2$ also agrees with half of the Morse index of the ambient fixed point component, matching \cite[Proposition 4.2.i]{gothen}. It follows that  $$\chi_\T(\Sym(T_\calE^{+*}))={1\over(1-t)^{9g-8-(2g-2+2\ell-v)}(1-t^2)^{2\ell-v+2g-2}}.$$ This implies \beq \label{nonpol}m_\calE(t)={{ \chi_\T(\Sym(T^{+*}_\calE))\over \chi_\T(\Sym(\calA^*))}}=(1+t)^{g-1-2\ell+v}(1+t+t^2)^{5g-5}.\eeq
	This is a polynomial in $t$  if and only if  $2\ell-v\leq g-1$. This shows that the type $(1,2)$ components of $
	\M_3^\T$ where $g-1<2\ell-v<3g-3$ do not contain very stable Higgs bundles.
	
	In fact, we can see this fact directly as follows. Our fixed point is $L\oplus V$ with Higgs field $\phi:L\rightarrow VK$. Assume for simplicity that the rank $3$ bundle has degree $0$. The upward flow consists of extensions $L\rightarrow E\rightarrow V$ where $\Phi$ acting on $L$ and projecting to $V$ is $\phi$.
	
	Let $U\subset V$ be the line bundle generated by $\phi$, or the image of $\phi:LK^*\rightarrow V$. Then 
	$$\deg U\ge \deg L-\deg K=\ell-(2g-2).$$
	Its annihilator is $(V/U)^*\subset V^*$ which has degree 
	$\deg U-\deg V=\deg U+\ell$ so the degree of $L(V/U)^*K$ is 
	$$\deg U+2\ell +(2g-2)\ge 3\ell.$$
	
	If $\ell>(g-1)/3$ then $L(V/U)^*K$ always has a non-trivial section $\psi$ which we can think of as lying in $\Hom(V, LK)$.
	Then $(\phi,\psi):L\oplus V\rightarrow (V\oplus L)\otimes K$ is a Higgs field on $L\oplus V$ and $\langle \phi,\psi\rangle = 0$ means it is nilpotent.

\end{remark}

\begin{remark} \label{obstruction}  For rank $2$ we know that every component of $\M^\T$ contains a very stable Higgs bundle, thus every component of the nilpotent cone is very stable (see Remark~\ref{rank2}). When a bundle is not very stable, the current term for it is {\em wobbly}. So for  rank $3$ above we found type $(1,2)$ wobbly components of the nilpotent cone by computing $m_{F}(t)$  in \eqref{nonpol} and finding that it is not a polynomial. Interestingly, precisely in the case when $m_{F}(t)$ was not a polynomial we found wobbly directions in $W^+_\calE$.    In fact  for rank $3$ we conjecture that all other type $(1,2)$ components of the nilpotent cone are very stable. As the type $(2,1)$ fixed points behave similarly to the type $(1,2)$ ones by duality, for rank $3$ we can formulate the conjecture that a component  $F\in \pi_0(\M^{s\T})$ contains a very stable Higgs bundle if and only if $m_{\calE}(t)$ (which is independent of $\calE\in F$) is a polynomial for a $\calE\in F$. 
	
	Adapting the argument for the type $(1,2)$ wobbly fixed point components above to the rank  $4$ case  we find that all stable type $(1,3)$ $\T$-fixed Higgs bundle are  wobbly. However,  the corresponding $m_\calE(t)$ is a polynomial, using the description in \cite[Example 6.6]{garcia-prada-heinloth-schmitt}. 
	Thus the obstruction of integrality of $m_\calE(t)$  for very stable Higgs bundles discussed in Remark  \ref{counter} is not sufficient. In particular, already for rank $4$ we do not have a conjectured list of very stable components of the nilpotent cone. 
	
\end{remark}

\begin{remark}\label{remark111}
	We know from Theorem~\ref{mainverystable} that all type $(1,\dots,1)$ components of the nilpotent cone are very stable and we can explicitly compute  ${{ \chi_\T(\Sym(T^{+*}_\calE))/ \chi_\T(\Sym(\calA^*))}}$ for a fixed point  $\calE\in \M^{s\T}$ of type $(1,\dots,1)$. In this case $E=L_0\oplus \dots \oplus L_{n-1}$ and  $\Phi(L_{i-1})\subset L_{i}K$ is determined by $b_i:=\Phi|_{L_{i-1}}:L_{i-1}\to L_{i}K$ non-zero. 
	We denote $\ell_i=\deg(L_i)$ and $m_i=2g-2-\ell_i+\ell_{i+1}$. As $b_i$ is non-zero $m_i\geq 0$. 
	
	By \eqref{tangent} the tangent space $T_\calE$  to ${\mathcal M}^s$ at this point is the hypercohomology group
	\begin{multline*}T_\calE\cong \H^1\left(\End(E)\stackrel{\ad(\Phi)}{\rightarrow}\End(E)\otimes K\right)\cong \\ \cong \bigoplus_{k=-n+1}^{k=n} \H^1\left(\bigoplus_{i-j=k}\Hom(L_i,L_{j})\stackrel{\ad(\Phi)}{\rightarrow}\bigoplus_{i-j=k-1}\Hom(L_i,L_{j})\otimes K\right)\end{multline*} indexed according to the weights of the $\T$ action on $T_\calE$. Thus we see that \bes T^+_\calE= \bigoplus_{k=1}^{k=n} (T_\calE)_k = \bigoplus_{k=1}^{k=n} \H^1\left(\bigoplus_{i-j=k}\Hom(L_i,L_{j})\stackrel{ad(\Phi)}{\rightarrow}\bigoplus_{i-j=k-1}\Hom(L_i,L_{j})\otimes K\right).\ees Because of the stability of the Higgs bundle $\calE$ \cite[Theorem 4.3]{hausel-vanishing} we get that the zeroth and second hypercohomology of all the complexes vanish, thus \bes\dim T^k_\calE&=&-\sum_{i-j=k} \chi(\Hom(L_i,L_j)) +\sum_{i-j=k-1} \chi(\Hom(L_i,L_j)\otimes K)\\ &=& -\sum_{i-j=k} (-\ell_i+\ell_j+1-g) + \sum_{i-j=k-1} (-\ell_i+\ell_j + g-1) \\ &=& (2n-2k+1)(g-1)-\ell_k+\ell_{n-k+1} \\ &=& (2k-1)(g-1) + m_{k,n-k+1},\ees where $$m_{a,b}=\left\{ \begin{array}{cc}\sum_{a\leq j< b}^b m_j & {\mbox{ when } a<b}\\  0 & {\mbox{ when } a=b} \\  -\sum_{b\leq j<a}^b m_j & {\mbox{ when } a> b} \end{array} \right.$$
	Then a straightforward computation gives \beq\label{explicit}m_\calE(t)={{ \chi_\T(\Sym(T^{+*}_\calE))\over \chi_\T(\Sym(\calA^*))}}={\prod_{j=\lceil{n \over 2}\rceil}^{{n-1}} (1-t^{j+1})^{m_{n-j,j+1}}\over \prod_{j=1}^{\lfloor {n\over 2} \rfloor} (1-t^j)^{m_{j,n-j+1} }}=\prod_{i=1}^{n-1} \left[\begin{array}{c} n \\ i\end{array}\right]^{m_i}_t.\eeq 
	Here  $$\left[\begin{array}{c} n \\ k \end{array}\right]_t=\prod_{j=1}^k{1-t^{n-j+1} \over 1-t^j }$$ is the quantum binomial coefficient, in particular a polynomial. Incidently, this is the Poincar\'e polynomial of the Grassmanian $\Gr_k(\C^n)$. This coincidence will be further discussed in Subsection~\ref{simple}. 
	
	We thus see that the rational function in \eqref{explicit} is always a polynomial. The polynomiality also follows from the existence of very stable Higgs bundles in each type $(1,1,\dots,1)$  component of $\M^\T$, as proved in Corollary~\ref{type111}. 
\end{remark}

We obtain immediately  
\begin{corollary} \label{mult111} For a fixed point component $F_\calE\in \pi_0(\M^T)$  of type $(1,1,...,1)$ as in Remark~\ref{remark111} the multiplicity of $N_{F_\calE}$ in the nilpotent cone $N$ is $$m_{F_\calE}=m_\calE(1)=\prod_{i=1}^{n-1} 
	{n \choose i}^{m_i}.$$
\end{corollary}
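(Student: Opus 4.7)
The plan is essentially to combine Theorem~\ref{multiplicity} with the explicit formula \eqref{explicit} derived in Remark~\ref{remark111}, and then evaluate at $t=1$. There should be no serious obstacle; the work is really a matter of assembling results already on the table.

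First I would pick a very stable representative $\calE$ inside the type $(1,\dots,1)$ component $F_\calE$. Such a representative exists by Corollary~\ref{type111} (which itself follows from Theorem~\ref{mainverystable}): the set of very stable Higgs bundles in any type $(1,\dots,1)$ component is a dense open subset, parametrised by data $\delta$ with $\delta_1+\dots+\delta_{n-1}$ a reduced divisor. Since the quantities we care about depend only on the component $F_\calE$ (the weight data of the $\T$-representation $T^+_\calE$ are locally constant on $F_\calE$), replacing $\calE$ by any very stable point of $F_\calE$ is harmless.

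Next I would apply Theorem~\ref{multiplicity}, which gives the identity
$$m_{F_\calE} = \left.\frac{\chi_\T(\Sym(T^{+*}_\calE))}{\chi_\T(\Sym(\calA^*))}\right|_{t=1} = m_\calE(1).$$
Here the numerator and denominator were computed weight-by-weight in Remark~\ref{remark111}, where the explicit simplification
$$m_\calE(t) = \prod_{i=1}^{n-1} \left[\begin{array}{c} n \\ i\end{array}\right]^{m_i}_t$$
was established in \eqref{explicit}. So the task reduces to evaluating this product of quantum binomials at $t=1$.

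Finally, using the standard fact that $\displaystyle \lim_{t\to 1}\left[\begin{array}{c} n \\ i\end{array}\right]_t = \binom{n}{i}$, which follows directly from the definition $\left[\begin{array}{c} n \\ i\end{array}\right]_t = \prod_{j=1}^i (1-t^{n-j+1})/(1-t^j)$ and L'Hôpital (or simply noting that each factor $(1-t^{n-j+1})/(1-t^j)$ tends to $(n-j+1)/j$), one obtains
$$m_{F_\calE} = m_\calE(1) = \prod_{i=1}^{n-1}\binom{n}{i}^{m_i},$$
as required. The only subtle input is that $m_\calE(t)$ is actually a polynomial (so the limit $t\to 1$ is simply evaluation), but this is already guaranteed by Corollary~\ref{eqmult} together with the very stability provided by Theorem~\ref{mainverystable}.
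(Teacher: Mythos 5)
Your proposal is correct and follows essentially the same route as the paper: the corollary is stated there as an immediate consequence of Theorem~\ref{multiplicity} applied to a very stable representative (which exists in every type $(1,\dots,1)$ component by Theorem~\ref{mainverystable}/Corollary~\ref{type111}) together with the explicit formula \eqref{explicit} from Remark~\ref{remark111}, evaluated at $t=1$. Your extra remarks (that the data depend only on the component, and that $m_\calE(t)$ is a polynomial so $t=1$ is genuine evaluation — which is in fact already visible from \eqref{explicit} itself) are fine and do not change the argument.
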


Another consequence is the following
\begin{corollary} For $\calE\in \M^{s\T}_n$ very stable the following are equivalent \begin{enumerate}\item $m_{{F_\calE}}=1$ 		\item $m_{{F_\calE}}(t)=1$  \item $\calE$ is a uniformising Higgs bundle of Remark~\ref{hitchinsection} 
		
		\item $W^+_\calE$ is a section of the Hitchin map $h$ 
	\end{enumerate}
\end{corollary}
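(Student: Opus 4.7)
The plan is to establish the cycle $(1) \Leftrightarrow (2) \Leftrightarrow (4)$ using the equivariant multiplicity machinery of Theorem~\ref{multiplicity}, then close it via $(3) \Rightarrow (4)$, which is immediate, and $(4) \Rightarrow (3)$, which is the content-bearing implication.

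\emph{Steps $(1) \Leftrightarrow (2)$.} By Corollary~\ref{eqmult}, $m_\calE(t) \in \Z[t]$ is a monic palindromic polynomial with non-negative integer coefficients satisfying $m_\calE(0)=1$. If $m_{F_\calE} = m_\calE(1) = 1$, then the non-negative integer coefficients sum to $1$ while the constant term is already $1$, so the polynomial has degree $0$ and $m_\calE(t)=1$. The converse is trivial.

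\emph{Steps $(2) \Leftrightarrow (4)$.} The equivariant Serre triviality \eqref{hitchinpush} gives $h_*(\calO_{W^+_\calE}) \cong h_*(\calO_{W^+_\calE})_0 \otimes \calO_\calA$ with $\chi_\T(h_*(\calO_{W^+_\calE})_0)=m_\calE(t)$. If $m_\calE(t)=1$ then the fibre is a one-dimensional trivial $\T$-representation, so $h_*(\calO_{W^+_\calE}) \cong \calO_\calA$; since $h|_{W^+_\calE}$ is already finite and flat by Lemma~\ref{locfree}, this forces $h|_{W^+_\calE}:W^+_\calE \to \calA$ to be an isomorphism, exhibiting $W^+_\calE$ as the image of a section. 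Conversely, if $W^+_\calE$ is the image of a section then $h|_{W^+_\calE}$ is a bijective proper map between smooth irreducible varieties of the same dimension, hence an isomorphism, giving $\rank h_*(\calO_{W^+_\calE}) = m_{F_\calE} = 1$.

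\emph{Step $(3) \Rightarrow (4)$.} This is precisely the content of Remark~\ref{hitchinsection}: the companion matrix construction identifies $W^+_{\calE_{L,0}}$ with the image of the Hitchin section $s:\calA \to \M$.

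\emph{Step $(4) \Rightarrow (3)$.} Suppose $W^+_\calE$ is a section, so $h$ induces a $\T$-equivariant isomorphism $T^+_\calE \cong \calA$ of $\T$-modules (via the derivative at $\calE$ combined with Proposition~\ref{bbprop}). In particular, $T^+_\calE$ contains weight $n$ with non-zero multiplicity $(2n-1)(g-1)$. Now if $\calE$ has type $(n_1,\ldots,n_k)$ with $k \leq n$, the formula of Remark~\ref{remark111} (which applies verbatim to arbitrary types) shows that $T^w_\calE$ is computed from $\Hom(E_i,E_j)$ with $i-j \in \{w-1,w\}$ and $i,j \in \{0,\ldots,k-1\}$; since $|i-j| \leq k-1$, the weights in $T^+_\calE$ cannot exceed $k$. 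Hence $k=n$ and $\calE = \calE_\delta$ is of type $(1,\ldots,1)$. Corollary~\ref{mult111} then gives $m_{F_\calE}=\prod_{i=1}^{n-1}\binom{n}{i}^{m_i}=1$, which forces $m_i=0$ for all $i$. Thus each $b_i\in H^0(C;L_{i-1}^*L_i K)$ is a nowhere-vanishing section, so $L_i \cong L_{i-1}K^{-1}$ and $\calE$ has the form $\calE_{L_0,0}$, i.e.\ it is a uniformising Higgs bundle.

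The expected main obstacle is the implication $(4) \Rightarrow (3)$: the step $(2) \Leftrightarrow (4)$ is essentially bookkeeping once Theorem~\ref{multiplicity} is in hand, but $(4) \Rightarrow (3)$ requires the weight-range argument to rule out non-$(1,\ldots,1)$ types and then an appeal to the explicit multiplicity of Corollary~\ref{mult111}. The classification in Theorem~\ref{mainverystable} is essential here so that one knows the general form of $\calE$ once the type is identified.
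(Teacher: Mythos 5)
Your proof is correct, and all the essential ingredients coincide with the paper's: Corollary~\ref{eqmult} for the equivalence of $(1)$ and $(2)$, a weight-counting argument on $T^+_\calE$ versus $\calA$ to force type $(1,\dots,1)$, the explicit multiplicity formula for that type to force all $m_i=0$, Remark~\ref{hitchinsection} for the uniformising case, and Theorem~\ref{multiplicity} to tie a section to multiplicity one. The difference is organizational: the paper runs the single cycle $1\Rightarrow2\Rightarrow3\Rightarrow4\Rightarrow1$, doing the weight-matching inside $2\Rightarrow3$ purely at the level of characters (equality $\chi_\T(\Sym(T^{+*}_\calE))=\chi_\T(\Sym(\calA^*))$ forces $n$ distinct weights in $T^+_\calE$), and closes with $4\Rightarrow1$ via the generic-fibre cardinality statement; you instead prove $(2)\Leftrightarrow(4)$ directly, using \eqref{hitchinpush} plus the fact that a finite flat morphism of degree one is an isomorphism, and then run the weight argument in $(4)\Rightarrow(3)$ through the derivative of the section, giving a $\T$-module isomorphism $T^+_\calE\cong\calA$ before quoting Corollary~\ref{mult111}. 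Your route buys a slightly more geometric $(2)\Rightarrow(4)$ (identifying $W^+_\calE$ with $\calA$ via the pushforward rather than only counting intersection points), at the cost of an extra standard fact about degree-one finite flat maps that the paper's ordering of implications never needs; the paper's version is marginally leaner because the weight argument is invoked only once, directly from $m_\calE(t)=1$. Your parenthetical that the weight-space description of Remark~\ref{remark111} applies to arbitrary types is the right observation and is indeed all you need there, since you only use which weights can occur, not the dimension formulas.
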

\begin{proof} Corollary~\ref{eqmult} implies that $m_{\calE}(t)\in \Z_{\geq0}[t]$ is palindromic with non-negative coefficients  showing $1.\Rightarrow 2.$.  By Definition~\ref{equmult}  $m_{\calE}(t)=1$ implies that there are $n$ distinct weights in the $\T$-module $T^+_\calE$, to match those in $\calA$.  This implies in turn that $\calE$ is of type $(1,\dots,1)$. For a type $(1,\dots,1)$ $\calE$ we will have $m_{\calE}(t)=1$ if all $m_i=0$ from \eqref{explicit}. This only happens for a  uniformising Higgs bundle. This shows $2.\Rightarrow 3.$.  From Remark~\ref{hitchinsection} for a uniformising Higgs bundle   $W^+_\calE$ is a section of the Hitchin map. This shows $3.\Rightarrow 4.$. Finally the second part of Theorem~\ref{multiplicity} implies $4.\Rightarrow1.$.
\end{proof}

\subsection{Intersection of upward flows with generic fibres of $h$}
\label{intersection}

Let ${{a}}=(a_1,\dots,a_n)\in H^0(C;K)\oplus \dots \oplus H^0(C;K^n)=\calA$ be coefficients that define a smooth spectral curve $C_a\subset T^*C$.   Assume that $\div(b)$ for $\calE$ of type $(1,\dots,1)$ avoids the ramification divisor of $\pi_a:C_a\to C$. In other words over any zero $c\in C$ of $b$ we have $|\pi_a^{-1}(c)|$ has $n$ distinct preimages of $c$ in $C_a$.   

We recall that the spectral curve is a subscheme of $T^*C$ defined as $$C_a=\det(\pi^*\Phi-x)^{-1}(0),$$ where $\pi:T^*C\to C$ is the projection and $x\in H^0(T^*C;\pi^*(K))$ is the tautological section.  For $(E,\Phi)\in h^{-1}(a)$ denote by \beq \label{definitionU}U:={\rm coker}(\pi_a^*(E\otimes K^{-1})\stackrel{\pi_a^*\Phi-x}{\to} \pi_a^*(E)).\eeq 

Then by \cite[\S 5.1]{hitchin-stable} and \cite[\S 3.7]{beauville-etal}  we have the following proposition, which we prove for completeness as the proof is not readily available in the literature.  

\begin{proposition}  
	
	\noindent 1. The direct image of the trivial bundle   \beq \label{canonicalBNR} (\pi_a)_*(\calO_{C_a}\stackrel{x}{\to}\pi_a^*(K))\cong \calE_{\calO_C, a}=(E_{\calO_C},\Phi_a)\in W^+_{\calE_0} \eeq   defines  the canonical section  $W^+_{\calE_0}$ of the Hitchin map (see \eqref{uniform}).
	
	\noindent 2. If a Higgs bundle $(E,\Phi)$ is the direct image of a line bundle $U$:
	\beq \label{utoephi}(\pi_a)_*(U\stackrel{x}{\to} U\otimes \pi_a^*(K))=(E,\Phi).\eeq then  on $C_a$ we have the exact sequence \beq\label{exactu}0\to  U\otimes \pi_a^*(K^{-n}) \to \pi_a^*(E\otimes K^{-1})\stackrel{\pi_a^*\Phi-x}{\to} \pi_a^*(E) \to U\to 0. \eeq 
\end{proposition}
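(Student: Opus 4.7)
My plan is to handle the two parts by an explicit computation of pushforwards and by resolving $\iota_*U$ on the surface $T^*C$ respectively, where $\iota\colon C_a \hookrightarrow T^*C$ denotes the inclusion.

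For part 1, I would begin from the fact that $T^*C$ is the total space of $K$, so $\pi_*\calO_{T^*C} \cong \bigoplus_{i\geq 0} K^{-i}$ is the symmetric algebra of $K^{-1}$ (with $x \in H^0(T^*C;\pi^*K)$ the tautological section generating the $i=1$ summand). Since $C_a\subset T^*C$ is cut out by the monic relation $x^n+a_1x^{n-1}+\cdots+a_n=0$, taking pushforward gives
\[
(\pi_a)_*\calO_{C_a} \;\cong\; \calO_C\oplus K^{-1}\oplus\cdots\oplus K^{-(n-1)}\;=\;E_{\calO_C},
\]
with basis $\{1,x,\dots,x^{n-1}\}$. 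The map of sheaves $\calO_{C_a}\xrightarrow{\,x\,}\pi_a^*K$ pushes forward, via the projection formula, to a map $E_{\calO_C}\to E_{\calO_C}\otimes K$; in the above basis multiplication by $x$ together with the relation $x^n=-a_1x^{n-1}-\cdots-a_n$ gives exactly the companion matrix $\Phi_a$ of \eqref{companion}. So the pushforward Higgs bundle is $\calE_{\calO_C,a}$, and by Remark~\ref{hitchinsection} this lies on the canonical Hitchin section $W^+_{\calE_0}$. The coincidence of image and section then follows from the dimension argument already recorded in Remark~\ref{hitchinsection}.

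For part 2, the plan is to first produce on the surface $T^*C$ the short exact sequence
\[
0 \longrightarrow \pi^*(E\otimes K^{-1}) \xrightarrow{\;\pi^*\Phi - x\;} \pi^*E \longrightarrow \iota_*U \longrightarrow 0,
\]
and then apply $\iota^*$. The map $\pi^*\Phi-x$ is a morphism between rank $n$ vector bundles on the smooth surface $T^*C$; its determinant is (up to sign) the section $\det(\pi^*\Phi-x)\in H^0(T^*C,\pi^*K^n)$ whose zero scheme is $C_a$, so it is injective with cokernel supported on $C_a$. A fibrewise check at an unramified point $c\in C$ shows that at each $q\in\pi_a^{-1}(c)$ the stalk of $\pi^*E$ decomposes as $\bigoplus_{p\in\pi_a^{-1}(c)} U_p$ (using $(\pi_a)_*U=E$), and $\pi^*\Phi-x$ acts on the $p$-summand by multiplication by $x(p)-x(q)$. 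Hence the cokernel is naturally identified with $U$ itself.

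Applying $\iota^*$ to this short exact sequence gives exactness on the right, and the kernel of $\pi_a^*\Phi-x$ is identified with $\Tor^{1}_{\calO_{T^*C}}(\iota_*U,\iota_*\calO_{C_a})$. Since $C_a$ is a smooth divisor in $T^*C$, resolving $\iota_*\calO_{C_a}$ by $0\to\calO_{T^*C}(-C_a)\to\calO_{T^*C}\to\iota_*\calO_{C_a}\to 0$ yields
\[
\Tor^1_{\calO_{T^*C}}(\iota_*U,\iota_*\calO_{C_a})\;\cong\;U\otimes \calN^*_{C_a/T^*C},
\]
and the conormal bundle equals $\pi_a^*K^{-n}$ because $C_a$ is cut out by a section of $\pi^*K^n$. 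Assembling the pieces gives the four-term exact sequence \eqref{exactu}. The main obstacle I anticipate is setting up the first short exact sequence on $T^*C$ cleanly, in particular naturally identifying the cokernel with $U$ rather than a twist of it; I would address this with the local/unramified fibre calculation above, which pins down the identification over a dense open subset, after which the coincidence over all of $C_a$ follows since the cokernel is a torsion-free (indeed line) sheaf on the integral curve $C_a$.
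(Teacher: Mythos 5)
Your part 1 follows the paper's own argument: $(\pi_a)_*\calO_{C_a}$ is $\Sym(K^{-1})$ modulo the ideal generated by the characteristic equation, and multiplication by $x$ in the basis $1,x,\dots,x^{n-1}$ is the companion matrix \eqref{companion}, so nothing to add there. For part 2 your route is genuinely different from the paper's: the paper pushes the two-term complex $\pi_a^*(EK^{-1})\xrightarrow{\pi_a^*\Phi-x}\pi_a^*E$ down to $C$, identifies it with $E_{\calO_C}\otimes EK^{-1}\to E_{\calO_C}\otimes E$, and exhibits the kernel $EK^{-n}$ and cokernel $E$ through the explicit maps $\Psi,\Xi$ and the Cayley--Hamilton identity; you instead resolve $\iota_*U$ on the surface $T^*C$ and recover the kernel term of \eqref{exactu} as $\Tor_1^{\calO_{T^*C}}(\iota_*U,\calO_{C_a})\cong U\otimes\calO_{T^*C}(-C_a)\vert_{C_a}\cong U\otimes\pi_a^*K^{-n}$. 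That surface-level package (injectivity of $\pi^*\Phi-x$ from the nonvanishing of its determinant, the Koszul computation of $\Tor_1$, $\iota^*\iota_*U\cong U$, $\calO_{T^*C}(C_a)\cong\pi^*K^n$) is sound and is essentially the Beauville--Narasimhan--Ramanan argument; it buys you a shorter, more conceptual derivation than the paper's explicit matrix computation.

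There is, however, a genuine gap exactly at the step you flag as the main obstacle: identifying $\coker(\pi^*\Phi-x)$ with $\iota_*U$ on the nose. Your fibrewise computation only does this over the locus where $\pi_a$ is unramified (the eigenspace decomposition of $\pi_a^*(\pi_a)_*U$ you invoke exists only there), and the inference ``agreement on a dense open subset plus both being torsion-free line sheaves on the integral curve $C_a$ implies agreement everywhere'' is false: two line bundles on $C_a$ isomorphic off a finite set can differ by a twist supported on that set (compare $\calO_{C_a}$ with $\calO_{C_a}(p)$), and here the danger is precisely a twist at the points of $C_a$ over the ramification divisor. The fix is to produce a \emph{global} comparison map rather than extend a generic one: the counit $\epsilon:\pi_a^*E=\pi_a^*(\pi_a)_*U\to U$ is surjective for the finite map $\pi_a$, and since $\Phi$ is by hypothesis \eqref{utoephi} the pushforward of multiplication by $x$ one has $\epsilon\circ\pi_a^*\Phi=x\cdot\epsilon$, so $\epsilon$ annihilates the image of $\pi_a^*\Phi-x$ and induces a surjection $\coker\to U$. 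Because $C_a$ is smooth, $\det(\pi^*\Phi-x)$ is not in $\mathfrak{m}_p^2$ at any $p\in C_a$, so the corank of $\pi^*\Phi-x$ along $C_a$ is exactly $1$ and (using the adjugate identity to see the cokernel is an $\calO_{C_a}$-module) the cokernel is invertible on $C_a$; a surjection of invertible sheaves is an isomorphism, which closes the gap and makes your unramified-locus calculation unnecessary. With that repair your Tor argument does deliver \eqref{exactu}.
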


\begin{proof}  To prove \eqref{canonicalBNR} we note that the pushforward $(\pi_a)_*(\calO_{C_a})$ is the $\calO_C$-algebra ${\rm Sym}(K^{-1})/{\mathcal I}$ where the ideal sheaf ${\mathcal I}$ is generated by the image of $u:K^{-n}\to {\rm Sym}(K^{-1})$ the sum  of $a_i:K^{-n}\to K^{-n+i}$. Thus we see that as a sheaf $(\pi_a)_*(\calO_C)\cong \calE_{\calO_C}$ and the algebra structure on it induces the Higgs field $\Phi_a$ \eqref{companion}  the companion matrix of $a$.

	For the proof of the second statement we also denote 
	by $$U^\prime:={\rm ker}(\pi_a^*(E\otimes K^{-1})\stackrel{\pi_a^*\Phi-x}{\to} \pi_a^*(E))$$ the kernel. Then the pushforward of the middle square of
	the diagram $$\begin{array}{ccccccccc} 0\to  &U^\prime& \to &\pi_a^*(E\otimes K^{-1})&\stackrel{\pi_a^*\Phi-x}{\to} &\pi_a^*(E) &\to &U& \to 0 \\  & x \downarrow    &&   x \downarrow  &&  x \downarrow     &&   x \downarrow   & \\ 0\to  &U^\prime\otimes \pi_a^*(K)& \to &\pi_a^*(E)&\stackrel{\pi_a^*\Phi-x}{\to} &\pi_a^*(E\otimes K) &\to &U\otimes \pi_a^*(K) & \to 0 \end{array}$$
	is the middle square of the following diagram 
	
	$$\begin{array}{ccccccccc} 0\to  &EK^{-n}& \stackrel{\Psi}{\to }&E_{\calO_C}\otimes E K^{-1}&\stackrel{Id_{E_{\calO_C}} \otimes \Phi-\Phi_a\otimes Id_{EK^{-1}} }{\longrightarrow} &E_{\calO_C}\otimes E &\stackrel{\Xi}{\to }&E& \to 0 \\  & _\Phi \downarrow    &&   _{\Phi_a\otimes Id_E} \downarrow  &&   _{\Phi_a\otimes Id_E} \downarrow     &&   _\Phi \downarrow   & \\ 0\to  &EK^{-n+1}&  \stackrel{\Psi}{\to} &E_{\calO_C}\otimes E&\stackrel{Id_{E_{\calO_C}} \otimes \Phi-\Phi_a\otimes Id_{EK^{-1}} }{\longrightarrow} &E_{\calO_C}\otimes EK &\stackrel{\Xi}{\to} &E K & \to 0 \end{array}.$$ The rest of the diagram is given by the maps
	$$\Psi=\left(\begin{array}{c} \Psi_1\\ \Psi_2 \\ \vdots \\ \Psi_n \end{array}\right):EK^{-n}\to EK^{-1}\oplus EK^{-2}\oplus \dots \oplus EK^{-n}$$  with  $$\Psi_i=\Phi^{n-i}+a_1 \Phi^{n-i-1}+\dots + a_{n-i} Id_{EK^{-n}}:EK^{-n}\to EK^{-i}$$ and $\Xi=(\Xi_1,\Xi_2,\dots,\Xi_n): E\oplus EK^{-1}\oplus \dots \oplus EK^{-n+1}\to E$ is the map given by $\Xi_i=\Phi^{i-1}:EK^{i-1}\to E$. It is straightforward to check that the second  diagram commutes due to the Cayley-Hamilton identity $0=\Phi^n+a_1\Phi^{n-1}+\dots + a_{n} Id_E$ .  Therefore the second diagram is the pushforward of the first. This implies  \eqref{utoephi} and that $U^\prime\cong U\otimes \pi_a^*(K^{-n})$.  The result follows. 
\end{proof}

Now let $\calE$ be, as in Remark~\ref{remark111}, a very stable Higgs bundle of type $(1,1,\dots,1)$. We know from Proposition~\ref{filtrationexists} that $(E,\Phi)\in \W^+_{\calE}$ if and only if there exists a filtration with a full flag of subbundles as in \eqref{fullfiltration} which is compatible with the Higgs field \eqref{compatible}. Recall the maps $b_i$ and $b$ from \eqref{bi} and \eqref{be} respectively. 

Assume that $(E,\Phi)\in W^+_\calE\cap h^{-1}(a).$  Denote by $f_i:\pi_a^*(E_i)\to U$ the composition of the  embedding $\pi_a^*(E_i)\subset \pi_a^*(E)$ and projection $\pi_a^*(E)\to U$ arising from the definition \eqref{definitionU} of $U$. Furthermore, denote by $U_i=\im(f_i)$ the image sheaf.  As a non-trivial  subsheaf of the invertible sheaf $U$ it is itself an invertible sheaf. We have thus constructed a filtration $$U_1\subset U_2\subset \dots\subset U_{n-1}\subset U_{n}=U$$ by invertible subsheaves. Denote  the induced map $\tilde{f}_i: U_i\to U$ and by $\Delta_i:=\div(\tilde{f}_i)$ the divisor of zeroes of $\tilde{f}_i\in H^0(C;U_i^*U)$. Note that the divisor $\Delta_1$ determines $$U=\pi_a^*(E_1)(\Delta_1),$$ and so in turn by \eqref{utoephi} it also determines our Higgs bundle $(E,\Phi)\in W^+_\calE\cap h^{-1}(a)$. We get the chain of divisors on $C$ $$\Delta_1\geq \Delta_2\geq \dots \geq \Delta_{n-1}\geq \Delta_n=0.$$ 

\begin{proposition} \label{vsintersect} Let $\calE\in \M^{s\T}$ be a type $(1,...,1)$ very stable Higgs bundle. Let $a\in \calA$ be such that the spectral curve $C_a\subset K$ is smooth and $\div(b)$ avoids the ramification divisor of $\pi_a$. \begin{enumerate} \item  If $(E,\Phi)\in W^+_\calE\cap h^{-1}(a)$ then the effective divisor $\Delta_i-\Delta_{i+1}\geq 0$ is reduced and supported on  $(n-i)$ preimages in $C_a$ of $\pi_a$ over each zero of $b_i$ in $C$. More precisely if $c\in C$ is a zero of $b_i$ then $\Phi_c$ leaves $(E_i)_c$ invariant and  $\tilde{c}\in \pi_a^{-1}({c})$ is in the support of $\Delta_i-\Delta_{i+1}$ if and only if $\tilde{c}\in K_c$ is not an eigenvalue for $\Phi_c|_{(E_i)_c}:(E_i)_c\to(E_i)_c K_c $. 
		
		\item 	On the other hand for any choice of an $(n-i)$ element subset of $\pi_a^{-1}(c)$ for all zeroes $c$ of all the $b_i$, there is a unique $(E,\Phi)\in W_\calE^+\cap h^{-1}(a)$ such that the corresponding invertible sheaf on the spectral curve $C_a$ has the form $U=\pi_a^*(E_1)(\Delta_1)$ for $\Delta_1$ effective and reduced and supported at our chosen points in $C_a$. 
		
		\item	Finally, any two distinct choices of such $(n-i)$ element subsets of $\pi_a^{-1}(c)$ over all zeroes of $b_i$ for all $i$ gives non-isomorphic Higgs bundles. In particular, $$|W^+_\calE\cap h^{-1}(a)|= \prod_{i=1}^{n-1}{n\choose i}	^{m_i},$$ where $\deg(b_i)=m_i$. \end{enumerate}	\end{proposition}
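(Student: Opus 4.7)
The plan is to combine the uniqueness of the compatible filtration from Proposition~\ref{filtrationexists} with the BNR description \eqref{utoephi}. Given $(E,\Phi)\in W^+_\calE\cap h^{-1}(a)$, I would first invoke Proposition~\ref{filtrationexists} to produce the unique full flag $E_1\subset\cdots\subset E_n=E$ with $E_i/E_{i-1}\cong L_{i-1}$ and $\Phi(E_i)\subset E_{i+1}K$, and write $(E,\Phi)=(\pi_a)_*U$ for a line bundle $U$ on the smooth spectral curve $C_a$. Since $\calE$ is very stable, Theorem~\ref{mainverystable} tells me $\div b$ is reduced, so at every zero $c$ of $b$ exactly one factor $b_k$ vanishes, simply. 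A Riemann--Roch computation on $C_a$ gives $\deg U = d+n(n-1)(g-1)$, and since $\pi_a^*E_1=\pi_a^*L_0$ injects into $U$ as line bundles one obtains $\deg U_1=n\ell_0$, hence $\deg\Delta_1=\sum_{i=1}^{n-1}(n-i)m_i$.

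The central step is a fibre-by-fibre analysis of the image $U_i|_{\tilde c}\subset U_{\tilde c}$, where $U_{\tilde c}$ is identified with the $\tilde c$-eigenspace $V_{\tilde c}\subset E_c$ of $\Phi_c$. I would choose a flag-adapted local basis $e_1,\dots,e_n$ of $E_c$ in which $\Phi_c$ is lower Hessenberg with subdiagonal entries $b_1(c),\dots,b_{n-1}(c)$. If $c\notin\div b$ then $\Phi_c$ is an unreduced Hessenberg matrix, so $e_1$ is a cyclic vector and has nonzero component in every eigenspace; since $e_1\in(E_i)_c$ for all $i\geq 1$, the projection $(E_i)_c\to V_{\tilde c}$ is surjective and no point above $c$ enters $\mathrm{supp}\,\Delta_i$. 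If instead $c$ is the (unique, simple) zero of $b_k$, then $(E_k)_c$ is $\Phi_c$-invariant and $\Phi_c$ becomes block lower triangular with blocks $A$ on $(E_k)_c$ and $D$ on $E_c/(E_k)_c$, each itself unreduced Hessenberg and hence cyclic; the $n$ eigenvalues split as $\{\lambda_j\}\sqcup\{\mu_l\}$ with $V_{\lambda_j}\subset(E_k)_c$ and $V_{\mu_l}\cap(E_k)_c=0$. Applying the cyclic-vector argument to $A$ and to $D$ separately yields $U_i|_{\lambda_j}=V_{\lambda_j}$ for all $i\geq 1$, $U_i|_{\mu_l}=0$ for $i\leq k$, and $U_i|_{\mu_l}=V_{\mu_l}$ for $i>k$. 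Hence $\mathrm{supp}(\Delta_i-\Delta_{i+1})$ is precisely the set of $(n-i)$ points $\mu_l$ above each of the $m_i$ zeros of $b_i$, i.e.\ the preimages that fail to be eigenvalues of $\Phi_c|_{(E_i)_c}$. Each such point contributes multiplicity at least $1$, so $\deg(\Delta_i-\Delta_{i+1})\geq(n-i)m_i$, and summing saturates the total $\deg\Delta_1=\sum(n-i)m_i$; equality forces every multiplicity to be exactly $1$. This establishes part 1.

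For parts 2 and 3 I would argue by injection and counting. The assignment sending $(E,\Phi)$ to the tuple of $(n-i)$-subsets $S_c\subset\pi_a^{-1}(c)$ (indexed over $i$ and the zeros $c$ of $b_i$) is well-defined by part 1 and is injective, because the tuple reconstructs the reduced effective divisor $\Delta_1$, whence $U=\pi_a^*L_0(\Delta_1)$ and $(E,\Phi)=(\pi_a)_*U$. By Theorem~\ref{multiplicity} together with Corollary~\ref{mult111}, the intersection has cardinality $m_{F_\calE}=\prod_{i=1}^{n-1}\binom{n}{i}^{m_i}$, which coincides with the total number of tuples of such subsets; hence the injection is a bijection, yielding parts 2 and 3 simultaneously. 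The main obstacle will be the fibre analysis in part 1: tracking the cyclic-vector projections across the four cases ($i$ either side of $k$ and $\tilde c\in\{\lambda_j\}$ versus $\{\mu_l\}$) and extracting the exact multiplicity from the degree-saturation identity. Once that is pinned down the counting argument closes everything cleanly.
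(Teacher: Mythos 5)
Your treatment of part 1 is sound and runs parallel to the paper's: you identify the fibre of $U$ at $\tilde c=(c,\lambda)$ with the $\lambda$-eigenspace, use the cyclic-vector property of an unreduced Hessenberg matrix (in the paper this is the analysis of where $g_{i+1}|_{\pi_a^*(E_i)}$ vanishes) to locate the support of $\Delta_i-\Delta_{i+1}$ over $\div(b_i)$, and then force reducedness by the same degree-saturation identity $\deg\Delta_1=\sum_i(n-i)m_i$. Strictly speaking your fibre analysis only bounds the support from below before the degree count, but the saturation argument closes that, exactly as in the paper.

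Parts 2 and 3, however, contain a genuine gap. Theorem~\ref{multiplicity} asserts that $W^+_\calE\cap h^{-1}(a)$ is transversal of cardinality $m_{F_\calE}$ only for \emph{generic} $a$, namely for $a$ in the (unspecified) \'etale locus of the finite flat map $h:W^+_\calE\to\calA$ coming from Lemma~\ref{locfree}. The proposition, by contrast, must hold for \emph{every} $a$ with $C_a$ smooth and $\div(b)$ off the ramification divisor, and this stronger statement is what is used later (e.g. in the proof of Theorem~\ref{fouriermukai} over all of $\calA^\#$). For such a specific $a$ you only know that the set-theoretic fibre has cardinality at most the length $m_{F_\calE}$, and your injection into the set of tuples gives an inequality in the \emph{same} direction, so the two cannot be combined into a bijection; nothing in your argument rules out that for some choice of subsets the line bundle $\pi_a^*E_1(\Delta_1)$ pushes forward to a Higgs bundle that fails to lie in $W^+_\calE$, making the fibre over that $a$ non-reduced with fewer points. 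Indeed the logical direction in the paper is the opposite of yours: transversality at these explicit $a$ is \emph{deduced} from the proposition by comparing its count with $m_{F_\calE}$, not assumed. The missing ingredient is the existence step of part 2, which the paper proves constructively by induction on $\deg(b)$: starting from the Hitchin-section case $U=\pi_a^*(E_1)$ and applying Hecke transformations at $\Phi_c$-invariant subspaces corresponding to the chosen eigenvalue subsets, with Proposition~\ref{modifiedfilt} (and the stability check of Lemma~\ref{stablemma}) guaranteeing that each transform still carries a compatible full filtration limiting to the correct fixed point, hence lies in $W^+_{\calE}\cap h^{-1}(a)$. You would need this construction, or some substitute argument showing the fibre over the given $a$ is reduced, to complete your proof.
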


\begin{proof}
	First we prove that $\Delta_i-\Delta_{i+1}\geq 0$ is supported at precisely $(n-i)$ preimages in $C_a$ over each zero of $b_i$ in $C$ with no multiplicity, i.e. $\Delta_i-\Delta_{i+1}$ is reduced.  
	
	Note that $\Phi(E_i)\subset E_{i+1}K$ thus we have the map $$p_i:=\pi_a^*(\Phi)-x:\pi_a^*(E_iK^*)\to \pi_a^*(E_{i+1}).$$ Let $\overline{\im(p_i)}\leq \pi_a^*(E_{i+1})$ denote the subbundle which is the saturation of the subsheaf $\im(p_i)\subset \pi_a^*(E_{i+1})$. Let $V_{i+1}:=\pi_a^*(E_{i+1})/\overline{\im(p_i)}$ be the quotient bundle and $$g_{i+1}:\pi_a^*(E_{i+1})\to V_{i+1}$$ the quotient map.  Another way to think about $V_{i+1}$ is that it is the quotient of the sheaf ${\rm coker}(p_i)$ by its torsion: $$V_{i+1}= \coker(p_i)/T(\coker(p_i)).$$ As $p_i$ is generically an embedding $V_{i+1}$ is a line bundle. We have a map $$h_i:V_{i}\to V_{i+1}.$$ This can be seen by  noting that the embedding $\pi_a^*(E_i)\subset \pi_a^*(E_{i+1})$ induces a map $\coker(p_i)\to \coker{(p_{i+1})}$ which induces a map on the quotients by torsion. By chasing around definitions we see that \beq \begin{array}{ccc} \pi_a^*(E_{i+1}) &\stackrel{g_{i+1}}{\to} &V_{i+1} \label{diagram} \\  \hookuparrow && \ \ \ \uparrow_{\scriptsize h_{i}} \\\pi_a^*(E_{i}) &\stackrel{g_{i}}{\to}  &V_{i}  \end{array}\eeq is commutative.
	We define $V_1:=\pi_a^*(E_1)$ and $h_1:V_1\to V_2$ as $g_{2}|_{\pi_a^*(E_1)}$ so that the diagram \eqref{diagram} is still commutative for $i=1$. Finally, we have a map $h_{n}:V_{n}\to U$ induced by the map $$\coker(p_{n-1})\to \coker(\pi_a^*(\Phi)-x)=U.$$  This induces the commutative diagram
	
	\beq \label{tricomm}
	\begin{tikzcd}
	& V_n \arrow{dr}{h_n} \\
	\pi_a^*(E) \arrow{ur}{g_n} \arrow{rr}{f_n} && U
	\end{tikzcd}
	\eeq
	
	If we denote by $$h_{\geq i}:=h_{n}\circ \dots \circ h_i : V_i\to U,$$ then we see from the commutativity of
	the diagrams \eqref{diagram} and \eqref{tricomm} that $f_i=h_{\geq i}\circ g_i$. Because $g_i$ is surjective we can deduce 
	 ${\mathrm{im}} (h_{\geq i})={\mathrm{im}}(f_i)=U_i.$ In particular, $h_{\geq i}:V_i\to U_i$ is surjective and thus an isomorphism. 
	 Thus $\Delta_i-\Delta_{i+1}=\div(h_i)$. 
	
	The map $g_{i+1}|_{\pi_a^*(E_{i})}:\pi_a^*(E_{i})\to V_{i+1}$ vanishes at $\tilde{c}=(c,\lambda)\in C_a$ provided that $$\pi_a^*(E_{i})_{\tilde{c}}\subset p_i(\pi_a^*(E_iK^*)_{\tilde{c}}).$$ This happens when
	$$(E_{i})_c\subset (\Phi-\lambda Id_{E})((E_iK^*)_c).$$ As $(E_{i})_c$ and $(E_iK^*)_c$ are of the same dimension this 
	is equivalent to \beq \label{equal}(E_{i})_c= (\Phi-\lambda Id_{E})((E_iK^*)_c).\eeq  This implies $\Phi((E_i)_c)\subset (E_iK)_c$, i.e. that $(E_i)_c$ is $\Phi$-invariant, which implies that $(b_i)_c=0$. As by assumption $c$ avoids the ramification divisor of $\pi_a$ we see that $\Phi_c$ has distinct eigenvalues, i.e. it is regular semisimple.  Finally, when $\lambda$ is not an eigenvalue of $\Phi|_{(E_i)_c}$ then $(\Phi-\lambda Id_{E})$ is injective so gives an isomorphism between   $(E_iK^*)_c$ and $(E_{i})_c$ and thus \eqref{equal} holds.
	
	Thus we see that $g_{i+1}|_{\pi_a^*(E_i)}$ vanishes at $n-i$ preimages of each zero of $b_i$, the ones corresponding to the eigenvectors of $\Phi_c$ not in $(E_i)_c$. From the commutativity of \eqref{diagram} and surjectivity of $g_i$ we see that the same follows for $h_i$. 
	
	We  compute \bes \deg(\Delta_1)=\sum_{i=1}^{n-1}\deg(\Delta_i-\Delta_{i+1}) = \sum_{i=1}^{n-1}\deg(h_i)  \geq \sum_{i=1}^{n-1} m_i (n-i)=\deg(\Delta_1).\ees Indeed, by  assumption $\div(b_i)$ is reduced, $\deg(\div(b_i))=m_i$, so $\deg(h_i)\geq m_i(n-i)$  and we have \bes \sum_{i=1}^{n-1}m_i(n-i)=\sum_{i=1}^{n-1}(-\ell_i+\ell_{i+1}+2g-2)(n-i)&=&-n\ell_1 +\sum_{i=1}^{n-1} \ell_i + n(n-1)(g-1)\\&=&-n\deg(E_1)+\deg(E)+\deg(K^{n\choose 2})\\&=&-\deg\pi_a^*(E_1)+\deg(U)=\deg(\Delta_1).\ees Here  we used $\deg(E)=\deg(U)-\deg(K^{{n\choose 2}})$ for which see \cite[\S 4]{beauville-etal}. 
	It follows that $\Delta_i-\Delta_{i-1}\geq 0$ is reduced. Over a zero of $b_i$ the morphism $h_i:V_i\to V_{i+1}$ is zero at precisely those points which correspond to eigenvectors of $\Phi_c$  not in $(E_i)_c$. This proves the first part of the statement. 
	
	For the second part of Proposition~\ref{vsintersect}, we will construct a Higgs bundle such that $\Delta_1\subset C_a$ is the union of the prescribed choices of an $(n-i)$ tuple in $\pi_a^{-1}(c)$ over every zero $c$ of every $b_i$. The construction is by induction on $\deg(b_i).$ When $\deg(b_i)=0$ then $U=\pi_a^*(E_1)$ is unique and given by the Higgs bundle $\calE_{E_1,a}$ defined   in Remark~\ref{hitchinsection}. 
	
	Recall the notation $\calE_{\delta}$ for a type $(1,\dots,1)$ Higgs bundle from Example~\ref{ex111}. Suppose  that we have constructed a Higgs bundle in $(E,\Phi)\in W^+_{\calE_{\d}}\cap h^{-1}(a)$ for any choice of an $(n-i)$ tuple in $\pi_a^{-1}(c)$ over every zero $c$ of every $b_i$. Let $c\in C$ be a point which avoids the ramification divisor of $\pi_a$ and $b(c)\neq 0$. Denote by $b^\prime_j:=b_j$ and $\delta_j^\prime:=\div(b_j)$ except when $j=i$,  when we let $b^\prime_i:=b_i s_c$ or equivalently $\d^\prime_i:=\d_i+c$. Assume that $\calE_{\d^\prime}$ is still stable.  Then $\Phi_c$ is regular semisimple and we can take an $i$-dimensional $\Phi_c$-invariant subspace $V_i\subset E_c$ corresponding to any $i$ element subset of $\pi_a^{-1}(c)$. Then by Proposition~\ref{modifiedfilt} the Hecke transform  $\calH_{V_i}(E,\Phi)=(E^\prime,\Phi^\prime)$ will be contained in $W^+_{\calE_{\d^\prime}}\cap h^{-1}(a)$, and $b_i(c)=0$ so that $\Phi^\prime_c$ leaves $(E_i)_c$ invariant. By induction the second part of the proposition follows.   
	
	For the last part we note that there are ${n\choose n-i}^{m_i}={n\choose i}^{m_i}$ choices for $\Delta_i-\Delta_{i-1}$. 	This gives us $\prod_{i=1}^{n-1}{n \choose i}^{m_i}$ possibilities for $\Delta_1$.  In theory, two distinct divisors could represent the same line bundle, but in our case, this cannot happen. Namely, a Higgs bundle $(E,\Phi) \in W^+_{\calE_{\d}}\cap h^{-1}(a)$ will have a unique filtration yielding the limiting Higgs bundle $\calE_{\d}$ from Proposition~\ref{filtrationexists}. Then in turn the  filtration  defines a unique subset $\Delta_1\subset C_a$ as above in the proof of part one of the proposition. The result follows.

	
\end{proof}

\begin{remark}When $\calE\in \M^{s\T}$ is very stable then $W^+_\calE\subset \M$ is closed and we have \eqref{generic}. 
	When $\calE$ is of type $(1,\dots,1)$ from Corollary~\ref{mult111} and Proposition~\ref{vsintersect} we get that $|W^+_\calE\cap h^{-1}(a)|=m_\calE$, provided that $C_a$ is smooth and $\div(b)$ avoids the ramification divisor of $\pi_a$. Consequently, in this case $W^+_\calE$ and $h^{-1}(a)$ intersect transversally. 
	
	Similarly as above in Proposition~\ref{vsintersect} one can prove that $|W^+_\calE\cap h^{-1}(a)|<m_\calE$ when $b$ has repeated $0$'s. The second part of Theorem~\ref{multiplicity}  implies that such type $(1,\dots,1)$ Higgs bundles must be  wobbly. This gives an alternative proof of the ``only if " part of Theorem~\ref{mainverystable}.
\end{remark}

\section{Mirror of  upward flows}
\label{mirror}

In this section we consider the mirror of $\calO_{W^+_\calE}$ the structure sheaf of a very stable type $(1,...,1)$ upward flow. We recall that we work with $\GL_n$-Higgs bundles. Because $\GL_n$ is its own Langlands dual we expect the mirror of $\calO_{W^+_\calE}$ to be a coherent sheaf on $\M$, or a complex of sheaves which -- generically at least -- should be a fibrewise Fourier-Mukai transform on the self-dual Jacobian of a smooth spectral curve. We start by recalling these notions. 
\subsection{Jacobians, duality, Fourier-Mukai transform}
The Jacobian $J:=J_0(C)$ of a smooth complex projective curve $C$ is the moduli space of degree $0$ line bundles on $C$. It is an Abelian variety with multiplication of line bundles. 
Its dual Abelian variety is defined as $\Pic^0(J)$ the moduli space of translation invariant line bundles on $J$. One can identify $\Pic^0(J)$ with $J$ in two different ways yielding the same isomorphism between the two. 

First, we fix a point $c_0\in C$ and define the {\em Abel-Jacobi map} as $\alpha_{c_0}:C\to J$ by $$\alpha_{c_0}(c):=\calO(c-c_0).$$ Then it turns out \cite[Lemma 11.3.1]{birkenhake-lange} that the  restriction map \beq\label{ajiso} \alpha_{c_0}^*:\Pic^0(J)\to J\eeq is an isomorphism. 



Another isomorphism between $\Pic^0(J)$ and $J$ is given by $\calL$ the line bundle of the Theta divisor $\Theta$.  One defines \beq \label{philiso} \phi_\calL: J\to \Pic^0(J)\eeq by $L\mapsto t_L^*(\calL) \otimes \calL^*$, where $t_L:J\to J$ is translation by $L$. The map \eqref{philiso} is an isomorphism as $\calL$ defines  a principal polarization on $J$. It follows from \cite[Proposition 11.3.5]{birkenhake-lange} that $-\phi_\calL=(\alpha^*_{c_0})^{-1}$. Thus $-\phi_\calL$ and $\alpha^*_{c_0}$ defines the same isomorphism between $J$ and $\Pic^0(J)$. In particular, $\alpha^*_{c_0}$ is independent of the choice of $c_0\in C$.  We will identify $J$ with $\Pic^0(J)$ using this isomorphism below. 

We have the {\em Poincar\'e bundle} $\bP$ on $J\times \Pic^0(J)$ unique with the properties that \beq \label{poiniden} \bP|_{J\times \{L\}}\cong L\eeq for every $L\in \Pic^0(J)$ and $$\bP|_{\{\calO_C\}\times \Pic^0(J)}\cong \calO_{\Pic^0(J)}.$$
For $L\in J$ we will denote the line bundle $$\bP_{L}:= \bP|_{\{L\}\times \Pic^0(J)}$$ on $\Pic^0(J)\cong J$, where we identified  $\Pic^0(J)$ and $J$ with $\alpha_{c_0}^*$. This translation invariant line bundle on $\Pic^0(J)$ is characterised by the property that \beq\label{char}\alpha^*_{c_0}(\bP_L)\cong L.\eeq
In particular, for ${L_1},L_2\in J$ we have \beq \label{tensorp}\bP_{{L_1}L_2}\cong  \bP_{{L_1}}\otimes \bP_{L_2},\eeq and $$\bP_{\calO_C}=\calO_{J}.$$
Using the Poincar\'e bundle $\bP$ on $J\times \Pic^0(J)\cong J\times J$  we can define the Fourier-Mukai transform \cite{mukai}. For a bounded complex $\calF^\bullet \in D_{coh}^b(J)$ of coherent sheaves on $J$ we denote by $$S(\calF^\bullet):=(\pi_2)_*(\pi_1^*(\calF^\bullet)\otimes^L \bP)\in D^b_{coh}(J),$$ where $\pi_1$ and $\pi_2$ are the projections of $J\times J$ to the two factors. Then $S$ is called
the {\em Fourier-Mukai transform }of $\calF^\bullet$. For example we have $$S(\calO_{L})=\bP_L.$$ 

We recall three properties of the Fourier-Mukai transform. First by \cite[Theorem 3.13.1]{mukai} for any $\calF^\bullet \in D^b_{coh}(J)$ \beq \label{skewinv} S(S(\calF^\bullet))=(-1)^*(\calF^\bullet )[-g], \eeq where $-1:J\to J$ denotes the inverse map on the abelian variety $J$ and $g=\dim(J)$ is the genus of $C$.   The  second \cite[Theorem 3.13.4]{mukai} \beq \label{skewdual}  S(\calF^{\bullet\vee}) \cong (-1)^*S(\calF^\bullet)^\vee [g].\eeq Here $(\calF^\bullet)^\vee:=R{\mathcal Hom}(\calF^\bullet,\calO_J)$ is the derived dual complex. Finally, by proper base change and \eqref{poiniden} we see that the  restriction of the Fourier-Mukai transform to the identity satisfies \beq \label{fmiden} S(\calF^\bullet)|_{\{\calO_C\}}\cong R\pi_*(\calF^\bullet), \eeq
where $\pi:J\to \{\calO_C\}$ is the projection to the point. 

We can pull back \beq \label{univlambda}\L:=(\alpha_{c_0}\times \alpha_{c_0}^{*-1})^*(\bP)\eeq the Poincar\'e bundle on $J\times \Pic_0(J)$ to $C\times J$ via the map $$\alpha_{c_0}\times \alpha_{c_0}^{*-1}:C\times J\to J\times \Pic_0(J)$$ to get a universal line bundle on $C\times J$ satisfying \beq \label{univl}\L|_{C\times \{L\}}\cong L\eeq for all $L\in J$ and normalized
by an isomorphism \beq\label{normalize} \bbbeta:\L|_{\{c_0\}\times J}\stackrel{\cong}{\to}\calO_{J}.\eeq

\begin{remark} \label{framedjacobian} As a universal line bundle on $C\times J$  the line bundle $\L$ is not unique as we can tensor it with the pull back of any line bundle on $J$, and the universal property still holds. However
	$(\L,\bbbeta)$ is the unique universal object for the functor assigning to a scheme $T$  families of framed line bundles $(L,\beta)$, where $L$ is a line bundle on $C\times T$ and $\beta:L|_{\{c_0\}\times T}\cong \calO_T$. 

If $c^\prime_0\in C$ is another point, then $\L\otimes \pi_2^*\left(\bP_{\calO(c_0-c^\prime_0)}\right)|_{\{c^\prime_0\}\times J}\cong \calO_J$.  This is because  $\L|_{\{c^\prime_0\}\times J}\cong (\alpha_0^{*-1})^*\bP_{\calO(c^\prime_0-c_0)}$ from \eqref{char}. Therefore the universal bundle normalised at $c^\prime_0$ is given by \beq \label{changenorm} \L^\prime=\L\otimes \pi_2^*\left(\bP_{\calO(c_0-c^\prime_0)}\right). \eeq 
\end{remark}


Finally, we denote by $\pi:\mathbf{C}\to C$  a finite morphism of smooth projective curves, by  $\tilde{J}$  the Jacobian of $\mathbf{C}$ and  $\tilde{\bP}$ denotes the Poincar\'e bundle on $\tilde{J}\times \tilde{J}$. Then, in this setting,  we have the norm map ${\mathrm{Nm}}_\pi:\tilde{J}\to J$ by \beq \label{norm}L\mapsto \det(\pi_*(M))\otimes \det(\pi_*(\calO_{\mathbf{C}}))^{-1},\eeq which is a group homomorphism. 
\begin{lemma} \label{pullnorm} For $M\in J$ we have $\pi^*(M)\in \tilde{J}$ and $$\tilde{\bP}_{\pi^*(M)}\cong {\mathrm{Nm}}_{\pi}^*(\bP_M).$$ 
\end{lemma}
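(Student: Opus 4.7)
The plan is to apply the characterization \eqref{char}: an element of $\Pic^0(\tilde{J})$ is uniquely determined, via the isomorphism $\alpha_{\tilde{c}_0}^*:\Pic^0(\tilde J)\to \tilde J$, by its pullback under the Abel--Jacobi map. So the strategy is to show that $\mathrm{Nm}_\pi^*(\bP_M)$ lies in $\Pic^0(\tilde J)$ and that $\alpha_{\tilde{c}_0}^*\,\mathrm{Nm}_\pi^*(\bP_M)\cong \pi^*(M)$, matching the characterization of $\tilde\bP_{\pi^*(M)}$.

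First I would choose a point $\tilde c_0\in\mathbf{C}$ with $\pi(\tilde c_0)=c_0$ (possible after shrinking or just choosing $c_0$ to be a value of $\pi$; the isomorphism \eqref{ajiso} is independent of the base point anyway). The key identity is then
\[
\mathrm{Nm}_\pi \circ \alpha_{\tilde c_0}\;=\;\alpha_{c_0}\circ \pi\qquad\text{as morphisms }\mathbf{C}\to J,
\]
which comes directly from the definition \eqref{norm} of $\mathrm{Nm}_\pi$ together with the standard fact that for a finite morphism of smooth curves one has $\mathrm{Nm}_\pi(\calO_{\mathbf{C}}(D))\cong \calO_C(\pi_*D)$ for any divisor $D$; applying this to $D=\tilde c-\tilde c_0$ gives $\calO_C(\pi(\tilde c)-c_0)=\alpha_{c_0}(\pi(\tilde c))$.

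Next I verify the two characterizing properties. For translation invariance, since $\bP_M$ is translation invariant on $J$ and $\mathrm{Nm}_\pi:\tilde J\to J$ is a group homomorphism, for every $\tilde L\in\tilde J$ we have
\[
t_{\tilde L}^*\,\mathrm{Nm}_\pi^*(\bP_M)\;\cong\;\mathrm{Nm}_\pi^*\,t_{\mathrm{Nm}_\pi(\tilde L)}^*(\bP_M)\;\cong\;\mathrm{Nm}_\pi^*(\bP_M),
\]
so $\mathrm{Nm}_\pi^*(\bP_M)\in\Pic^0(\tilde J)$. For the Abel--Jacobi restriction, combining the displayed identity with \eqref{char},
\[
\alpha_{\tilde c_0}^*\bigl(\mathrm{Nm}_\pi^*(\bP_M)\bigr)\;=\;(\mathrm{Nm}_\pi\circ\alpha_{\tilde c_0})^*(\bP_M)\;=\;\pi^*\alpha_{c_0}^*(\bP_M)\;\cong\;\pi^*(M).
\]
Since the tilded version of \eqref{char} characterizes $\tilde\bP_{\pi^*(M)}\in\Pic^0(\tilde J)$ uniquely by the condition $\alpha_{\tilde c_0}^*(\tilde\bP_{\pi^*(M)})\cong \pi^*(M)$, this forces $\mathrm{Nm}_\pi^*(\bP_M)\cong \tilde\bP_{\pi^*(M)}$, completing the proof.

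There is no real obstacle here; the only point to be a little careful about is the base-point compatibility $\pi(\tilde c_0)=c_0$, which is harmless because the isomorphism $\alpha_{c_0}^*:\Pic^0(J)\cong J$ (and the tilded one) is independent of the chosen base point, as noted after \eqref{philiso}.
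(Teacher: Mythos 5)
Your argument is correct and follows essentially the same route as the paper: both reduce the claim, via the characterization \eqref{char}, to the commutativity of the square relating ${\mathrm{Nm}}_{\pi}^*$, $\pi^*$ and the Abel--Jacobi isomorphisms $\alpha^*_{c_0}$, $\alpha^*_{\tilde{c}_0}$ for a lift $\tilde{c}_0$ of $c_0$. The only difference is that the paper simply cites \cite[p. 331]{birkenhake-lange} for that square, whereas you derive it directly from ${\mathrm{Nm}}_{\pi}\circ\alpha_{\tilde{c}_0}=\alpha_{c_0}\circ\pi$ (i.e.\ ${\mathrm{Nm}}_{\pi}(\calO_{\mathbf{C}}(D))\cong\calO_C(\pi_*D)$) and, in addition, verify the translation invariance of ${\mathrm{Nm}}_{\pi}^*(\bP_M)$ explicitly.
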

\begin{proof}  Fix $\tilde{c}_0\in \mathbf{C}$ a lift such that $c_0=\pi(\tilde{c}_0)$. Then by \cite[p. 331]{birkenhake-lange} the diagram $$\begin{array}{lcl} \Pic_0(\tilde{J}) &\stackrel{\alpha_{\tilde{c}_0}^*}{\rightarrow} & \tilde{J}\\  \!\!\!\!\!\!{{\mathrm{Nm}}}_{\pi}^*\uparrow && \uparrow  \pi^*\\ \Pic_0(J) &\stackrel{\alpha^*_{c_0}}{\rightarrow} & {J} \end{array}$$ commutes. Thus $$\alpha_{\tilde{c}_0}^*({\mathrm{Nm}}_{\pi}^*(\bP_M))=\pi^*(\alpha_c^*(\bP_M))=\pi^*(M).$$ By \eqref{char} this implies the statement. 
\end{proof}

\subsection{Fourier-Mukai transform of $\calO_{W_\calE^+}$}

In this section we will fix $d=-n(n-1)(g-1)$ the degree of our Higgs bundles. This will ensure that the line bundle $U$ defined in \eqref{definitionU} on a spectral curve corresponding to our Higgs bundle will have degree $0$. In particular, the canonical uniformising Higgs bundle $\calE_0\in \M$ and the canonical Hitchin section $W^+_{\calE_0}\subset \M$ will be contained in our Higgs moduli space $\M$.   

Consider now a very stable type $(1,\dots,1)$ Higgs bundle 
$\calE_\delta\in \M^{s\T}$. That means  $\delta=(\delta_0,\delta_1,\dots,\delta_{n-1}) \in J_\ell(C)\times C^{[m_1]}\times \dots \times C^{[m_{n-1}]}\subset \M$ such that $\calE_{\d}$ is very stable. 

\subsubsection{Construction of $\Lambda_\delta$ from universal bundles}
\label{lambdadelta}
The proposed mirror of $\calO_{W^+_{\calE_\delta}}$ will be a vector bundle $\Lambda_\delta$ on $\M^s$ constructed from universal Higgs bundles whose existence was proved in \cite{simpson}. We will have to overcome two difficulties in the construction. One is that the individual universal Higgs bundles will only be sheaves in a twisted sense, and they will become untwisted vector bundles only if an appropriate number of them is tensored together. Thus we will first construct universal bundles in an \'etale covering of $\M^s$ and only when an appropriate number will be tensored together the obstruction to glue will vanish. The second difficulty will be in that the universal Higgs bundles are only unique up to tensoring with line bundles pulled back from $\M^s$. Thus in order to construct well defined universal bundles we will have to normalise them appropriately.

\'Etale locally $U_i\to \M^s$ we can define  \cite[Theorem 4.7.(4)]{simpson1} universal Higgs bundles $(\bE_{U_i},\bPhi_{U_i})$ on $U_i\times C$.  More precisely,  choosing a point $c_0\in C$, we use the framed moduli space $\calR^s$ \cite[Theorem 4.10]{simpson1}. Then  $C\times \calR^s$ carries  a unique universal framed Higgs bundle $(\bE_{\calR^s},\bPhi_{\calR^s},\bbbeta_{\calR^s})$, where the map $\bE_{\calR^s}$ is a rank $n$ vector bundle on $C\times \calR^s$, the universal Higgs field $\bPhi_{\calR^s}\in H^0(C\times\calR^s; \End(\bE_{\calR^s}\otimes K))$ and the universal framing $\bbbeta_{\calR^s}:\bE_{\calR^s}|_{\{c_0\}\times \calR^s}\stackrel{\cong}{\to} \calO_{\calR^s}^n$. We can take slices $U_i\subset \calR^s$ of the $\PGL_n$-principal bundle $\calR^s\to \M^s$ by  Luna's slice theorem \cite[Theorem III.1]{luna} such that $U_i\to \M^s$ is \'etale and $\coprod U_i \to \M^s$ covers $\M^s$. Then $(\bE_{U_i},\bPhi_{U_i})$ is just the Higgs bundle part of the family of framed Higgs bundles $(\bE_{U_i},\bPhi_{U_i},\bbbeta_{U_i})$ which is obtained as the pull back of the universal framed Higgs bundle $(\bE_{\calR^s},\bPhi_{\calR^s},\bbbeta_{\calR^s})$ from $C\times \calR^s$.  

The local universal framings $\bbbeta_{U_i}$  can be  used to normalise our local universal Higgs bundles. For this denote the {\em normalized determinant} $$\Nm(E):=\det(E)\otimes K^{n \choose 2}$$ for a vector bundle (or a family of vector bundles) on $C$. Fixing an isomorphism $K_{c_0}\cong \C$, a framing $\beta:E_{c_0}\to 
\C^n$ gives rise to a framing $\Nm(\beta):\Nm(E)|_{c_0}\cong \C$ of the line bundle $\Nm(E)$.  Then $(\Nm(\E_{U_i}),\Nm(\bbbeta_{U_i}))$ is a $U_i$-family of framed degree $0$ line bundles on $C$, thus unique from Remark~\ref{framedjacobian}. This gives our local normalisation  \beq \label{determinant}\phi_i:\Nm(\bE_{U_i})\stackrel{\cong}{\to}\Nm^*(\L),\eeq where $\L$ is the line bundle of \eqref{univlambda} and $\Nm:\M^s\to J$ is  given by \beq \label{normdet} \Nm(E,\Phi):=\Nm(E).\eeq

We will now attempt to glue our normalised local Higgs bundles together. On the overlap $U_i\times_{\M^s} U_j$  of two of our \'etale covers \cite[(4.2)]{hausel-thaddeus} implies that we get  a  line bundle $L_{ij}\in {\rm Pic}(U_i\times_{\M^s} U_j)$  which satisfies  $$(\bE_{U_i},\bPhi_{U_i})\cong (L_{ij}\bE_{U_j},\bPhi_{U_j}).$$ Passing to a refinement of the \'etale covering if necessary we can assume that all $L_{ij}$ are trivial. It is obvious how to do this in the analytic topology, the \'etale version which we need is  more complicated.   Namely, we can trivialise $L_{ij}$ on $U_i\times_{\M^s} U_j$ even on a Zariski covering, and then we can use \cite[Theorem 4.1]{artin} ( see also \cite[Lemma III.2.19]{milne}) to refine the original coverings so that the connecting line bundles will trivialise on the overlaps. 

This way we can assume that $$(\bE_{U_i},\bPhi_{U_i})\cong (\bE_{U_j},\bPhi_{U_j})$$ on $U_i\times_{\M^s} U_j$ for all ${ij}$. Taking such an isomorphism $$\phi_{ij}: \bE_{U_i} \stackrel{\cong}{\to} \bE_{U_j}$$ and recalling the fixed isomorphisms \eqref{determinant} we consider $$\phi_j\circ \Nm(\phi_{ij}) \circ \phi_i^{-1}:\Nm^*(\L)\to \Nm^*(\L).$$ By passing to an \'etale cover if needed we can assume that this function has an $n$th root (for this we use again \cite[Theorem 4.1]{artin}), and thus we can rescale $\phi_{ij}$ so that we can assume that $$\phi_j\circ \Nm(\phi_{ij}) \circ \phi_i^{-1}={\mathrm{Id}}_{\Nm^*(\L)}.$$ The isomorphisms $\phi_{ij}$ with this property are unique only up to multiplying with an $n$th root of unity, but we can make sure that $\phi_{ji}=\phi_{ij}^{-1}$.  
Thus $\phi_{jk}\circ\phi_{ij}$ agrees with $\phi_{ik}$ up to an $n$th root of unity, which we denote by $\theta_{ijk}\in \bbmu_n\subset \C^\times$ so that $$\phi_{jk}\circ\phi_{ij}=\theta_{ijk}\phi_{ik}.$$
We compute $$\theta_{ijk} \theta_{kli}{\mathrm{Id}}_{\bE_{U_k}}= \theta_{ijk}\phi_{ik}\circ \theta_{kli}\phi_{ki}=\phi_{jk}\circ\phi_{ij}\circ \phi_{li}\circ\phi_{kl}\in H^0(U_i\times_{\M^s}U_j\times_{\M^s}U_k\times_{\M^s}U_l;\End(\bE_{U_k}))$$ and similarly  
$$\theta_{jkl} \theta_{lij}Id_{\bE_{U_l}}= \phi_{kl}\circ\phi_{jk}\circ \phi_{ij}\circ\phi_{li}\in H^0(U_i\times_{\M^s}U_j\times_{\M^s}U_k\times_{\M^s}U_l;\End(\bE_{U_l})).$$ 
These two scalar endomorphisms are conjugate by $\phi_{kl}$ and therefore the scalars agree: $$\theta_{ijk} \theta_{kli}=\theta_{jkl} \theta_{lij}.$$ It follows that the $n$th roots of unity $\theta_{ijk}$ form a {\v{C}}ech $2$-cocycle $\theta$, thus define a cohomology class $[\theta]\in H^2(\M^s;\muhat_n)$, that is a $\muhat_n$-gerbe. By definition \cite[\S 1.2]{caldararu} the data of the vector bundles $\bE_{U_i}$ and  $\phi_{ij}$ constitute a $\theta$-twisted vector bundle $\bE$ on $\M^s\times C$, when $\theta$ is considered a  \v{C}ech $2$-cocycle for the sheaf $\calO_{\M^s}^*$ via the embedding $\bbmu_n\subset \C^\times$. We can multiply $\phi_{ij}$ by $\alpha_{ij}\in \muhat_n$ for a $1$-cocycle representing $[\alpha] \in H^1(\M^s; \bbmu_n)$, i.e. an order $n$ line bundle, then all of the above will hold. Thus the twisted bundle $\bE$ constructed above  is well-defined up to tensoring with an order $n$ line bundle on $\M^s$  but normalized such that \beq\label{normalise}\Nm(\bE)\cong \Nm^*(\L).\eeq We also note that the $2$-cocycle $\theta$ does not change if we rescale with the $1$- cocycle $\alpha$ as above. 

For a rank $n$ vector bundle $E$ and line bundle $M$ we have  $\Lambda^{i}(ME)=M^i\Lambda^i(E)$. It follows that $\Lambda^i(\bE)$ is naturally a $\theta^i$-twisted sheaf. For  $(\ell,m_1,\dots,m_{n-1})\in \Z\times \Z_{\geq 0}$  and  $\d=(\delta_0,\delta_1,\dots,\delta_{n-1})\in J_\ell(C)\times C^{[m_1]}\times\dots\times C^{[m_{n-1}]}$  with $\delta_0=c_{01}+\dots+c_{0m_0}$ where $ c_{0j}\in C$ or $-c_{0j}\in C$ is a point in $C$ and  $\delta_i=c_{i1}+\dots + c_{im_i}\in C^{[m_i]} $ for points $c_{ij}\in C$  we consider~\footnote{It is easy to check \eqref{normdeterminant} that $\otimes_{j=1}^{m_0}\Lambda^n(\E_{c_{0j}})\cong \Nm^*\left(\bP_{\calO(\delta_0-\ell c_0)}\right)$ depends only on the line bundle $\calO(\delta_0)$ and not on the choice of divisor $\delta_0=c_{01}+\dots+c_{0m_0}$.}   \beq\label{bigprod}\Lambda_\delta:=\bigotimes_{i=0}^{n-1}\bigotimes_{j=1}^{m_i} \Lambda^{n-i}(\bE_{c_{ij}}), \eeq  where $\bE_{c_{ij}}=\bE|_{\M^s\times{\{c_{ij}\}}}$ when $c_{ij}\in C$ and $\bE_{c_{0j}}=\bE^*|_{\M^s\times{\{-c_{0j}\}}}$, when $-c_{0j}\in C$. Then for a line bundle $M$ on $\M^s$ we have \beq \label{twistedM}\bigotimes_{i=0}^{n-1}\bigotimes_{j=1}^{m_i} \Lambda^{n-i}(M\bE_{c_{ij}})=M^{D}\bigotimes_{i=0}^{n-1}\bigotimes_{j=1}^{m_i}  \Lambda^{n-i}(\bE_{c_{ij}}),\eeq where \beq \label{D}D=n\ell+\sum_{i=1}^n(n-i)m_i.\eeq Thus $\Lambda_\delta$ in \eqref{bigprod} is a $\theta^{D}$-twisted sheaf on $\M^s$. In particular, when $n|D$ the $2$-cocycle  $\theta^D=1$ is trivial, thus in this case \eqref{bigprod} descends as a vector bundle to $C\times\M^s$. Additionally, when $n|D$ the ambiguity of tensoring $\E$ with an order $n$ line bundle disappears, and so \eqref{bigprod} is a well-defined vector bundle on $\M^s$, which is normalised such that  \beq\label{bignorm}\det(\Lambda_\delta)=\det\left(\bigotimes_{i=0}^{n-1}\bigotimes_{j=1}^{m_i} \Lambda^{n-i}(\bE_{c_{ij}}))\right) \cong \Nm^*\left(\bigotimes_{i=0}^{n-1}\bigotimes_{j=1}^{m_i} \L_{c_{ij}}^{\bino{n-1}{n-i-1}}\right).\eeq 

To study the dependence of the vector bundle $\Lambda_\delta$ on the base point $c_0\in C$ let  $c^\prime_0\in C$ be another point. The twisted vector bundle $\bE^\prime:=\bE \otimes \Nm^*(\pi_2^*\bP_{L})$, where $L^n\cong \calO(c_0-c_0^\prime)$, will be a twisted universal bundle normalised at $c^\prime_0$ in the sense that $\Nm(\bE^\prime)=\Nm^*(\L^\prime),$ where $\L^\prime$ is the universal line bundle \eqref{changenorm} normalised at $c^\prime_0$. When $D=0$, which we are assuming in this section, we see from \eqref{twistedM} that $\Lambda_\delta$ does not depend on the choice of the base point $c_0\in C$. 

\subsubsection{Fourier-Mukai transform over the good locus}

We shall compute the relative Fourier-Mukai transform of $\calO_{W_{\calE_\delta}^+}$ over a certain open subset of $\calA$. Denote by $\mathbf{C}\subset T^*C\times \calA$ the universal spectral curve. 
We denote by $\calA^{\#}\subset \calA$ the open subset of characteristic equations $a\in\calA^{\#}$ for which the following  properties hold \begin{enumerate} \item $C_a$ is smooth \item $\div(b)$ avoids the ramification divisor of the spectral cover $\pi_a:C_a\to C$  
	.\end{enumerate} We let $$\mathbf{C}^{\#}\subset T^*C\times \calA^{\#}$$ denote the family of spectral curves over $\calA^{\#}$. By the BNR correspondence we can identify $\M^{\#}:=h^{-1}(\calA^{\#})$ with the relative Jacobian of the family $\mathbf{C}^{\#}\to \calA^{\#}$: $$\M^{\#}\cong J({\mathbf{C}}^{\#}/{\calA}^{\#}).$$

We shall denote the relative Poincar\'e bundle on
$$J({\mathbf{C}}^{\#}/{\calA}^{\#})\times_{\calA^{\#}} J({\mathbf{C}}^{\#}/{\calA}^{\#})$$
by $\tilde{\bP}$. 
In this section
we will determine the relative Fourier-Mukai transform $$S(\calO_{W_{\calE_\delta}^+}|_{\M^\#})=(\pi_2)_*(\pi_1^*(\calO_{W_{\calE_\delta}^+}|_{\M^\#})\otimes^L \tilde{\bP})$$ over $\calA^{\#}$.

Recall that $\delta=(\delta_0,\delta_1,\dots,\delta_{n-1}) \in J_\ell(C)\times C^{[m_1]}\times \dots \times C^{[m_{n-1}]}\subset \M$ such that $\calE_{\d}$ is very stable. In other words $\delta_1+\dots + \delta_{n-1}$ is reduced, and we denote $\delta_i=c_{i1}+\dots+c_{im_i}$, for points $c_{ij}\in C$ or $-c_{0j}\in C$.  We also introduce the notation $$\calL_{\d}:=\calO_{W^+_{\calE_{\d}}}$$ for the structure sheaf of the Lagrangian upward flow of $\calE_{\d}$. We recall that in this section we set $d=-n(n-1)(g-1)$ for our Higgs bundles and clearly $d=D-n(n-1)(g-1)$ of \eqref{D} thus $D=0$ and 
$$\Lambda_{\d}:= \bigotimes_{i=0}^{n-1}\bigotimes_{j=1}^{m_i} \Lambda^{n-i}(\bE_{c_{ij}})$$ constructed in Section~\ref{lambdadelta} together with the normalisation \eqref{bignorm} is  a well-defined vector bundle on $\M^s$. With this notation we have

\begin{theorem} \label{fouriermukai}   The relative Fourier-Mukai transform over 
	$\calA^\#$  satisfies 
	\beq \label{mirrorl} S(\calL_{\d}|_{\M^\#})=\Lambda_{\d}|_{\M^\#}.\eeq
\end{theorem}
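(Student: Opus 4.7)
\medskip

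\noindent\textbf{Proof plan.} The strategy is to work fibrewise over $\calA^\#$ and then globalise. On each smooth spectral curve, the BNR correspondence \eqref{utoephi} identifies the Hitchin fibre $h^{-1}(a)$ with $J(C_a)$, and $\calL_\delta|_{h^{-1}(a)}$ becomes a structure sheaf of the scheme-theoretic intersection $W^+_{\calE_\delta}\cap h^{-1}(a)$. By Proposition~\ref{vsintersect} this intersection is transversal and consists of $\prod_{i\geq 1}{n\choose i}^{m_i}$ reduced points indexed by tuples $\{S_{ij}\}$ with $S_{ij}\subset \pi_a^{-1}(c_{ij})$ of cardinality $n-i$; the point labelled by $\{S_{ij}\}$ corresponds to the line bundle
\[U_{\{S_{ij}\}}:=\pi_a^{*}(\calO(\delta_0))\otimes \calO_{C_a}\!\left(\sum_{i,j}\sum_{\tilde c\in S_{ij}}\tilde c\right)\in J(C_a).\]
Consequently the fibrewise Fourier–Mukai transform evaluates to
\[S(\calL_\delta|_{J(C_a)})=\bigoplus_{\{S_{ij}\}}\tilde{\bP}_{U_{\{S_{ij}\}}},\]
and using the multiplicativity \eqref{tensorp} together with Lemma~\ref{pullnorm} each summand factorises as $\Nm_{\pi_a}^{*}(\bP_{\calO(\delta_0)})\otimes\bigotimes_{i\geq 1,\,j,\,\tilde c\in S_{ij}}\tilde{\bP}_{\tilde c}$.

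The second half of the match is a fibrewise computation of $\Lambda_\delta$. On $J(C_a)\times C$ the BNR direct image gives the decomposition $\bE_c|_{J(C_a)}=\bigoplus_{\tilde c\in\pi_a^{-1}(c)}\tilde{\bP}_{\tilde c}$, where we use the Abel–Jacobi identification \eqref{ajiso} to view the restricted universal line bundle on $C_a\times J(C_a)$ as the Poincaré bundle on $J(C_a)\times\Pic^0(J(C_a))$. Taking $\Lambda^{n-i}$ yields
\[\Lambda^{n-i}(\bE_{c})|_{J(C_a)}=\bigoplus_{\substack{S\subset \pi_a^{-1}(c)\\ |S|=n-i}}\tilde{\bP}_{\sum_{\tilde c\in S}\tilde c},\]
and distributing the tensor product in \eqref{bigprod} produces exactly the same direct sum as above. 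The $i=0$ factor is forced ($|S_{0j}|=n$ means $S_{0j}=\pi_a^{-1}(c_{0j})$), and collecting these gives $\bigotimes_{j}\Nm_{\pi_a}^{*}(\bP_{\calO(c_{0j})})=\Nm_{\pi_a}^{*}(\bP_{\calO(\delta_0)})$, matching the normalising factor from the $\calL_\delta$ side. Hence $S(\calL_\delta)|_{J(C_a)}\cong\Lambda_\delta|_{J(C_a)}$ for every $a\in\calA^\#$.

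To promote the fibrewise statement to an isomorphism over $\calA^\#$, I will use that $h:W^+_{\calE_\delta}\to\calA$ is finite flat (Lemma~\ref{locfree}) and étale on $\calA^\#$ by Proposition~\ref{vsintersect}, so $\pi_1^{*}(\calL_\delta|_{\M^\#})\otimes\tilde{\bP}$ has proper support finite over the base and Grauert/cohomology and base change commute with $(\pi_2)_{*}$; in particular $S(\calL_\delta|_{\M^\#})$ is a locally free sheaf on $\M^\#$ whose fibres over $\calA^\#$ are those computed above. The relative versions of the universal line bundle on $J(\mathbf{C}^\#/\calA^\#)\times_{\calA^\#}\mathbf{C}^\#$ and the relative Poincaré bundle $\tilde{\bP}$ yield the very same decompositions in families, so the fibrewise isomorphisms glue to a global isomorphism up to tensoring with a line bundle pulled back from $\calA^\#$.

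\medskip

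\noindent\textbf{Main obstacle.} The delicate point is the bookkeeping of \emph{normalisations}: showing that the pullback ambiguity in the previous paragraph is trivial, so that the isomorphism $S(\calL_\delta|_{\M^\#})\cong\Lambda_\delta|_{\M^\#}$ is canonical rather than holding only up to a twist. This requires matching the determinant on the $\Lambda_\delta$ side, which was fixed by the framing $\Nm(\bE)\cong\Nm^{*}(\L)$ of \eqref{normalise} and computed in \eqref{bignorm}, with the induced determinant of $S(\calL_\delta|_{\M^\#})$, which is controlled by the relative Poincaré bundle $\tilde{\bP}$ normalised along the canonical section. Under the assumption $d=-n(n-1)(g-1)$ we have $D=0$ in \eqref{D}, so the gerbe class $\theta^{D}$ is trivial and $\Lambda_\delta$ is an honest vector bundle independent of the base point $c_0\in C$; the task is then to compare the two normalisations by choosing $c_0$ carefully and exploiting the uniqueness of the framed universal line bundle of Remark~\ref{framedjacobian}. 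Once the determinants match, the abstract fibrewise isomorphism forces the two line bundles on $\calA^\#$ differing the two sides to be trivial.
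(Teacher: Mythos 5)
Your fibrewise computation is exactly the paper's: by Proposition~\ref{vsintersect} the intersection $W^+_{\calE_\delta}\cap h^{-1}(a)$ is the predicted reduced set of points, the Fourier--Mukai transform of the resulting direct sum of skyscrapers is a direct sum of Poincar\'e bundles, and regrouping that sum as $\tilde{\bP}_{\pi_a^*(E_1)(-n\ell\tilde c_0)}\otimes\bigotimes_{i,j}\Lambda^{n-i}\bigl(\bigoplus_{\tilde c\in\pi_a^{-1}(c_{ij})}\tilde{\bP}_{\tilde c-\tilde c_0}\bigr)$ is the heart of the argument; globalising by working \'etale-locally over $\calA^\#$ is also how the paper concludes. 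However, the step you set aside as the ``main obstacle'' is precisely where the paper does the real work, and your sketch for closing it is not sufficient as stated. The identification $\bigoplus_{\tilde c\in\pi_a^{-1}(c)}\tilde{\bP}_{\tilde c-\tilde c_0}\cong\bE_c|_{h^{-1}(a)}$ is not automatic: the left-hand side is $(\pi_a)_*(\tilde{\L})|_{h^{-1}(a)\times\{c\}}$ for the universal line bundle $\tilde{\L}$ on $J(C_a)\times C_a$ normalised at a lift $\tilde c_0$ of $c_0$, and to conclude that this pushforward agrees with the twisted universal bundle $\bE$, which was pinned down by the normalisation $\Nm(\bE)\cong\Nm^*(\L)$ of \eqref{normalise}, one must prove $\Nm\bigl((\pi_a)_*\tilde{\L}\bigr)\cong{\mathrm{Nm}}^*(\L)$. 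This is exactly the paper's Lemma~\ref{detpull}, proved by comparing the restrictions to $\{L\}\times C$ and to $J(C_a)\times\{c\}$ and invoking Lemma~\ref{pullnorm}; after that, the residual order-$n$ ambiguity in $\bE$ is harmless in the full product because $n\mid D$. Your appeal to the uniqueness of the framed universal line bundle points in the right direction, but this determinant computation must actually be carried out.

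Your proposed way of removing the final twist --- ``fibrewise isomorphism plus a determinant comparison forces the difference line bundle on $\calA^\#$ to be trivial'' --- is also not a valid shortcut: for bundles of rank greater than one, an isomorphism fibre by fibre over $\calA^\#$ does not by itself yield a global isomorphism up to a line bundle pulled back from the base. Moreover the decompositions you use (of $\bE_{c}|_{J(C_a)}$ and of its exterior powers into Poincar\'e line bundles) require labelling the points of $\pi_a^{-1}(c_{ij})$ and a choice of lift of $c_0$, and these labels are permuted by monodromy as $a$ varies; so the family version of your computation only makes sense after passing to an \'etale cover of $\calA^\#$ over which sections of the universal spectral curve through the $c_{ij}$ and $c_0$ exist, and one then glues the two globally defined sides --- which is exactly the paper's closing step. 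With Lemma~\ref{detpull} (or an equivalent determinant argument) supplied and the \'etale-local gluing made explicit, your argument coincides with the paper's proof.
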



\begin{proof} First we  check the theorem fibrewise along $h$.    We let $a\in \calA^\#$ so that   $C_a$ is smooth, $\div(b)$ avoids the ramification divisor of the spectral cover $\pi_a:C_a\to C$ and  where $W_\calE^+\cap h^{-1}(a)$ is transversal.
	
	As $\calE_\delta$ is a very stable  type $(1,\dots,1)$ Higgs bundle, we have from Theorem~\ref{mainverystable}  that $b=b_{n-1}\circ \dots \circ b_1$ has no repeated $0$. We will denote by $c_{i1},\dots,c_{im_i}$ the $m_i$ zeroes of $b_i$. For $1\leq i\leq n-1$ and $1\leq j \leq m_i$  denote by $\tilde{c}^{1}_{ij},\dots,\tilde{c}^{n}_{ij}\subset C_a$ the $n$ points in the preimage $\pi_a^{-1}(c_{ij})$ of the zero $c_{ij}$ of $b_i$.
	From Proposition~\ref{vsintersect} we see that  $L\in W_\calE^+\cap h^{-1}(a)\cong W_\calE^+ \cap J(C_a)$ is a degree $0$ line bundle on $C_a$ of the form $L\cong \pi^*(E_1)(\Delta_1)$ where $\Delta_1=\sum_{i=1}^{n-1} \Delta_i-\Delta_{i+1}$. We know all possibilities for the divisor $\Delta_i-\Delta_{i+1}$ on $C_a$; namely it is effective and reduced and it has the form $$\Delta_i-\Delta_{i+1}=\sum_{j=1}^{m_i} \tilde{c}^{k^1_{ij}}_{ij}+\dots+\tilde{c}^{k^{n-i}_{ij}}_{ij}$$ where  $1\leq k^1_{ij}< \dots < k^{n-i}_{ij}\leq n$. Thus we see that \beq \label{sumsky}\calO_{W_\calE^+\cap h^{-1}(a)}=\bigoplus_{\{1\leq k^1_{ij}< \dots < k^{n-i}_{ij}\leq n\}^{1\leq j \leq m_i}_{1\leq i\leq n}} \calO_{\pi^*(E_1)\left(\sum_{i=1}^{n-1}\sum_{j=1}^{m_i} \tilde{c}^{k^1_{ij}}_{ij}+\dots+\tilde{c}^{k^{n-i}_{ij}}_{ij}\right)}\eeq is a direct sum of skyscraper sheaves. 
	Fixing a lift $\tilde{c}_0\in C_a$ of $c_0\in C$ we can compute the Fourier-Mukai transform of \eqref{sumsky} as follows:
	\bes S(\calO_{W_\calE^+\cap h^{-1}(a)})&=& \bigoplus_{\{1\leq k^1_{ij}< \dots < k^{n-i}_{ij}\leq n\}^{1\leq j \leq m_i}_{1\leq i\leq n}} S\left(\calO_{\pi^*(E_1)\left(\sum_{i=1}^{n-1}\sum_{j=1}^{m_i} \tilde{c}^{k^1_{ij}}_{ij}+\dots+\tilde{c}^{k^{n-i}_{ij}}_{ij}\right)}\right)\\
	&=& \bigoplus_{\{1\leq k^1_{ij}< \dots < k^{n-i}_{ij}\leq n\}^{1\leq j \leq m_i}_{1\leq i\leq n}} S\left(\calO_{\pi^*(E_1)\left((\sum_{i=1}^n(n-i)m_i)\tilde{c}_0+\sum_{i=1}^{n-1}\sum_{j=1}^{m_i} \tilde{c}^{k^1_{ij}}_{ij}-\tilde{c}_0+\dots+\tilde{c}^{k^{n-i}_{ij}}_{ij}-\tilde{c}_0\right)}\right)\\ &=&\bigoplus_{\{1\leq k^1_{ij}< \dots < k^{n-i}_{ij}\leq n\}^{1\leq j \leq m_i}_{1\leq i\leq n}} \tilde{\P}_{\pi^*(E_1)(-n\ell \tilde{c}_0)}\otimes  \bigotimes_{i=1}^{n-1}\bigotimes_{j=1}^{m_i} \tilde{\P}_{( \tilde{c}^{k^1_{ij}}_{ij}-\tilde{c}_0)}\otimes \dots \otimes \tilde{\P}_{(\tilde{c}^{k^{n-i}_{ij}}_{ij}-\tilde{c}_0)}\\
	&=&  \tilde{\P}_{\pi^*(E_1)(-n\ell \tilde{c}_0)}\otimes \bigotimes_{i=1}^{n-1}\bigotimes_{j=1}^{m_i} \Lambda^{n-i} \left(\tilde{\P}_{(\tilde{c}^{1}_{ij}-\tilde{c}_0)}\oplus \dots \oplus \tilde{\P}_{(\tilde{c}^{n}_{ij}-\tilde{c}_0)} \right) 
	\ees
	
	We see that $$\tilde{\P}_{(\tilde{c}^{k}_{ij}-\tilde{c}_0)}=\tilde{\L}|_{J(C_a)\times \{\tilde{c}^{k}_{ij}\}},$$ where $\tilde{\L}$ is a universal Poincar\'e bundle on $J(C_a)\times C_a$ normalized at $\tilde{c}_0$ as defined in \eqref{univlambda}. Furthermore, \beq\label{universal}\tilde{\P}_{(\tilde{c}^{1}_{ij}-\tilde{c}_0)}\oplus \dots \oplus \tilde{\P}_{(\tilde{c}^{n}_{ij}-\tilde{c}_0)}=\pi_*(\tilde{\L})|_{h^{-1}(a)\times\{c_{ij}\}},\eeq and $(\pi_*(\tilde{\L}),\bPhi_{h^{-1}(a)})$ with $\bPhi_{h^{-1}(a)}:=\pi_*(x:\tilde{\L}\to \tilde{\L}\otimes \pi^*(K))$ is a universal Higgs bundle on $h^{-1}(a)\times C$. 
	
	\begin{lemma}\label{detpull} For  $L\in J(C_a)$ let ${\mathrm{Nm}}:J(C_a)\to J(C)$ given by ${\mathrm{Nm}}(L)=\det(\pi_*(L))\otimes K^{n\choose 2}$ the usual norm map. Then on $J(C_a)\times C$  we have $$\Nm((\pi_a)_*(\tilde{\L}))={\mathrm{Nm}}^*(\L),$$ where $\tilde{\L}$ is a universal line bundle on $J(C_a)\times \tilde{C_a}$ normalized at $\tilde{c}_0\in \tilde{C_a}$ and $\L$ is a universal line bundle on $J(C)\times C$ normalized at $c_0=\pi_a(\tilde{c}_0)$. 
	\end{lemma}
	\begin{proof} For  $L\in J(C_a)$ we have $${\mathrm{Nm}}^*(\L)|_{\{L\}\times C}={\mathrm{Nm}}(L)=\Nm(\pi_*(\tilde{\L}))|_{\{L\}\times C}
		.$$
		
		On the other hand for $c\in C$ we have $${\mathrm{Nm}}^*(\L)|_{J(C_a)\times\{c\} }={\mathrm{Nm}}^*(\L|_{J(C)\times\{c\}})={\mathrm{Nm}}^*(\P_{(c-c_0)}).$$
		While for the other side $$\Nm(\pi_*(\tilde{\L}))|_{J(C_a)\times\{c\} }=\det(\pi_*(\tilde{\L})|_{J(C_a)\times\{c\} })=\det\left(\tilde{\P}_{(\tilde{c}_1-\tilde{c}_0)}+\dots+\tilde{\P}_{(\tilde{c}_n-\tilde{c}_0)}\right)=\tilde{\P}_{\pi^*(c-c_0)}$$ where $\pi^{-1}(c)=\{\tilde{c}_1,\dots,\tilde{c}_n\}$.
		
		Now Lemma~\ref{pullnorm} implies that  $$\det(\pi_*(\tilde{\L}))|_{J(C_a)\times\{c\} }= {\rm det}^*(\L)|_{J(C_a)\times\{c\} }.$$
		
		The result follows. 
	\end{proof}
	The norm map ${\mathrm{Nm}}:J(C_a)\to J(C)$ in Lemma~\ref{detpull} agrees with the one restricted from
	the normalised determinant map in \eqref{normdet} to $h^{-1}(a)\cong J(C_a)$. Thus $\Nm(\pi_*(\tilde{\L}))=\Nm^*({\L})$ and therefore  $\pi_*(\tilde{\L})\cong \bE$ on $h^{-1}(a)$. Furthermore $$ \tilde{\P}_{\pi^*(E_1)(-n\ell \tilde{c}_0)}\cong \Nm^*(\P_{E_1(-\ell c_0)})$$ from Lemma~\ref{pullnorm}. By the normalisation \eqref{normalise}, the definition \eqref{univlambda} and $E_1\cong\calO(\delta_0)=\calO(c_{01}+\dots+c_{0m_0})$   we get \beq\label{normdeterminant}\Nm^*(\P_{E_1(-\ell c_0)})\cong \bigotimes_{j=1}^{m_0} \Lambda^{n}(\bE_{c_{0j}}).\eeq Thus \eqref{universal} and Lemma~\ref{detpull} verify Theorem~\ref{universal} on $h^{-1}(a)$.
	
	By noting that   \'etale-locally on $\calA^\#$  we have sections of the universal spectral curve corresponding to the preimages of zeroes of $b$ and $c_0$, we see that the same argument can be repeated relative to \'etale neighbourhoods $U_i\to \calA^\#$, proving Theorem~\ref{fouriermukai} \'etale-locally, which as argued above glue together to prove Theorem~\ref{fouriermukai}.
\end{proof}

\begin{remark} As a sanity check we can see what this gives for $\delta=0$. Then $\calL_0=\calO_{W^+_0}$ is the structure sheaf of the canonical Hitchin section. We expect its mirror to be the structure sheaf of the whole space $\M^s$. Indeed, choosing the representative $0=c+-c$ for the trivial divisor on $C$ we get $\Lambda_0=\Lambda^n(\bE_{c})\otimes \Lambda^n(\bE_{-c})=\Lambda^n(\bE_{c})\otimes \Lambda^n(\bE^*_{c})\cong \calO_{\M^s}$. 
	
	More generally, we can determine $\Lambda_\delta$ in the case when $\delta_1=\dots=\delta_{n-1}=0$ and $\ell=0$, i.e. when $\calE_\delta$ is a uniformising Higgs bundle and $\calL_\delta=W^+_\delta$ is the structure sheaf of a Hitchin section. We see from \eqref{normdeterminant} that $\Lambda_\delta=\Nm^*(\bP_{\calO(\delta_0)})$ is a line bundle on $\M^s$ pulled back from the Jacobian $J$. 
\end{remark}

\begin{remark} \label{remarkdual} Vector bundle duality defines a natural  involution \beq \label{iota} \begin{array}{cccc} \iota:&\M&\to &\M\\ &(E,\Phi)&\mapsto &(E^*K^{1-n},\Phi^T)\end{array}.\eeq where $\Phi^T$ is the induced action  of $\Phi$ on $E^*$. This is well-defined as $\deg(E^*K^{1-n})=-\deg(E)-n(n-1)2(g-1)=-n(n-1)(g-1)$. From the exact sequence \eqref{exactu} we see that $(\pi_a)_*(U^*) =\iota(E,\Phi)$ and therefore $\iota$ restricts to the abelian variety $h^{-1}(a)\cong J(C_a)$ as the inverse map $-1$. If $\calE\in \M$ we will denote $\calE^\iota:=\iota(\calE)$, then $\iota(W^+_\calE)=W^+_{\calE^\iota}$.  
	
	For example for a type $(1,\dots,1)$ very stable Higgs bundle $\calE_{\d}$ we can determine $\calE^\iota_{\d}=\calE_{ \d^\iota}$, where  $\d^\iota=(\delta_0^\iota,\delta_1^\iota,\dots,\delta_{n-1}^\iota)=(-\delta_0-\delta_1-\dots-\delta_{n-1},\delta_{n-1},\dots,\delta_1)$.  We thus see that the skew involutive property \eqref{skewinv} of the Fourier-Mukai transform  yields
	\beq \label{mirrormirror} S(\Lambda_{\d}|_{\M^\#})=\calL_{\d^\iota}[-n^2(g-1)-1]|_{\M^\#}.\eeq 
\end{remark}


\subsection{An agreement for $\T$-equivariant Euler forms} 

As a motivation recall that mirror symmetry should be an equivalence of categories. Not only objects should match but morphisms between them. If two objects have $\T$-equivariant structure so will the vector space of morphisms between them. Thus its $\T$-character should agree with the $\T$-character of the vector space of morphisms between the mirror objects. In this subsection we find the agreement of these $\T$-characters (aka Euler pairing) for our mirror pairs of branes. Incidently, it is easier to pair a Lagrangian brane with a hyperholomorphic one because the computation reduces to a computation on the Lagrangian support which in our case is just a vector space.

\subsubsection{$\T$-equivariant Euler characteristics and Euler forms}
\label{equivarianteuler}
Recall that for a $\T$-equivariant coherent sheaf $\calF$ on a semiprojective variety $M$ the {\em equivariant Euler characteristic} is defined by
$$\chi_\T(M;\calF):=\sum_{ij} (-1)^i \dim(H^i(M;\calF)^{j}) t^{-j},$$ 
when the weight spaces $H^i(M;\calF)^{j}$, where $\T$ acts with weight $j$, are finite dimensional. The finiteness condition holds for smooth semiprojective varieties and  then $\chi_\T(M;\calF)\in \Z((t)),$ i.e.  $H^i(M;\calF)^j=0$ for $i$ sufficiently large. More generally for a $\T$-equivariant bounded complex of coherent sheaves $\calF^\bullet$ we define $\chi_\T(M;\calF^\bullet)=(-1)^k\sum_k \chi_\T(M;\calF^k)$. 

Furthermore, given two $\T$-equivariant coherent sheaves $\calF_1$ and $\calF_2$ on $M$ we define the {\em equivariant Euler pairing} as

\begin{multline*}\chi_\T(M;\calF_1,\calF_2)=\sum_{k,l} \dim(H^k({ R} {\mathcal Hom}(\calF_1,\calF_2))^l) (-1)^k t^{-l}=\\ =\sum_{k,l} \dim\left(\Hom_{{\mathbf D}_{coh}(M)}(\calF_1,\calF_2[k])^l\right) (-1)^k t^{-l}=\sum_{k,l} \dim(\Ext^k(M;\calF_1,\calF_2)^l) (-1)^k t^{-l} .\end{multline*}

As $\chi_\T(M;\calF_1,\calF_2)=\chi_\T(M;\calF_1^\vee\otimes^L\calF_2),$ where $\calF_1^\vee$ is the derived dual and $\otimes^L$ is the derived tensor product, we expect that  $\chi_\T(M;\calF_1,\calF_2)\in \Z((t))$ on a semiprojective variety $M$.

We note that the upward flow $W^+_\calE$ for $\calE\in \M^{s\T}$ is invariant under the $\T$-action and we shall endow $\calO_{W^+_\calE}$  with the trivial $\T$-equivariant structure. 
\subsubsection{$\T$-equivariant universal bundles}
\label{equiuni}
On the other hand we would like to endow the  sheaf \eqref{bigprod}, when $n|D$,  on $\M^s\times C$ with a $\T$-equivariant structure. For this we need

\begin{lemma} \label{teqcover} We can choose the \'etale cover $\coprod_i U_i\to \M^s$ so that the $U_i$ are $\T$-equivariant and  the local universal Higgs bundle  $(E_{U_i},\bPhi_{U_i})$ on $U_i\times C$ carries a $\T$-equivariant structure with $\T$ acting on $\bPhi_{U_i}$ with weight $-1$. Furthermore the isomorphism \eqref{determinant} is $\T$-equivariant with some $\T$-equivariant structure on $\L$.
\end{lemma}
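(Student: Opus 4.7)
The plan is to transport the tautological $\T$-action on Simpson's framed moduli space $\calR^s$ down to \'etale slices $U_i$ for $\M^s$, after first arranging the slices to be $\T$-invariant. First I would observe that the assignment $(E,\Phi,\beta)\mapsto(E,\lambda\Phi,\beta)$ defines a $\T$-action $m_\lambda:\calR^s\to\calR^s$ which commutes with the $\PGL_n$-action and descends to the standard $\T$-action on $\M^s$. The universal property of the framed moduli space then provides a unique isomorphism of framed families
\[
\rho_\lambda:\;m_\lambda^*(\bE_{\calR^s},\bPhi_{\calR^s},\bbbeta_{\calR^s})\xrightarrow{\sim}(\bE_{\calR^s},\lambda\bPhi_{\calR^s},\bbbeta_{\calR^s})
\]
on $C\times\calR^s$; the framing rigidifies this isomorphism, so uniqueness yields the cocycle condition and hence a $\T$-equivariant structure on $\bE_{\calR^s}$ preserving $\bbbeta_{\calR^s}$, with respect to which the universal Higgs field $\bPhi_{\calR^s}$ is a section of weight $-1$.

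Next I would produce a $\T$-invariant \'etale refinement of the Luna slices. By Sumihiro's theorem \cite[Corollary 2]{sumihiro} every point of $\M^s$ admits a $\T$-invariant affine open neighbourhood; over each such open, the commuting $\T\times\PGL_n$-action on $\calR^s$ allows Luna's slice theorem to be applied equivariantly for $\T\times\PGL_n$ (a closed $\PGL_n$-orbit is automatically $\T$-stable because $\T$ commutes with $\PGL_n$ and the corresponding point of $\M^s$ can be taken in the chosen $\T$-invariant affine), producing $\T$-invariant slices $U_i\hookrightarrow\calR^s$ with $U_i\to\M^s$ \'etale. The further refinements of the cover carried out in Section~\ref{lambdadelta} -- namely trivialising the connecting line bundles $L_{ij}$ on double overlaps and extracting $n$-th roots of the rescaling functions -- can all be performed $\T$-equivariantly using the $\T$-equivariant analogue of \cite[Theorem 4.1]{artin}, since the $L_{ij}$ and the relevant rescaling functions carry canonical $\T$-equivariant structures induced from $\rho_\lambda$. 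Pulling back then produces the required $\T$-equivariant local family $(\bE_{U_i},\bPhi_{U_i},\bbbeta_{U_i})$ with $\bPhi_{U_i}$ of weight $-1$.

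Finally, for the normalisation statement I would note that $\Nm(\bE_{U_i})=\det(\bE_{U_i})\otimes K^{\binom{n}{2}}$ inherits a $\T$-equivariant structure from $\bE_{U_i}$, and the induced framing $\Nm(\bbbeta_{U_i})$ is $\T$-invariant because $\bbbeta_{U_i}$ is. The normalised determinant map $\Nm:\M^s\to J$ of \eqref{normdet} is $\T$-equivariant for the trivial $\T$-action on $J$, so the universal property of the framed Jacobian recalled in Remark~\ref{framedjacobian} equips $\L$ with a unique $\T$-equivariant structure (trivial along the $J$-factor) with respect to which the isomorphism $\phi_i$ of \eqref{determinant} becomes $\T$-equivariant. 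The main obstacle I anticipate is the middle step: ensuring coherently that the slices and all subsequent \'etale refinements can be taken $\T$-equivariantly. Once this is established, the weight $-1$ assertion for $\bPhi_{U_i}$ and the equivariance of $\phi_i$ follow from the universal-property characterisations carried out in the first and third steps.
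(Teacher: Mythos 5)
Your first step (rigidifying via the framing and using uniqueness of the universal framed object on $\calR^s$ to get a canonical lift $f_\lambda$ with $\bPhi$ of weight $-1$) is exactly the paper's argument, and your treatment of the normalisation via the uniqueness of the framed universal line bundle is consistent with what the lemma asks for. The genuine gap is in your middle step. The parenthetical claim that ``a closed $\PGL_n$-orbit is automatically $\T$-stable because $\T$ commutes with $\PGL_n$'' is false: commuting actions do not stabilise each other's orbits, and a $\PGL_n$-orbit in $\calR^s$ is $\T$-stable only when its image in $\M^s$ is a $\T$-fixed point. Consequently you cannot apply Luna's slice theorem for $\G=\T\times\PGL_n$ at an arbitrary point of $\calR^s$: Luna requires the $\G$-orbit to be closed, and the $\G$-orbit of a lift $\tilde{\calE}$ is closed exactly when the $\T$-orbit of $\calE$ in $\M^s$ is closed, i.e.\ when $\calE\in\M^{s\T}$ or when neither $\lim_{\lambda\to 0}\lambda\calE$ nor $\lim_{\lambda\to\infty}\lambda\calE$ is stable. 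Choosing the base point inside a Sumihiro-invariant affine does not change this, so your Sumihiro step does not repair the argument. Moreover, even at such points the Luna slice $V$ is invariant only under the (typically finite) stabiliser $\G_{\tilde{\calE}}\subset\T$, not under all of $\T$; so the slices are not themselves ``$\T$-invariant'' as you assert.

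The paper's proof handles precisely these two points: it applies Luna's theorem for $\G=\T\times\PGL_n$ only at lifts of points with closed $\T$-orbit in $\M^s$, obtains a $\G_{\tilde{\calE}}$-invariant slice $V$, and then induces up to $V^\prime=\T\times_{\G_{\tilde{\calE}}}V$, which by base change of $V/\G_{\tilde{\calE}}\to U/\G$ along $U/\PGL_n\to U/\G$ is $\T$-equivariantly \'etale over the saturated $\G$-invariant open $U/\PGL_n\subset\M^s$; finally, since every $\T$-orbit closure contains a closed orbit and the opens $U$ are saturated, these neighbourhoods cover all of $\M^s$. This covering argument is the missing idea in your proposal. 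A smaller remark: your appeal to a ``$\T$-equivariant analogue'' of Artin's refinement theorem to trivialise the $L_{ij}$ and extract $n$-th roots equivariantly is both unproven and unnecessary for this lemma --- the gerbe-level refinements are dealt with separately in the paper (at the level of the projective bundle $\P\bE$ and the lifting of the action to $\calO(1)$), so you should not build them into the proof of this statement.
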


\begin{proof}
	First we note that over $\calR\times C$ there exists, from \cite[Theorem 4.10]{simpson1}, the universal  framed Higgs bundle $(\bE_\calR,\bPhi_\calR,{\bbbeta}_\calR)$. We have the $\T$-action on $\calR$ multiplying the Higgs field and we  denote  the morphism $m_\lambda:\calR\to \calR$ given by scaling the Higgs field with $\lambda\in \T$. Then $$(m_\lambda^*(\bE_\calR),\lambda^{-1}m_\lambda^*(\bPhi_\calR),m_\lambda^*(\bbbeta_\calR))$$ is also a universal framed Higgs bundle and as $\calR$ is a fine moduli space with a unique universal object we get that $\bE_\calR\cong m_\lambda^*(\bE_\calR)$ canonically, inducing $f_\lambda:\bE_\calR\to \bE_\calR$ covering $m_\lambda$ and making the diagram \beq\label{univdiag}\begin{array}{ccc} \bE_\calR & \stackrel{\bPhi_\calR}{\longrightarrow} &\bE_\calR\otimes K \\ f_\lambda \downarrow && \downarrow f_\lambda  \\ \bE_\calR & \stackrel{\lambda^{-1} \bPhi_\calR}{\longrightarrow} &\bE_\calR\otimes K \end{array}\eeq commute. This will endow $\bE_\calR$ with a canonical $\T$-equivariant structure which acts on $\bPhi_\calR$ with weight $-1$.
	
	Now we take a  point $\calE\in \M^{s}$ such that the orbit $\T\calE\subset \M^s$ is closed. This means that  either  $\calE\in \M^{s\T}$ or that  neither $\lim_{\lambda\to 0}\lambda\calE$ nor (if it exists) $\lim_{\lambda\to \infty}\lambda\calE$ is stable. We take  a lift $\tilde{\calE}\in \calR^s$. Recall that $\PGL_n$ acts freely on $\calR^s$ by changing the framing and $\T$ acts by multiplying the Higgs field. The $\PGL_n$ quotient $p:\calR^s\to \M^s$ is $\T$-equivariant. It follows  that for $\G:=\T\times \PGL_n$ the orbit $\G\tilde{\calE}\subset \calR^s$ is closed. We let $\G_{\tilde{\calE}}\subset \G$ denote the stabilizer which projects injectively into $\T$ as $\PGL_n$ acts freely.  We apply Luna's slice theorem \cite[Theorem III.1]{luna} for the action of $\G$ on $\calR^s$ around the point $\tilde{\calE}$. This way we get a locally closed $\G_{\tilde{\calE}}$-invariant affine slice $\tilde{\calE} \in V\subset \calR^s$, such that the map  $\G\times_{\G_{\tilde{\calE}}} V \to \calR^s$ is \'etale and whose image $U\subset \calR^s$ is  a saturated affine open $\G$-invariant subset, the morphism \beq \label{etale}V/\G_{\tilde{\calE}} \to U/\G\eeq is also \'etale and the natural morphism induces \beq \label{iso}\G\times_{\G_{\tilde{\calE}}} V \cong U\times_{U/\G} V/\G_{\tilde{\calE}}\eeq a $\G$-isomorphism.  We let $V^\prime:=\T\times_{\G_{\tilde{\calE}}}V=(\G\times_{\G_{\tilde{\calE}}}V)/\PGL_n$ then $V^\prime/\T=V/\G_{\tilde{\calE}}$ and dividing \eqref{iso} by $\PGL_n$ gives $$V^\prime=\T\times_{\G_{\tilde{\calE}}} V\cong U/\PGL_n\times_{U/\G} V/\G_{\tilde{\calE}}.$$ By base change of the \'etale morphism \eqref{etale}  by $U/{\PGL_n}\to U/\G=(U/\PGL_n)/\T$  \beq\label{etalelocal}V^\prime \to U/\PGL_n\subset \M^s\eeq is also \'etale and $\T$-equivariant. Finally, the $\T$-equivariant universal Higgs bundle $(\bE_\calR,\bPhi_{\calR})$ restricts to the $\T$-equivariant $V^\prime\to  \calR$ giving  $\T$-equivariant universal bundles $(\bE_{V^\prime},\bPhi_{V^\prime})$ \'etale locally \eqref{etalelocal} around $\calE\in U/\PGL_n$. 
	
	Finally, we note that a $\T$-invariant open covering of the closed $\T$-orbits in $\M^s$ covers the whole $\M^s$ and so the result follows.  
	
\end{proof}

Thus we have local $\T$-equivariant universal Higgs bundles on $U_k\to \M^s$. On overlaps $C\times U_k\times_{\M^s} U_l$ they differ by a $\T$-equivariant line bundle by \cite[(4.2)]{hausel-thaddeus}. Thus the $\T$-equivariant projective bundles $\bP\E_{U_i}$ and $\bP\E_{U_j}$ are canonically isomorphic over the overlap. Therefore they descend as a $\T$-equivariant projective bundle $\bP\E$ over $C\times \M^s$. 

\begin{lemma} In the notation of \eqref{bigprod} when $n|D=n\ell+\sum_{i=1}^{n-1}(n-i)m_i$ there is a $\T$-equivariant structure on the vector bundle $\Lambda_\d=\bigotimes_{i=0}^{n-1}\bigotimes_{j=1}^{m_i} \Lambda^{n-i}(\bE_{c_{ij}})$ normalised such that 
	\beq\label{equinormalise} \det\left(\bigotimes_{i=0}^{n-1}\bigotimes_{j=1}^{m_i} \Lambda^{n-i}(\bE_{c_{ij}}))\right)\cong \Nm^*\left(\bigotimes_{i=0}^{n-1}\bigotimes_{j=1}^{m_i} \L_{c_{ij}}^{\bino{n-1}{n-i-1}}\right)\eeq as equivariant line bundles, where the line bundle $\L_{c_{ij}}^{\bino{n-1}{n-i-1}}$ on $J$ is endowed with a weight ${n-i\choose 2}-{n\choose 2}{\bino{n-1}{n-i-1}}$ $\T$-action.
\end{lemma}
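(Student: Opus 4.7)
The strategy is to show that the $\T$-equivariant local universal bundles from Lemma~\ref{teqcover} descend, after taking the prescribed wedge and tensor products, to a $\T$-equivariant vector bundle $\Lambda_\d$ on $\M^s$ with the specified determinant weight. The key observation is that the $\bbmu_n$-valued $2$-cocycle $\theta_{ijk}$ which obstructs the gluing of $\bE$ consists of \emph{constant} $n$-th roots of unity, hence is automatically $\T$-invariant. Therefore the $\T$-equivariant structures on $\Lambda^{n-i}(\bE_{U_i}|_{c_{ij}})$ on the \'etale patches glue (up to $\theta^D$-twists) compatibly with the $\T$-action, and when $n \mid D$ the twisting cocycle trivialises and the tensor product descends to a genuine $\T$-equivariant vector bundle on $\M^s$.

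The ambiguity in the choice of $\T$-equivariant structure on each $\bE_{U_i}$ (which is a free $\Z$-action by twisting with characters $\lambda^{k}$ of $\T$) will be used to pin down the normalisation. The plan is to require that the defining isomorphism $\phi_i:\Nm(\bE_{U_i}) \xrightarrow{\cong} \Nm^*(\L)$ from \eqref{determinant} be $\T$-equivariant, for the unique $\T$-equivariant structure on $\L$ making the equivariance hold at the canonical uniformising Higgs bundle $\calE_0 \in \M^{s\T}$. There, the convention that $\bPhi$ has $\T$-weight $-1$ forces the summands of $E_{\calO} = \calO \oplus K^{-1} \oplus \dots \oplus K^{-(n-1)}$ to carry weights $0, -1, \dots, -(n-1)$, so $\det(E_{\calO})|_c$ has $\T$-weight $-\binom{n}{2}$ at any $c \in C$, pinning down the $\T$-weight on $\L$.

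To verify \eqref{equinormalise} as an isomorphism of $\T$-equivariant line bundles, I would compute the weight at the fixed point $\calE_0$. The $\T$-character of $\Lambda^{n-i}(\bE_{c_{ij}})|_{\calE_0}$ is the sum over $(n-i)$-subsets of $\{0,-1,\dots,-(n-1)\}$, and a direct count shows the resulting determinant weight together with an appropriate shift (coming from the different ways one can promote the local $\bE_{U_i}$ to $\T$-equivariance) equals the prescribed value $\binom{n-i}{2} - \binom{n}{2}\binom{n-1}{n-i-1}$ on each factor $\L_{c_{ij}}^{\binom{n-1}{n-i-1}}$. The hard part will be bookkeeping: tracking weight shifts across the several tensor factors and checking they remain compatible with gluing when $n\mid D$. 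This compatibility reduces, thanks to the $\T$-invariance of $\theta_{ijk}$ noted above and the connectedness of $\M^s$, to the single fixed-point computation at $\calE_0$.
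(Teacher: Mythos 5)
Your fixed-point computation at $\calE_0$ is in the same spirit as the paper's normalisation step, but your existence/gluing step has a genuine gap. The $\T$-invariance of the constant cocycle $\theta_{ijk}$ only disposes of the \emph{non-equivariant} gerbe obstruction; it says nothing about whether the gluing isomorphisms $\phi_{ij}$, which were chosen and normalised (through their determinants) with no reference to the $\T$-action, intertwine the local $\T$-equivariant structures of Lemma~\ref{teqcover}. Those local structures are pulled back from different Luna slices of $\calR^s$, and on overlaps the local equivariant universal bundles differ by $\T$-equivariant line bundles (cf. \cite[(4.2)]{hausel-thaddeus}), so compatibility of the linearisations with the $\phi_{ij}$ is exactly the point at issue and is not implied by the constancy of $\theta$. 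The paper avoids this problem altogether: only the projectivisation of $\bigotimes_{i,j}\Lambda^{n-i}(\bE_{c_{ij}})$ is glued $\T$-equivariantly; when $n|D$ the vector bundle itself descends non-equivariantly, so the projectivisation carries an $\calO(1)$, and Brion's linearisation theorem \cite[Theorem 4.2.2]{brion} (using that $\Pic(\T)$ is trivial) lifts the $\T$-action to $\calO(1)$; pushing down then produces the equivariant structure on $\Lambda_\delta$. Your route could in principle be repaired --- first retwist each local structure so that $\phi_i$ of \eqref{determinant} is equivariant (which already needs an argument that the discrepancy, a $\T$-cocycle valued in invertible functions on $U_i$, can be absorbed), and then use simplicity of stable Higgs bundles together with connectedness of $\T$ to show that the remaining scalar discrepancy of each $\phi_{ij}$ is a $\mu_n$-valued cocycle in $\lambda$, hence trivial --- but none of this is in your proposal, and invariance of $\theta$ alone does not substitute for it.

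There is also a tension in your normalisation. Requiring each $\phi_i$ to be equivariant, with $\L$ normalised at $\calE_0$ by the weights $0,-1,\dots,-(n-1)$, pins the structure down completely: the determinant condition kills the character ambiguity on each $\bE_{U_i}$, and the only residual freedom on $\Lambda_\delta$ is an overall character twist, which moves the weight of $\det\Lambda_\delta$ in steps of $\rank(\Lambda_\delta)$. Under your normalisation the fibre at $\calE_0$ has character $\prod_i\bigl(t^{\binom{n-i}{2}}\left[\begin{smallmatrix} n\\ n-i\end{smallmatrix}\right]_t\bigr)^{m_i}$ and each factor contributes determinant weight $-\binom{n}{2}\binom{n-1}{n-i-1}$, whereas the lemma asserts the weight $\binom{n-i}{2}-\binom{n}{2}\binom{n-1}{n-i-1}$, and the paper's own normalisation is a different one (non-positive weights with non-trivial weight-zero part at $\calE_0$, giving the monic character \eqref{eqmultuniv} used later in Theorem~\ref{euleriso}). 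So the ``appropriate shift'' you invoke is not available once $\phi_i$-equivariance has been imposed, and checking that your pinned-down structure (or an admissible overall retwist of it) really satisfies \eqref{equinormalise} with the stated weights is not routine bookkeeping to be deferred --- it is the substance of the claim.
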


\begin{proof}
	Similarly to $\P\bE$ above,  we can argue that $\bigotimes_{i=0}^{n-1}\bigotimes_{j=1}^{m_i} \Lambda^{n-i}(\bE_{U_k}|_{c_{ij}\times U_k})$ on $U_k\to \M^s$  carries a $\T$-equivariant structure, and they differ by a $\T$- equivariant line bundle on the overlaps. Thus the total space of the projective bundle $\P\left(\bigotimes_{i=0}^{n-1}\bigotimes_{j=1}^{m_i} \Lambda^{n-i}(\bE_{c_{ij}})\right)$ descends to $\M^s$ with a $\T$-equivariant structure.  When $n|D=\sum_{i=0}^{n-1}(n-i)m_i$ then we saw before Theorem~\ref{fouriermukai} that even the vector bundle $\bigotimes_{i=0}^{n-1}\bigotimes_{j=1}^{m_i} \Lambda^{n-i}(\bE_{c_{ij}})$ descends to $\M^s$. Thus the projective bundle $\P\left(\bigotimes_{i=0}^{n-1}\bigotimes_{j=1}^{m_i} \Lambda^{n-i}(\bE_{c_{ij}})\right)$ has a sheaf $\calO(1)$ on it, such that \beq \label{pushdown} \pi_*(\calO(1))^*\cong \bigotimes_{i=0}^{n-1}\bigotimes_{j=1}^{m_i} \Lambda^{n-i}(\bE_{c_{ij}}).\eeq By \cite[Theorem 4.2.2]{brion} the $\T$-action on  $\P\left(\bigotimes_{i=0}^{n-1}\bigotimes_{j=1}^{m_i} \Lambda^{n-i}(\bE_{c_{ij}})\right)$ lifts to $\calO(1)$ because $\Pic(\T)$ is trivial and thus the last map in \cite[(12)]{brion} is an isomorphism. Thus we have a $\T$-equivariant structure on $\bigotimes_{i=0}^{n-1}\bigotimes_{j=1}^{m_i} \Lambda^{n-i}(\bE_{c_{ij}})$ from \eqref{pushdown}. 
	
	We can modify this $\T$-equivariant structure by tensoring with a $\T$ -equivariant  structure on $\calO_{\M^s}$. We can choose it  such that at the canonical uniformising  Higgs bundle $\calE_0=\calE_{\calO_C,0}$  we get that
	$\bigotimes_{i=0}^{n-1}\bigotimes_{j=1}^{m_i} \Lambda^{n-i}(\bE_{c_{ij}}))|_{\calE_0}$ has only non-positive weights and non-trivial weight $0$ space. We can endow $E_{\calO_C}=\calO\oplus K^{-1}\oplus \dots \oplus K^{-n+1}$ with a $\T$-action which acts by weight $-i$ on the $i$th summand $K^{-i}$. This will act on $\Phi_{0}$ with weight $-1$. Therefore any $\T$-equivariant family of stable Higgs bundles containing $\calE_0$ will restrict to $\calE_0$ as this $\T$-equivariant structure, up to a rescaling of the $\T$-action by an overall $1$-dimensional $\T$-module. Therefore  our normalisation of the $\T$-action on  $\bigotimes_{i=0}^{n-1}\bigotimes_{j=1}^{m_i} \Lambda^{n-i}(\bE_{c_{ij}})$  will satisfy \beq\label{eqmultuniv}\chi_\T(\bigotimes_{i=0}^{n-1}\bigotimes_{j=1}^{m_i} \Lambda^{n-i}(\bE_{c_{ij}}))|_{\calE_0})=\prod_{i=0}^{n-1} \left[\begin{array}{c} n \\ n-i\end{array}\right]^{m_i}_t\eeq which incidently also agrees with the equivariant multiplicity $m_{\calE_{\d}}(t)$ of \eqref{explicit}. This will be related to  mirror symmetry in \eqref{mirroriden}. 
	
	A straightforward computation checks that this normalisation of the $\T$-action will induce the one on the determinant as claimed in the Lemma. 
\end{proof}

  The following is an indication  that the mirror relationships \eqref{mirror} and \eqref{mirrormirror}  over $\calA^\#$ hold over the whole $\calA$. 

\begin{theorem} \label{euleriso} For $\ddelta=(\delta_0,\delta_1,\dots,\delta_{n-1})\in J_{{\ell}}(C)\times C^{[m_1]}\times \dots \times C^{[m_{n-1}]}$ and $\ddelta^\prime=({\delta_0}^\prime,\delta^\prime_1,\dots,\delta^\prime_{n-1})\in J_{{\ell^\prime}}(C)\times C^{[m^\prime_1]}\times \dots \times C^{[m^\prime_{n-1}]}$  let $\calE_{\ddelta}, \calE_{\ddelta^\prime}\in \M^{s\T}$ be two very stable type $(1,\dots,1)$ Higgs bundles. Then \bes \chi_\T\left(\M;\calL_{\ddelta^\prime},\Lambda_{\ddelta}\right)&=&\chi_\T(\det(\C_{1}\otimes\calA^*)) \chi_\T\left(\M;\Lambda_{\ddelta^\prime},\calL_{\ddelta^\iota}[-n^2(g-1)-1]
	\right)
	\\ &= &(-1)^{n^2(g-1)+1} t^{(4n+1)(n-1)n(g-1)/6}  \chi_\T\left(\M;\Lambda_{\ddelta^\prime},\calL_{\ddelta^\iota}\right).\ees
	
\end{theorem}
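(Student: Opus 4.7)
The plan is to express both Euler pairings as explicit $\T$-characters of finite-dimensional vector spaces built from the tangent data at the fixed points $\calE_{\ddelta^\prime}$ and $\calE_{\ddelta^\iota}$, and then verify the resulting identity combinatorially using the $\iota$-symmetry and the palindromic structure of Corollary~\ref{eqmult}.

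By Theorem~\ref{mainverystable} and Lemma~\ref{transversal}, the upward flows $W^+_{\ddelta^\prime}$ and $W^+_{\ddelta^\iota}$ are smooth closed Lagrangian subvarieties of $\M^s$, each $\T$-equivariantly isomorphic via Proposition~\ref{bbprop} to an affine $\T$-module with only positive weights.  Applying the Koszul resolution of the regular embedding $i\colon W^+_{\ddelta^\prime}\hookrightarrow\M^s$ of codimension $c=\dim\calA=n^2(g-1)+1$, together with the Lagrangian duality $N_{W^+_{\ddelta^\prime}/\M}\cong T^*W^+_{\ddelta^\prime}\otimes\C_1$ (coming from the weight-one symplectic form of Proposition~\ref{lagrangian} and the pairing convention $\nu_1+\nu_2=1$), and using that $\Lambda_\ddelta|_{W^+_{\ddelta^\prime}}$ is equivariantly trivial on the positive-weight affine space $W^+_{\ddelta^\prime}$ (by the equivariant Quillen--Suslin argument used in the proof of Theorem~\ref{multiplicity}), the LHS evaluates as
\[
\chi_\T(\M;\calL_{\ddelta^\prime},\Lambda_\ddelta)=(-1)^c\,\chi_\T(\Lambda_\ddelta|_{\calE_{\ddelta^\prime}})\,\chi_\T(\det T^{+*}_{\ddelta^\prime})\,t^{-c}\,\chi_\T(\Sym(T^{+*}_{\ddelta^\prime})).
\]
An analogous argument using the local freeness of $\Lambda_{\ddelta^\prime}$ gives
\[
\chi_\T(\M;\Lambda_{\ddelta^\prime},\calL_{\ddelta^\iota})=\chi_\T(\Lambda_{\ddelta^\prime}^\vee|_{\calE_{\ddelta^\iota}})\,\chi_\T(\Sym(T^{+*}_{\ddelta^\iota})),
\]
so that after cancelling $(-1)^c$ the theorem reduces to the character identity
\[
\chi_\T(\Lambda_\ddelta|_{\calE_{\ddelta^\prime}})\,\chi_\T(\det T^{+*}_{\ddelta^\prime})\,t^{-c}\,\chi_\T(\Sym(T^{+*}_{\ddelta^\prime}))=\chi_\T(\det(\C_1\otimes\calA^*))\,\chi_\T(\Lambda_{\ddelta^\prime}^\vee|_{\calE_{\ddelta^\iota}})\,\chi_\T(\Sym(T^{+*}_{\ddelta^\iota})).
\]

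To establish this identity I would use Definition~\ref{equmult} to substitute $\chi_\T(\Sym(T^{+*}_{\ddelta^\prime}))=m_{\ddelta^\prime}(t)\chi_\T(\Sym(\calA^*))$, and similarly for $\ddelta^\iota$.  The explicit formula~\eqref{explicit} together with $\delta^\iota_i=\delta_{n-i}$ for $i\geq 1$ and the symmetry of quantum binomials under $i\leftrightarrow n-i$ immediately gives $m_{\ddelta^\iota}(t)=m_\ddelta(t)$, so the common factor $\chi_\T(\Sym(\calA^*))$ cancels.  The fiber characters $\chi_\T(\Lambda_\ddelta|_{\calE_{\ddelta^\prime}})$ and $\chi_\T(\Lambda_{\ddelta^\prime}^\vee|_{\calE_{\ddelta^\iota}})$ can be computed directly from the construction~\eqref{bigprod}: by Lemma~\ref{teqcover}, at any type $(1,\dots,1)$ fixed point $\calE_{\ddelta''}$ the universal bundle restricts to $\{c\}\times\calE_{\ddelta''}$ as $\bigoplus_i(L''_i)_c$ with $\T$-weights $-i$, so each exterior-power factor $\Lambda^{n-i}(\bE_{c_{ij}})$ contributes an elementary symmetric polynomial $e_{n-i}(t^0,t^{-1},\ldots,t^{-(n-1)})$ together with the character of the corresponding line-bundle fiber.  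A sanity check at $\ddelta^\prime=0$ recovers $\chi_\T(\Lambda_\ddelta|_{\calE_0})=m_\ddelta(t)$ from~\eqref{eqmultuniv}, consistent with the mirror expectation~\eqref{mirrorexp}.

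The hard part will be the matching of the ``anomaly'' factors: the combination $\chi_\T(\det T^{+*}_{\ddelta^\prime})\cdot t^{-c}$ on the LHS against $\chi_\T(\det(\C_1\otimes\calA^*))=t^{n(n-1)(4n+1)(g-1)/6}$ on the RHS.  The $\T$-weight of $\det T^{+*}_{\ddelta^\prime}$ is a linear combination of the $m_i^\prime=\deg(b_i^\prime)$ computed as in Remark~\ref{remark111}, and it must match, after the $\iota$-involution swapping $\ddelta^\prime$ with $\ddelta^\iota$, the universal shift $n(n-1)(4n+1)(g-1)/6$ coming from $\sum_{i=1}^n(i-1)(2i-1)\dim H^0(K^i)$.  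A careful bookkeeping using the palindromicity of the quantum binomials~\eqref{palindromic} and the exterior-power characters should yield the required cancellation; the main technical difficulty is propagating the $\T$-equivariant normalisation~\eqref{equinormalise} coherently across different type $(1,\dots,1)$ fixed points, since the identification $\chi_\T(\Lambda_\ddelta|_{\calE_0})=m_\ddelta(t)$ is pinned only at the canonical uniformising point $\calE_0$ by~\eqref{eqmultuniv}.
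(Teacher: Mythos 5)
Your setup is exactly the paper's: dualise $\calL_{\ddelta^\prime}$ via the Koszul/Grothendieck-duality isomorphism $\calL_{\ddelta^\prime}^\vee\cong \det(N_{W^+_{\ddelta^\prime}})[-c]$, use the weight-one symplectic form to identify $N_{W^+_{\ddelta^\prime}}|_{\calE_{\ddelta^\prime}}\cong T^{+*}_{\ddelta^\prime}\otimes\C_1\cong T^{\leq 0}_{\ddelta^\prime}$, evaluate both equivariant indices on the affine upward flows by equivariant triviality, and reduce to an identity among fixed-point characters. Both of your displayed formulas for the two pairings agree with the paper's \eqref{lhs} and \eqref{rhs}. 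However, the way you propose to finish leaves the argument incomplete in two respects, one of which is an unnecessary detour and one of which is the actual gap.

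First, the ``anomaly matching'' you defer as the hard bookkeeping step is in fact immediate and requires no elementary-symmetric-polynomial computation: once you know $\chi_\T(\Lambda_{\ddelta^\prime}|_{\calE_{\ddelta^\iota}})=m_{\calE_{\ddelta^\prime}}(t)$, you have $\chi_\T(\Lambda^\vee_{\ddelta^\prime}|_{\calE_{\ddelta^\iota}})=m_{\calE_{\ddelta^\prime}}(t^{-1})$, and the palindromicity identity \eqref{palindromic} rewrites this as $m_{\calE_{\ddelta^\prime}}(t)\,\chi_\T(\det\calA)/\chi_\T(\det T^+_{\calE_{\ddelta^\prime}})$; combining with $\chi_\T(\det T^{\leq 0}_{\calE_{\ddelta^\prime}})=\chi_\T(\det T^{+*}_{\calE_{\ddelta^\prime}})\,t^{-c}$ and $\chi_\T(\det(\C_1\otimes\calA^*))=\chi_\T(\det\calA^*)\,t^{-c}$, your reduced identity becomes a tautology after cancelling $m_{\calE_\ddelta}(t)\chi_\T(\Sym(\calA^*))$ (using $m_{\calE_{\ddelta^\iota}}=m_{\calE_\ddelta}$, which you obtain correctly from \eqref{explicit}). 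Second, the one genuinely substantive input — which you flag but do not resolve — is the claim that the restriction character of $\Lambda_\ddelta$ is the \emph{same} at every type $(1,\dots,1)$ fixed point, i.e.\ $\chi_\T(\Lambda_\ddelta|_{\calE_{\ddelta^\prime}})=m_{\calE_\ddelta}(t)$ and not merely this up to an overall power of $t$ depending on $\ddelta^\prime$. Your plan only determines the weights at $\calE_{\ddelta^\prime}$ up to an undetermined overall twist, and without pinning it the factor you call the anomaly would not cancel. The paper closes this by the same mechanism as for \eqref{eqmultuniv}: the ambiguity at any fixed point is an overall one-dimensional character, and it is fixed by the determinant normalisation \eqref{equinormalise}, whose equivariant structure is pulled back along the normalised determinant map $\Nm:\M^s\to J$; since $\Nm$ is $\T$-invariant, the weight of $\det\Lambda_\ddelta$ is the same at every point of $\M^s$, which forces the overall twist at $\calE_{\ddelta^\prime}$ to agree with the one at $\calE_0$. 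With that step supplied, your argument coincides with the paper's proof.
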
   	\begin{proof} We compute
	
	\begin{multline*} \chi_\T\left(\M;\calL_{\ddelta^\prime},\Lambda_{\ddelta}\right)= \chi_\T\left(\M;\calL_{\ddelta^\prime}^\vee \otimes ^L \Lambda_{\ddelta}\right)=\\ = \chi_\T\left(W^+_{\d^\prime};\det(N_{W^+_{\d^\prime}})[-n^2(g-1)-1] \otimes \Lambda_{\ddelta}|_{W^+_{\d^\prime}}\right),\end{multline*} where in the last equation we used \cite[Corollary 3.40]{huybrechts} and the simplified notation $W^+_{\d^\prime}:=W^+_{\calE_{\d^\prime}}$.
	
	As $\det(N_{W^+_{\d^\prime}}) \otimes \Lambda_{\ddelta}|_{W^+_{\d^\prime}}$ is a $\T$-equivariant vector bundle on the  affine space $W^+_{\d^\prime}$ its $\T$-equivariant index can be computed as $$\chi_\T\left(W^+_{\d^\prime};\det(N_{W^+_{\d^\prime}}) \otimes \Lambda_{\ddelta}|_{W^+_{\d^\prime}}\right)=\chi_\T\left((\det(N_{W^+_{\d^\prime}}) \otimes \Lambda_{\ddelta})|_{{\calE_{\d^\prime}}}\right) \chi_\T(W^+_{\d^\prime};\calO_{W^+_{\d^\prime}}).$$
	
	The same argument as above for \eqref{eqmultuniv} gives  $$\chi_\T(\Lambda_{\d}|_{\calE_{\d^\prime}})=m_{\calE_{\d}}(t).$$ From Definition~\ref{equmult} we have $$\chi_\T(W^+_{\d^\prime};\calO_{W^+_{\d^\prime}})=m_{\calE_{\d^\prime}}(t)\chi_\T(\Sym(\calA^*)).$$ Finally $N_{W^+_{\d^\prime}}|_{\calE_{\d^\prime}}\cong T^{\leq  0} _{\calE_{\d^\prime}}$ 
	consequently $$\chi_\T(\det(N_{W^+_{\d^\prime}})|_{{\calE_{\d^\prime}}}))= \chi_\T(\det(T^{\leq  0}_{\calE_{\d^\prime}}))$$  as $\dim \calA = \dim T^+_{\calE_{\d^\prime}}=\dim \M/2$. Thus for the left hand side we have 
	\beq \label{lhs}	\chi_\T\left(\M;\calL_{\ddelta^\prime},\Lambda_{\ddelta}\right)=(-1)^{\dim \cal A}m_{\calE_{\d}}(t)m_{\calE_{\d^\prime}}(t)\chi_\T(\Sym(\calA^*))\chi_\T(\det(T^{\leq  0}_{\calE_{\d^\prime}})).\eeq
	
	For the other side we compute similarly \beq \chi_\T\left(\M;\Lambda_{\ddelta^\prime},\calL_{\ddelta^\iota}\right)\nonumber&=&\chi_\T(\M;\Lambda^\vee_{\ddelta^\prime}\otimes^L\calL_{\ddelta^\iota})\\ \nonumber&=&\chi_\T(W^+_{\d^\iota};\Lambda^\vee_{\ddelta^\prime}|_{W^+_{\d^\iota}} )\\ \nonumber&=&\chi_\T(\Lambda^\vee_{\ddelta^\prime}|_{{\calE_{\ddelta^\iota}}} )\chi_\T(W^+_{\d^\iota};\calO_{W^+_{\d^\iota}} )\\ \nonumber&=& m_{\calE_{\d^\prime}}(t^{-1}) m_{\calE_{\d^\iota}}(t)\chi_\T(\Sym(\calA^*))\\ \nonumber&=& \frac{\chi_\T(\det(\calA))}{\chi_\T(\det(T^+_{\calE_{\d^\prime}}))}m_{\calE_{\d^\prime}}(t) m_{\calE_{\d^\iota}}(t)\chi_\T(\Sym(\calA^*)) \\ &=& \frac{\chi_\T(\det(T^{\leq 0}_{\calE_{\d^\prime}}))}{\chi_\T(\det(\C_{1}\otimes\calA^*))}m_{\calE_{\d^\prime}}(t) m_{\calE_{\d^\iota}}(t)\chi_\T(\Sym(\calA^*)), \label{rhs}
	\eeq
	where in the fourth equation we used \eqref{palindromic}. For the last equation we can argue as follows. As $\iota$ of \eqref{iota} is a $\T$-equivariant isomorphism, it follows that $T^+_{\calE_{\d^\iota}}\cong T^+_{\calE_{\d}}$ and thus $m_{\calE_{\d^\iota}}(t)=m_{\calE_{\d}}(t)$. Using the homogeneity $1$ symplectic form on $T_{\calE_{\d^\prime}} $ we see that $\chi_\T(\C_{-1}\otimes\det(T^+_{\calE_{\d^\prime}}))\chi_\T(\det(T^{\leq  0}_{\calE_{\d^\prime}}))=1$. This implies the last equation. Comparing \eqref{lhs} and \eqref{rhs} implies the first equation in the Theorem.
	
	For the second equation we note that $\dim \calA=\sum_{i=1}^n\dim H^0(C;K^i)= n^2(g-1)+1$ and that a straightforward computation  gives $$\chi_\T(\C_1\otimes \calA^*)=t^{\sum_i (i-1)\dim \calA_i}=t^{\sum_{i=1}^n (i-1)\dim H^0(C;K^i)}=t^{\sum_{i=1}^n (i-1)(2i-1)(g-1)}=t^{(4n+1)(n-1)n(g-1)/6} .$$ The result follows.

\end{proof}

\begin{remark} We proved in Theorem~\ref{fouriermukai} that the relative Fourier-Mukai transform $S(\calL_{\d})=\Lambda_{\d}$ holds over $\calA^\#$ and in Remark~\ref{remarkdual} that  $S(\Lambda_{\d})=\calL_{\d^\iota}[-n^2(g-1)-1]$ also holds over $\calA^\#$. The above theorem  would be a consequence  of a $\T$-equivariant extension of  the mirror symmetry conjecture in the "classical limit"  as in \cite[Conjecture 2.5]{donagi-pantev}. 
\end{remark}

\begin{remark} Similar agreement of $\T$-equivariant Euler pairings between certain conjectured mirror pairs of Lagrangian and hyperholomorphic branes of \cite{hitchin2} were observed 
	in \cite[\S 7]{hausel-mellit-pei}, when $n=2$. When $n=2$ we have $\chi_\T(\C_1\otimes \calA^*)=t^{3g-3}$ in Theorem~\ref{euleriso} which we cannot get rid of by just renormalising the $\T$-action on our branes when $3g-3$ is odd. The same phenomenon was observed in \cite[Remark 7.7]{hausel-mellit-pei}. 
	
	In fact, one can also pair certain $n=2$ analogues of the mirror pairs of this paper with the ones in \cite{hausel-mellit-pei} and find the expected agreement. 
	
\end{remark}

\begin{remark} Just as in \cite[Corollary 4.11]{hausel-mellit-pei} we get a clearly symmetric expression in $\d$ and $\d^\prime$ if we compute, equivalently as in  \eqref{lhs},
	\beq	\label{tensorsym}\chi_\T(\M;\calL_{\d^\prime}\otimes \Lambda_{\d})=m_{\calE_{\d^\prime}}(t)m_{\calE_{\d}}(t)\chi_\T(\Sym(\calA^*)).\eeq When deriving the agreement of the corresponding Euler forms \cite[Corollary 7.1]{hausel-mellit-pei} we basically argued that it follows from the fact that both of the branes $\calL_i$ and $\Lambda_j$ were self-dual. In our present situation ${\calL_{\d^\prime}}$ is self-dual (up to a shift) for the trivial reason that it is supported on an affine space. On the other hand $\Lambda_{\d}$ is typically not self-dual, rather it satisfies $\Lambda^\vee_{\d}=\iota^*(\Lambda_{\d})=\Lambda_{\d^\iota}$ using the normalised inverse map $\iota$ of Remark~\ref{remarkdual}.  However in \cite{hausel-mellit-pei} both types of our branes $\calL_i$ and $\Lambda_j$ were secretly invariant under $\iota$ as well. In general, what \eqref{skewinv} tells us is that self-dual branes should be mirror to branes invariant under $\vee \circ \iota$. The former is satisfied by $\calL_{\d^\prime}$ and the latter by $\Lambda_{\d}$. 
\end{remark}

\begin{remark}  \label{mirrorate0}
	It is instructive to take $\delta^\prime=0=(0,0,\dots,0)$ in \eqref{tensorsym}. Then $\calE_{0}=\calE_{\calO_C,0}$ is the canonical uniformising Higgs bundle of  \eqref{uniform}. It has $m_{\calE_0}(t)=1$ and $\calL_{0}\cong \calO_{W^+_0}$ is the structure sheaf of the canonical Hitchin section, while $\Lambda_0\cong \calO_{\M^s}$.   We get $$\chi_\T(\M;\calO_{W_{\delta}^+}\otimes \Lambda_{0})= m_{\calE_{\d}}(t)\chi_\T(\Sym(\calA^*)).$$   On the other hand $$\chi_\T(\M;\calO_{W_0^+}\otimes \Lambda_{\d})=\chi_\T(\calO_{W_0^+};\Lambda_{\d}|_{W_0^+})=\chi_\T(\Lambda_{\d}|_{\calE_0})\chi_\T(\Sym(\calA^*)).$$ Thus $\chi_\T(\M;\calO_{W_{\delta}^+}\otimes \Lambda_{0})=\chi_\T(\M;\calO_{W_0^+}\otimes \Lambda_{\d})$ (or the equivalent Theorem~\ref{euleriso}) implies that \beq \label{mirroriden} \chi_\T(\Lambda_{\d}|_{\calE_0})=m_{\calE_{\d}}(t).\eeq This was observed already in \eqref{eqmultuniv}, but here we derived it as a consequence of mirror symmetry. Indeed such a relationship is expected from the mirror in general. Because  of the restriction property \eqref{fmiden} of the Fourier-Mukai transform to the identity we expect from the fibrewise Fourier-Mukai transform that the mirror of a bounded complex of coherent sheaves $\calF^\bullet$ on $\M$ should restrict to the canonical Hitchin section as $h^*(h_*(\calF^\bullet) )$. The latter was computed for the upward flow of a very stable Higgs bundle in \eqref{hitchinpush} and \eqref{t-character}. Thus \eqref{mirroriden} verifies this expectation from the mirror of $\calL_{\d}$. 
\end{remark}

\begin{remark} We can also generalise these ideas further for very stable upward flows of type not $(1,\dots,1)$. For  $\calE\in \M^{s\T}$  a very stable Higgs bundle let $\calL_\calE:=\calO_{W^+_\calE}$ denote the structure sheaf of its upward flow and  $\Lambda_\calE$   denote its mirror, which we expect to be a $\T$-equivariant vector bundle on $\M^s$ of rank $m_\calE$. If $\calF\in \M^{s\T}$ is another very stable Higgs bundle then denote by $$m_{\calE,\calF}(t):=\chi_\T(\Lambda_\calE|_\calF)\in \Z[t]$$ the $\T$-character  of the fibre $\Lambda_\calE|_\calF$, which we expect to contain only non-positive weights. In particular,  by \eqref{mirroriden} we have $m_{\calE,\calE_0}(t)= m_\calE(t)$ the equivariant multiplicity. As above from mirror symmetry we expect $$\chi_\T(\M;\calL_\calE\otimes \Lambda_\calF)=\chi_\T(\M;\calL_\calF\otimes \Lambda_\calE)$$ which in turn leads to the identity
	\beq\label{mefidentity} m_\calE(t) m_{\calF,\calE}(t)=m_\calF(t) m_{\calE,\calF}(t).\eeq When both $\calE$ and $\calF$ are of type $(1,\dots,1)$ then $m_{\calE,\calF}(t)=m_\calE(t)$ is just the equivariant multiplicity as $\Lambda_\calE|_{\calF}\cong \Lambda_\calE|_{\calE_0}$  and so \eqref{mefidentity} holds. If only $\calF$ is of type $(1,\dots,1)$ we know what $\Lambda_\calF$ should be and how it restricts to any $\calE$ giving a formula for $m_{\calF,\calE}(t)$ only depending on the type of $\calE$. From \eqref{mefidentity} we can deduce $$m_{\calE,\calF}(t)=\frac{m_\calE(t) m_{\calF,\calE}(t)}{m_\calF(t)}.$$ As it is again expected to be a polynomial in $t$ we get more obstructions on $\calE$ being very stable. 
	
	To illustrate the above take the example of $n=2$ and $d=-2(g-1)$ and let $\calE$ be a very stable Higgs bundle of type $(2)$, i.e. a very stable bundle $E$ together with the zero Higgs field. For some $0\leq i<g$ let $\calF\in F_{l,2i}$ be a very stable type $(1,1)$ Higgs bundle. Then $\Lambda_\calF$ is a tensor product of universal bundles restricted to points of $C$.  By carefully following the normalisation of the $\T$-action on the tensor product of these restricted  universal bundles given by Lemma~\ref{equinormalise} we find that 
	$$m_{\calF,\calE}(t)=\chi_\T(\Lambda_\calF|_\calE)=2^{2i}t^i .$$ By \eqref{mefidentity} we get that
	\beq \label{cotmir} m_{\calE,\calF}(t)=\frac{m_\calE(t) m_{\calF,\calE}(t)}{m_\calF(t)}=\frac{(1+t)^{3g-3} 2^{2i}t^i}{(1+t)^{2i}}=2^{2i}t^i(1+t)^{3g-3-2i}.\eeq Thus even though we do not know (see \ref{cotfib} for more on this) what vector bundle $\Lambda_\calE$ the mirror of the cotangent fibre at $E$  should be we can infer its $\T$-character at type $(1,1)$ fixed points  from \eqref{cotmir}.

\end{remark}

\section{Hyperk\"ahler aspects}
\label{hyper}

\subsection{Hyperholomorphic connections}

Thus far we have considered $\M$ as a variety with a holomorphic symplectic structure in which the upward flows are complex Lagrangian subvarieties. At this point we bring into play the hyperk\"ahler metric on $\M$. This is a metric which is K\"ahler with respect to complex structures $I,J,K$ which obey the quaternionic relations $I^2=J^2=K^2=IJK=-1$. It exists as a consequence of the theorem \cite{hitchin},\cite{simpson1},\cite{simpson1b} that a stable Higgs bundle admits a Hermitian metric, unique up to gauge equivalence, satisfying the equation $F_A+[\Phi,\Phi^*]=0$. 

Within this context, mirror symmetry predicts that a BAA-brane should correspond to a BBB-brane on the mirror of $\M$, which is to be understood as the statement that to a flat vector bundle on a complex Lagrangian on $\M$ there should exist a hyperholomorphic connection on a vector bundle over a hyperk\"ahler submanifold of the mirror.  A hyperholomorphic connection is a unitary connection whose curvature is of type $(1,1)$ with respect to each complex structure in the hyperk\"ahler family  of complex structures $x_1I+x_2J+x_3K$ with   $(x_1,x_2,x_3)\in S^2\subset \R^3$.  At this point in time, we have few constructions of hyperholomorphic connections on ${\mathcal M}$ so verifying the predictions is an important issue.

Since the fibration structure of $\M$ gives a natural setting for the Strominger-Yau-Zaslow approach to mirror symmetry, it is the Fourier-Mukai transform approach which can be applied. In the original SYZ context of a special Lagrangian fibration Arinkin and Polishchuk \cite{arinkin} showed how a real Lagrangian can describe a holomorphic bundle on the mirror. This could be applied to the current situation but would give at most  a hyperholomorphic connection relative to the {\emph semi-flat} hyperk\"ahler metric which is defined over the regular locus in $ \calA$ and is only an approximation to the actual metric. We have described the Fourier-Mukai transform of the trivial bundle over an upward flow Lagrangian in terms of universal bundles over the mirror moduli space, so to verify the prediction we need to construct a hyperholomorphic connection, using the genuine hyperk\"ahler metric, on a universal bundle  over the mirror of the moduli space. Since we are working with $GL_n(\C)$ Higgs bundles here, and this group is isomorphic to its Langlands dual, the mirror is the same moduli space. Twistor theory allows us to express this by holomorphic means using the direct analogy of the Atiyah-Ward description of instantons \cite{AHS}, \cite{Sal} -- a hyperholomorphic connection on $\R^4$ is precisely an  anti-self-dual connection. 

Recall that the twistor space $Z$ of a hyperk\"ahler manifold $M$ is  the manifold $M\times S^2$ endowed with a complex structure such that the fibre over $(x_1,x_2,x_3)\in S^2$ is $M$ with complex structure $x_1I+x_2J+x_3K$. The projection onto the second factor is a holomorphic fibration $p: Z\rightarrow \PP^1$ with a real structure $\sigma$ covering the antipodal map on $S^2$ and each 2-sphere $(m,S^2)$ is a real holomorphic section, called a twistor line.   Then a unitary hyperholomorphic connection on a vector bundle is equivalent to a holomorphic bundle on $Z$ which is trivial on each twistor line and real with respect to $\sigma$.

A description of the twistor space for Higgs bundles due to Deligne and Simpson \cite{simpson3} is phrased in terms of $\lambda$-connections. A holomorphic $\lambda$-connection on a vector bundle $E$ is a differential operator $D:E\rightarrow E\otimes K$ satisfying 
$D(fs)=fDs+\lambda s\otimes df$ for a local holomorphic section $s$ and function $f$. If $\lambda\ne 0$ then $\lambda^{-1}D$ is a holomorphic, and therefore flat, connection and for $\lambda=0$ this is a Higgs field. There is an algebraic construction of a moduli space ${\M}^s_{\mathrm{Hod}}$ of stable $\lambda$-connections and the parameter $\lambda$ gives a projection to $\C$. 

A solution to the Higgs bundle equations gives rise to a $\bar\partial$-operator $\nabla^{0,1}_A+\zeta\Phi^* =\bar\partial_A+\zeta\Phi^*$ parametrized by $\zeta\in \C$ which commutes with $\zeta\partial _A+ \Phi$, so defining a $\lambda$-connection with $\lambda=\zeta$ for the holomorphic structure on $E$ defined by  the $\bar\partial$-operator. This is a holomorphic section of ${\mathcal M}_{\mathrm{Hod}}\rightarrow \C$ and the twistor space ${\mathcal Z}$ is constructed so as to extend this to $\zeta=\infty\in \PP^1$ and make this a twistor line. It  is defined by  taking the moduli space of $\lambda$-connections  and the corresponding one for the conjugate complex structure on $C$, and identifying with the moduli space of  flat connections over $\zeta\in \C^*\subset \PP^1$ by means of the holonomy, the representation of $\pi_1(C)$ in $GL_n(\C)$, which is independent of any holomorphic structure. Then identifying $\lambda=\zeta$ on one side with $\lambda=\zeta^{-1}$ on the other gives ${\mathcal Z}$.

When $\zeta\ne 0$ we have the flat connection
\begin{equation}
\nabla_A+\zeta^{-1}\Phi+\zeta \Phi^*.
\label{flatconn} 
\end{equation}
and as $\zeta$  tends to $\infty$, $\partial_A+\zeta^{-1}\Phi$ defines a holomorphic structure with respect to the conjugate complex structure on $C$ and $\zeta^{-1}\bar\partial_A+\Phi^*$  a $\lambda$-connection for $\lambda=\zeta^{-1}$.   This defines a section of the fibration $p: {\mathcal Z}\rightarrow \PP^1$.

Since $A$ is a unitary connection the antilinear involution $a\mapsto -a^*$ on $\End(E)$ coupled with the antipodal map $\zeta\mapsto -1/\bar \zeta$ acting on $\PP^1$ transforms $\nabla^{1,0} _A+ \zeta^{-1}\Phi$ to  $\nabla^{0,1}_A+\bar \zeta \Phi^*$ and so a solution to the Higgs bundle equations  defines a real section, and this corresponds to a point of $\M$.

\subsection{The connection on a universal bundle}
As described above, the Fourier-Mukai transform of the upward flow of a very stable fixed point  of type $(1,1,\dots,1)$ can be expressed as a tensor product of exterior powers of the universal bundle corresponding to  points $c\in C$. A hyperholomorphic connection on the universal bundle will then induce one on the associated tensor or exterior powers. 

In the  complex structure  $\zeta \ne 0,\infty$ there is a straightforward description of  the universal bundle: given a framing of a bundle $E$ at $c$, the holonomy of a flat connection on $E$ around closed curves with base point $c$ defines a homomorphism from $\pi_1(C,c)$ to $\GL_n(\C)$.   We denote $\calR_\B=\Hom(\pi_1(C,c)\to \GL_n(\C))$, and the open locus of irreducible representations $\calR^s_\B$. We have the action of $\GL_n(\C)$ on $\calR_\B$ by conjugation, and denote the affine quotient by $\M_\B$, the character variety. Over the open subset $\M^s_\B=\calR^s_\B/\GL_n(\C)\subset \M_\B$ the  variety $\calR^s_\B$ is a principal $\PGL_n(\C)$-bundle, representing irreducible local systems framed at the point $c\in C$.

As described in Section 6, there is an obstruction in  $H^2({\M^s_\B}, \Z_n)$ to lifting to a  principal $\GL_n(\C)$-bundle. Given local liftings over  open sets $U_i$, the obstruction class is represented by a flat line bundle $L_{ij}$ over $U_i\cap U_j$ with $L_{ij}^n$ trivial -- the data for a gerbe $\theta$. If the universal vector bundle ${\mathbb E}$  does not exist globally, it  does over the neighbourhoods $U_i$, and a connection is a locally defined notion. Moreover tensoring with a flat line bundle $L_{ij}$ acts trivially on the curvature and so takes a local hyperholomorphic connection to another one, so the notion of hyperholomorphic passes over to the situation where we need to twist by a flat gerbe.

For the complex structure $\zeta=0$ or $\infty$, the moduli space of framed Higgs bundles was constructed by Simpson in    \cite[Theorem 4.10]{simpson1}, and in \cite[Proposition 4.1]{simpson3} he  outlined, by the same approach, a construction for $\lambda$-connections. We denote by $\calR^s_\Hod$ the framed moduli space of stable $\lambda$-connections. This carries a $\GL_n(\C)$-action on the framing, so that it becomes a  principal $\PGL_n(\C)$-bundle over $\M^s_\Hod$.  Over $\lambda=0$ this principal bundle $\calR^s_\Hod\to \M^s_\Hod$ restricts to $\calR^s_\Dol\to \M_\Dol^s=\M^s$  the framed moduli space of stable rank $n$ degree $0$ Higgs bundles, which is a $\PGL_n(\C)$-principal bundle over $\M^s$. 

When $\lambda\ne 0$ we have the complex analytically equivalent alternative description $\calR^s_\B\to \M^s_\B$ of the framed moduli space in terms of the character variety, using the holonomy of the flat connection. This  is clearly compatible with the identification in the construction of  the twistor space ${\mathcal Z}$,  so we have a holomorphic $\PGL_n(\C)$-bundle over ${\mathcal Z}$. The final point is to show this is trivial on a real twistor line. 

A real twistor line is defined by  a  solution to the Higgs bundle equations which, as $\zeta$ varies,  is a pair $\bar\partial_A+\zeta\Phi^*$,  $\zeta\partial _A+ \Phi$ on the {\emph {same}} $C^{\infty}$ vector bundle $E$. A basis $e_1,\dots, e_n$ of the fibre $E_c$ then defines a framing for all holomorphic structures and we need this to be holomorphic in $\zeta$. But  from the integrability theorem of the holomorphic structure $\bar\partial_A+\zeta\Phi^*$ we can find a basis of local sections $s_1,\dots, s_n$ in a neighbourhood of $c$   with $(\bar\partial_A+\zeta\Phi^*)s_i=0$ and  these vary holomorphically with $\zeta$. 
Then evaluation at $c$ gives a frame which is holomorphic in $\zeta$ and similarly using the conjugate complex structure for $\zeta\ne 0$. .

\begin{remark} 
	
	A formal explanation for the  existence of the hyperholomorphic connection is provided by the infinite-dimensional hyperk\"ahler quotient construction of ${\mathcal M}$ itself \cite{hitchin}. This considers the action of the group ${\mathcal G}$ of unitary gauge transformations on the infinite-dimensional flat hyperk\"ahler manifold which is the cotangent bundle of the space ${\mathfrak A}$ of $\bar\partial$-operators $\bar\partial_A$ on the underlying $C^{\infty}$ vector bundle $E$. The zero set of the hyperk\"ahler moment map for this action consists of pairs $(A,\Phi)$ satisfying the Higgs bundle equations. This space is formally a principal ${\mathcal G}/U(1)$ bundle over the quotient ${\mathcal M}$ and the restriction of the flat $L^2$ metric on $T^*{\mathfrak A}$ defines a distribution orthogonal to the orbit directions, and this is a connection on the principal bundle. In any hyperk\"ahler quotient this is hyperholomorphic. Now given a framing at $c$, the evaluation map gives a homomorphism from ${\mathcal G}/U(1)$ to $PU(n)$ and an associated hyperholomorphic connection on the universal bundle.
\end{remark}


Thus we have proved the following.

\begin{theorem} There is a hyperholomorphic connection on the $\PGL_n(\C)$ bundle $\calR^s_\Dol$ on $\M^s$, which is associated to the projective bundle $\P(\E_c)$. Moreover, there is a hyperholomorphic structure on the  vector bundle $\E_c$ on $\M^s$ twisted by the gerbe  $\theta$. 	
\end{theorem}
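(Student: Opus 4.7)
The plan is to use the twistor interpretation of hyperholomorphic connections. Recall that a unitary hyperholomorphic connection on a bundle over the hyperk\"ahler manifold $\M^s$ is equivalent data to a holomorphic bundle on its twistor space $\calZ\to \PP^1$ which is trivial on each real twistor line and compatible with the real structure $\sigma$ covering the antipodal map. The same criterion, applied fibrewise, characterises hyperholomorphic $\PGL_n(\C)$-bundles. Our goal is therefore to assemble a holomorphic $\PGL_n(\C)$-bundle on $\calZ$ whose restriction to a real section (i.e.\ a solution of the Higgs bundle equations) is trivial and whose Dolbeault specialisation at $\zeta=0$ is the framed moduli space $\calR^s_\Dol\to\M^s$.

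First I would use Simpson's construction of $\calR^s_\Hod\to \M^s_\Hod$, the moduli space of stable $\lambda$-connections framed at $c\in C$, and its counterpart $\calR^{s,\opp}_\Hod$ for the conjugate complex structure on $C$. These are $\PGL_n(\C)$-principal bundles on the two halves $\lambda=\zeta$ and $\lambda=\zeta^{-1}$ of $\calZ$. Over $\C^*\subset \PP^1$ the complex-analytic equivalence between $\lambda$-connections (for $\lambda\ne 0$) and local systems via the holonomy gives $\calR^s_\Hod|_{\zeta\neq 0}\cong \calR^s_\B\times \C^*$ as $\PGL_n(\C)$-bundles. The Deligne--Simpson gluing that defines $\calZ$ itself identifies $\lambda=\zeta$ with $\lambda=\zeta^{-1}$ via this same holonomy, so pulling back $\calR^s_\B$ through both holonomy maps produces compatible isomorphisms on the overlap. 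This yields a holomorphic $\PGL_n(\C)$-bundle $\bR$ on $\calZ$ whose restriction to $\{\zeta=0\}$ is $\calR^s_\Dol$.

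The main obstacle, and the key geometric input, is to verify that $\bR$ is trivial on each real twistor line. A point of $\M^s$ is represented by a harmonic pair $(A,\Phi)$ satisfying $F_A+[\Phi,\Phi^*]=0$, and the associated real section of $p:\calZ\to \PP^1$ is given at $\zeta\neq\infty$ by the holomorphic structure $\bar\partial_A+\zeta \Phi^*$ together with the $\lambda=\zeta$ operator $\zeta\partial_A+\Phi$, and symmetrically at $\zeta\neq 0$. A frame for the fibre $\bR|_{(A,\Phi,\zeta)}$ is the same as a framing at $c$ of the corresponding holomorphic bundle on $C$, and to show triviality I must produce such a framing depending holomorphically on $\zeta$. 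Following the sketch in the excerpt, I would apply the Newlander--Nirenberg/integrability theorem for the family of operators $\bar\partial_A+\zeta\Phi^*$ to obtain a local basis $s_1(\zeta),\dots,s_n(\zeta)$ of sections near $c$ with $(\bar\partial_A+\zeta\Phi^*)s_i=0$ and varying holomorphically in $\zeta$; evaluation at $c$ then gives the required holomorphic frame on $\C\subset\PP^1$. Repeating with the conjugate complex structure on the chart $\zeta\ne 0$ and checking compatibility on $\C^*$ via holonomy produces a global trivialisation of $\bR$ over $\PP^1$, which is the nontrivial content.

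Reality with respect to $\sigma$ is then automatic from unitarity: the involution $a\mapsto -a^*$ on $\End(E)$ exchanges $\bar\partial_A+\zeta\Phi^*$ with $\nabla^{1,0}_A+\bar\zeta^{-1}\Phi$ under $\zeta\mapsto -1/\bar\zeta$, and this intertwines with the natural real structure on $\calR^s_\B$ coming from Hermitian conjugation of representations. The outcome is the hyperholomorphic $\PGL_n(\C)$-connection on $\calR^s_\Dol$, associated to the projective universal bundle $\P(\bE_c)$. For the second assertion, the local lifts of the $\PGL_n(\C)$-bundle to $\GL_n(\C)$-bundles constructed on the \'etale cover $\{U_i\}$ of Section~\ref{lambdadelta} are precisely the local universal vector bundles $\bE_c|_{U_i}$; the hyperholomorphic connection descends from each local $\GL_n(\C)$-lift, and since tensoring with a flat line bundle preserves the $(1,1)$-curvature condition with respect to every complex structure, the twisting cocycle $L_{ij}$ representing $\theta$ leaves the hyperholomorphic property invariant. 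This equips the $\theta$-twisted universal bundle $\bE_c$ with a hyperholomorphic connection, completing the argument.
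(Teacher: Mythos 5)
Your overall route is the paper's: you build the holomorphic $\PGL_n(\C)$-bundle on the twistor space $\calZ$ from Simpson's framed moduli $\calR^s_\Hod$ on the two Hodge charts, glued to $\calR^s_\B$ over $\C^*$ via holonomy; you reduce hyperholomorphicity to triviality on real twistor lines plus reality; and you handle the $\theta$-twisted vector bundle exactly as the paper does (local $\GL_n$ lifts, tensoring by the flat line bundles $L_{ij}$ does not affect curvature). The one place where your argument does not go through as written is the triviality step. You produce a holomorphic framing over the chart $\zeta\neq\infty$ by evaluating at $c$ a $\zeta$-holomorphic family of $(\bar\partial_A+\zeta\Phi^*)$-holomorphic local frames, another framing over $\zeta\neq 0$ from the conjugate structure, and then assert that ``checking compatibility on $\C^*$ via holonomy'' yields a global trivialisation. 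But these two evaluation frames are in general different bases of $E_c$; having trivialisations on the two charts with a holomorphic transition function on $\C^*$ does not by itself imply that the restriction of the bundle to the twistor line $\PP^1$ is trivial (it could have any splitting type), and the holonomy identification of the moduli descriptions says nothing about the two frames agreeing on the overlap.

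The missing idea -- and the actual point of the paper's proof -- is that a real twistor line lives on a single fixed $C^\infty$ bundle $E$ with the operators $\bar\partial_A+\zeta\Phi^*$ and $\zeta\partial_A+\Phi$ varying; hence one fixed basis $e_1,\dots,e_n$ of the fibre $E_c$ defines a framing simultaneously for every $\zeta$, i.e. a globally defined section of the principal bundle along the whole line. The evaluation frames $s_i(c)(\zeta)$ are then used only to verify that this constant section is holomorphic in $\zeta$ on each chart (the change of basis from $s_i(c)(\zeta)$ to $e_i$ is invertible and holomorphic in $\zeta$), which gives the global holomorphic trivialisation. Equivalently, each of your two chart framings differs from the constant frame by a matrix holomorphic on $\C$, respectively near $\infty$, so the transition function admits a Birkhoff factorisation; either formulation closes the gap, but some such statement is needed in place of the appeal to holonomy. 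The rest of your argument (reality with respect to $\sigma$, and the passage to the gerbe-twisted $\E_c$) matches the paper.
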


\subsection{The universal Higgs field} 
For each point $c\in C$ we obtain the universal bundle $\E_c$ with a hyperholomorphic connection on ${\mathcal M}$, and so $C$ itself is the parameter space for  a family of such bundles with connection.  Equivalently, these correspond to holomorphic bundles on the twistor space ${\mathcal Z}$. In particular, considering first order deformations, there is a natural map from the tangent space of $C$ at $c$ to $H^1({\mathcal Z},\End (\E_c))$. This space can be interpreted  in terms of variations of hyperholomorphic connections  by using the Penrose transform in the current quaternionic context \cite{Sal}. It is isomorphic to the first cohomology group of an elliptic complex generalizing  the deformation complex for anti-self-dual connections in four dimensions in \cite{AHS}. 

Since $\M$ is the moduli space of pairs $(E,\Phi)$ the universal bundle should have a universal Higgs field to accompany it and indeed a global construction of the universal pair  can be found in  \cite{hausel-vanishing} for the $(d,n)=1$ case
and  \cite[Theorem 4.5]{simpson1}  for a local construction in general. 
Given the role of the universal bundle, the reader may wonder how the universal Higgs field at $c$ enters the picture. We shall see here that it defines the infinitesimal variation of the hyperholomorphic connection on the universal bundle.

The starting point is the observation that the space $\Hom(\pi_1(C,c), \PGL_n(\C))$ is independent of $c$. More accurately, choosing a path from $c$ to $y$, there is a biholomorphism between the two spaces which only depends on the homotopy class of the path. In our situation it is the holonomy of the flat connection along that path which identifies the two. We have a flat connection when $\zeta \ne 0$ or $\infty$ and this suggests that the deformation class in $H^1({\mathcal Z},\End (\E_c))$ is supported on the fibres $Z_0,Z_{\infty}\subset {\mathcal Z}$ over these two points in $\PP^1$.

Given a tangent vector  $X\in T_c(C)$, we can contract with the  Higgs field $\Phi_c\in (\End (E) \otimes K)_c$ to obtain $\Phi(X)$ which is a section of the universal bundle $\End (\E_c)$ over  the moduli space of Higgs bundles  ${\mathcal M}$ which we now think of as the fibre $Z_0$ over $\zeta=0$ in the twistor space. Similarly $-\Phi^*$ defines a holomorphic section $-\Phi^*(X)$ over $Z_{\infty}$, the moduli space  for the conjugate complex structure. Let $s$ be the pullback  to ${\mathcal Z}\rightarrow \PP^1$ of the holomorphic section $\zeta d/d\zeta$ of $T\PP^1\cong {\mathcal O}(2)$ on $\PP^1$, then we have an exact sequence of sheaves on  ${\mathcal Z}$:
$$0\rightarrow {\mathcal O}(\End (\E_c))\stackrel {s} \rightarrow {\mathcal O}(\End (\E_c)(2))\rightarrow {\mathcal O}_{Z_0+Z_{\infty}}(\End (\E_c)(2))\rightarrow 0.$$
The vector field  $d/d\zeta$ trivializes ${\mathcal O}(2)$ at $\zeta=0$ and similarly at infinity so the pair $(\Phi(X),-\Phi^*(X))$ may be regarded as a section of $\End (\E_c)(2)$ over $Z_0+Z_{\infty}$. Then we have: 
\begin{proposition} The map $TC_c\rightarrow H^1({\mathcal Z},\End (\E_c))$ defining the first order deformation of the universal bundle at $c$ is $\delta(\Phi(X),-\Phi^*(X))$ where $\Phi$ is the universal Higgs field at $c$ and  $\delta: H^0({Z_0+Z_{\infty}},\End (\E_c)(2))\rightarrow H^1({\mathcal Z},\End (\E_c))$   is the connecting homomorphism in the cohomology of the above exact sequence. 
\end{proposition}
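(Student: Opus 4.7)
I would prove this by computing both sides as \v{C}ech 1-cocycles on the two-patch cover $\mathcal Z = U_0 \cup U_\infty$ with $U_0 = \mathcal Z \setminus Z_\infty$ and $U_\infty = \mathcal Z \setminus Z_0$, and checking that they agree as classes in $H^1(\mathcal Z, \End(\E_c))$.

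\emph{First step: local frames of $\E_c$.} On $U_0$ the bundle $\E_c$ is described via the moduli of $\lambda$-connections and evaluation at $c$; a local holomorphic frame at a solution $(A,\Phi)$ of Hitchin's equations is obtained from sections $s_1(\zeta),\dots,s_n(\zeta)$ of $E$ on a neighbourhood of $c$ satisfying $(\bar\partial_A+\zeta\Phi^*)s_i=0$ and normalised by $s_i(c)=e_i$, which depend holomorphically on $\zeta$ near $0$. On $U_\infty$ one uses the analogous description using the conjugate complex structure on $C$. On $U_0\cap U_\infty$ both frames represent the holonomy of the flat connection \eqref{flatconn}, and parallel transport along a path from $c$ to $y$ gives the canonical identification $\E_c\cong \E_y$ there.

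\emph{Second step: Kodaira--Spencer cocycle.} Varying $c$ by $X\in T_cC$, the Dol frames at $c$ are replaced by Dol frames at $c+X$ (fitting into a family over $U_0$), and similarly on $U_\infty$; on the overlap the two new descriptions agree under the new parallel transport. The infinitesimal change of the overlap identification is the operator
$$ -\bigl(\nabla_A(X)+\zeta^{-1}\Phi(X)+\zeta\Phi^*(X)\bigr)$$
acting on frames. The term $\nabla_A(X)$ is holomorphic in $\zeta$ and extends over each of $U_0$, $U_\infty$ separately in the respective description, so is a \v{C}ech coboundary. The remaining piece yields the Kodaira--Spencer 1-cocycle
$$g_{0\infty}=\zeta^{-1}\Phi(X)+\zeta\Phi^*(X)\in\Gamma(U_0\cap U_\infty,\End(\E_c)),$$
a meromorphic section with simple poles only at $Z_0$ and $Z_\infty$.

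\emph{Third step: image under $\delta$.} Trivialise $\calO(2)\cong T\mathbb P^1$ at $\zeta=0$ by $d/d\zeta$ and at $\zeta=\infty$ by $-d/dw=\zeta^2\,d/d\zeta$, where $w=1/\zeta$. Then $(\Phi(X),-\Phi^*(X))$ corresponds to the sections $\Phi(X)\,d/d\zeta$ on $Z_0$ and $-\zeta^2\Phi^*(X)\,d/d\zeta$ on $Z_\infty$ of $\End(\E_c)(2)$. Natural local lifts are $\widetilde\alpha=\Phi(X)\,d/d\zeta$ on $U_0$ and $\widetilde\beta=-\zeta^2\Phi^*(X)\,d/d\zeta$ on $U_\infty$. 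On the overlap,
$$\widetilde\alpha-\widetilde\beta=\bigl(\Phi(X)+\zeta^2\Phi^*(X)\bigr)\frac{d}{d\zeta}=s\cdot\bigl(\zeta^{-1}\Phi(X)+\zeta\Phi^*(X)\bigr),$$
so $\delta(\Phi(X),-\Phi^*(X))$ is represented by $g_{0\infty}$, matching the cocycle of the second step.

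\emph{Main obstacle.} The crux is Step~2: justifying rigorously that the infinitesimal parallel transport of the flat connection \eqref{flatconn} really does compute the Kodaira--Spencer class of the holomorphic family $\{\E_c\}_{c\in C}$ on the twistor space. A clean formulation is via the Atiyah sequence $0\to\End(\E_c)\to \mathrm{At}(\E_c)\to T\mathcal Z\to 0$: the flat connection furnishes a splitting of $\mathrm{At}(\E_c)$ on $U_0\cap U_\infty$, and the failure to extend this splitting across $Z_0$ and $Z_\infty$ is governed precisely by $\Phi$ and $\Phi^*$ in the respective Dolbeault descriptions. A secondary issue is the gerbe $\theta$, but $\End(\E_c)$ is untwisted so all constructions transport \'etale-locally on $\M^s$ and glue without change.
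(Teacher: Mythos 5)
Your proposal is correct and follows essentially the same route as the paper: use the flat connection (the holonomy description valid for $\zeta\neq 0,\infty$) to trivialise the deformation away from $Z_0+Z_\infty$, observe that the resulting \v{C}ech cocycle has polar parts $\Phi(X)$ at $\zeta=0$ and $-\Phi^*(X)$ at $\zeta=\infty$, and identify the class as the image under the connecting map $\delta$ by the same coboundary computation with $s=\zeta\,d/d\zeta$. Your two-patch version just makes the \v{C}ech bookkeeping (and the choice of trivialisation of $\calO(2)$ at $\infty$, which fixes the sign in $(\Phi(X),-\Phi^*(X))$) slightly more explicit than the paper's phrasing.
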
 
\begin{proof} For $y\in C$ in a neighbourhood of $c$,  parallel translation of a flat connection  takes the principal bundle $\Hom(\pi_1(C,c), \PGL_n(\C))$ to $\Hom(\pi_1(C,y), \PGL_n(\C))$. This is an isomorphism  of the universal bundle at $c$ to the universal bundle at $y$, and to first order at $c$ it is the horizontal lift of a tangent vector $X$ to the universal bundle over $\M\times C$, the lift defined by the connection. This is a trivial deformation of holomorphic  principal bundles where $\M$ has the complex structure of the moduli space of flat connections, so since $\zeta\ne 0,\infty$ describes the points of ${\mathcal Z}$ via flat connections, it gives a trivialization of the deformation of the universal bundle on this part of ${\mathcal Z}$. The flat connection is $\partial _A+ \Phi/\zeta$ and so the trivialization does not extend to $\zeta=0$. 
	
	We may consider the first order deformation of the universal bundle as the Kodaira-Spencer class for the deformation of complex structure of the principal bundle ${\mathcal P}_c\rightarrow {\mathcal Z}$ as $c\in C$ varies. 
	In \v Cech cohomology, the deformation class in $H^1({\mathcal Z},\End (\E_c))\subset H^1({\mathcal P}_c, T)$ is given using an open covering  $\{U_i\}$, taking  local lifts $X_i$ of the tangent vector $X$ and defining the cocycle $X_j-X_i$, a section of $\End (\E_c)$ over $U_i\cap U_j$. In our case, for $\zeta\ne 0$, there is a global lift given by the horizontal space of the connection $\partial _A+ \Phi/\zeta$. If we multiply by $\zeta$ then this is global for all $\zeta$. Repeating this process near $\zeta=\infty$ shows that the class maps to zero in $H^1({\mathcal Z},(\End (\E_c))\stackrel {s} \rightarrow H^1({\mathcal Z},\End (\E_c)(2))$ and consequently is in the image of the coboundary map $\delta:
	H^0({Z_0+Z_{\infty}},(\End (\E_c)(2))\rightarrow H^1({\mathcal Z},(\End (\E_c))$.
	
	But the polar part of $\partial _A+ \Phi/\zeta$ at $\zeta=0$ is $\Phi(X)$ and 
	we see that the class is given by extending $\Phi(X)$ and $-\Phi^*(X)$ as sections of $\End (\E_c)(2)$ and dividing by $s$, which is the coboundary map  in \v{C}ech cohomology.
\end{proof}

\section{Further issues}
\label{further}

\subsection{Simple Lie groups}\label{simple}
The simplest example of a very stable upward flow of type $(1,1,...1)$ is, as we have seen, the canonical section of the Hitchin fibration. In fact  the moduli space of Higgs bundles for any simple Lie group $\G$ admits   a corresponding section, constructed in \cite{hitchin-lie}. For the adjoint group this is unique. This points towards a generalization of the $(1,1,...1)$ type in this context, where the mirror should be a hyperholomorphic bundle on the moduli space for the Langlands dual group $\G^{\vee}$.

The key point in \cite{hitchin-lie} is to use the principal three-dimensional subgroup of $\G$ \cite{Kos}. The fixed point $\mathcal E$ is then obtained from the canonical uniformising $\SL_2$ Higgs bundle  via the homomorphism $\SL_2\rightarrow \G$. At a fixed point  the scalar action on the Higgs field arises from the action of a subgroup of the adjoint  group of  $\G$ and in this case it is the  action of the image of $\C^\times\subset \SL_2$. 

The characteristic feature of the subgroup is that it breaks up the Lie algebra $\lie{g}$ into $\ell$ irreducible components where $\ell$ is the rank of $\G$ and the dimensions are $2d_i-1$ where the $d_i$ are the degrees of a basis of invariant polynomials. These are odd-dimensional and hence representations of $\SO_3$. Therefore $t   \in \T$ acts through $(t^{-1},1,t)\subset \SO_3$.

The  Higgs field  has homogeneity one with respect to the $\T$-action. Each  irreducible representation of $\SO_3$  has a single weight one subspace so the Higgs field must take values  in an $\ell$-dimensional subspace of $\lie{g}$. From \cite[Lemma 5.2]{Kos}, the action has  weight $1$ on the root spaces of the $\ell$ simple roots $\alpha_1,\dots, \alpha_{\ell}$ which therefore form a basis for this subspace.  In the case of the general linear group  treated in this paper   the simple roots are $x_{i+1}-x_i$ which correspond to line bundles  $L_{i+1}L_i^*$ and we dealt with  sections $b_i$ of $L_{i+1}L_i^*K$. The direct analogue for the group $\G$  of the $(1,1,\dots,1)$ type  fixed point is then a line bundle $L_{\alpha_i}$ for each simple root and a section $b_i$ of $L_{\alpha_i}K$. Writing any root $\alpha$ in terms of simple roots then defines by tensor product a line bundle $L_{\alpha}$ and a Lie algebra bundle 
$$H\oplus \bigoplus_{\alpha\in \Delta}L_{\alpha}$$
where $H$ is a trivial rank $\ell$ bundle associated to the Cartan subalgebra. For the canonical fixed point $\calE_0$  in the moduli space of $\G$-bundles each $b_i$ is nonvanishing and the upward flow is a section of the Hitchin fibration, and in particular is closed. We shall discuss next the case of a Higgs bundle $\calE$ when $b_i$ is of degree $m_i\ge 0$ by studying the virtual multiplicity $$m_\calE(t)=\frac{\chi_{\T}(\Sym ({T_\calE^+}^*))}{\chi_{\T}(\Sym ({\mathcal A}^*))}.$$ 
Here $T_\calE^+$ is the sum of root space line bundles for roots of positive weight under the $\C^{\times}$-action. Since the weight is $1$ on the simple roots, these are the positive roots. If  $\alpha=c_1\alpha_1+\dots+c_{\ell}\alpha_{\ell}$ in terms of the simple roots then $w(\alpha)=c_1+\dots+c_{\ell}$.   Since $m_i=\deg L_{\alpha_i}K$ we have 
$$\deg L_{\alpha}=\sum_{i=1}^{\ell} c_i(\alpha) (m_i-(2g-2)). $$

Following the procedure of  Remark 5.12 we obtain 
$$\chi_\T(T_\calE^+)=\sum_{\alpha>0} t^{-w(\alpha)}\left(\sum_{w(\alpha)=k}-(\deg L_{\alpha}+(1-g))+\sum_{w(\alpha)=k-1}\deg(L_{\alpha}+(g-1))\right)$$
or
$$\chi_\T(T_\calE^+)=\sum_{\alpha>0} t^{-w(\alpha)}\left(\sum_{w(\alpha)=k}\sum_{i=1}^{\ell} -m_ic_i(\alpha) +\sum_{w(\alpha)=k-1}\sum_{i=1}^{\ell} m_ic_i(\alpha) \right)+(g-1)N(t).$$ 
When all the $m_i=0$, the multiplicity is $1$ because then we have an equivariant section of ${\mathcal A}\rightarrow {\mathcal M}$ (one may also easily do the calculation -- see below the generating function for $N_j$) so the $(g-1)N(t)$ term above cancels the denominator ${\chi_{\T}(\Sym ({\mathcal A}^*))}$ and the multiplicity reduces to 
\begin{equation}
m_\calE(t)=\prod_{i=1}^{\ell}\prod_{\alpha>0}\left(\frac{1-t^{w(\alpha)+1}}{1-t^{w(\alpha)}}\right)^{m_ic_i(\alpha)}. 
\label{multG}
\end{equation} 
By experimentation one finds that this is rarely a polynomial. In the case of ${\G}_2$ this happens independently of the $m_i$, supposing they are not identically zero, for other groups there are constraints on the $m_i$. So there is a distinct difference from the case of $\GL_n$ or $\SL_n$. One case however always gives a polynomial as we see next. 

A simple root $\alpha_i$ is called {\it cominuscule} if the coefficient $c_i(\alpha)=0,\pm1$ for every root $\alpha$.  Every simple root of $\SL_n$ has this property but for other types  they are special or non-existent. The coroot $\alpha_i^{\vee}$ in the dual root system, the root system of the Langlands dual group, is called {\it minuscule} (see for example \cite{Buch}) and this corresponds to an irreducible representation: the exterior powers of the vector representation for $\SL_n$, the vector representation for $\mathrm{Sp}_{2n}$, the spin representation for $\mathrm{Spin}_{2n+1}$ and the vector and two spin representations for $\mathrm{Spin}_{2n}$.

Suppose then that $\alpha_i$ is cominuscule and the Higgs field has $m_j=0$ for $j\ne i$ and $m_i=1$. The formula (\ref{multG}) is multiplicative in $m_i$ so this case is sufficient. Since $c_i(\alpha)=0,1$ for a positive root, the multiplicity is 
$$\prod_{\alpha>0, c_i(\alpha)\ne 0}\left(\frac{1-t^{w(\alpha)+1}}{1-t^{w(\alpha)}}\right). $$
Consider this product over all the positive roots of $\G$:
$$\prod_{\alpha>0}\left(\frac{1-t^{w(\alpha)+1}}{1-t^{w(\alpha)}}\right). $$
Let $N_j$ be the number of root spaces of weight $j$, then this expression is
$$q_\G(t)=\prod_{j\ge 1}\left(\frac{1-t^{j+1}}{1-t^{j}}\right)^{N_j}= \prod_{j\ge 1}(1-t^{j})^{N_{j-1}-N_j}.$$

The Lie algebra $\lie{g}$ splits under the $\SO_3$-action into $\ell$  irreducibles of dimension $2d_i-1$ where $d_i$ is the degree of a basis element of the ring of invariant polynomials. In each irreducible of dimension $2d_i-1$, there is a one-dimensional space of weight $1,2,\dots d_i-1$. Then $$N_1x+N_2x^2+\cdots =\sum_{i=1}^{\ell}(x+x^2+\cdots + x^{d_i-1})=\frac{x}{(1-x)}\sum_1^{\ell}(1-x^{d_i-1}).$$
So the generating function for $N_{j-1}-N_j$ is 
$-\ell x+\sum_1^{\ell}x^{d_i}$, hence 
\begin{equation}
q_\G(t)=(1-t)^{-{\ell}}  \prod_{j=1}^{\ell}(1-t^{d_j}).
\label{qG}
\end{equation} 

Associated to each cominuscule root is a maximal parabolic subgroup $\rm P$ whose Levi subgroup $\rm L$  is obtained by deleting $\alpha_i$ from the Dynkin diagram of $\G$ to produce a semisimple group and  has an additional $\C^\times$ factor corresponding to the coroot ${\alpha_i}^{\vee}$ (see \cite{Buch} for example). All simple roots of type $A_{\ell}$ are minuscule and we have dealt with those in the main body of the paper. For  the other groups the deleted Dynkin diagram is connected and so we get a simple group. The roots for which $c_i(\alpha)=0$ are the roots of $\rm L$.  The $\T$-action has weight one on  the simple root spaces  of $\G$ and hence also  on those of $\rm L$. This means that $\T$ acts with weight one on a linear combination $\sum_{j\ne i} \lambda_je_{\alpha_j}$ of root vectors. From \cite{Kos} if $\lambda_j\ne 0$, this is a principal nilpotent in the Levi subalgebra and so $\T$ acts as a semisimple element in a principal three-dimensional subgroup of the Levi group $\rm L$. 

Let $n_j$ be the degrees of  generators of the invariant polynomials on $\rm L$. The rank of $\rm L$ is $\ell$ and the simple component $\ell-1$.  Then 
$$q_{\rm L}(t)=(1-t)^{-{\ell}}  \prod_{j=1}^{\ell-1}(1-t^{n_j})$$
and 
\begin{equation}
m_\calE(t)=\prod_{\alpha>0, c_i(\alpha)\ne 0}\left(\frac{1-t^{w(\alpha)+1}}{1-t^{w(\alpha)}}\right)=\frac{q_\G(t)}{q_{\rm L}(t)}= \prod_{j=1}^{\ell}\frac{(1-t^{d_j})}{(1-t^{n_j})}.
\label{mminusc}
\end{equation}
\begin{proposition} \label{cominint} The virtual multiplicity $m_\calE(t)$ in equation (\ref{mminusc}) is a polynomial.
\end{proposition}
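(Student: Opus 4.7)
The plan is to identify $m_\calE(t)$, under the substitution $t \leadsto t^2$, with the Poincar\'e polynomial of the compact cominuscule flag variety $\G/\rm P$, which is manifestly a polynomial in $t$.

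First I would recall that since $\rm P$ is the maximal parabolic obtained by deleting the cominuscule node $\alpha_i$ from the Dynkin diagram, its Levi quotient $\rm L$ contains the maximal torus $\rm T$ of $\G$ and has the same rank $\ell$. By Chevalley's theorem (or Borel's computation of $H^*(\G/\rm T;\Q)$), the rational cohomology ring of $\G/\rm T$ is the coinvariant ring of the Weyl group $W$ of $\G$, and so its Poincar\'e polynomial is
\[
P_{\G/\rm T}(t) \;=\; \prod_{j=1}^{\ell} \frac{1-t^{2d_j}}{1-t^2}.
\]
Similarly, applying the same fact to $\rm L$ with its degrees $n_1,\dots,n_\ell$ gives
\[
P_{\rm L/\rm T}(t) \;=\; \prod_{j=1}^{\ell} \frac{1-t^{2n_j}}{1-t^2}.
\]

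Next I would use the Borel fibration $\rm L/\rm T \hookrightarrow \G/\rm T \twoheadrightarrow \G/\rm P$. Since each of these spaces has only even-dimensional cells (coming from the Bruhat/Schubert decomposition), the Leray--Serre spectral sequence degenerates and Poincar\'e polynomials are multiplicative along the fibration. Thus
\[
P_{\G/\rm P}(t) \;=\; \frac{P_{\G/\rm T}(t)}{P_{\rm L/\rm T}(t)} \;=\; \prod_{j=1}^{\ell} \frac{1-t^{2d_j}}{1-t^{2n_j}}.
\]
Comparing with \eqref{mminusc} gives $m_\calE(t^2) = P_{\G/\rm P}(t)$, which is a polynomial in $t$ (with non-negative integer coefficients), hence $m_\calE(t) \in \Z[t]$ as claimed.

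The only substantive ingredient beyond bookkeeping is the multiplicativity of Poincar\'e polynomials in the Borel fibration, which rests on the fact that cominuscule flag varieties admit cell decompositions with cells only in even real dimension. (Alternatively, one may simply invoke the Bruhat decomposition of $\G/\rm P$ by Schubert cells indexed by minimal coset representatives $W^{\rm P} \subset W$, giving the cell count $\sum_{w\in W^{\rm P}} t^{2\ell(w)}$ for $P_{\G/\rm P}(t)$ directly; this is standard and gives polynomiality without any spectral sequence.) No step presents a serious obstacle; this is essentially a matching of the formula \eqref{mminusc} against a classical computation. It also explains the parallel observed in Remark~\ref{remark111}, where the $\GL_n$ equivariant multiplicity came out as a product of Poincar\'e polynomials of Grassmannians.
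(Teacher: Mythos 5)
Your proposal is correct and follows essentially the same route as the paper: both identify $m_\calE(t^2)$ with the Poincar\'e polynomial of the cominuscule flag variety $\G/{\rm P}$ (an equal-rank compact homogeneous space) and conclude polynomiality. The only difference is that the paper simply cites Borel's theorem on $P(K/U,t)$ for equal-rank compact pairs, whereas you rederive that formula via the fibration ${\rm L}/{\rm T}\to \G/{\rm T}\to \G/{\rm P}$ (or the Bruhat decomposition), which is the standard proof of the same fact.
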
 
\begin{proof} From \cite[Theorem 26.1]{Borel}  if $K,U$ are compact Lie groups of the same rank then the Poincar\'e polynomial of $K/U$ is 
	$$P(K/U,t)=\frac{(1-t^{s_1})(1-t^{s_2})\cdots (1-t^{s_{\ell}})}{(1-t^{r_1})(1-t^{r_2})\cdots (1-t^{r_{\ell}})}.$$
	where $s_i-1, r_i-1$ are the degrees of generators of the cohomology of $K,U$. Taking $K$ and $U$ to be the maximal compact subgroups of ${\rm P},{\rm L}$ then $s_j=2d_j, r_j=2n_j$ and   $P(K/U,t)$ is a polynomial $Q(t^2)$. Then $m_\calE(t)=Q(t)$. 
\end{proof}
A theorem (which also follows from the geometric Satake theorem of \cite{mirkovic-villonen}, c.f. \cite[Theorem 1.6.3]{ginzburg}) of B.Gross \cite[Corollary 6.4]{Gross}  states that the Lefschetz action of $\SL_2$ on the cohomology of the flag variety $\G/{\rm P}$ is isomorphic to the representation of the principal $\SL_2$ of the Langlands dual $\G^{\vee}$ on the  representation given by the minuscule weight dual to $\alpha_i$. In the light of our results on exterior powers for the linear group this suggests the following.

\begin{conjecture}  \label{cominmirror} Let $\alpha_{i_1},\dots,\alpha_{i_k}$ be distinct cominuscule roots and suppose that the corresponding sections $b_{i_j}$ of $L_{\alpha_{i_j}}K$ have $m$ distinct zeros $x_1,\dots, x_m$, and for other simple roots $b_i$ has no zero. 
	Then the fixed point  is very stable and the mirror of its upward flow is the tensor product of the vector bundles associated to the universal principal bundle for the Langlands dual group at $x_j$ in the corresponding minuscule representation. \end{conjecture}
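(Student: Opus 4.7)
The plan is to mirror the twin strategy used for $\GL_n$ in Theorems~\ref{mainverystable} and~\ref{fouriermukai}: prove very stability by reduction to the uniformising (no-zeroes) case via Hecke transformations at cominuscule coweights, and then identify the mirror by a fibrewise Fourier--Mukai computation over the smooth locus $\calA^\#$, using the geometric Satake correspondence to match cominuscule flag varieties of $G$ with minuscule representations of $G^\vee$. The cominuscule hypothesis is the $G$-analogue of working with type $(1,\dots,1)$ for $\GL_n$, and Proposition~\ref{cominint} already tells us that on the equivariant-multiplicity side the two numerical predictions agree via Gross's theorem.

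For the very stability assertion I would induct on $m$. The base case $m=0$ is immediate: when all the $b_{i_j}$ are nonvanishing the $\T$-fixed Higgs bundle $\calE$ is exactly the one produced in \cite{hitchin-lie} from the principal three-dimensional subgroup, so $W^+_\calE$ is a Hitchin section, hence closed, hence very stable by Proposition~\ref{upwardclose}. For the inductive step one needs a $G$-Hecke transformation at a zero $x$ of $b_{i_j}$ supported on the cominuscule coweight $\alpha_{i_j}^\vee$. The cominuscule assumption is crucial: the relevant affine Grassmannian slice is isomorphic to the single Schubert cell $G/P_{i_j}$, so there is a distinguished $\Phi_x$-invariant line in the associated minuscule representation, and an exact analogue of Proposition~\ref{modifiedfilt} should show that Hecke transforming at it produces a stable $\T$-fixed Higgs bundle $\calE'$ with $m-1$ cominuscule zeros, such that $W^+_{\calE'}$ is $\T$-equivariantly related to $W^+_\calE$. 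Induction then reduces to the uniformising case. For the non-existence of further nilpotent points in $W^+_\calE$, I would rerun the Hecke argument backwards as in Theorem~\ref{mainverystable}: given a nilpotent $\calF\in W^+_\calE$, Hecke-transform it at the same cominuscule direction to get a nilpotent point in $W^+_{\calE'}$, which by induction is $\calE'$ itself, and then the regular nilpotency of the limiting Higgs field forces $\calF\cong\calE$.

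For the mirror identification I would work fibrewise over $\calA^\#$ using the cameral cover $\tilde C_a\to C$ in place of the spectral curve. The fibre of the Hitchin map is a suitable quotient of $\Pic(\tilde C_a)$, self-dual up to the Langlands duality exchanging $G$ and $G^\vee$, and the relative Poincar\'e sheaf on $\M^\#\times_{\calA^\#}\M^{\vee\#}$ plays the role of $\tilde{\mathbf P}$. Following the proof of Theorem~\ref{fouriermukai}, the intersection $W^+_\calE\cap h^{-1}(a)$ should be a finite set of line bundles on $\tilde C_a$ of the form $\pi^*(U_0)(\Delta)$, where $\Delta$ runs over the choices, at each preimage of each $x_j$, of a weight of the minuscule representation dual to the corresponding cominuscule root. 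The Fourier--Mukai transform of the corresponding skyscraper sum then assembles into $\bigotimes_j V_{\omega_{i_j}}(\mathbf{E}^\vee_{x_j})$, where $\mathbf{E}^\vee$ is the (suitably twisted and normalised) universal $G^\vee$-principal bundle of Section~\ref{hyper} and $V_{\omega_{i_j}}$ denotes the associated vector bundle in the minuscule representation. The geometric Satake equivalence \cite{mirkovic-villonen}, together with the fact that each cominuscule Schubert cell corresponds to a single Weyl-orbit of minuscule weights, packages the local Fourier--Mukai data exactly into that representation, and the rank count matches $Q(1)$ via Proposition~\ref{cominint}.

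The main obstacle is developing the Hecke-transformation step for general $G$-Higgs bundles with the right control on stability, $\T$-equivariance, and interaction with the filtration produced by Proposition~\ref{filtrationexists}. For $\GL_n$ this worked because of the explicit chain description $E=L_0\oplus\cdots\oplus L_{n-1}$ and the very concrete Grassmannian bundle in Section~\ref{heckeverystable}; for general $G$ one needs to perform the analogous manipulation intrinsically on the associated adjoint bundle, track the behaviour of the Levi and unipotent summands of the grading induced by the principal $\SL_2$, and verify that the Hecke-modified Higgs field fits Proposition~\ref{modifiedfilt}'s template in the root bundle $L_{\alpha_{i_j}}K$ rather than in a $\Hom$ bundle between adjacent line summands. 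A secondary but genuine difficulty is constructing the normalised universal $G^\vee$-bundle needed to define the mirror candidate globally: the gerbe obstruction of Section~\ref{lambdadelta} is replaced by one indexed by $\pi_1(G^\vee)$, and the passage to honest vector bundles in the minuscule representations requires a careful choice of twist dependent on the degrees $m_i$.
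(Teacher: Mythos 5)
The statement you are addressing is not proved in the paper at all: it is stated as Conjecture~\ref{cominmirror}, and the only support the authors offer is the polynomiality of the virtual multiplicity (Proposition~\ref{cominint}), Gross's theorem relating the Lefschetz $\SL_2$-action on $H^*(\G/{\rm P})$ to the minuscule representation of $\G^{\vee}$, and the referee's $\mathrm{Sp}_{2n}$ example in the subsequent remark. So your text cannot be measured against a proof in the paper; it can only be judged as a free-standing argument, and as such it is a research programme rather than a proof. Your outline in fact reproduces the authors' own motivation (reduction of very stability to the Hitchin section via Hecke transforms at cominuscule coweights, and a fibrewise Fourier--Mukai computation matched with geometric Satake), but every step that goes beyond the $\GL_n$ case is asserted rather than established.

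Concretely, the two load-bearing steps are missing. First, the inductive very-stability argument needs a $G$-analogue of Proposition~\ref{modifiedfilt}: a Hecke modification of a $G$-Higgs bundle at a cominuscule coweight, with control of (i) stability of the modified bundle (the analogue of Lemma~\ref{stablemma}, which for $\GL_n$ used the explicit list of $\Phi$-invariant subbundles $L_i\oplus\cdots\oplus L_{n-1}$), (ii) the effect on the sections $b_i$ in the root-space bundles $L_{\alpha_i}K$, and (iii) compatibility with the Bialynicki--Birula filtration of Proposition~\ref{filtrationexists}, whose statement itself is only proved for $\GL_n$ Higgs bundles in the paper. Your phrase ``an exact analogue \dots should show'' is precisely the unproved content; the ``backwards'' step also needs uniqueness of the invariant parabolic reduction at a regular nilpotent value of the Higgs field for general $G$, and the wobbly direction (producing a Hecke curve when zeros collide) is not even sketched. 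Second, the mirror identification over $\calA^\#$ requires a fibrewise duality statement for the $G$- and $G^{\vee}$-Hitchin fibres (torsors under abelian varieties built from cameral covers, with gerbe twists as in Donagi--Pantev), an identification of $W^+_\calE\cap h^{-1}(a)$ with choices of minuscule weights at the preimages of the $x_j$, and a construction of a normalised, suitably twisted universal $G^{\vee}$-bundle whose associated minuscule bundles descend; none of these is carried out, and the gerbe/normalisation bookkeeping that occupied Section~\ref{lambdadelta} for $\GL_n$ is genuinely more delicate when $\pi_1(G^{\vee})$ replaces $\Z/n$. Until these ingredients are supplied, the statement remains, as in the paper, a conjecture.
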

	
	\begin{remark}
The referee has observed a further example beyond the general linear group supporting this conjecture. The cominuscule root for $\mathrm{Sp}_{2n}$ gives a fixed point which is a rank $2n$ symplectic Higgs bundle of the form 
$$L_1\stackrel{1}\rightarrow L_2\stackrel{1}\rightarrow\cdots \stackrel{1}\rightarrow L_n\stackrel{b_n}\rightarrow L_n^{-1}\stackrel{1}\rightarrow \cdots \stackrel{1}\rightarrow L_2^{-1}\stackrel{1}\rightarrow L_1^{-1}$$
	and if $b_n$ has no multiple zeros this is very stable as an $\mathrm {SL}_{2n}$-bundle and a fortiori as an $\mathrm{Sp}_{2n}$-bundle. 
	
	On the upward flow from this point, the Higgs field $\Phi$
	at a zero of $b_n$  preserves a Lagrangian subspace. At a regular point the eigenspaces of $\Phi$ lie in pairs  in $n$ $2$-dimensional symplectic subspaces so there are $2^n$ invariant Lagrangian subspaces which should correspond  to a basis for the spin representation of the Langlands dual group $\mathrm{Spin}_{2n+1}.$ 

\end{remark}

\subsection{Multiple zeros} \label{multiple}

We showed in Theorem 1.2 that a necessary and sufficient condition for a fixed point of type $(1,1,\dots,1)$ to be very stable is that $b$
should have no multiple zeros. It is natural to ask what happens when we do have a multiple zero. What is the closure of the upward flow, 
and is there a corresponding hyperholomorphic bundle on the mirror moduli space? 

We already know from Remark 5.15 that the intersection 
of the upward flow with a generic fibre has fewer points than the situation where $b$ has simple zeros. We also see in the proof of Lemma 4.13 
that a multiple zero leads to a Hecke curve in the closure -- the closure of the flow tangent to a nilpotent Higgs field. 

We illustrate this  here with the simplest 
example of this situation in rank $2$, for the group $\SL_2$. Let ${\mathcal E}$ be a fixed point of the $\T$-action defined by the vector bundle
$L\oplus L^*$ and the off-diagonal Higgs 
field $b: L\rightarrow L^*K$, defining  a section of $L^{-2}K$ which we assume has  a single double zero at $c\in C$, and hence $L\cong K^{1/2}(-c)$.
The upward
flow is given
by Higgs bundles $(E,\Phi)$ where $E$ is an extension $L\rightarrow E\stackrel p\rightarrow L^*$, $\tr \Phi=0$ and $b=p \Phi:L\rightarrow L^*K$.  

The spectral curve  $ C_a$ is defined by $\det(x-\Phi)=x^2+\det \Phi=0$ and $\sigma(x)= -x$ is the covering involution. Then if $ C_a$ is smooth, as we have seen, $E=\pi_*U$ where the line
bundle $U$ is the cokernel of $x-\Phi:\pi^*(E\otimes K^*)\rightarrow \pi^*E$. The inclusion $L\subset E\in H^0(C;\Hom(L,E))$ is canonically defined 
by the direct image construction as a section of the line bundle $\Hom (\pi^*L,U)$ on $C_a$.

Now $b$ vanishes at  $c\in C$ and suppose further that $\det \Phi(c)\ne 0$. Since  $L$ is preserved by $\Phi$ at $c$, it  maps to zero in the cokernel 
of $\Phi-x$ for one of the two eigenvalues $\pm x$. Then $p=(x,c)\in  C_a$ or $\sigma(p)=(-x,c)$ are zeros of the holomorphic section of $(\pi^*L)^*U$.
Since $\pi:C_a\rightarrow C$ is unramified at these points, these zeros have multiplicity $2$ since $b$ has a double zero at $c$.

The bundle $\Lambda^2 E$ is trivial which implies $U\cong M\pi^*K^{1/2}$ where $\sigma^*M\cong M^*$ (i.e. $M$ lies in the Prym variety) and so 
since $L\cong K^{1/2}(-c)$, $\pi^*L^* U\cong M(c)$ which has degree $2$. But we have a section which vanishes with multiplicity $2$ 
hence $M(c)\sim 2p$ or $2\sigma(p)$ as divisors, or, since $c\sim p+\sigma(p)$, $M\sim \pm(p-\sigma(p))$. Thus the 
intersection with the generic fibre consists of two points in the Prym variety whereas if $b$ had two distinct zeros the intersection 
would be four points. 

Note that if $q(c)=0$ and $C_a$ is smooth, there is a single point $p$ with $\pi(p)=c$ and $\pi$ locally has the form $z\mapsto z^2$. Since $b$ has a double zero at $c$, the section of $\pi^*L^* U$ has a simple zero which is a contradiction since it has degree $2$. Thus the upward flow does not intersect these fibres, but as we shall see next, the closure does. 

Suppose $(E,\Phi)$ is a limit point of the upward flow. There is still a non-zero homomorphism $K^{1/2}(-c)\rightarrow E$ but its image may not be an
embedding. It will generate a subbundle $L_0$ of larger degree. Since it has positive degree by  stability it cannot be invariant by the Higgs 
field $\Phi_0$ so there is a nonvanishing section of $L_0^{-2}K$. But $\deg L_0>\deg K^{1/2}(-c)=g-2$ hence $\deg L_0^{-2}K\le 0$. 
The existence of a section means that we have equality and $L_0=K^{1/2}$, so the bundle is an extension $K^{1/2}\rightarrow E\rightarrow 
K^{-1/2}$ and $b$ is an isomorphism. But from \cite[Proposition 3.3]{hitchin} the extension is trivial and the limit lies on the canonical section. Since 
$\det \Phi\in  H^0(C,K^2)$ parametrizes the section the limit lies on 
the $(3g-4)$-dimensional  linear subspace consisting of sections which vanish at $c$.

Consider now the uniformising Higgs bundle $K^{1/2}\oplus K^{-1/2}$ with Higgs field $1: K^{1/2}\rightarrow K^{-1/2}K$. The {\it downward}  
flow from this point consists of extensions $K^{-1/2}\rightarrow E\rightarrow K^{1/2}$ with the same nilpotent Higgs field mapping the quotient to the subbundle. This is a Lagrangian submanifold and the 
extensions are parametrized by a class $e\in H^1(C,K^*)$ (of  dimension $(3g-3)$ = $\dim {\mathcal M}/2$). The section $s$
of ${\mathcal O}(c)$ 
vanishing at $c$ defines a homomorphism $s:K^{1/2}(-c)\rightarrow K^{1/2}$ which lifts to $E$ if $se=0\in  H^1(C,K^*)$. Equivalently
$e=\delta(u)$ for
$u\in H^0(c,{\mathcal O}_c(K^*))\cong \C$ where $\delta$ is the connecting homomorphism in the cohomology exact sequence.  

In Dolbeault terms with a $C^{\infty}$ splitting $K^{1/2}\oplus K^{-1/2}$ we have 
$$\bar\partial+ \begin{pmatrix}0 & \bar\partial v/s\cr
0 & 0\end{pmatrix}\qquad \Phi= \begin{pmatrix}0 & 1\cr
0 & 0\end{pmatrix} $$
where (as in 4.1.4) $v$ is a local extension of $u$, zero outside a neighbourhood and holomorphic in a smaller one.  Then the embedding of $K^{1/2}(-c)\subset E$ is $(v,-s)$ so that $\Phi(-v,s)=(s,0)$ and 
the skew form 
on $E$ gives 
$\langle (v,-s),(s,0)\rangle= s^2$. This is equivalent to projecting onto the quotient and clearly has a double zero at $c$. Then each extension $E$ 
lies on the upward flow from ${\mathcal E}$ and has as limit the canonical fixed point. This is a projective line in the closure, a Hecke curve.

We see therefore that the union of the upward flow $W^+_c$ from $c$ and $W^+_0$ from the canonical point is a closed subspace, and the symplectic form vanishes at  smooth points. 

\begin{conjecture}  The mirror
	of the closed Lagrangian cycle $W^+_0\cup W^+_c$ is the universal adjoint bundle for $c$ in the $\SO_3$ moduli space. \end{conjecture}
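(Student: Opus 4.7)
The plan is to mimic the proof of Theorem~\ref{fouriermukai} in this wobbly case: compute the fiberwise Fourier--Mukai transform of $\mathcal{O}_{W^+_0 \cup W^+_c}$ over the good locus $\mathcal{A}^\#$, identify it with the restriction of the universal adjoint bundle of the $\SO_3 \cong \PGL_2$-moduli space, and then invoke Section~\ref{hyper} for the hyperholomorphic extension. A first rank check already supports the conjecture: the universal adjoint bundle has rank $3$, matching $|W^+_0 \cap h^{-1}(a)| + |W^+_c \cap h^{-1}(a)| = 1+2$ for generic $a \in \mathcal{A}^\#$, where the second term consists of the two Prym points $\pm(p-\sigma(p))$ computed immediately before the conjecture, with $\pi_a^{-1}(c) = \{p, \sigma(p)\}$.

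For the fiberwise computation, the intersection $(W^+_0 \cup W^+_c) \cap \operatorname{Prym}(C_a)$ consists of three reduced points: the identity (from $W^+_0$, corresponding under BNR to $\mathcal{O}_{C_a}$ as in \eqref{canonicalBNR}) and $\pm(p-\sigma(p))$ (from $W^+_c$, as derived in the text). The relative Poincar\'e bundle on the self-dual Prym thus sends the corresponding sum of skyscrapers to
\[
\mathcal{O} \oplus \tilde{\mathbb{P}}_{p-\sigma(p)} \oplus \tilde{\mathbb{P}}_{\sigma(p)-p}.
\]
On the mirror side, the (gerbe-twisted) universal rank-$2$ bundle at $c$ pushes forward from a line bundle $U$ on $C_a$ as $\mathbb{E}_c \cong U_p \oplus U_{\sigma(p)}$, so its traceless endomorphism bundle decomposes on the Prym fiber as $\operatorname{End}_0(\mathbb{E}_c) \cong \mathcal{O} \oplus \tilde{\mathbb{P}}_{p-\sigma(p)} \oplus \tilde{\mathbb{P}}_{\sigma(p)-p}$, matching the FM transform exactly. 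The gerbe twist cancels in $\operatorname{End}_0(\mathbb{E})$ since the adjoint representation factors through $\PGL_2 \cong \SO_3$, so the result is a genuine rank-$3$ vector bundle on the $\SO_3$ moduli space, and the hyperholomorphic connection on the universal $\PGL_2$-bundle from Section~\ref{hyper} induces one on the associated adjoint bundle.

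The main obstacle is to give the right scheme structure to the Lagrangian cycle $W^+_0 \cup W^+_c$ so that the transform is the adjoint bundle on the nose rather than up to a non-trivial correction. The two Lagrangian components are joined along the Hecke $\mathbb{P}^1$ exhibited in the text (extensions $K^{-1/2} \to E \to K^{1/2}$ with class $\delta(u)$ for $u \in H^0(C;\mathcal{O}_c(K^*))$), and the virtual equivariant multiplicity $m_\mathcal{E}(t) = (1+t)^2$ of the wobbly $\T$-fixed point $\mathcal{E}$ overcounts the actual generic multiplicity $2$ of $W^+_c$; some derived or non-reduced enhancement must account for this discrepancy. A consistency check for any proposed scheme structure is the analogue of \eqref{mirroriden}: the fiber of the conjectured mirror at $\mathcal{E}_0 = \mathcal{E}_{\mathcal{O}_C, 0}$ decomposes under the principal $\C^\times \subset \PGL_2$ into the weights $\{-1, 0, 1\}$ of $\mathfrak{sl}_2$, giving $\T$-character $t + 1 + t^{-1}$, which must match the pushforward of the structure sheaf of the chosen scheme along the Hitchin map restricted to the canonical section. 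A secondary difficulty is extending this identification across the discriminant of the Hitchin base, where the Prym degenerates and the clean FM picture breaks down.
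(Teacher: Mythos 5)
You should be clear at the outset that the statement you are addressing is stated in the paper as a \emph{conjecture}: the authors offer only motivation (the three-point count in a generic fibre, the identification of the Lefschetz $\SL_2$-action on $m_\calE(t)=(1+t)^2$ with $V\otimes V=\mathrm{adjoint}\oplus\mathrm{trivial}$, the observation that the canonical section must appear in the support of the inverse transform, and the expected equivariant character $1+t+t^2$), and no proof. Your proposal reproduces essentially this motivation and adds the natural first step, namely the fibrewise Fourier--Mukai computation over $\calA^\#$. That step is plausibly correct, with one imprecision to fix: the $\SO_3$ Hitchin fibre is the \emph{dual} abelian variety of the $\SL_2$ Prym, not ``the self-dual Prym'', so $\tilde{\bP}_{p-\sigma(p)}$ must be read as the line bundle on that dual determined by the Prym point $p-\sigma(p)$; with that reading your decomposition of $\End_0(\E_c)$ over a smooth fibre does match the transform of the three reduced intersection points, in line with the paper's remarks.

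The genuine gap is the one you name but do not close, and it is exactly where the content of the conjecture lies. Because $\calE$ is wobbly, the argument of Lemma~\ref{locfree} fails: $h$ restricted to $W^+_c$ (or to the cycle $W^+_0\cup W^+_c$ with any naive structure) is not finite flat, so the generic identification over $\calA^\#$ determines the mirror only up to modifications supported over the discriminant and, crucially, over the nilpotent cone. Producing a scheme-theoretic (or derived) structure on the cycle whose transform is locally free, and showing it agrees with $\End_0$ of the universal bundle \emph{globally}, is the whole problem; the bookkeeping $(1+t)^2$ versus $1+t+t^2$ and the character check at $\calE_0$ are only numerical Euler-pairing constraints of the type in Theorem~\ref{euleriso} and \eqref{mirroriden}, and cannot pin the answer down. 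You also pass too quickly over the BBB side: the paper itself flags that the hyperholomorphic connection on the universal bundle is irreducible (its Pontryagin class generates $H^4(\M,\R)$), which puts in question whether the mirror of the closure of $W^+_c$ alone carries a hyperholomorphic structure at all; your observation that $\End_0$ of the $\PGL_2$-universal bundle inherits a hyperholomorphic connection is fine, but it does not show that this bundle, rather than a sheaf differing from it over the discriminant or the nilpotent cone, is the transform of your cycle. So the proposal is a reasonable plan consistent with the paper's own heuristics, not a proof, and ``invoking'' Theorem~\ref{fouriermukai} and Section~\ref{hyper} does not bridge these gaps.
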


Here is some motivation.  The subvariety $W^+_0\cup W^+_c$ intersects a generic fibre in three points and so the Fourier-Mukai transform is generically a rank $3$
vector bundle. The virtual multiplicity $m_\calE(t)$ for a double zero of $b$ is the same as for two distinct zeros and, from the point of view of the previous subsection this is  $(1+t)^2$, from the Poincar\'e polynomial of $\PP^1\times \PP^1$. Moreover the Lefschetz action of $\SL_2$ on the cohomology is the action on $V\otimes V$ where $V$ is the defining 2-dimensional representation. This is the direct sum of the adjoint  and the trivial one-dimensional 
representation so on the adjoint component the action is consistent with the conjecture.

Now the  adjoint bundle for $\SO_3$ is a rank $3$ bundle $V$ and $\Phi: K^*\rightarrow V$  is injective when $\Phi$ is regular at 
each point of $C$, i.e. when the spectral curve is smooth. Trivializing $K$
at $c$, the universal bundle admits a homomorphism from the trivial bundle. It follows that the canonical section, which corresponds to the trivial line bundle on the spectral curve,  forms a component of the 
support of the inverse transform. The other component clearly relates to the quotient sheaf which is generically a rank $2$ bundle. 

One point to note is that the hyperholomorphic connection on the universal bundle is not  reducible (its Pontryagin class is a generator of $H^4({\mathcal M},\R)$) and this casts a question mark over the
issue of whether the mirror of the closure of $W^+_c$ actually has a hyperholomorphic connection. 

More generally, there is a Hecke correspondence for every irreducible representation of the Langlands dual group, which has a natural compactification (see \cite[\S 9]{kapustin-witten}).  In our case, the image of the Hitchin section in the $\SL_2$ Higgs moduli space under this compactified Hecke  correspondence, attached to the adjoint representation of $\SO_3$, is precisely our Lagrangian cycle $W_0^+\cup W_c^+$. Its mirror then should be the universal adjoint bundle at $c$. The $\T$-equivariant index of its structure sheaf then should be $1+t+t^2$, which is also the $\T$-character of the mirror adjoint bundle at the canonical uniformising Higgs bundle. Incidently, it also matches the intersection Poincar\'e polynomial of a certain weighted projective space, which is the closure of the space of Hecke modifications at a point in our case (c.f. \cite[9.2]{kapustin-witten}). In fact, from the geometric Satake correspondence \cite{mirkovic-villonen,ginzburg} this generalises to the general case of simple (even reductive) groups, and we expect the analoguous results to hold there too. 

\subsection{The cotangent fibre} \label{cotfib}

The upward flow from a very stable bundle $(E,0)$ is an important example. 
We have calculated the rank of its transform in Remark~\ref{multN} using the multiplicity formula. The alternative method, as in \cite{beauville-etal} is to argue
that if there are no nilpotent Higgs fields then locally the moduli space is given by the vanishing of invariant polynomials. For a simple Lie group this is therefore the vanishing of  $(2d_i-1)(g-1)$ functions of degree $d_i$ for $i=1,\dots,\ell$ and so the rank of this bundle is
$$\prod_1^{\ell}d_i^{(2d_i-1)(g-1)},$$  which Laumon in \cite[Remark 6.2.3]{laumon2} called the global analogue of the order of the Weyl group. 
The structure of this bundle even for the $2^{3g-3}$ case of $SL_2$ is largely unknown, and less so a way to construct a hyperholomorphic connection on it.  In \eqref{cotmir}  we offered evidence from mirror symmetry for a conjecture of the $\T$-character of the  fibre of this bundle with a $\T$-equivariant structure at the fixed points of type $(1,1)$. 

It is an important example as  it relates to the work of Donagi--Pantev \cite{DP} on the 
geometric Langlands programme, where they proposed a geometric approach of constructing automorphic sheaves on the moduli stack of bundles, which was, in fact, also the motivation of  Drinfeld \cite{drinfeld} and Laumon \cite{laumon} for the introduction of very stable bundles, see also \cite[Conjecture 6.2.1]{laumon2}. This programme aims to associate to a point  $m\in {\mathcal M}^{\vee}$, the mirror of the Higgs bundle moduli space ${\mathcal M}$, a flat connection on ${\mathcal N}$, the moduli space of stable bundles, with singularities on the ``wobbly locus". A generic point in the mirror consists of a spectral curve $C_a$ and (according to the SYZ procedure) a line bundle $L$ over its Jacobian. In ${\mathcal M}$, the spectral curve defines a fibre of the Hitchin fibration. This fibre over $a\in {\mathcal A}$ intersects $T^*{\mathcal N}\subset {\mathcal M}$ in the complement of a subvariety and the projection to ${\mathcal N}$ extends to a rational map as in \cite{beauville-etal}. The idea is that the vector bundle $V$ on ${\mathcal N}$ is associated to a direct image of $L$, and the canonical 
1-form on the cotangent bundle $T^*{\mathcal N}$ defines a Higgs field on $V$, a section of $\End (V)\otimes T^*$. Then, using the higher dimensional nonlinear Hodge theory due to Simpson and  Mochizuki \cite{mochizuki}, one hopes to obtain a flat connection.  

What is the link with this paper? The complement of the wobbly locus in ${\mathcal N}$ is the set of very stable vector bundles $E$. The fibre of $V$  at a generic $[E]\in {\mathcal N}$ is the direct sum of the fibres of $L$ over the intersection of the upward flow of $(E,0)$ with the Hitchin fibre. Put another way, if ${\mathcal N}^{vs}\subset {\mathcal N}$ denotes the moduli space of very stable bundles, then the Fourier-Mukai transform of the upward flows over  ${\mathcal N}^{vs}$
defines a bundle on ${\mathcal N}^{vs}\times {\mathcal M}^{\vee}$. In this paper we have been concerned with the restriction to $[E]\times {\mathcal M}^{\vee}$, for Donagi and Pantev it is the restriction to ${\mathcal N}^{vs}\times \{m\}$.

In fact what we have encountered is a miniature version of this in the case of very stable Higgs bundles of type $(1,1,\dots, 1)$. Instead of the $\T$-fixed point set ${\mathcal N}$ we have another component: a subvariety defined by the zero set of $(b_1,\dots, b_{n-1})$ in the product of symmetric products $C^{[m_1]}\times C^{[m_2]}\times \cdots \times C^{[m_{n-1}]}$. Moreover we have explicitly  identified  the wobbly locus, namely where the product $b_1\dots b_{n-1}$ has multiple zeroes. This  includes the various diagonals. The hyperholomorphic bundle we identified as a tensor product of exterior powers of universal bundles corresponding to the zeros of the $b_i$. By nonlinear Hodge theory on $C$, a point $m\in {\mathcal M}^{\vee}$ defines a flat connection on the curve $C$ and hence also on the product $C^{m_1}\times C^{m_2}\times \cdots \times C^{m_{n-1}}$, invariant by the action of the product of symmetric groups, hence descending to a flat connection, but with singularities over the diagonals.

    \end{document}